\newtheorem{cor}[subsubsection]{Corollary}
\newtheorem{lem}[subsubsection]{Lemma}
\newtheorem{prop}[subsubsection]{Proposition}
\newtheorem{thm}[subsubsection]{Theorem}
\newtheorem{quest}[subsubsection]{Question}
\newcommand{\iso}{\buildrel{\sim}\over{\longrightarrow}}
\newcommand{\mono}{\hookrightarrow}
\theoremstyle{definition}
\newtheorem{defn}[subsubsection]{Definition}
\theoremstyle{remark}
\newtheorem{rem}[subsubsection]{Remark}
\newcommand{\thmref}[1]{Theorem~\ref{#1}}
\newcommand{\secref}[1]{Sect.~\ref{#1}}
\newcommand{\lemref}[1]{Lemma~\ref{#1}}
\newcommand{\propref}[1]{Proposition~\ref{#1}}
\newcommand{\corref}[1]{Corollary~\ref{#1}}
\numberwithin{equation}{section}
\newcommand{\nc}{\newcommand}
\nc{\renc}{\renewcommand}
\nc{\ssec}{\subsection}
\nc{\sssec}{\subsubsection}
\nc{\on}{\operatorname}
\nc\ol{\overline}
\nc\wt{\widetilde}
\nc\tboxtimes{\wt{\boxtimes}}
\nc{\alp}{\alpha}
\nc{\ZZ}{{\mathbb Z}}
\nc{\NN}{{\mathbb N}}
\nc{\OO}{{\mathbb O}}
\renc{\SS}{{\mathbb S}}
\nc{\DD}{{\mathbb D}}
\nc{\GG}{{\mathbb G}}
\nc{\Fq}{{\mathbb F}_q}
\nc{\Fqb}{\ol{{\mathbb F}_q}}
\nc{\Ql}{\ol{{\mathbb Q}_\ell}}
\nc{\id}{\text{id}}
\nc\X{\mathcal X}
\nc{\Ad}{\on{Ad}}
\nc{\Hom}{\on{Hom}}
\nc{\Hull}{\on{Hull}}
\nc{\Mat}{\on{Mat}}
\nc{\Lie}{\on{Lie}}
\nc{\Loc}{\on{Loc}}
\nc{\Pic}{\on{Pic}}
\nc{\Bun}{\on{Bun}}
\nc{\IC}{\on{IC}}
\nc{\Aut}{\on{Aut}}
\nc{\rk}{\on{rk}}
\nc{\Sh}{\on{Sh}}
\nc{\Stab}{\on{Stab}}
\nc{\Perv}{\on{Perv}}
\nc{\pos}{{\on{pos}}}
\nc{\Conv}{\on{Conv}}
\nc{\Sph}{\on{Sph}}
\nc{\Sym}{\on{Sym}}
\nc{\BunBb}{\overline{\Bun}_B}
\nc{\BunBbm}{\overline{\Bun}_{B^-}}
\nc{\BunBbel}{\overline{\Bun}_{B,el}}
\nc{\BunBbmel}{\overline{\Bun}_{B^-,el}}
\nc{\Buno}{\overset{o}{\Bun}}
\nc{\BunPb}{{\overline{\Bun}_P}}
\nc{\BunBM}{\Bun_{B(M)}}
\nc{\BunBMb}{\overline{\Bun}_{B(M)}}
\nc{\BunPbw}{{\widetilde{\Bun}_P}}
\nc{\BunBP}{\widetilde{\Bun}_{B,P}}
\nc{\GUb}{\overline{G/U}}
\nc{\GUPb}{\overline{G/U(P)}}
\nc{\Hhom}{\underline{\on{Hom}}}
\nc\syminfty{\on{Sym}^{\infty}}
\nc\lal{\ol{\lambda}}
\nc\xl{\ol{x}}
\nc\thl{\ol{\theta}}
\nc\nul{\ol{\nu}}
\nc\mul{\ol{\mu}}
\nc{\oX}{\overset{o}{X}{}}
\nc{\hl}{\overset{\leftarrow}h{}}
\nc{\hr}{\overset{\rightarrow}h{}}
\nc{\M}{{\mathcal M}}
\nc{\N}{{\mathcal N}}
\nc{\F}{{\mathcal F}}
\nc{\D}{{\mathcal D}}
\nc{\Q}{{\mathcal Q}}
\nc{\Y}{{\mathcal Y}}
\nc{\G}{{\mathcal G}}
\nc{\E}{{\mathcal E}}
\nc{\CalC}{{\mathcal C}}
\nc\Dh{\widehat{\D}}
\nc{\C}{{\mathcal C}}
\nc{\K}{{\mathcal K}}
\renewcommand{\H}{{\mathcal H}}
\nc{\T}{{\mathcal T}}
\nc{\V}{{\mathcal V}}
\renc{\P}{{\mathcal P}}
\nc{\A}{{\mathcal A}}
\nc{\B}{{\mathcal B}}
\nc{\U}{{\mathcal U}}
\nc{\Gr}{{\on{Gr}}}
\nc{\frn}{{\check{\mathfrak u}(P)}}
\nc{\p}{\mathfrak p}
\nc{\q}{\mathfrak q}
\nc\f{{\mathfrak f}}
\nc{\qo}{{\mathfrak q}}
\nc{\po}{{\mathfrak p}}
\nc{\s}{{\mathfrak s}}
\nc\w{\text{w}}
\nc\Spec{\on{Spec}}
\nc\Mod{\on{Mod}}
\nc{\tw}{\widetilde{\mathfrak t}}
\nc{\pw}{\widetilde{\mathfrak p}}
\nc{\qw}{\widetilde{\mathfrak q}}
\nc{\jw}{\widetilde j}
\nc{\grb}{\overline{\Gr}}
\nc{\I}{\mathcal I}
\nc{\lambdach}{{\check\lambda}}
\nc{\Lambdach}{{\check\Lambda}{}}
\nc{\much}{{\check\mu}}
\nc{\omegach}{{\check\omega}}
\nc{\nuch}{{\check\nu}}
\nc{\etach}{{\check\eta}}
\nc{\alphach}{{\check\alpha}}
\nc{\betach}{{\check\beta}}
\nc{\rhoch}{{\check\rho}}
\nc{\ch}{{\check h}}
\nc{\Hb}{\overline{\H}}
\nc{\BA}{{\mathbb{A}}}
\nc{\BC}{{\mathbb{C}}}
\nc{\BG}{{\mathbb{G}}}
\nc{\BL}{{\mathbb{L}}}
\nc{\BM}{{\mathbb{M}}}
\nc{\BD}{{\mathbb{D}}}
\nc{\BN}{{\mathbb{N}}}
\nc{\BP}{{\mathbb{P}}}
\nc{\BQ}{{\mathbb{Q}}}
\nc{\BR}{{\mathbb{R}}}
\nc{\BX}{{\mathbb{X}}}
\nc{\BZ}{{\mathbb{Z}}}
\nc{\BS}{{\mathbb{S}}}
\nc{\CA}{{\mathcal{A}}}
\nc{\CB}{{\mathcal{B}}}
\nc{\CE}{{\mathcal{E}}}
\nc{\CF}{{\mathcal{F}}}
\nc{\CL}{{\mathcal{L}}}
\nc{\CC}{{\mathcal{C}}}
\nc{\CM}{{\mathcal{M}}}
\nc{\CN}{{\mathcal{N}}}
\nc{\CK}{{\mathcal{K}}}
\nc{\CO}{{\mathcal{O}}}
\nc{\CP}{{\mathcal{P}}}
\nc{\CQ}{{\mathcal{Q}}}
\nc{\CR}{{\mathcal{R}}}
\nc{\CS}{{\mathcal{S}}}
\nc{\oCS}{\overset{\circ}{\mathcal{S}}}
\nc{\CT}{{\mathcal{T}}}
\nc{\CU}{{\mathcal{U}}}
\nc{\CV}{{\mathcal{V}}}
\nc{\CW}{{\mathcal{W}}}
\nc{\CX}{{\mathcal{X}}}
\nc{\CY}{{\mathcal{Y}}}
\nc{\CZ}{{\mathcal{Z}}}
\nc{\CI}{{\mathcal{I}}}
\nc{\csM}{{\check{\mathcal A}}{}}
\nc{\oM}{{\overset{\circ}{\mathcal M}}{}}
\nc{\obM}{{\overset{\circ}{\mathbf M}}{}}
\nc{\oCA}{{\overset{\circ}{\mathcal A}}{}}
\nc{\obA}{{\overset{\circ}{\mathbf A}}{}}
\nc{\ooM}{{\overset{\circ}{M}}{}}
\nc{\osM}{{\overset{\circ}{\mathsf M}}{}}
\nc{\vM}{{\overset{*}{\mathcal M}}{}}
\nc{\nM}{{\underset{*}{\mathcal M}}{}}
\nc{\oD}{{\overset{\circ}{\mathcal D}}{}}
\nc{\obD}{{\overset{\circ}{\mathbf D}}{}}
\nc{\oA}{{\overset{\circ}{\mathbb A}}{}}
\nc{\op}{{\overset{*}{\mathbf p}}{}}
\nc{\cp}{{\overset{\circ}{\mathbf p}}{}}
\nc{\oU}{{\overset{*}{\mathcal U}}{}}
\nc{\oZ}{{\overset{\circ}{\mathcal Z}}{}}
\nc{\ofZ}{{\overset{\circ}{\mathfrak Z}}{}}
\nc{\oF}{{\overset{\circ}{\fF}}}
\nc{\fa}{{\mathfrak{a}}}
\nc{\fb}{{\mathfrak{b}}}
\nc{\fd}{{\mathfrak{d}}}
\nc{\fg}{{\mathfrak{g}}}
\nc{\fgl}{{\mathfrak{gl}}}
\nc{\fh}{{\mathfrak{h}}}
\nc{\fj}{{\mathfrak{j}}}
\nc{\fl}{{\mathfrak{l}}}
\nc{\fm}{{\mathfrak{m}}}
\nc{\fn}{{\mathfrak{n}}}
\nc{\fu}{{\mathfrak{u}}}
\nc{\fp}{{\mathfrak{p}}}
\nc{\fr}{{\mathfrak{r}}}
\nc{\fs}{{\mathfrak{s}}}
\nc{\ft}{{\mathfrak{t}}}
\nc{\fsl}{{\mathfrak{sl}}}
\nc{\hsl}{{\widehat{\mathfrak{sl}}}}
\nc{\hgl}{{\widehat{\mathfrak{gl}}}}
\nc{\hg}{{\widehat{\mathfrak{g}}}}
\nc{\chg}{{\widehat{\mathfrak{g}}}{}^\vee}
\nc{\hn}{{\widehat{\mathfrak{n}}}}
\nc{\chn}{{\widehat{\mathfrak{n}}}{}^\vee}
\nc{\fA}{{\mathfrak{A}}}
\nc{\fB}{{\mathfrak{B}}}
\nc{\fD}{{\mathfrak{D}}}
\nc{\fE}{{\mathfrak{E}}}
\nc{\fF}{{\mathfrak{F}}}
\nc{\fG}{{\mathfrak{G}}}
\nc{\fK}{{\mathfrak{K}}}
\nc{\fL}{{\mathfrak{L}}}
\nc{\fM}{{\mathfrak{M}}}
\nc{\fN}{{\mathfrak{N}}}
\nc{\fP}{{\mathfrak{P}}}
\nc{\fU}{{\mathfrak{U}}}
\nc{\fV}{{\mathfrak{V}}}
\nc{\fZ}{{\mathfrak{Z}}}
\nc{\bb}{{\mathbf{b}}}
\nc{\bc}{{\mathbf{c}}}
\nc{\bd}{{\mathbf{d}}}
\nc{\be}{{\mathbf{e}}}
\nc{\bj}{{\mathbf{j}}}
\nc{\bn}{{\mathbf{n}}}
\nc{\bp}{{\mathbf{p}}}
\nc{\bq}{{\mathbf{q}}}
\nc{\bu}{{\mathbf{u}}}
\nc{\bv}{{\mathbf{v}}}
\nc{\bx}{{\mathbf{x}}}
\nc{\bs}{{\mathbf{s}}}
\nc{\by}{{\mathbf{y}}}
\nc{\bw}{{\mathbf{w}}}
\nc{\bA}{{\mathbf{A}}}
\nc{\bK}{{\mathbf{K}}}
\nc{\bB}{{\mathbf{B}}}
\nc{\bC}{{\mathbf{C}}}
\nc{\bG}{{\mathbf{G}}}
\nc{\bD}{{\mathbf{D}}}
\nc{\bH}{{\mathbf{H}}}
\nc{\bM}{{\mathbf{M}}}
\nc{\bN}{{\mathbf{N}}}
\nc{\bP}{{\mathbf{P}}}
\nc{\bV}{{\mathbf{V}}}
\nc{\bW}{{\mathbf{W}}}
\nc{\bX}{{\mathbf{X}}}
\nc{\bZ}{{\mathbf{Z}}}
\nc{\bS}{{\mathbf{S}}}
\nc{\sA}{{\mathsf{A}}}
\nc{\sB}{{\mathsf{B}}}
\nc{\sC}{{\mathsf{C}}}
\nc{\sD}{{\mathsf{D}}}
\nc{\sF}{{\mathsf{F}}}
\nc{\sK}{{\mathsf{K}}}
\nc{\sM}{{\mathsf{M}}}
\nc{\sO}{{\mathsf{O}}}
\nc{\sW}{{\mathsf{W}}}
\nc{\sQ}{{\mathsf{Q}}}
\nc{\sP}{{\mathsf{P}}}
\nc{\sZ}{{\mathsf{Z}}}
\nc{\sfj}{{\mathsf{j}}}
\nc{\sfp}{{\mathsf{p}}}
\nc{\sfq}{{\mathsf{q}}}
\nc{\sfr}{{\mathsf{r}}}
\nc{\osfr}{\overset{\circ}{\mathsf{r}}}
\nc{\sr}{{\mathsf{r}}}
\nc{\sfi}{{\mathsf{i}}}
\nc{\sk}{{\mathsf{k}}}
\nc{\sg}{{\mathsf{g}}}
\nc{\sff}{{\mathsf{f}}}
\nc{\sfb}{{\mathsf{b}}}
\nc{\sfc}{{\mathsf{c}}}
\nc{\sd}{{\mathsf{d}}}
\nc{\BK}{{\bar{K}}}
\nc{\tA}{{\widetilde{\mathbf{A}}}}
\nc{\tB}{{\widetilde{\mathcal{B}}}}
\nc{\tg}{{\widetilde{\mathfrak{g}}}}
\nc{\tG}{{\widetilde{G}}}
\nc{\TM}{{\widetilde{\mathbb{M}}}{}}
\nc{\tO}{{\widetilde{\mathsf{O}}}{}}
\nc{\tU}{{\widetilde{\mathfrak{U}}}{}}
\nc{\TZ}{{\tilde{Z}}}
\nc{\tx}{{\tilde{x}}}
\nc{\tbv}{{\tilde{\bv}}}
\nc{\tfP}{{\widetilde{\mathfrak{P}}}{}}
\nc{\tz}{{\tilde{\zeta}}}
\nc{\tmu}{{\tilde{\mu}}}
\nc{\urho}{\underline{\rho}}
\nc{\uB}{\underline{B}}
\nc{\uC}{{\underline{\mathbb{C}}}}
\nc{\ui}{\underline{i}}
\nc{\uj}{\underline{j}}
\nc{\ofP}{{\overline{\mathfrak{P}}}}
\nc{\oB}{{\overline{\mathcal{B}}}}
\nc{\og}{{\overline{\mathfrak{g}}}}
\nc{\oI}{{\overline{I}}}
\nc{\eps}{\varepsilon}
\nc{\hrho}{{\hat{\rho}}}
\nc{\one}{{\mathbf{1}}}
\nc{\two}{{\mathbf{t}}}
\nc{\Rep}{{\mathop{\operatorname{\rm Rep}}}}
\nc{\Tot}{{\mathop{\operatorname{\rm Tot}}}}
\nc{\Ker}{{\mathop{\operatorname{\rm Ker}}}}
\nc{\Hilb}{{\mathop{\operatorname{\rm Hilb}}}}
\nc{\End}{{\mathop{\operatorname{\rm End}}}}
\nc{\Ext}{{\mathop{\operatorname{\rm Ext}}}}
\nc{\CHom}{{\mathop{\operatorname{{\mathcal{H}}\it om}}}}
\nc{\GL}{{\mathop{\operatorname{\rm GL}}}}
\nc{\gr}{{\mathop{\operatorname{\rm gr}}}}
\nc{\Id}{{\mathop{\operatorname{\rm Id}}}}
\nc{\de}{{\mathop{\operatorname{\rm def}}}}
\nc{\length}{{\mathop{\operatorname{\rm length}}}}
\nc{\supp}{{\mathop{\operatorname{\rm supp}}}}
\nc{\Cliff}{{\mathsf{Cliff}}}
\nc{\Fl}{\on{Fl}}
\nc{\Fib}{{\mathsf{Fib}}}
\nc{\Coh}{{\mathsf{Coh}}}
\nc{\FCoh}{{\mathsf{FCoh}}}
\nc{\reg}{{\text{\rm reg}}}
\nc{\cplus}{{\mathbf{C}_+}}
\nc{\cminus}{{\mathbf{C}_-}}
\nc{\cthree}{{\mathbf{C}_*}}
\nc{\Qbar}{{\bar{Q}}}
\nc\Eis{\on{Eis}}
\nc\Eisb{\ol\Eis{}}
\nc\wh{\widehat}
\nc{\Def}{\on{Def_{\check{\fb}}(E)}}
\nc{\barZ}{\overline{Z}{}}
\nc{\barbarZ}{\overline{\barZ}{}}
\nc{\barpi}{\overline\pi}
\nc{\barbarpi}{\overline\barpi}
\nc{\barpip}{\overline\pi{}^+}
\nc{\barpim}{\overline\pi{}^-}
\nc{\fq}{\mathfrak q}
\nc{\sfqb}{\ol{\sfq}{}}
\nc{\sfpb}{\ol{\sfp}{}}
\nc{\hattimes}{\wh\otimes}
\nc{\bh}{{\bar{h}}}
\nc{\bOmega}{{\overline{\Omega(\check \fn)}}}
\nc{\seq}[1]{\stackrel{#1}{\sim}}
\nc{\cT}{{\check{T}}}
\nc{\cG}{{\check{G}}}
\nc{\cM}{{\check{M}}}
\nc{\cB}{{\check{B}}}
\nc{\ct}{{\check{\mathfrak t}}}
\nc{\cg}{{\check{\fg}}}
\nc{\cb}{{\check{\fb}}}
\nc{\cn}{{\check{\fn}}}
\nc{\cLambda}{{\check\Lambda}}
\nc{\cla}{{\check\lambda}}
\nc{\cmu}{{\check\mu}}
\nc{\cnu}{{\check\nu}}
\nc{\ceta}{{\check\eta}}
\nc{\cP}{\check{P}}
\nc{\DefbE}{{\on{Def}_{\cB}(E_\cT)}}
\nc{\imathb}{{\ol{\imath}}}
\nc{\Dmod}{\on{D-mod}}
\nc{\Maps}{\on{Maps}}
\nc{\Vect}{\on{Vect}}
\nc{\sotimes}{\overset{!}\otimes}
\nc{\IndCoh}{\on{IndCoh}}
\nc{\LocSys}{\on{LocSys}}
\nc{\red}{\on{red}}
\begin{document}

\title{Geometric constant term functor(s)}

\author{V.~Drinfeld and D.~Gaitsgory}

\dedicatory{To Joseph Bernstein with deepest gratitude}

\date{\today}

\begin{abstract}
We study the Eisenstein series and constant term functors in the framework of geometric
theory of automorphic functions. Our main result says that for a parabolic $P\subset G$
with Levi quotient $M$, the !-constant term functor
$$\on{CT}_!:\Dmod(\Bun_G)\to \Dmod(\Bun_M)$$ is canonically isomorphic to the
*-constant term functor $$\on{CT}^-_*:\Dmod(\Bun_G)\to \Dmod(\Bun_M),$$
taken with respect to the opposite parabolic $P^-$.
\end{abstract} 

\maketitle

\section*{Introduction}

\ssec{Conventions}   \label{ss:Conventions}
We fix an algebraically closed field $k$ of characteristic $0$. Unless stated otherwise, we say 
``scheme" instead of ``scheme locally of finite type over $k$." 
When we say ``stack" we mean an algebraic stack locally of finite type over $k$ such that the automorphism 
group of any $k$-point is affine.
\footnote{The latter condition is called ``locally QCA" in \cite{DrGa1}. 
It ensures that the theory of D-modules on our stack is reasonable.} 

\medskip

A morphism of stacks $f:Y_1\to Y_2$ is said to be \emph{representable} (resp.~ \emph{schematic}) if for any scheme $S$ the stack $Y_1\underset{Y_2}\times S$ 
is an algebraic space (resp. scheme).

\ssec{Posing the problem}

\sssec{}

One of the main tools in studying automorphic functions on an ad\`ele group $G$ is the pair of mutually
adjoint operators, called  ``Eisenstein series" and ``constant term" that connect this space to similar spaces
for Levi subgroups.

\medskip

The goal of this paper is to make several basic observations regarding the analogs of these operators
in the geometric context.

\sssec{}

Let $X$ be a smooth projective connected curve over $k$, and $G$ a reductive group. 
Let $\Bun_G$ be the moduli stack of principal $G$-bundles on $X$. Our geometric analog of the space of automorphic
functions is the DG category of (not necessarily holonomic) D-modules on $\Bun_G$, denoted by 
$\Dmod(\Bun_G)$.  We refer the reader to \secref{sss:conventions}
for the explanation of what exactly we understand by ``DG category of D-modules." Here we just mention that the 
homotopy category of this DG category is the derived category of D-modules.

\medskip

Let $P$ be a parabolic in $G$ with Levi quotient $M$. We have the following fundamental diagram of stacks:
\begin{equation} \label{e:basic diag}
\xy
(15,0)*+{\Bun_G}="X";
(-15,0)*+{\Bun_M}="Y";
(0,15)*+{\Bun_P}="Z";
{\ar@{->}^{\sfp} "Z";"X"};
{\ar@{->}_{\sfq} "Z";"Y"};
\endxy
\end{equation}

Recall that the set of connected components of the stack $\Bun_M$ is in bijection with $\pi_1(M)$. For an element $\mu\in \pi_1(M)$
we let $\Bun_M^\mu$ denote the corresponding connected component; let $\Bun_P^\mu$ denote the preimage of
$\Bun_M^\mu\subset \Bun_M$ under the above map $\sfq:\Bun_P\to \Bun_M$; one can show that $\Bun_P^\mu$ is connected,
i.e., is a single connected component of $\Bun_P$. 

\medskip

By a slight abuse of notation we will denote
by the same symbols $\sfq$ and $\sfp$, respectively, the restrictions of the corresponding maps to $\Bun_P^\mu\subset \Bun_P$. 
Thus, we obtain a diagram
\begin{equation} \label{e:basic diag mu}
\xy
(15,0)*+{\Bun_G}="X";
(-15,0)*+{\Bun^\mu_M}="Y";
(0,15)*+{\Bun^\mu_P}="Z";
{\ar@{->}^{\sfp} "Z";"X"};
{\ar@{->}_{\sfq} "Z";"Y"};
\endxy
\end{equation}

\sssec{}

Applying pull-push along diagram \eqref{e:basic diag mu} we obtain the functors
$$\Eis^\mu_*:\Dmod(\Bun^\mu_M)\to \Dmod(\Bun_G), \quad \Eis^\mu_*:=\sfp_*\circ \sfq^!$$
and
$$\on{CT}^\mu_*: \Dmod(\Bun_G)\to \Dmod(\Bun^\mu_M), \quad  \on{CT}^\mu_*:=\sfq_*\circ \sfp^!$$
(here $\Eis$ stands for ``Eisenstein" and $\on{CT}$ stands for ``constant term").

\medskip

We will address the following questions:

\medskip

\begin{quest}   \label{quest1new}
Can one define the functor
$$\Eis^\mu_!:=\sfp_!\circ \sfq^*: \Dmod(\Bun^\mu_M)\to \Dmod(\Bun_G)\,,$$
left adjoint to $\on{CT}^\mu_*$\,\emph{?}
\end{quest}

\begin{quest}   \label{quest2new}
Can one define the functor 
$$\on{CT}^\mu_!:=\sfq_!\circ \sfp^*:\Dmod(\Bun_G)\to \Dmod(\Bun^\mu_M)\,,$$
left adjoint to $\Eis^\mu_*\,$\emph{?}
\end{quest}

The next few remarks explain why these questions are non-obvious.

\begin{rem}   \label{r:1}
For a morphism
$f:Y_1\to Y_2$ between schemes or stacks the functors
$$f_*:\Dmod(Y_1)\to \Dmod(D_2) \text{ and }f^!:\Dmod(Y_1)\to \Dmod(D_2)$$
are well-defined. But their left adjoints
$$f^*:\Dmod(Y_2)\to \Dmod(D_1) \text{ and }f_!:\Dmod(Y_2)\to \Dmod(D_1)$$
are only \emph{partially} defined (because we work with non-necessarily holonomic D-modules).
\end{rem}

\begin{rem}   \label{r:2}
If $f$ is quasi-compact, representable 
and proper then $f_!$ is always defined and equals $f_*\,$.
If $f$ is smooth then $f^*$ is always defined and equals $f^!$ up to a cohomological shift.
 \end{rem}
 
\begin{rem}   \label{r:3}
If a morphism  of stacks $f:Y_1\to Y_2$ is \emph{safe} then 
the functor $f_*$ is ``well-behaved" (more precisely, $f_*$ is \emph{continuous}).
For more details (including the definitions of safety and continuity), see \secref{sss:safety}.
\end{rem}
 
\begin{rem}  \label{r:4}
In diagram \eqref{e:basic diag mu} the morphism $\sfp$ is quasi-compact and representable 
(and moreover, schematic), but neither smooth nor proper. 
The morphism $\sfq$ is smooth and safe 
\footnote{For any surjection of algebraic groups $\phi:H_1\to H_2$
the corresponding map $\Phi:\Bun_{H_1}\to \Bun_{H_2}$ is smooth, which can be seen through the calculation of
its differential. If $\on{ker}(\phi)$ is unipotent, then $\Phi$ is safe.} (but not representable).  
 \end{rem}

\ssec{Statement of the results}

\sssec{}

First, we show that the functor $\Eis^\mu_!:=\sfp_!\circ \sfq^*$ is well-defined. (More precisely, since 
$\sfq$ is smooth the functor $\sfq^*$ is well-defined, and
we will show that $\sfp_!$ is well-defined on the essential image of $\sfq^*$).

\sssec{}

We now turn to $\on{CT}^\mu_!$. We do eventually show that the functor $\on{CT}^\mu_!$ is well-defined,
but this comes as a result of a more precise assertion (see \thmref{t:prel main} below) that describes this functor explicitly. 

\medskip

Here we note that a direct attempt to define $\on{CT}^\mu_!=\sfq_!\circ \sfp^*$ brings problems different from 
those in the case of $\Eis^\mu_!$. Namely, it turns out that the second functor, i.e., $\sfq_!$, is
always well-defined (see \secref{sss:contraction on moduli simple}), while the first one, namely $\sfp^*$, is not. However, 
we will show that their composition is well-defined in a certain sense (see \secref{sss:hyperb stacks} for the general 
paradigm when such compositions are well-defined; we will show that this paradigm is applicable to $\Bun_G$ in 
\secref{ss:proof via Braden} and \secref{sss:contraction on moduli simple}). 

\sssec{}

Now we are ready to state the main result of this paper, namely, \thmref{t:main}:

\begin{thm}  \label{t:prel main}
The functor $\on{CT}^\mu_!$ exists and is isomorphic to the functor $\on{CT}^{\mu,-}_*$, where the superscript 
``$\,^-$" means that instead of $P$ we are considering the opposite parabolic $P^-$.
\end{thm}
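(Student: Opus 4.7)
The plan is to deduce \thmref{t:prel main} from Braden's hyperbolic restriction theorem, adapted to the non-quasi-compact stack setting. Choose a strictly dominant cocharacter $\lambda\colon\BG_m \to Z(M)$, so that the adjoint action of $\lambda(t)$ on $\fu(P)$ has strictly positive weights as $t\to 0$. Via the adjoint action on the structure group, $\lambda$ induces a $\BG_m$-action on $\Bun_G$ whose fixed locus is $\Bun_M$, whose attracting locus is $\Bun_P$ (with the maps $\sfp, \sfq$ of diagram \eqref{e:basic diag mu}), and whose repelling locus is $\Bun_{P^-}$ (with the analogous maps $\sfp^-, \sfq^-$). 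The same holds componentwise for each $\mu\in\pi_1(M)$, so the argument can be run on a single connected component at a time.

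Braden's theorem in this setting produces a canonical isomorphism
\[
\sfq_! \circ \sfp^* \iso \sfq^-_* \circ (\sfp^-)^!.
\]
The right-hand side is precisely $\on{CT}^{\mu,-}_*$, which is already well-defined because $\sfp^-$ is schematic and $\sfq^-$ is smooth and safe (cf.\ Remarks \ref{r:2}--\ref{r:4}). Using this isomorphism, I would \emph{define} $\on{CT}^\mu_!:=\sfq_!\circ\sfp^*$ by the right-hand side, thereby obtaining simultaneously its existence and the claimed identification. The roles of the two sides are asymmetric: the left-hand side is what one naively wants but cannot form directly (since $\sfp^*$ is only partially defined, as noted in Remark \ref{r:1}); the right-hand side is a substitute that manifestly exists and is computed by the Braden identification.

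The chief obstacle is that Braden's theorem is classically stated for finite-type schemes, whereas $\Bun_G$ is not quasi-compact. To bypass this, I would first develop the axiomatic framework of ``hyperbolic stacks'' alluded to in the introduction, in which the composition $\sfq_! \circ \sfp^*$ makes sense and is identified with $\sfq^-_* \circ (\sfp^-)^!$ even though $\sfp^*$ is not globally defined on $\Dmod(\Bun_G)$. The geometric heart of the argument is then to verify that $(\Bun_G,\Bun_P,\Bun_{P^-},\Bun_M)$ fits this framework: this uses the description of $\sfq$ as a smooth, safe map with fibers modeled on the unipotent radical $U(P)$, together with an exhaustion of $\Bun_G$ by $\BG_m$-invariant quasi-compact open substacks on which Braden can be applied stratum by stratum. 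A secondary technical issue is $\BG_m$-monodromicity: since the action factors through $Z(M)$, the relevant D-modules on $\Bun_G$ should be naturally monodromic for it, so Braden applies without restriction to a subcategory. Once this axiomatic apparatus is in place, the theorem follows immediately by specialization.
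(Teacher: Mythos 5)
Your proposal matches, in broad outline, the paper's \emph{second} proof of this theorem (Sect.~\ref{s:mainproof}): deduce the isomorphism $\on{CT}^\mu_!\simeq\on{CT}^{\mu,-}_*$ from Braden's theorem. However, there is a genuine gap at the very first step, and it is exactly the subtlety that makes the paper's argument nontrivial.

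You assert that the cocharacter $\gamma\colon\BG_m\to Z(M)$ induces a $\BG_m$-action on $\Bun_G$ whose fixed locus is $\Bun_M$, whose attractor is $\Bun_P$ and whose repeller is $\Bun_{P^-}$. This is false. Since $\gamma$ lands in $G$, the resulting adjoint $\BG_m$-action on $G$ is by \emph{inner} automorphisms, and an inner automorphism of $G$ induces a canonically trivialized automorphism of $\Bun_G$. Thus the $\BG_m$-action on $\Bun_G$ is canonically isomorphic to the \emph{trivial} action, its ``fixed locus'' is all of $\Bun_G$, and the attractor/repeller picture you invoke does not exist. (The paper makes this point explicitly: see the footnote in Sect.~\ref{sss:contraction on moduli simple} and the proof of Lemma~\ref{l:geom equiv}, where the same triviality of the induced action on $\Bun_G\times\Bun_G$ is used.) What \emph{does} survive is the $\BA^1$-monoid contraction of $\Bun_P$ onto $\Bun_M$, but that is a contraction of $\Bun_P$, not a fixed/attracting/repelling decomposition of $\Bun_G$.

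Because of this, your remark that ``an exhaustion of $\Bun_G$ by $\BG_m$-invariant quasi-compact open substacks on which Braden can be applied stratum by stratum'' suffices is not an adequate plan: Braden's theorem is a statement about schemes with genuine $\BG_m$-actions, and after the trivialization there is no such action on $\Bun_G$ to work with. The paper's fix is essential and cannot be skipped: one passes to a smooth cover of $U_G\subset\Bun_G$ by the \emph{scheme} $U_G\underset{\Bun_G}\times\Bun_G^{n\cdot x}$ of $G$-bundles with a level-$n$ structure at a point $x$. On this scheme the adjoint $\BG_m$-action is genuinely nontrivial and locally linear (the level structure provides a marked trivialization that the inner automorphism moves), and the fixed/attracting/repelling loci really are the level-structure versions of $\Bun_M$, $\Bun_P$, $\Bun_{P^-}$; one then checks via tangent-space computations that the natural maps to $Z^0$, $Z^\pm$ are open embeddings near $U_M\underset{\Bun_M}\times\Bun_M^{n\cdot x}$. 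Theorem~\ref{t:hyperb} is the abstract criterion that lets one descend the validity of Braden from this scheme cover to the stack diagram; its proof in Appendix~\ref{s:check hyperb} is the substantive technical content. The ``hyperbolicity for stacks'' framework you gesture at is not about $\BG_m$-actions on stacks at all --- it is a purely axiomatic property of a square diagram of stacks, verified through the scheme cover. Your secondary remark about monodromicity also misfires for the same reason: the monodromicity condition is vacuous on $\Bun_G$ (the action is trivial), and this does not help because there is no attractor/repeller to restrict to.

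Finally, you may wish to note that the paper also gives a \emph{first} proof (Sects.~\ref{s:proof of adj}--\ref{s:Verifying}) which does not invoke Braden's theorem as a black box at all, but rather constructs the unit and counit of the $(\on{CT}^{\mu,-}_*,\Eis^\mu_*)$-adjunction directly, with the unit built from the $\BA^1$-family of groups $\wt{G}\subset\BA^1\times G\times G$ interpolating between the diagonal $G$ and $P\underset{M}\times P^-$. That argument likewise relies crucially on nontrivial $\BG_m\times\BG_m$-equivariance structure on the family $\Bun_{\wt G}$ \emph{over} $\BA^1$, not on a $\BG_m$-action on $\Bun_G$ itself.
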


\medskip

The assertion of this theorem can be viewed as some kind of non-standard functional equation.
It does not have an immediate analog in the classical theory of automorphic functions
(where one has only one type of pullback operator and one type of push forward).

\medskip

\thmref{t:prel main} has an implication to the relation between the functors
$\Eis_!$ and $\Eis_*$, which is discussed in \cite[Theorem 4.1.2]{Ga}.
Here we will only mention that this implication \emph{does} have a manifestation in the classical 
theory of automorphic functions.

\ssec{Relation to the geometric Langlands conjecture}

The contents of this subsection play a motivational role and may be skipped by the reader.

\sssec{}

In addition to the functors 
$$\Eis^\mu_*,\quad \on{CT}^\mu_*,\quad \Eis_!^\mu,\quad \on{CT}_!^\mu$$
one can consider the full Eisenstein and constant term functors:
$$\Eis_*:=\underset{\mu\in \pi_1(M)}\oplus\, \Eis^\mu_*,\,\,\,\,\Eis_!:=\underset{\mu\in \pi_1(M)}\oplus\, \Eis^\mu_!$$
and 
$$\on{CT}_*:=\underset{\mu\in \pi_1(M)}\oplus\, \on{CT}^\mu_*\simeq \underset{\mu\in \pi_1(M)}\Pi\, \on{CT}^\mu_*,$$
$$\on{CT}_!:=\underset{\mu\in \pi_1(M)}\oplus\, \on{CT}^\mu_!\simeq \underset{\mu\in \pi_1(M)}\Pi\, \on{CT}^\mu_!,$$
(where the sum equals the product because each $\on{CT}^\mu_*$ and $\on{CT}^\mu_!$ lands in its own component
of $\Bun_M^\mu$).

\medskip

Tautologically, the functors $(\Eis_!,\on{CT}_*)$ form an adjoint pair. 

\medskip

Note, however, that the functors $\on{CT}_!$ and $\Eis_*$ \emph{do not} form an adjoint pair. Indeed, the right adjoint of
$\on{CT}_!$ is given by $\underset{\mu\in \pi_1(M)}\Pi\, \Eis^\mu_*$ (which is different from 
$\Eis_*:=\underset{\mu\in \pi_1(M)}\oplus\, \Eis^\mu_*$);
in particular, the above right adjoint is not continuous. 

\medskip

Define also 
$$\on{CT}_*^-:=\underset{\mu\in \pi_1(M)}\oplus\, \on{CT}^{\mu,-}_*.$$

Of course, \thmref{t:prel main} implies that we have a canonical isomorphism
$$\on{CT}_!\simeq \on{CT}_*^-.$$

\sssec{}

It turns out that the functor $\Eis_!$, although less straightforward to define than $\Eis_*$, \emph{plays a more fundamental role from the point of the geometric Langlands conjecture}. 

\medskip

Namely, according to \cite{AG}, this conjecture predicts 
an equivalence of categories 
$$\BL_G:\Dmod(\Bun_G)\to \IndCoh_{\on{Nilp}_{glob}}(\LocSys_\cG),$$
where $\IndCoh_{\on{Nilp}_{glob}}(\LocSys_\cG)$ is a certain modification of the DG category of quasi-coherent sheaves
on the stack $\LocSys_\cG$ of local systems on $X$ with respect to the Langands dual group $\cG$. 

\medskip 

Now, the equivalence $\BL_G$ is expected to be compatible with the corresponding equivalence $\BL_M$ for its 
Levi subgroup $M$
via the diagram
\begin{equation} \label{e:Langlands Eis}
\CD
\Dmod(\Bun_G)   @>{\BL_G}>> \IndCoh_{\on{Nilp}_{glob}}(\LocSys_\cG) \\
@A{\Eis_!}AA    @AA{\Eis_{\on{spec}}}A   \\
\Dmod(\Bun_M)   @>{\BL_M}>> \IndCoh_{\on{Nilp}_{glob}}(\LocSys_\cM), 
\endCD
\end{equation}
which commutes up to an auto-equivalence of $\IndCoh_{\on{Nilp}_{glob}}(\LocSys_\cM)$,
given by tensoring with a certain canonically defined graded line bundle on $\LocSys_\cM$.

\medskip

In the above diagram, $\Eis_{\on{spec}}$ is the \emph{spectral Eisenstein series functor}, defined as pull-push
along the diagram
\begin{equation} \label{e:basic diagram spec}
\xy
(-15,0)*+{\LocSys_\cG}="X";
(15,0)*+{\LocSys_{\cM},}="Y";
(0,15)*+{\LocSys_{\cP}}="Z";
{\ar@{->}_{\sfp_{\on{spec}}} "Z";"X"};
{\ar@{->}^{\sfq_{\on{spec}}} "Z";"Y"};
\endxy
\end{equation}
see \cite[Sect. 13.2]{AG} for more details. 

\medskip

The point that we would like to emphasize is that \emph{the commutation in diagram \eqref{e:Langlands Eis}
takes place for the functor $\Eis_!$ and not for $\Eis_*$} (moreover, this cannot be remedied by any auto-equivalence of the category $\Dmod(\Bun_G)$\,\footnote{See, however, \secref{sss:! functor} below.}.)

\medskip

It is possible to explicitly describe the functor
$$\IndCoh_{\on{Nilp}_{glob}}(\LocSys_\cM)\to \IndCoh_{\on{Nilp}_{glob}}(\LocSys_\cG)$$
that corresponds via $\BL_G$ and $\BL_M$ to 
$$\Eis_*:\Dmod(\Bun_M)\to \Dmod(\Bun_G),$$
but this description is more involved.

\sssec{}

The fact that the functor $\Eis_!$ is more fundamental than $\Eis_*$ can also be explained as follows.

\medskip

Recall (see \cite[Sect. 4.3.3]{DrGa2}) that in addition to the category $\Dmod(\Bun_G)$, there exists another
DG category, denoted $\Dmod(\Bun_G)_{\on{co}}$, that can be naturally assigned to the stack $\Bun_G$.

\medskip

Furthermore, according to \cite[Sect. 4.4.3]{DrGa2}, there is a canonically defined functor
$$\on{Ps-Id}_{\Bun_G,\on{naive}}:\Dmod(\Bun_G)_{\on{co}}\to \Dmod(\Bun_G),$$
which is \emph{not} an equivalence, unless $G$ is a torus.

\medskip

Now, it is not difficult to see that there is a naturally defined functor
$$\Eis_{*,\on{co}}:\Dmod(\Bun_M)_{\on{co}}\to \Dmod(\Bun_G)_{\on{co}}$$
that makes the following diagram commute:
$$
\CD
\Dmod(\Bun_G)_{\on{co}}    @>{\on{Ps-Id}_{\Bun_G,\on{naive}}}>>  \Dmod(\Bun_G)  \\
@A{\Eis_{*,\on{co}}}AA      @A{\Eis_*}AA    \\
\Dmod(\Bun_M)_{\on{co}}    @>{\on{Ps-Id}_{\Bun_M,\on{naive}}}>> \Dmod(\Bun_M).
\endCD
$$

Thus, the functor $\Eis_*$ is a coarsening of $\Eis_{*,\on{co}}$, since, informally, the functor
$\on{Ps-Id}_{\Bun_G,\on{naive}}$ ``loses more information" than $\on{Ps-Id}_{\Bun_M,\on{naive}}$. 

\sssec{}  \label{sss:! functor}

Finally, we note that according to \cite[Sect. 4.4.8]{DrGa2}, in addition to the functor $\on{Ps-Id}_{\Bun_G,\on{naive}}\,$,
there is another canonically defined functor
$$\on{Ps-Id}_{\Bun_G,!}:\Dmod(\Bun_G)_{\on{co}}\to \Dmod(\Bun_G).$$

It is shown in \cite[Theorem 3.1.5]{Ga} that the functor $\on{Ps-Id}_{\Bun_G,!}$ \emph{is actually an equivalence of categories}.

\medskip

In  \cite[Theorem 4.1.2]{Ga} it is also shown that the following diagram commutes:
\begin{equation} \label{e:! functor}
\CD
\Dmod(\Bun_G)_{\on{co}}    @>{\on{Ps-Id}_{\Bun_G,!}}>>  \Dmod(\Bun_G)  \\
@A{\Eis_{*,\on{co}}}AA      @A{\Eis_!^-}AA    \\
\Dmod(\Bun_M)_{\on{co}}    @>{\on{Ps-Id}_{\Bun_M,!}}>> \Dmod(\Bun_M).
\endCD
\end{equation}

So, to summarize, although the naive functor $\Eis_*$ plays an inferior role to that of $\Eis_!$,
its counterpart 
$$\Eis_{*,\on{co}}:\Dmod(\Bun_M)_{\on{co}}\to \Dmod(\Bun_G)_{\on{co}}$$
is on par with $\Eis_!$ by virtue of being intertwined by the equivalences
$\on{Ps-Id}_{\Bun_G,!}$ and $\on{Ps-Id}_{\Bun_M,!}$. 

\medskip

In \cite[Sect. 0.2]{Ga}, it is also explained how diagram \eqref{e:! functor} expresses
the compatibility of the Langlands correspondence functors $\BL_G$ (resp., $\BL_M$)
with Verdier duality on $\Bun_G$ (resp., $\Bun_M$) and Serre duality on
$\LocSys_\cG$ (resp., $\LocSys_\cM$). 

\ssec{Method of proof} \label{ss:Method of proof} 

The proof of \thmref{t:prel main} is a variation on the theme of a theorem of T.~ Braden on hyperbolic
restrictions (see \cite{Br}), recently revisited in \cite{DrGa3}. 
In fact, we give two proofs, in Sects. \ref{s:proof of adj}-\ref{s:Verifying} and Sect. \ref{s:mainproof}, respectively. 

\sssec{}

The first proof mimics the new proof of Braden's theorem given in \cite{DrGa3}, and it directly establishes the 
$(\on{CT}_*^{\mu,-},\Eis_*^\mu)$-adjunction by specifying the unit and co-unit morphisms. 

\medskip

As in the case of the new proof of Braden's theorem given in \cite{DrGa3},
the co-unit morphism is straightforward, and essentially corresponds to the embedding
of the big Bruhat cell into $P\backslash G/P^-$. 

\sssec{}

The unit of the adjunction uses a certain geometric construction, namely, an $\BA^1$-family of subgroups
$\wt{G}$ of $G\times G$, whose fiber at $1\in \BA^1$ is the diagonal copy of $G$, and whose fiber at $0\in \BA^1$
is the subgroup
$$P\underset{M}\times P^-\subset P\times P^-\subset G\times G.$$

\medskip

For the definition of the group-subscheme $\wt{G}\subset \BA^1\times G\times G$, see Sects.~\ref{sss:hyperbolas}, 
\ref{sss:tilde Z}, and \ref{ss:Interpolating}.  Here let us just mention that $\wt{G}$
depends on the choice of a co-character 
\begin{equation} \label{e:cochar preview}
\gamma:\BG_m\to M,
\end{equation}
which lands in the center of $M$, and which is dominant and regular with respect to $P$. 

\sssec{}  \label{sss:Vin}

The group-scheme $\wt{G}$ is not new in Lie theory. Namely, it can be recovered from the
\emph{Vinberg semi-group} correspondingg to $G$ (a.k.a. the enveloping semi-group of $G$); 
see Appendix \ref{s:Vinberg}, where this is explained. 

\medskip

The Lie algebra $\Lie (\wt{G})$ can be directly recovered from the ``wonderful compactification" of $G$ defined in \cite{DCP}: 

\medskip

Let $W$ denote the variety of Lie subalgebras of 
$\fg\times\fg$, where $\fg:=\Lie (G)$. Let $\gamma:\BG_m\to M$ denote the co-character 
\eqref{e:cochar preview}. For $t\in\BG_m$ let $\Gamma_t\subset\fg\times\fg$ denoted the graph of
$\Ad_{\mu (t)}:\fg\iso\fg\,$. Since $W$ is projective the map $\BG_m\to W$ defined by 
$t\mapsto\Gamma_t$ extends to a morphism $\BA^1\to W$, whose image is contained in the ``wonderful compactification." 
Thus one gets an $\BA^1$-family of Lie subalgebras of $\fg\times\fg$. This $\BA^1$-family is 
$\Lie (\wt{G})$ (because $\wt{G}$ is smooth over $\BA^1$, see \propref{p:smooth}).

\sssec{}

The statement that the functors $(\on{CT}_*^{\mu,-},\Eis^\mu_*)$ form an adjoint pair bears a strong resemblance
to the Second Adjointness Theorem in the theory of $\fp$-adic groups. 

\medskip

By a slight abuse of notation, let us temporarily denote by $G$ the set of points of a reductive group over a 
local non-archimedian field, and consider the corresponding groups 
$$M\twoheadleftarrow P\hookrightarrow G.$$ We have the usual pair of adjoint functors 
$$(r^G_M,i^G_M),$$
where $i^G_M$ is the functor of parabolic induction, and $r^G_M$ is the Jacquet functor. 

\medskip

Now, a theorem of J.~Bernstein (unpublished) says that, in addition to being the \emph{right} adjoint of $r^G_M$,
the functor $i^G_M$ is also the \emph{left} adjoint of $r^{G,-}_M$, where the latter is the Jacquet functor
with respect to the opposite parabolic $P^-$. 

\medskip

The proof of this result, given recently in \cite{BKa} (which is different from the original proof of Bernstein), 
is very close in spirit to our proof of the $(\on{CT}_*^{\mu,-},\Eis^\mu_*)$-adjunction: 

\medskip

One constructs the unit of the adjunction using the big Bruhat cell in $N(P)\backslash G/N(P^-)$.

\medskip

The co-unit of the adjunction uses (the set of points over our field of) the family of schemes over $\BA^1$
$$(\BA^1\times G\times G)/\wt{G},$$
acted on by $G\times G$. Note that this family interpolates between
$G$ (the fiber at $1\in \BA^1$) and $\left(G/N(P)\times G/N(P^-)\right)/M$ (the fiber at $0\in \BA^1$). 

\sssec{Remark}
The proof of Braden's theorem given in \cite{DrGa3} is paraphrased in \cite[Sect.~C.14]{Dr} using the categorical formalism of ``lax actions by correspondences". This formalism can also be used to prove Theorem~\ref{t:prel main}. However, we will not use it in this article.

\sssec{}

We now turn to the second proof of \thmref{t:prel main}, given in \secref{s:mainproof}. This proof
is obtained by deducing the isomorphism 
$$\on{CT}^\mu_!\simeq \on{CT}^{\mu,-}_*$$
from Braden's theorem, which involves \emph{schemes} acted on by $\BG_m$. 

\medskip

The schemes in question are obtained by replacing $\Bun_G$, $\Bun_P$, $\Bun_{P^-}$ and $\Bun_M$
by their versions when one considers a sufficiently deep level structure at one point of the curve. 

\sssec{}

Finally, we remark that \thmref{t:prel main} is analogous to the  corresponding 
theorem of Lusztig on \emph{restriction of character sheaves}, see \cite[Theorem 4.1 (iv)]{Gi}.

\medskip

In fact, the two statements admit a common generalization when instead of 
$\Bun_G$ we consider the moduli stack of $G$-bundles with level structure at a finite collection of 
points of $X$; the case of character sheaves on $G$ (resp., the Lie algebra $\fg$) corresponds to the case 
of $X=\BP^1$ with structure of level $1$ at $(0,\infty)\in \BP^1$ (resp., structure of  level $2$ ar $\infty\in \BP^1$). 

\medskip

Just as \thmref{t:prel main}, Lusztig's theorem can be deduced from Braden's theorem.
We learned this very simple proof of Lusztig's theorem from folklore and wrote it up in 
\cite[Sect. 0.2]{DrGa3}.

\ssec{Recollections on D-modules on stacks}

\sssec{DG categories}

Our conventions and notation pertaining to DG categories follow those of \cite[Sect. 1]{DrGa2}.

\sssec{The category $\Dmod(\CY)$, where $\CY$ is a stack} \label{sss:conventions}

For any stack
$\CY$ let $\Dmod(\CY)$ denote the DG category of D-modules on $\CY$ as defined in 
\cite{DrGa1} (the case of quasi-compact schemes is considered in \cite[Sect. 5.1]{DrGa1}  
and the general case in \cite[Sect. 6.2]{DrGa1}).

\medskip

If $\CY=S$ is a quasi-compact scheme then the homotopy category of the DG category $\Dmod(S)$ is the usual derived 
category of D-modules on $S$, 
but note that we impose \emph{no boundedness or coherence conditions \footnote{This is because we want $\Dmod(S)$ to 
be cocomplete, see below.}.} 

\medskip

If $\CY$ is a stack then $\Dmod(\CY)$ is defined to be the (projective) limit of the 
categories $\Dmod(S)$ over the indexing category of quasi-compact schemes $S$ mapping smoothly to $\CY$.
Informally, an object of $\Dmod(\CY)$ is a compatible collection of objects of $\Dmod(S)$ for all quasi-compact schemes 
$S$ mapping smoothly to $\CY$.

\medskip

As is explained in \cite{DrGa1}, the DG category $\Dmod(\CY)$ is \emph{cocomplete,} i.e., it has arbitrary colimits.

\sssec{Direct images and safety}  \label{sss:safety}
For any morphism $f:\CY_1\to \CY_2$ between stacks one has the direct image functor
$f_*:\Dmod(\CY_1)\to\Dmod(\CY_2)$ (see \cite[Sect. 7.4]{DrGa1} for the definition of $f_*$ in the
case that $f$ is not necessarily quasi-compact schematic).

\medskip

The functor $f_*$ is not necessarily  \emph{continuous}. (Recall that a functor between cocomplete DG categories is 
said to be continuous if it commutes with (infinite) colimits, or equivalently, with (infinite) direct sums.)

\medskip

However, according to \cite[Sect. 10.2]{DrGa1}, $f_*$ is continuous if $f$ is \emph{safe}. By definition,  safety means that $f$ is 
quasi-compact and has the following property: for any $y\in \CY_2(k)$  the neutral 
connected component of the automorphism group of any $k$-point of the fiber $(\CY_1)_y$ is unipotent. In particular, any 
quasi-compact representable morphism is safe. 

\sssec{Terminological remark} 
Following the conventions of higher category
theory, we call a morphism between two objects in a DG category an \emph{isomorphism} if and only 
if it is such in the homotopy category. 

\ssec{Organization of the article} 
In Sect.~\ref{s:The Statement} we reformulate Theorem~\ref{t:prel main} as an existence of an adjunction (see Theorem~\ref{t:main adj}), and we describe the natural transformation that will turn out to be the co-unit of the adjunction. We also discuss the notion of cuspidal object of $\CF\in \Dmod(\Bun_G)$; the main point is that two \emph{a priori} different notions of cuspidality coincide.

\medskip

As already said in \ref{ss:Method of proof}, we give two proofs of Theorem~\ref{t:main adj}. 

\medskip

The first one is given in Sections~\ref{s:proof of adj}-\ref{s:Verifying}. (In Sect.~\ref{s:proof of adj} we construct the natural transformation that will turn out to be the unit, and in Sect.~\ref{s:Verifying} we verify that the natural transformations constructed in Sections~\ref{s:The Statement} and \ref{s:proof of adj}  satisfy the properties of unit and count of an adjunction.)

\medskip

The second proof of Theorem~\ref{t:main adj} is given in  Sect.~\ref{s:mainproof}.

\medskip

In Appendix \ref{s:check hyperb} we prove a technical Theorem~\ref{t:hyperb}, which is used in the second proof of 
Theorem~\ref{t:main adj}. 

\medskip

In Appendix \ref{s:support} we prove \propref{p:cuspidality} that describes support of cusipdal objects of 
$\Dmod(\Bun_G)$.

\medskip

In Appendix \ref{s:quasiaffine} we prove \propref{p:quasiaffine}, which says that the
quotient of $\BA^1\times  G\times G$ by the ``interpolating" group-subscheme 
$\wt{G}\subset \BA^1\times  G\times G$ is a quasi-affine scheme.

\medskip

In Appendix \ref{s:Vinberg} we describe $\wt{G}$ and $(\BA^1\times  G\times G)/\wt{G}$ in terms of the Vinberg semigroup of $G$ (a.k.a. enveloping semigroup of $G$).

\ssec{Acknowledgements} 
The research of V. D. is partially supported by NSF grants DMS-1001660 and DMS-1303100. The research of
D. G. is partially supported by NSF grant DMS-1063470.

\section{The Statement}  \label{s:The Statement}

\ssec{The functor $\Eis^\mu_!$} 

\sssec{}

Consider the diagram 
\begin{equation} \label{e:basic diagram again}
\xy
(15,0)*+{\Bun_G}="X";
(-15,0)*+{\Bun^\mu_M.}="Y";
(0,15)*+{\Bun^\mu_P}="Z";
{\ar@{->}^{\sfp} "Z";"X"};
{\ar@{->}_{\sfq} "Z";"Y"};
\endxy
\end{equation}
By Remark~\ref{r:4},
the map $\sfq$ is smooth, so
the functor $\sfq^*$, left adjoint to $\sfq_*$, is well-defined. We are going to prove:

\begin{prop}  \label{existence of Eis !}
The partially defined left adjoint $\sfp_!$ to $\sfp^!$ is defined on the essential
image of the functor $\sfq^*$.
\end{prop}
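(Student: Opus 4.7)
The plan is to recognize $\sfp_!(\sfq^*\F)$ as the value at $\F$ of a genuine left adjoint to the functor $\on{CT}^\mu_* = \sfq_* \circ \sfp^!$, and then to give a concrete geometric model via a compactification.

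First I would observe that both factors of $\on{CT}^\mu_*$ are right adjoints. Since $\sfq$ is smooth (Remark~\ref{r:4}), the functor $\sfq^*$ is defined everywhere and $\sfq^* \dashv \sfq_*$, so $\sfq_*$ is a right adjoint. The adjunction $\sfp_* \dashv \sfp^!$ holds for any morphism, so $\sfp^!$ is a right adjoint as well. Hence $\on{CT}^\mu_* = \sfq_* \circ \sfp^!$ preserves all limits. The DG categories $\Dmod(\Bun_G)$ and $\Dmod(\Bun^\mu_M)$ are presentable in the sense of Section~\ref{sss:conventions}, so the adjoint functor theorem yields a left adjoint $L : \Dmod(\Bun^\mu_M) \to \Dmod(\Bun_G)$. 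The chain of isomorphisms
\[
\Hom_{\Bun_G}(L(\F), \G) \;\simeq\; \Hom_{\Bun^\mu_M}(\F, \sfq_*\sfp^!\G) \;\simeq\; \Hom_{\Bun^\mu_P}(\sfq^*\F, \sfp^!\G)
\]
then shows that $L(\F)$ co-represents the same functor that a putative $\sfp_!(\sfq^*\F)$ would co-represent. By the universal property of the partial left adjoint, $\sfp_!$ is therefore defined on $\sfq^*\F$, with value $L(\F)$.

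For a more concrete (and non-circular) argument I would also exhibit an explicit model. Factor $\sfp = \bar\sfp \circ \jmath$, where $\jmath: \Bun^\mu_P \hookrightarrow \BunBb{}^\mu$ is the open embedding into Drinfeld's compactification and $\bar\sfp: \BunBb{}^\mu \to \Bun_G$ is the resulting extension, which is schematic and proper. By Remark~\ref{r:2}, $\bar\sfp_! = \bar\sfp_*$ is defined and continuous on all of $\Dmod(\BunBb{}^\mu)$. Since $\jmath$ is an open embedding, the extension-by-zero functor $\jmath_!$, left adjoint to $\jmath^! = \jmath^*$, is defined on all of $\Dmod(\Bun^\mu_P)$. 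Composing, $\sfp_! \sfq^* \F := \bar\sfp_*\,\jmath_!\,\sfq^*\F$ is a well-defined object of $\Dmod(\Bun_G)$; one then verifies that this object satisfies the same universal property as $L(\F)$, identifying the two constructions.

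The main obstacle is technical rather than conceptual: in the first approach it is the verification of accessibility of $\on{CT}^\mu_*$ in the non-holonomic DG setting, where $\Dmod(\Bun_G)$ is presentable but need not be compactly generated in the most naive sense; in the second approach it is the confirmation that $\jmath_!$ is genuinely defined as a continuous functor on all of $\Dmod(\Bun^\mu_P)$ in the paper's framework, and that $\bar\sfp$ really is proper and schematic for the chosen component $\mu$. Both points are standard but should be checked carefully against the definitions recalled in Sections~\ref{sss:conventions}--\ref{sss:safety}.
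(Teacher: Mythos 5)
Your first argument rests on the claim that ``the adjunction $\sfp_* \dashv \sfp^!$ holds for any morphism,'' which is false: the correct adjoint pairs are $(\sfp^*, \sfp_*)$ and $(\sfp_!, \sfp^!)$, and $\sfp_* \dashv \sfp^!$ is available only when $\sfp$ is proper, in which case $\sfp_! = \sfp_*$. Remark~\ref{r:4} states explicitly that $\sfp$ is \emph{not} proper. Without this false premise you no longer know that $\sfp^!$ preserves limits, so you cannot invoke the adjoint functor theorem for $\on{CT}^\mu_* = \sfq_* \circ \sfp^!$. Indeed, if $\sfp^!$ did preserve limits, the adjoint functor theorem would immediately give $\sfp_!$ on \emph{all} of $\Dmod(\Bun^\mu_P)$, making the restriction to the essential image of $\sfq^*$ superfluous; the fact that the paper asserts existence only on this subcategory is already a sign that $\sfp^!$ is not a right adjoint.

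Your second approach correctly identifies the relevant factorization --- the paper writes $\sfp = \wt\sfp\circ r$ with $\wt\sfp$ proper and $r$ an open embedding --- but the step ``since $\jmath$ is an open embedding, $\jmath_!$ is defined on all of $\Dmod(\Bun^\mu_P)$'' is not a technicality to verify; it is the entire mathematical content of the proposition, and it is unjustified. For non-holonomic D-modules, $j_!$ along an open embedding is only \emph{partially} defined (the paper itself flags, in the footnote to \propref{p:cuspidality}, that it is a ``remarkable property'' when a particular open embedding has $\jmath_!$ defined everywhere). What the paper actually proves is that $r_!$ is defined on $\sfq^*(\CF)$ for each $\CF \in \Dmod(\Bun^\mu_M)$, and this is where the real input enters: by \cite[Theorem 5.1.5]{BG}, $r_!(k_{\Bun^\mu_P})$ is ULA with respect to $\wt\sfq$, and the projection formula rewrites $r_!\circ\sfq^*(\CF)$ as $r_!(k_{\Bun^\mu_P}) \overset{*}\otimes \wt\sfq^*(\CF)$, whose existence is precisely part of the ULA condition. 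Your proposal never invokes the ULA theorem, which is the essential ingredient that both of your arguments are missing.
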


\begin{cor}
The functor
$\Eis^\mu_!:=\sfp_!\circ \sfq^*:\Dmod(\Bun^\mu_M)\to \Dmod(\Bun_G)$, left adjoint to $\on{CT}^\mu_*$, is well-defined.
\end{cor}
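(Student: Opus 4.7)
The plan is to factor $\sfp$ through a proper schematic morphism via the Drinfeld compactification $\BunPb^\mu$ of $\Bun_P^\mu$, and to exhibit $\sfp_!(\sfq^*\CE)$ as an explicit proper pushforward.

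Let $j: \Bun_P^\mu \hookrightarrow \BunPb^\mu$ be the natural open embedding, $\sfpb: \BunPb^\mu \to \Bun_G$ the proper schematic extension of $\sfp$, and $\sfqb: \BunPb^\mu \to \Bun_M^\mu$ the extension of $\sfq$. Write $d:=\dim\sfq$; since $\sfq$ is smooth, $\sfq^! \simeq \sfq^*[2d]$. By Remark~\ref{r:2}, $\sfpb_* = \sfpb_!$ is defined on all of $\Dmod(\BunPb^\mu)$. For $\CE \in \Dmod(\Bun_M^\mu)$ I would propose the candidate
\[
\Eis_!^\mu(\CE) \;:=\; \sfpb_*\bigl(\sfqb^!\CE\bigr)[-2d],
\]
which is well-defined because $\sfqb^!$ is always defined and $\sfpb_*$ is everywhere defined; moreover its $j$-restriction is $\sfq^!\CE[-2d] \simeq \sfq^*\CE$.

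The goal is to verify that $\Eis_!^\mu(\CE)$ represents the partially defined functor $\sfp_!$ at $\sfq^*\CE$, i.e., that there is a natural isomorphism
\[
\Hom_{\Bun_G}\bigl(\Eis_!^\mu(\CE),\CF\bigr) \;\simeq\; \Hom_{\Bun_P^\mu}\bigl(\sfq^*\CE,\sfp^!\CF\bigr)
\]
for every $\CF \in \Dmod(\Bun_G)$. Using the $(\sfpb_*,\sfpb^!)$-adjunction on the left, the $(\sfq^*,\sfq_*)$-adjunction on the right, and the shift $\sfq^!\simeq\sfq^*[2d]$, the verification reduces to analyzing the open-closed fiber sequence
\[
i_*i^!\sfpb^!\CF \;\to\; \sfpb^!\CF \;\to\; j_*\sfp^!\CF
\]
on $\BunPb^\mu$ (with $i: \partial\BunPb^\mu \hookrightarrow \BunPb^\mu$ the closed embedding of the boundary), and to showing that the boundary term $i_*i^!\sfpb^!\CF$ is killed when paired against $\sfqb^!\CE$.

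The main obstacle will be this boundary-cleanness statement. Its proof relies on the stratified geometry of the Drinfeld compactification: each boundary stratum carries a contracting $\mathbb{G}_m$-action induced by a central cocharacter $\gamma: \mathbb{G}_m \to Z(M)$ dominant regular with respect to $P$; along each stratum $\sfqb$ is $\mathbb{G}_m$-invariant while $\sfpb$ is equivariant for the trivial action on $\Bun_G$. A Braden-type contraction argument, analogous in spirit to the methods developed later in this paper, then forces the boundary contribution to vanish, yielding the adjunction and hence the existence of $\sfp_!(\sfq^*\CE)$ on the essential image of $\sfq^*$.
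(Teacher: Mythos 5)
Your strategy — factor $\sfp$ through a proper compactification and reduce to a boundary-cleanness statement — has the right shape, but the paper proceeds differently and more economically. The paper's candidate is $\Eis_!^\mu(\CE) := \wt\sfp_*\circ r_!\circ\sfq^*(\CE)$, i.e.\ the \emph{zero}-extension across the boundary of the compactification $\wt{\Bun}^\mu_P$ from \cite[Sect.~1.2]{BG}, followed by the proper pushforward $\wt\sfp_*$. The only thing to verify is that $r_!\circ\sfq^*(\CE)$ is defined, and this follows at once from the projection-formula identity $r_!\sfq^*\CE \simeq r_!(k_{\Bun^\mu_P})\overset{*}\otimes\wt\sfq^*\CE$ together with \cite[Theorem 5.1.5]{BG}, which says $r_!(k_{\Bun^\mu_P})$ is ULA relative to $\wt\sfq$. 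Your candidate $\sfpb_*(\sfqb^!\CE)[-2d]$ instead fills in the boundary by $\sfqb^!\CE[-2d]$, and your cleanness claim is exactly what would be needed to see that this agrees with the zero-extension after pushing forward.

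The cleanness step is where the gap lies. A minor caveat first: $\BunPb$ usually denotes the Drinfeld compactification, whose map to $\Bun_G$ is proper only when $[G,G]$ is simply connected; the paper uses $\wt{\Bun}^\mu_P$, for which $\wt\sfp$ is proper for any $G$. More seriously, the $\BG_m$-contraction argument you invoke for the cleanness is not substantiated and looks problematic: the $\BG_m$-action on $\Bun_P$ induced by a cocharacter $\gamma$ into $Z(M)$ is inner on $P$ and hence canonically trivializable on $\Bun_P$ — the paper states exactly this in a footnote to \secref{sss:contraction on moduli simple} — so it is far from clear that the compactification carries the genuinely contracting $\BG_m$-action on its boundary strata that your sketch requires. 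The vanishing you need, that $i^*\sfqb^!\CE$ and $i^!\sfpb^!\CF$ pair trivially, is in substance a restatement of the ULA property of $r_!(k_{\Bun^\mu_P})$, whose proof in \cite{BG} goes through factorization and local-to-global arguments rather than $\BG_m$-contraction. As written, this crucial step is unproved, and you would be better served to quote the ULA theorem directly, as the paper does.
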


The proof of Proposition~\ref{existence of Eis !}, given below, is based on some results of \cite{BG}.
Let us recall them. 

\sssec{}  \label{sss:BunPtilde}

First,
the diagram \eqref{e:basic diagram again} was extended  in \cite[Sect. 1.2]{BG}  to a diagram
\begin{gather}  \label{comp diag}
\xy
(20,0)*+{\Bun_G}="X";
(-20,0)*+{\Bun^\mu_M}="Y";
(0,20)*+{\wt{\Bun}^\mu_P}="Z";
(-20,20)*+{\Bun^\mu_P}="W";
{\ar@{->}^{\wt{\sfp}} "Z";"X"};
{\ar@{->}_{\wt\sfq} "Z";"Y"};
{\ar@{^{(}->}^r "W";"Z"};  
\endxy
\end{gather}
so that $\wt\sfp$ is proper, $r$ is an open embedding, and $$\sfp=\wt\sfp\circ r, \quad\quad\quad \sfq=\wt\sfq\circ r.$$
Moreover, the following basic fact was established (see \cite[Theorem 5.1.5]{BG}):

\medskip

\noindent{\it The object $r_!(k_{\Bun^\mu_P})\in \Dmod(\wt\Bun^\mu_P)$ is universally locally acyclic (ULA)
with respect to the map $\wt\sfq$.} Here for a stack $\CY$, we denote by $k_{\CY}\in \Dmod(\CY)$ the ``constant sheaf" D-module on 
$\CY$ (i.e., the Verdier dual of $\omega_\CY$). 

\sssec{}

Let us recall the definition of the ULA property. First, recall that on $\Dmod(\CY)$ there
 are two tensor products, namely
$$\CF_1\overset{!}\otimes \CF_2:=\Delta_\CY^!(\CF_1\boxtimes \CF_2) \quad \mbox{ and }\quad
\CF_1\overset{*}\otimes \CF_2:=\Delta_\CY^*(\CF_1\boxtimes \CF_2), \quad\quad \CF_1,\CF_2\in \Dmod(\CY),$$
where $\Delta_\CY :\CY\to \CY\times \CY$ is the diagonal morphism. (Note that $\Delta^*$ and $\overset{*}\otimes$
are only partially defined.) 

\medskip

Suppose now that we have a morphism $\phi:\CY\to \CZ$ with $\CZ$ smooth. 
According to \cite[Sect. 5.1.1]{BG}, $\CF\in \Dmod(\CY)$ is said to be \emph{ULA} 
with respect to $\phi$ if for every $\CF'\in \Dmod(\CZ)$ the following holds: $\CF\overset{*}\otimes \phi^*(\CF')$ is a well-defined object of $\Dmod(\CY)$ 
and a certain canonical morphism $$\CF\overset{*}\otimes \phi^*(\CF')\to 
\CF\overset{!}\otimes \phi^!(\CF')[2\dim(\CZ)],$$ defined in \cite[Sect. 5.1.1]{BG}, is an isomorphism.

\sssec{Proof of Proposition~\ref{existence of Eis !}}
Since $\sfp=\wt\sfp\circ r$ and the functor $\wt\sfp_!=\wt\sfp_*$ is well-defined, it suffices to show that $r_!\circ\sfq^*(\CF)$ is 
well-defined for every $\CF\in \Dmod(\Bun^\mu_M)$. 

\medskip

This follows from the ULA property of $r_!(k_{\Bun^\mu_P})$
because $r_!\circ \sfq^*(\CF)=r_!(k_{\Bun^\mu_P})\overset{*}\otimes\wt\sfq^*(\CF)$.

\qed

\ssec{The functor $\on{CT}^\mu_!$}  \label{CT_!}

Our next task is to analyze the existence of the left adjoint of the functor $\Eis_*\,$. 

\sssec{}

Let $P^-$ be a parabolic opposite to $P$. We will identify the Levi factors of $P$ and $P^-$ via 
the embedding of $M\simeq P\cap P^-$ into $P$ and $P^-$, respectively. 

\medskip

In particular, we have the diagram

\begin{equation} \label{e:- diag}
\xy
(15,0)*+{\Bun_G}="X";
(-15,0)*+{\Bun^\mu_M.}="Y";
(0,15)*+{\Bun^\mu_{P^-}}="Z";
{\ar@{->}^{\sfp^-} "Z";"X"};
{\ar@{->}_{\sfq^-} "Z";"Y"};
\endxy
\end{equation}

\medskip

Let $\Eis^{\mu,-}_*$, $\Eis^{\mu,-}_!$ and  $\on{CT}^{\mu,-}_*$ be the counterparts of $\Eis^\mu_*$, $\Eis^\mu_!$ 
and  $\on{CT}^\mu_*$, obtained via the diagram \eqref{e:- diag}. 

\sssec{}

We are now ready to formulate our main result:

\begin{thm}  \label{t:main}
The functor 
$$\on{CT}_!^\mu:\Dmod(\Bun_G)\to \Dmod(\Bun^\mu_M),$$ left adjoint to $\Eis_*^\mu$, 
exists and is canonically isomorphic to $\on{CT}^{\mu,-}_*$.
\end{thm}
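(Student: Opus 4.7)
The plan is to prove the theorem by reformulating it as the existence of an adjunction $(\on{CT}^{\mu,-}_*, \Eis^\mu_*)$ and then constructing the unit and counit of this adjunction explicitly, following the paradigm of Braden's hyperbolic restriction theorem as revisited in \cite{DrGa3}. Once such an adjunction is produced, the left adjoint of $\Eis^\mu_*$ is forced to exist and to coincide with $\on{CT}^{\mu,-}_*$, which is the statement of the theorem.

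For the counit $\on{CT}^{\mu,-}_* \circ \Eis^\mu_* \to \id_{\Dmod(\Bun^\mu_M)}$, I would analyze the fiber product $\Bun^\mu_P \underset{\Bun_G}\times \Bun^\mu_{P^-}$, which by base change computes the composite via pull-push to $\Bun^\mu_M$ on either side. The big Bruhat cell $P \cdot P^- \subset G$ carves out $\Bun^\mu_M$ diagonally as a locally closed substack of this fiber product (the locus where the two parabolic reductions are in generic relative position), and the corresponding embedding together with the identification of pull-push from this substack with the identity functor on $\Dmod(\Bun^\mu_M)$ produces the counit.

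For the unit $\id_{\Dmod(\Bun_G)} \to \Eis^\mu_* \circ \on{CT}^{\mu,-}_*$, I would exploit the interpolation family. Choose a cocharacter $\gamma: \BG_m \to Z(M)$ that is dominant and regular with respect to $P$, and construct an $\BA^1$-family of group subschemes $\wt{G} \subset \BA^1 \times G \times G$ whose fiber at $t \neq 0$ is the graph of $\Ad_{\gamma(t)}$ (i.e., a copy of the diagonal $G$) and whose fiber at $t = 0$ equals $P \underset{M}\times P^- \subset G \times G$. At the level of moduli, $\wt{G}$ yields an $\BA^1$-family of stacks equipped with two maps to $\Bun_G$, whose associated pull-push functor computes $\id$ at $t = 1$ and $\Eis^\mu_* \circ \on{CT}^{\mu,-}_*$ at $t = 0$. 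The $\BG_m$-equivariance of this family under scaling of $\BA^1$ then produces a canonical morphism from the $t=1$ fiber to the $t=0$ fiber, and this morphism is the unit. With the unit and counit in hand, the triangle identities can be verified by localizing on the big Bruhat cell, essentially as at the end of the argument in \cite{DrGa3}.

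The main obstacle is making this interpolation argument rigorous on $\Bun_G$, which is neither quasi-compact nor smooth enough to apply Braden's statement directly, and in particular producing a well-defined specialization morphism between the two compositions of functors. Key technical inputs are smoothness of $\wt{G}$ over $\BA^1$ (so that quotients in families are controlled), quasi-affineness of $(\BA^1 \times G \times G)/\wt{G}$ (so that the corresponding family of stacks behaves cohomologically well), and the ULA property of $r_!(k_{\Bun^\mu_P})$ from \cite{BG} recalled in the proof of \propref{existence of Eis !} (ensuring that the various compositions of $\ast$- and $!$-functors are defined and commute with base change). A cleaner alternative is to deduce the theorem from Braden's theorem for schemes, by replacing the relevant moduli stacks with their variants carrying a sufficiently deep level structure at a point of $X$: these are schemes, and the cocharacter $\gamma$ induces a $\BG_m$-action whose attracting and repelling loci correspond to $P$- and $P^-$-reductions, so that the classical hyperbolic restriction isomorphism gives $\on{CT}^\mu_! \simeq \on{CT}^{\mu,-}_*$ directly.
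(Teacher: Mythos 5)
Your proposal captures both of the paper's arguments: the first proof (Sects.~\ref{s:proof of adj}--\ref{s:Verifying}) constructs the $(\on{CT}^{\mu,-}_*,\Eis^\mu_*)$-adjunction with counit given by the open Bruhat embedding $\sfj$ and unit given by the specialization map for the kernel $\CQ$ of the $\wt{G}$-interpolating family of moduli stacks, then verifies the triangle identities; the second proof (Sect.~\ref{s:mainproof}) reduces to Braden's theorem for schemes by passing to a sufficiently deep level structure at a point of $X$. One small correction: the ULA property of $r_!(k_{\Bun^\mu_P})$ from \cite{BG} is used only to establish that $\Eis^\mu_!$ exists (\propref{existence of Eis !}), not in either proof of the adjunction, which relies instead on the $\blacktriangle$-pushforward formalism of \cite{DrGa1} (continuity, base change, projection formula) and on quasi-affineness of $(\BA^1\times G\times G)/\wt{G}$.
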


Thus we have a sequence of three functors
\[
\quad \Eis_!^{\mu,-}\, , \quad\on{CT}^{\mu,-}_*=  \on{CT}_!^\mu\,,\quad \Eis_*^\mu 
\] 
in which each neighboring pair forms an adjoint pair of functors.

\sssec{}

\thmref{t:main} can be tautologically restated as follows:

\begin{thm}  \label{t:main adj}
The functors $(\on{CT}^{\mu,-}_*,\Eis_*^\mu)$
form an adjoint pair.
\end{thm}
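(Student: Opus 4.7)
The plan is to establish the adjunction by constructing the counit and unit natural transformations explicitly, following the Braden-theorem paradigm of \cite{DrGa3}, and then verifying the triangle identities.

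\textbf{Counit.} First I would produce a morphism $\varepsilon: \on{CT}^{\mu,-}_* \circ \Eis^\mu_* \to \Id_{\Dmod(\Bun^\mu_M)}$. By base change along the Cartesian square
\[
\begin{CD}
\Bun^\mu_P \underset{\Bun_G}{\times} \Bun^\mu_{P^-} @>{\pi^-}>> \Bun^\mu_{P^-} \\
@V{\pi}VV @VV{\sfp^-}V \\
\Bun^\mu_P @>{\sfp}>> \Bun_G
\end{CD}
\]
the composition $\on{CT}^{\mu,-}_* \circ \Eis^\mu_*$ is naturally pull-push from $\Bun^\mu_M$ to $\Bun^\mu_M$ along this fiber product, via $\sfq \circ \pi$ and $\sfq^- \circ \pi^-$. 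The big Bruhat cell yields an open embedding $\Bun^\mu_M \hookrightarrow \Bun^\mu_P \times_{\Bun_G} \Bun^\mu_{P^-}$ that is a common section of both projections to $\Bun^\mu_M$; restriction to this open substack produces the counit.

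\textbf{Unit.} Next I would construct $\eta: \Id_{\Dmod(\Bun_G)} \to \Eis^\mu_* \circ \on{CT}^{\mu,-}_*$ via an $\BA^1$-degeneration. Choose a cocharacter $\gamma: \BG_m \to Z(M)$ that is dominant and regular with respect to $P$, and form the smooth $\BA^1$-group-scheme $\widetilde{G} \subset \BA^1 \times G \times G$ whose fiber over $t \in \BG_m$ is the graph of $\Ad_{\gamma(t)}$ and whose fiber over $0$ is $P \underset{M}{\times} P^-$. The associated moduli stack $\Bun_{\widetilde{G}} \to \BA^1$ interpolates between $\Bun_G$ at $t=1$ (via the diagonal embedding $G \hookrightarrow G \times G$) and a stack that naturally maps to $\Bun^\mu_P \underset{\Bun_G}{\times} \Bun^\mu_{P^-}$ at $t=0$. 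Applying a specialization construction to an appropriate ULA sheaf along this family and transporting via pull-push then yields the desired natural transformation from $\Id_{\Dmod(\Bun_G)}$ to the functor $\Eis^\mu_* \circ \on{CT}^{\mu,-}_*$.

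\textbf{Main obstacle.} The central difficulty is verifying the two triangle identities, i.e.\ checking that the compositions
\[
\Eis^\mu_* \xrightarrow{\eta \circ \Eis^\mu_*} \Eis^\mu_* \circ \on{CT}^{\mu,-}_* \circ \Eis^\mu_* \xrightarrow{\Eis^\mu_* \circ \varepsilon} \Eis^\mu_*
\]
and the analogous one for $\on{CT}^{\mu,-}_*$ are both the identity. This requires a careful analysis of how the big Bruhat cell section interacts with the degeneration $\Bun_{\widetilde{G}}$. Conceptually, the cocharacter $\gamma$ induces a $\BG_m$-action on $\Bun^\mu_P \times_{\Bun_G} \Bun^\mu_{P^-}$ whose fixed locus is $\Bun^\mu_M$ and whose contracting and repelling loci are $\Bun^\mu_P$ and $\Bun^\mu_{P^-}$, and the triangle identities amount to a stack-theoretic Braden theorem for this action.

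\textbf{Fallback.} If the direct verification proves too delicate at the level of stacks, I would fall back on the second strategy outlined in \secref{ss:Method of proof}: reduce the statement to the scheme-theoretic Braden theorem by replacing $\Bun_G$, $\Bun_P$, $\Bun_{P^-}$, $\Bun_M$ by their versions with sufficiently deep level structure at a point of $X$, thereby converting the relevant substacks into honest schemes acted on by $\gamma$, and apply \cite{Br} (in the form of \cite{DrGa3}) to conclude.
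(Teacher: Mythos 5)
Your outline matches the paper's approach: the counit from the big Bruhat cell, the unit from a specialization map along the $\BA^1$-family $\Bun_{\wt{G}}$, with the triangle identities then verified by a direct kernel computation (this is the proof in Sects.~\ref{s:proof of adj}--\ref{s:Verifying}), and your fallback via level structures and the scheme-theoretic Braden theorem is precisely the second proof in Sect.~\ref{s:mainproof}. One small correction: the specialization map $\iota_1^!(\CQ)\to\iota_0^!(\CQ)$ that produces the unit is defined because the kernel $\CQ=\wt\sfp_\blacktriangle(\omega_{\Bun^\mu_{\wt{G}}})$ is $\BG_m$-monodromic (via the $\BG_m\times\BG_m$-equivariance of $\wt\sfp$), not because of a ULA condition --- ULA is used only earlier, for the existence of $\Eis^\mu_!$.
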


In \secref{s:proof of adj} we will prove \thmref{t:main adj} by essentially repeating the argument
from the paper \cite{DrGa3} that gives a new proof of a theorem of T.~Braden (see \cite{Br}) on 
hyperbolic restrictions.

\medskip

In \secref{s:mainproof} we will give a proof of \thmref{t:main adj} by directly deducing it from the
above theorem of Braden.

\ssec{Description of the co-unit of the adjunction}

\sssec{}   \label{sss:co-unit}

Let us specify the co-unit of the adjunction for the functors $(\on{CT}^{\mu,-}_*,\Eis_*^\mu)$
in \thmref{t:main adj}, i.e., the natural transformation 
\begin{equation} \label{e:expl co-unit}
\on{CT}^{\mu,-}_*\circ \Eis_*^\mu\to \on{Id}_{\Dmod(\Bun^\mu_M)}.
\end{equation}

\medskip

Consider the diagram
$$
\xy
(20,0)*+{\Bun_G}="X";
(-20,0)*+{\Bun^\mu_M,}="Y";
(0,20)*+{\Bun^\mu_P}="Z";
(40,20)*+{\Bun^\mu_{P^-}}="W";
(60,0)*+{\Bun^\mu_M}="U";
(20,40)*+{\Bun^\mu_{P}\underset{\Bun_G}\times \Bun^\mu_{P^-}}="V";
(20,80)*+{\Bun_M^\mu}="T";
{\ar@{->}_{\sfp} "Z";"X"};
{\ar@{->}^{\sfq} "Z";"Y"};
{\ar@{->}^{\sfp^-} "W";"X"};
{\ar@{->}_{\sfq^-} "W";"U"}; 
{\ar@{->}^{'\sfp^-} "V";"Z"};
{\ar@{->}_{'\sfp} "V";"W"};
{\ar@{->}_{\sfj} "T";"V"};
{\ar@{->}_{\on{id}} "T";"Y"};
{\ar@{->}^{\on{id}} "T";"U"};
\endxy
$$
where $\sfj$ is the (open) embedding of the big Bruhat cell, i.e., the locus where the reductions of a given $G$-bundle
to $P$ and $P^-$ are mutually transversal.\footnote{The morphism $\sfj$ is of the type  considered in \lemref{l:2fiber product}(ii) below.} 

\medskip

By base change,
$$\on{CT}^{\mu,-}_*\circ \Eis_*^\mu=(\sfq^-)_*\circ (\sfp^-)^!\circ \sfp_*\circ \sfq^!
\simeq (\sfq^-)_*\circ ({}'\sfp)_*
\circ ({}'\sfp^-)^!\circ \sfq^!.$$

Now, the open embedding $\sfj$ gives rise to a natural transformation 
$$\on{Id}_{\Dmod(\Bun^\mu_P\underset{\Bun_G}\times \Bun^\mu_{P^-})}\to \sfj_*\circ \sfj^*\simeq \sfj_*\circ \sfj^!,$$ and hence to 
\begin{multline*}
\on{CT}^{\mu,-}_*\circ \Eis_*^\mu\simeq (\sfq^-)_*\circ ({}'\sfp)_*
\circ ({}'\sfp^-)^!\circ \sfq^!\to \\
\to (\sfq^-)_*\circ ({}'\sfp)_*\circ \sfj_*\circ \sfj^!
\circ ({}'\sfp^-)^!\circ \sfq^!\simeq \\
\simeq (\sfq^-\circ {}'\sfp\circ \sfj)_*\circ
(\sfq\circ {}'\sfp^-\circ \sfj)^!\simeq \on{id}_*\circ \on{id}^!=\on{Id}_{\Dmod(\Bun^\mu_M)}.
\end{multline*}

This is the sought-for co-unit of the adjuction. 

\sssec{}

Let us observe that \thmref{t:main} can be reformulated also as saying that the functors 
$(\Eis_!^{\mu,-},\on{CT}_!^{\mu})$ form an adjoint pair. 
The above description of the co-unit for the $(\on{CT}^{\mu,-}_*,\Eis_*^\mu)$-adjunction gives rise
to a description of the unit of the $(\Eis_!^{\mu,-},\on{CT}_!^{\mu})$-adjunction:

\medskip

By definition, the functors $\Eis_!^{\mu,-}$ and $\on{CT}_!^{\mu}$ are the left adjoints of the functors
$\on{CT}_*^{\mu,-}$ and $\Eis_*^{\mu}$, respectively. Now, the unit map
$$\on{Id}_{\Dmod(\Bun_M^\mu)}\to \on{CT}_!^\mu\circ \Eis_!^{\mu,-}$$
is obtained by passing to left adjoints in the map \eqref{e:expl co-unit}.

\ssec{Cuspidality}

\sssec{}

We will say that an object $\CF\in \Dmod(\Bun_G)$ is \emph{*-cuspidal} 
if $\on{CT}^\mu_*(\CF)=0$ for all proper parabolics of $G$.

\medskip

We will say that an object $\CF\in \Dmod(\Bun_G)$ is \emph{!-cuspidal}
$\on{CT}^\mu_!(\CF)=0$ for all proper parabolics of $G$.

\sssec{}

From \thmref{t:main} we obtain:

\begin{cor}
The notions of !- and *-cuspidality coincide.
\end{cor}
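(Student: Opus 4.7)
The plan is to derive the corollary as a direct consequence of \thmref{t:main}, which is the substantive result already stated. By definition, $\CF\in \Dmod(\Bun_G)$ is $*$-cuspidal if $\on{CT}^\mu_*(\CF)=0$ for every proper parabolic $P\subset G$ and every $\mu\in \pi_1(M)$, and $\CF$ is $!$-cuspidal if $\on{CT}^\mu_!(\CF)=0$ under the same range of $P$ and $\mu$. The first step is to invoke \thmref{t:main} to rewrite the latter vanishing as $\on{CT}^{\mu,-}_*(\CF)=0$ for every proper parabolic $P$ and every $\mu$.

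Next I would use the elementary observation that $P\mapsto P^-$ is an involution on the set of conjugacy classes of parabolics of $G$; in particular it takes proper parabolics to proper parabolics and is a bijection on the collection of proper parabolics (after fixing a maximal torus, or more invariantly up to conjugation). Moreover, under this involution the Levi quotient $M$ is naturally identified (we have already fixed $M\simeq P\cap P^-$), so the indexing set $\pi_1(M)$ for $\mu$ is preserved. Therefore the condition
\[
\on{CT}^{\mu,-}_*(\CF)=0 \text{ for every proper } P\subset G \text{ and every } \mu\in \pi_1(M)
\]
is equivalent to
\[
\on{CT}^{\mu'}_*(\CF)=0 \text{ for every proper } P'\subset G \text{ and every } \mu'\in \pi_1(M'),
\]
via $P':=P^-$. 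Thus $!$-cuspidality is equivalent to $*$-cuspidality.

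There is no real obstacle: the content of the corollary is entirely packaged in \thmref{t:main}, and the only remaining step is the trivial observation about the involution $P\leftrightarrow P^-$ on proper parabolics. I would therefore present the proof as a one-line deduction from the main theorem, with a brief remark justifying why one may quantify equivalently over $P$ or over $P^-$.
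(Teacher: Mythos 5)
Your proof is correct and is exactly the argument the paper has in mind: the corollary is stated as an immediate consequence of Theorem~\ref{t:main}, and the only step not made explicit in the paper is precisely the observation you supply, that $P\mapsto P^-$ is a bijection on proper parabolics preserving the Levi $M$ (hence the index set for $\mu$).
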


Hence, from now on, we will rename the above property as just cuspidality. Let $\Dmod(\Bun_G)_{\on{cusp}}$
be the full subcategory of $\Dmod(\Bun_G)$ formed by cuspidal objects. By construction, $\Dmod(\Bun_G)_{\on{cusp}}$
is cocomplete (i.e., closed under colimits). 

\medskip

Let us denote by $\Dmod(\Bun_G)_{\Eis,!}$ (resp., $\Dmod(\Bun_G)_{\Eis,*}$) 
the full subcategory of $\Dmod(\Bun_G)$ generated\footnote{``generated by $A$" means ``the smallest cocomplete DG subcategory, containing $A$"}
by the essential images of the functors $\Eis^\mu_!$ (resp., $\Eis^\mu_*$) for all proper parabolics. 

\medskip

Viewing the notion of cuspidality from the *-perspective, we have:
$$\Dmod(\Bun_G)_{\on{cusp}}=\left(\Dmod(\Bun_G)_{\Eis,!}\right)^\perp.$$

\sssec{}

In addition, we have the inclusion 
\begin{equation} \label{e:cusp incl}
^\perp\!\left(\Dmod(\Bun_G)_{\Eis,*}\right)\subset \Dmod(\Bun_G)_{\on{cusp}}
\end{equation}
that comes from 
\[
^\perp\!\left(\Dmod(\Bun_G)_{\Eis,*}\right)\subset
\underset{P,\mu}\cap\, {}^\perp\!\left(\on{Im}(\Eis_*^\mu)\right)
=\underset{P,\mu}\cap\, \on{ker}(\on{CT}^\mu_!)=\Dmod(\Bun_G)_{\on{cusp}}\,.
\]

It is not clear whether the inclusion \eqref{e:cusp incl} is an equality; we conjecture that it is not
if $X$ has genus $\geq 2$.

\sssec{}

Let us note another important property of cuspidal objects:

\begin{prop}    \label{p:cuspidality}
There exists an open substack $\jmath:\CU\hookrightarrow \Bun_G$ such that

\begin{enumerate}

\item[(i)] The intersection of $\CU$ with each connected component of  $\Bun_G$ is quasi-compact;

\item[(ii)] For any $\CF\in \Dmod(\Bun_G)_{\on{cusp}}\,$, the canonical maps
$$\jmath_!\circ \jmath^*(\CF)\to \CF\to \jmath_*\circ \jmath^*(\CF)$$
are isomorphisms.\footnote{The statement about  $\jmath_!\circ \jmath^*(\CF)$ should be understood as follows: 
the partially defined functor $\jmath_!$ is defined on $\jmath^*(\CF)$ and the map 
$\jmath_!\circ \jmath^*(\CF)\to \CF$ is an isomorphism.}
\end{enumerate}
\end{prop}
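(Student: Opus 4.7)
The plan is to exhibit $\CU$ as (essentially) the semistable locus $\Bun_G^{\on{ss}} \subset \Bun_G$ and to show, via the Harder--Narasimhan (HN) stratification, that every cuspidal $\CF$ is supported there. Set $\jmath : \CU := \Bun_G^{\on{ss}} \hookrightarrow \Bun_G$ (possibly enlarged by finitely many HN strata to cover components with no semistable bundles). Property (i) is the classical boundedness of semistable $G$-bundles of fixed topological type.

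For (ii), writing $\imath : \Bun_G \setminus \CU \hookrightarrow \Bun_G$ for the closed complement, it suffices to show $\imath^!(\CF) = 0$ and $\imath^*(\CF) = 0$ for any cuspidal $\CF$. The complement decomposes into locally closed HN strata $\imath_{P,\mu} : S_{P,\mu} \hookrightarrow \Bun_G$, indexed by pairs $(P,\mu)$ with $P \subsetneq G$ a proper standard parabolic (Levi $M$) and $\mu \in \pi_1(M)$ strictly HN-anti-dominant. It is thus enough to verify the two vanishings on each stratum.

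The key geometric input is the canonical identification $\CV_{P,\mu} \iso S_{P,\mu}$ effected by $\sfp$, where $\CV_{P,\mu} := \sfq^{-1}(\Bun_M^{\mu,\on{ss}}) \subset \Bun_P^\mu$. (Uniqueness of the HN reduction forces $\Aut(\CE_P) \iso \Aut(\sfp(\CE_P))$ on this locus, so the restriction of $\sfp$ is an isomorphism of stacks.) Consequently $\imath_{P,\mu}^!\CF = \sfp^!(\CF)|_{\CV_{P,\mu}}$, and $\imath_{P,\mu}^*\CF = \sfp^*(\CF)|_{\CV_{P,\mu}}$ (the latter being meaningful on the open $\CV_{P,\mu}$ where $\sfp$ is an open embedding onto $S_{P,\mu}$, even though $\sfp^*$ is not globally defined on $\Dmod(\Bun_G)$). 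Cuspidality yields $\on{CT}^\mu_*(\CF) = \sfq_*\sfp^!(\CF) = 0$, and by \thmref{t:main} also $\on{CT}^\mu_!(\CF) = \sfq_!\sfp^*(\CF) = 0$ (well-defined via its identification with $\on{CT}^{\mu,-}_*(\CF)$). Smooth base change along the open embedding $\Bun_M^{\mu,\on{ss}} \hookrightarrow \Bun_M^\mu$ then gives
\[
(\sfq|_{\CV_{P,\mu}})_*(\imath_{P,\mu}^!\CF) = 0 \qquad \text{and} \qquad (\sfq|_{\CV_{P,\mu}})_!(\imath_{P,\mu}^*\CF) = 0.
\]

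The main obstacle is to extract the pointwise vanishings $\imath_{P,\mu}^!\CF = 0$ and $\imath_{P,\mu}^*\CF = 0$ from these pushforward vanishings, i.e., to establish conservativity of $(\sfq|_{\CV_{P,\mu}})_*$ and $(\sfq|_{\CV_{P,\mu}})_!$ on the relevant subcategories. The fibration $\sfq|_{\CV_{P,\mu}} : \CV_{P,\mu} \to \Bun_M^{\mu,\on{ss}}$ has fibers that are twisted forms of the unipotent-bundle stack $\Bun_{U(P)}$; one obtains the required conservativity by a contraction argument along the $\BG_m$-action induced by the cocharacter $\gamma : \BG_m \to Z(M)$ employed throughout the paper (see \secref{sss:Vin}), which reduces the problem to the fixed locus and enables a direct analysis there.
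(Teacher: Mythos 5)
Your high-level strategy — work through the Harder--Narasimhan stratification and show the $*$- and $!$-restrictions of a cuspidal object vanish on each stratum outside a suitable open — is the same as the paper's. But two specific steps are wrong, and they are not independently fixable; they are connected.

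\textbf{The open $\CU$ is too small.} The paper does \emph{not} take $\CU$ to be the semistable locus. It takes
\[
\CU = \bigcup_{\lambda\in\Sigma}\Bun_G^{(\lambda)},
\qquad
\Sigma = \{\,\lambda\in\Lambda_G^{+,\BQ}\ :\ \text{image of }\lambda\text{ in }\Lambda_{G_{\on{adj}}}^{\BQ}\text{ is }\le (2g-2)\rho_G\,\}.
\]
For $g=1$ this agrees with the semistable locus, but for $g\geq 2$ it is strictly larger. The bound $(2g-2)\rho_G$ is not cosmetic — it is precisely what makes the next step possible. With $\CU=\Bun_G^{\on{ss}}$, your argument fails on the shallow unstable strata.

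\textbf{The conservativity claim is false as stated, and the contraction argument does not rescue it.} For a stratum $S_{P,\mu}$, the map $\sfq:\CV_{P,\mu}\to\Bun_M^{\mu,\on{ss}}$ has fibers given by the stack of $N(P)_{\CP}$-torsors on $X$, i.e.\ $H^1(X,\fn_\CP)/H^0(X,\fn_\CP)$ in stacky form. The $\BG_m$-contraction of \propref{p:simple Braden} gives $\sfq_!\simeq\iota^!$ (!-restriction to the zero section) and $\sfq_*\simeq\iota^*$, which are certainly not conservative when $H^1(X,\fn_\CP)\ne 0$ — they just forget everything away from the zero section. So from $(\sfq|_{\CV})_*(\imath^!\CF)=0$ you cannot conclude $\imath^!\CF=0$, and similarly for the $!$-version. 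The paper's fix is to choose, for each $\lambda\not\in\Sigma$, a parabolic $P$ \emph{depending on $\lambda$} (with Levi $M$ generated by the simple roots $\check\alpha_i$ satisfying $\langle\lambda,\check\alpha_i\rangle\le 2g-2$; note $P$ is in general \emph{larger} than the HN parabolic) and a locally closed $V\subset\Bun_M$ on which $H^1(X,\fn_\CP(-n\cdot x))=0$ (this uses $\langle\lambda,\check\alpha_i\rangle>2g-2$ for $\check\alpha_i\notin M$, \cite[Prop.~10.1.3]{DrGa2}, and characteristic $0$). Under that vanishing, each fiber of $\sfq|_{\sfq^{-1}(V)}$ is the classifying stack of a unipotent group, so $\sfq_*$ and $\sfq_!$ are \emph{equivalences} (up to a shift), not merely conservative, and the vanishing of $\imath^!\CF$ and $\imath^*\CF$ follows. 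Your conservativity claim cannot be established by contraction alone, and it is indeed false on the strata between $\Bun_G^{\on{ss}}$ and the paper's $\CU$.

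Finally, for the $*$-restriction, the paper avoids ever invoking $\sfp^*$ directly: it uses the adjunction $(\on{CT}^\mu_!,\Eis^\mu_*)$ of \thmref{t:main} to get $\Hom(\CF,\Eis^\mu_*(\CE))=0$ for all $\CE$, and then observes that every $\imath_{\lambda,*}(\CF')$ for $\CF'\in\Dmod(\Bun_G^{(\lambda)})$ is of the form $\Eis^\mu_*(\CE)$ via the isomorphism $\sfq^{-1}(V)\iso\Bun_G^{(\lambda)}$ (for $g>0$), which directly gives $\imath_\lambda^*\CF=0$. This is cleaner and avoids the delicate discussion of $\sfp^*$ on an open.
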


The proof is given in Appendix~\ref{s:support}. It is parallel to the proof of a similar statement in the classical 
theory of automorphic forms. In fact, in Appendix~\ref{s:support} we describe an explicit open substack $\CU$ 
with the properties required in \propref{p:cuspidality}.

\section{Proof of \thmref{t:main adj}: constructing the unit of the adjunction}  \label{s:proof of adj}

We will prove \thmref{t:main adj} (and thereby \thmref{t:main}) by mimicking the new proof of Braden's theorem given in \cite{DrGa3}. 

\medskip

The idea of the proof is to define a natural transformation
\begin{equation} \label{e:unit}
\on{Id}_{\Dmod(\Bun_G)}\to  \Eis_*^\mu\circ \on{CT}^{\mu,-}_*
\end{equation}
and to show that \eqref{e:expl co-unit} and \eqref{e:unit} define
an adjunction datum. 

\medskip

In this section we will construct the natural transformation \eqref{e:unit}. 
The fact that \eqref{e:expl co-unit} and \eqref{e:unit} indeed define an adjunction datum will be proved in 
\secref{s:Verifying}.

\ssec{Digression: functors given by kernels}

As was mentioned in \secref{ss:Conventions}, all algebraic stacks in this paper are assumed locally QCA.

\sssec{} \label{sss:blacktriangle}

Let $f:\CY_1\to \CY_2$ be a \emph{quasi-compact} map. We note that in addition to the usual de Rham direct image functor
$$f_*:\Dmod(\CY_1)\to \Dmod(\CY_2)$$
(defined as in \cite[Sect. 7.4]{DrGa1}),
there exists another canonically defined functor
$$f_\blacktriangle:\Dmod(\CY_1)\to \Dmod(\CY_2).$$

\medskip

The main feature of the functor $f_\blacktriangle$ is that, unlike $f_*$, it is \emph{continuous}. In addition,
it has the following properties (all of which fail for $f_*$):

\smallskip

\noindent(i) The formation $f\rightsquigarrow f_\blacktriangle$ is compatible with compositions of morphisms;

\smallskip

\noindent(ii) It satisfies base change (with respect to !-pullbacks);

\smallskip

\noindent(iii) It satisfies the projection formula. 

\medskip

We have a natural transformation
\begin{equation} \label{e:triangle to *}
f_\blacktriangle\to f_*\, .
\end{equation}

If the morphism $f$ is representable, or, more generally, \emph{safe}, then \eqref{e:triangle to *}
is an isomorphism. 

\medskip 

All of the above facts are established in \cite[Sect. 9.3]{DrGa1}.

\begin{rem}
Technically, in \cite[Sect. 9.3]{DrGa1}, the functor $f_\blacktriangle$ was defined when $\CY_2$
(and, hence, $\CY_1$) is quasi-compact. However, the base-change property ensures that
the definition canonically extends to the case of general quasi-compact morphisms.
\end{rem}

\sssec{}  \label{sss:*-adm}

Let $\CY_1$ and $\CY_2$ be two algebraic stacks. We will say that an object
$$\CQ\in \Dmod(\CY_1\times \CY_2)$$
has a \emph{quasi-compact !-support relative to $\CY_2$} if the following condition holds:

\medskip

\noindent For every quasi-compact open substack $U_2\overset{j_2}\hookrightarrow \CY_2$ there exists a 
quasi-compact open substack $U_1\overset{j_1}\hookrightarrow \CY_1$ such that the map
\begin{equation} \label{e:adm isom}
(\id\times j_2)^*(\CQ)\to (j_1\times \id)_*\circ (j_1\times \id)^* \circ (\id\times j_2)^*(\CQ)=
(j_1\times \id)_*\circ (j_1\times j_2)^*(\CQ)
\end{equation}
is an isomorphism in $\Dmod(\CY_1\times U_2)$. 

\medskip

Let us repeat the same in words: the restriction of $\CQ$ to the open
substack $\CY_1\times U_2$ is a *-extension from $U_1\times U_2$
for some quasi-compact open substack $U_1\subset \CY_1$. 

\medskip

Note that $(j_1\times \id)_*\simeq (j_1\times \id)_\blacktriangle$ (indeed, the morphism $j_1\times \id$ is 
representable because it is an open embedding). 

\sssec{}

Let $\CQ\in \Dmod(\CY_1\times \CY_2)$ have a quasi-compact !-support relative to $\CY_2$. 
We claim that such $\CQ$ canonically gives rise to a continuous functor
%
\begin{equation} \label{e:formula for functor}
\sF_\CQ:\Dmod(\CY_1)\to \Dmod(\CY_2), \quad \sF_\CQ(\CF):=(\on{pr}_2)_\blacktriangle \left(\on{pr}_1^*(\CF)\sotimes \CQ\right).
\end{equation}

Since the morphism $\on{pr}_2$ is not assumed quasi-compact, we have to explain how to 
understand formula~\eqref{e:formula for functor}.


\medskip

By the definition of $\Dmod(\CY_2)$ (see \cite[Sect. 2.3]{DrGa2} for a detailed review), the datum of a functor
$\Dmod(\CY_1)\to \Dmod(\CY_2)$ amounts to a compatible \footnote{The compatibility is
with respect to *-restrictions for the inclusions $U'_2\subset U''_2$.} family of functors 
$$\Dmod(\CY_1)\to \Dmod(U_2)$$ for quasi-compact open substacks $U_2\overset{j_2}\hookrightarrow \CY_2\,$. 

\medskip

For a given $U_2\,$, let $U_1\overset{j_1}\hookrightarrow \CY_1$ be as in \secref{sss:*-adm}, and let
$$\on{pr}_2^{U_1,U_2}:U_1\times U_2\to U_2$$ denote the projection.
Then we set
\begin{equation} \label{e:formula for functor open}
j_2^*\left(\sF_\CQ(\CF)\right):=
(\on{pr}_2^{U_1,U_2})_\blacktriangle 
\circ (j_1\times j_2)^*\left(\on{pr}_1^*(\CF)\sotimes \CQ\right)
\end{equation}
(here the $\blacktriangle$-puhsforward is defined because the morphism $\on{pr}_2^{U_1,U_2}$ is quasi-compact). 

\medskip

It is easy to see that the isomorphism \eqref{e:adm isom} implies that the right-hand side 
in \eqref{e:formula for functor open} is independent of the choice of $U_1\,$. 

\medskip

Furthermore, it is easy to see that the base change property for the $\blacktriangle$-puhsforward for 
quasi-compact morphisms implies that the functors \eqref{e:formula for functor open} are indeed
compatible under the inclusions $U'_2\subset U''_2$. 

\sssec{} \label{sss:corr}

As an example, consider a diagram
\begin{equation} \label{e:corr}
\xy
(-15,0)*+{\CY_1}="X";
(15,0)*+{\CY_2,}="Y";
(0,15)*+{\CY}="Z";
{\ar@{->}_{f_1} "Z";"X"};
{\ar@{->}^{f_2} "Z";"Y"};
\endxy
\end{equation}
where the morphism $f_2$ is quasi-compact. In this case the morphism
$$(f_1\times f_2):\CY\to \CY_1\times \CY_2$$
is quasi-compact as well. 

\medskip

Set
$$\CQ:=(f_1\times f_2)_\blacktriangle(\omega_\CY)\in \Dmod(\CY_1\times \CY_2),$$
where $\omega_\CY\in \Dmod(\CY)$ is the dualizing complex.

\medskip

It is easy to see that $\CQ$ has a quasi-compact !-support relative to $\CY_2$. Moreover, in this case the functor
$\sF_\CQ$ identifies canonically with $(f_2)_\blacktriangle\circ f_1^*$. 

\medskip

In particular, for $\CY_1=\CY=\CY_2$ and $f_1=f_2=\id$, we obtain that for
$\CQ:=(\Delta_\CY)_\blacktriangle(\omega_\CY)$, the corresponding
functor $\sF_\CQ$ identifies canonically with $\on{Id}_{\Dmod(\CY)}$. 

\ssec{Plan of the construction}

\sssec{} 

Note that since the morphism $\sfp$ is quasi-compact and representable (in fact, schematic), we have:
$$\Eis_*^\mu=\sfp_*\circ \sfq^!\simeq \sfp_\blacktriangle\circ \sfq^!.$$

Since $\sfq$ is \emph{safe}, we have also
$$\on{CT}_*^\mu=\sfq_*\circ \sfp^!\simeq \sfq_\blacktriangle\circ \sfp^!.$$

\medskip

Hence, the functor $\Eis_*^\mu\circ \on{CT}^{\mu,-}_*$ is given as pull-push along the following diagram:

\begin{equation}  \label{e:the_diagram}
\xy
(20,0)*+{\Bun^\mu_M}="X";
(-20,0)*+{\Bun_G.}="Y";
(0,20)*+{\Bun^\mu_{P^-}}="Z";
(40,20)*+{\Bun^\mu_{P}}="W";
(60,0)*+{\Bun_G}="U";
(20,40)*+{\Bun^\mu_{P^-}\underset{\Bun^\mu_M}\times \Bun^\mu_{P}}="V";
{\ar@{->}^{\sfq^-} "Z";"X"};
{\ar@{->}_{\sfp^-} "Z";"Y"};
{\ar@{->}_{\sfq} "W";"X"};
{\ar@{->}^{\sfp} "W";"U"}; 
{\ar@{->}_{'\sfq} "V";"Z"};
{\ar@{->}^{'\sfq^-} "V";"W"};
\endxy
\end{equation}

Here and in the sequel, by ``push" we understand the $\blacktriangle$-pushforward, so the base change
formula applies to any quasi-compact morphism.

\sssec{}  \label{sss:intr K}

Denote
$$\CQ_0:=\left((\sfp^-\circ {}'\sfq)\times (\sfp\circ {}'\sfq^-)\right)_\blacktriangle
(\omega_{\Bun^\mu_{P^-}\underset{\Bun^\mu_M}\times \Bun^\mu_{P}})\in \Dmod(\Bun_G\times \Bun_G)$$
and
$$\CQ_1:=(\Delta_{\Bun_G})_\blacktriangle(\omega_{\Bun_G})\in \Dmod(\Bun_G\times \Bun_G).$$

\medskip

By \secref{sss:corr} we have:

\begin{lem}  \label{l:two kernels}
The functors $\on{Id}_{\Dmod(\Bun_G)}$ and  $\Eis_*^\mu\circ \on{CT}^{\mu,-}_*$ are given by
$\sF_{\CQ_1}$ and $\sF_{\CQ_0}$, respectively.
\end{lem}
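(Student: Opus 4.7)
The plan is to deduce both isomorphisms directly from the general construction of \secref{sss:corr}, which states that for a correspondence $\CY_1 \xleftarrow{f_1} \CY \xrightarrow{f_2} \CY_2$ with $f_2$ quasi-compact, the kernel $(f_1 \times f_2)_\blacktriangle(\omega_\CY)$ has quasi-compact $!$-support relative to $\CY_2$ and the associated functor coincides with the pull-push along the correspondence.

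For $\CQ_1$: This is precisely the case $\CY_1 = \CY = \CY_2 = \Bun_G$ with $f_1 = f_2 = \on{id}$ recorded at the end of \secref{sss:corr}, which yields $\sF_{\CQ_1} \simeq \on{Id}_{\Dmod(\Bun_G)}$ with no additional work.

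For $\CQ_0$: The first step is to rewrite the two-step composition
\[
\Eis_*^\mu \circ \on{CT}_*^{\mu,-} = \sfp_* \circ \sfq^! \circ (\sfq^-)_* \circ (\sfp^-)^!
\]
as pull-push along a single correspondence. By \remref{r:4}, $\sfp$ and $\sfp^-$ are schematic (hence safe), while $\sfq$ and $\sfq^-$ are safe, so each $\ast$-pushforward above coincides with the corresponding $\blacktriangle$-pushforward. Applying the base change property (ii) of \secref{sss:blacktriangle} to the Cartesian square at the top of diagram \eqref{e:the_diagram} gives
\[
\sfq^! \circ (\sfq^-)_\blacktriangle \simeq ({}'\sfq^-)_\blacktriangle \circ ({}'\sfq)^!,
\]
and the compositional compatibility (property (i)) of both $\blacktriangle$ and $!$ then yields
\[
\Eis_*^\mu \circ \on{CT}_*^{\mu,-} \simeq (\sfp \circ {}'\sfq^-)_\blacktriangle \circ (\sfp^- \circ {}'\sfq)^!.
\]
This is the pull-push functor for the single correspondence
\[
\Bun_G \xleftarrow{\sfp^- \circ {}'\sfq} Z \xrightarrow{\sfp \circ {}'\sfq^-} \Bun_G,\qquad Z := \Bun^\mu_{P^-} \underset{\Bun^\mu_M}\times \Bun^\mu_P,
\]
and invoking \secref{sss:corr} with $f_1 = \sfp^- \circ {}'\sfq$ and $f_2 = \sfp \circ {}'\sfq^-$ identifies it with $\sF_{\CQ_0}$.

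The sole input that needs verification is the quasi-compactness hypothesis on $f_2 = \sfp \circ {}'\sfq^-$ required by \secref{sss:corr}. This holds because $\sfp$ is schematic---in particular quasi-compact---by \remref{r:4}, while ${}'\sfq^-$ is a base change of the safe (hence quasi-compact) morphism $\sfq^-$. I expect no serious obstacle: once the base-change and compositional properties of the $\blacktriangle$-pushforward are assembled, the argument is essentially bookkeeping.
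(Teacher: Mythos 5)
Your proposal is correct and follows the same line as the paper: the paper's own proof consists precisely of the reduction (carried out in the preceding "Plan of the construction" paragraph) of $\Eis_*^\mu\circ \on{CT}^{\mu,-}_*$ to pull-push along the single correspondence in diagram \eqref{e:the_diagram}, via replacing $*$-pushforward by $\blacktriangle$-pushforward using safety and then applying base change, followed by an appeal to \secref{sss:corr}. Your spelling out of the base-change isomorphism and of the quasi-compactness of $\sfp\circ{}'\sfq^-$ is exactly the bookkeeping the paper leaves implicit.
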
 

\sssec{}  \label{sss:properties of K}

We are going to define the natural transformation \eqref{e:unit} by constructing a map 
\begin{equation} \label{e:map of kernels}
\CQ_1\to \CQ_0
\end{equation}
in $\Dmod(\Bun_G\times \Bun_G)$. To do this, we will construct an object 
$$\CQ\in\Dmod(\BA^1\times\Bun_G\times \Bun_G)$$ such that:

\medskip

\noindent(i)  $\iota_0^!(\CQ)=\CQ_0$ and $\iota_1^!(\CQ)=\CQ_1\,$, 
where $\iota_t:\Bun_G\times \Bun_G\to \BA^1\times\Bun_G\times \Bun_G$ is the closed embedding corresponding to $t\in \BA^1$;

\medskip

\noindent(ii) $\CQ$ is $\BG_m$-monodromic with respect to the usual action of $\BG_m$ on 
$\BA^1$; by definition, this means that $\CQ$ belongs to the full subcategory of 
$\Dmod(\BA^1\times\Bun_G\times \Bun_G)$ generated by the essential image of the pullback functor 
\[
\Dmod((\BA^1/\BG_m)\times\Bun_G\times \Bun_G)\to \Dmod(\BA^1\times\Bun_G\times \Bun_G). 
\]

\medskip

Assuming we have such $\CQ$, we take the map $\CQ_1\to \CQ_0$ of \eqref{e:map of kernels} to be the \emph{specialization map} 
$$\on{Sp}_\CQ:\iota_1^!(\CQ)\to\iota_0^!(\CQ)\,,$$ 
which is defined by virtue of the above property (ii). The definition of the 
specialization map is recalled in \secref{sss:specialization} below.

\medskip

The remaining part of \secref{s:proof of adj} is devoted to constructing an object 
$\CQ$ with the above properties (i)-(ii). Informally, $\CQ$ should ``interpolate" between $\CQ_1$ and $\CQ_0\,$.

\begin{rem}
The construction of $\CQ$ is geometric, and except for \secref{sss:defining K} at the very end, we do not use the characteristic 0 
assumption on $k$. The idea is to first construct a group-scheme $\wt{G}$ over $\BA^1$, which ``interpolates" between the groups 
$G$ and $P\underset{M}\times P^-$, and then to construct a stack over $\BA^1$ which ``interpolates" between $\Bun_G$ and 
the stack 
$$\Bun^\mu_{P^-}\underset{\Bun^\mu_M}\times \Bun^\mu_{P}$$
from diagram \eqref{e:the_diagram}.
\end{rem} 

\sssec{The specialization map}   \label{sss:specialization}

Let $\CY$ be a stack. Let $\iota_0,\iota_1:\CY\to \BA^1\times\CY$ denote the closed embeddings 
corresponding to $0\in \BA^1$ and $1\in \BA^1$, respectively. Suppose that 
$\CK\in\Dmod(\BA^1\times \CY)$ is $\BG_m$-monodromic with respect to the usual action of $\BG_m$ on 
$\BA^1$. In this situation we can define the  \emph{specialization morphism} 
\begin{equation}  \label{e:specialization}
\on{Sp}_\CK:\iota_1^!(\CK)\to\iota_0^!(\CK)\,.
\end{equation}

\medskip

We need the following lemma (which is actually a particular case of \propref{p:simple Braden}(2)):

\begin{lem} \label{l:specialization}
Let $\pi :\BA^1\times \CY\to \CY$ denote the projection. Then the functor $\pi_!\,$, left adjoint to $\pi^!$, is defined and is canonically 
isomorphic to $\iota_0^!$ on $\BG_m$-monodromic objects of 
$\Dmod(\BA^1\times \CY)$.
\end{lem}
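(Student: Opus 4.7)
The plan is to prove the lemma as the simplest instance of a $\BG_m$-contraction principle: since monodromicity encodes invariance under the scaling action on $\BA^1$, the ``integrated'' pushforward $\pi_!(\CK)$ should equal the $!$-restriction to the unique fixed locus $\{0\}\times\CY$.

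First, I would construct the candidate comparison map. Since $\pi\circ\iota_0=\id_\CY$ and $\iota_0$ is a closed immersion (so $(\iota_0)_!=(\iota_0)_*$, and hence the composition $\pi_!\circ(\iota_0)_*\simeq\id$ is unambiguously defined), applying ``$\pi_!$'' to the counit $(\iota_0)_*\iota_0^!(\CK)\to \CK$ of the $((\iota_0)_*,\iota_0^!)$-adjunction formally yields a natural morphism $\iota_0^!(\CK)\to \pi_!(\CK)$. Equivalently, passing to right adjoints, there is a canonical map $(\iota_0)_*(\CF)\to \pi^!(\CF)$ arising from $\iota_0^!\pi^!\simeq\id$, and the lemma amounts to showing that this map induces an isomorphism
\[
\Hom_{\BA^1\times\CY}(\CK,(\iota_0)_*\CF)\xrightarrow{\sim}\Hom_{\BA^1\times\CY}(\CK,\pi^!\CF)
\]
for every $\CF\in\Dmod(\CY)$ and every $\BG_m$-monodromic $\CK$.

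Second, I would localize the obstruction using the decomposition $\BA^1=\{0\}\sqcup\BG_m$. Writing $j:\BG_m\times\CY\hookrightarrow \BA^1\times\CY$ and $\tilde\pi:=\pi\circ j$, the cofiber of $(\iota_0)_*\CF\to\pi^!\CF$ is $j_*\tilde\pi^!(\CF)$, using the localization triangle for the open--closed decomposition together with $j^!=j^*$ and $j^*\pi^!\simeq\tilde\pi^!$. Hence the claim reduces to the vanishing
\[
\Hom_{\BA^1\times\CY}(\CK,j_*\tilde\pi^!\CF)=\Hom_{\BG_m\times\CY}(j^*\CK,\tilde\pi^!\CF)=0
\]
for $\BG_m$-monodromic $\CK$ and all $\CF$. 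Since $j$ is $\BG_m$-equivariant, $j^*\CK$ is itself $\BG_m$-monodromic on $\BG_m\times \CY$, now for the \emph{free} translation action of $\BG_m$ on the first factor.

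Third, I would reduce this vanishing to an explicit computation. As $\BG_m/\BG_m$ is a point and the quotient map is $\tilde\pi$, the monodromic subcategory on $\BG_m\times\CY$ is generated under colimits by the essential image of $\tilde\pi^!$ together with the induced $\BG_m$-weight grading. Writing $j^*\CK$ as a colimit of weighted generators pulled back from $\CY$ and commuting $\Hom(-,\tilde\pi^!\CF)$ with the colimit, the question becomes a weight-by-weight cancellation: the potentially non-zero contributions from $H^*(\BG_m)$ to $\Hom(\tilde\pi^!\CM,\tilde\pi^!\CF)$ must exactly annihilate against the nontrivial-monodromy components dictated by the $\BG_m$-weight decomposition. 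This is the essential content of the contraction lemma on $\BA^1$, recorded in more general form by \propref{p:simple Braden}(2).

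The main obstacle is precisely this last weight-cancellation step. While the contraction principle is geometrically transparent and classical in the constructible setting, formulating it correctly in the cocomplete DG category of (not necessarily holonomic) D-modules --- where $\pi$ is not proper and $\pi_!$ is only a partial left adjoint --- requires careful bookkeeping of the $\BG_m$-grading and of the interaction between $!$- and $*$-extensions across the stratum $\{0\}\subset\BA^1$; bypassing this technical work is exactly what the invocation of \propref{p:simple Braden}(2) buys us.
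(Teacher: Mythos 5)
Your construction of the comparison map $\iota_0^!\to\pi_!$ is the same as the paper's, but the next step ("passing to right adjoints") contains a genuine error that breaks the rest of the argument. You claim the lemma amounts to showing that $(\iota_0)_*\CF\to\pi^!\CF$ induces an isomorphism
$\Hom_{\BA^1\times\CY}(\CK,(\iota_0)_*\CF)\to\Hom_{\BA^1\times\CY}(\CK,\pi^!\CF)$. But by the $(\iota_0^*,(\iota_0)_*)$-adjunction the left-hand side is $\Hom_\CY(\iota_0^*\CK,\CF)$, not $\Hom_\CY(\iota_0^!\CK,\CF)$; so the statement you are reducing to is $\pi_!\simeq\iota_0^*$, not the $\pi_!\simeq\iota_0^!$ claimed in the lemma. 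These genuinely differ on monodromic objects: for $\CK=\omega_{\BA^1\times\CY}$ one has $\iota_0^!\CK=\omega_\CY$ while $\iota_0^*\CK=\omega_\CY[2]$, and $\pi_!\CK=\omega_\CY$. Concretely, the vanishing you derive, $\Hom(\CK,j_*\tilde\pi^!\CF)=0$ for monodromic $\CK$, is false: take $\CK=j_!\,\omega_{\BG_m\times\CY}$ (this is monodromic, being the pullback of $\bar\jmath_!\,\omega$ from $\BA^1/\BG_m$); then $\Hom(\CK,j_*\tilde\pi^!\CF)\simeq\Hom(\omega_{\BG_m\times\CY},\tilde\pi^!\CF)\simeq\CF\otimes H^*_{dR}(\BG_m)\ne 0$. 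If you want an open--closed reduction, you should apply the recollement triangle to $\CK$ (not to $\pi^!\CF$): from $(\iota_0)_*\iota_0^!\CK\to\CK\to j_*j^!\CK$ one gets that the lemma is equivalent to the vanishing $\Hom(j_*j^!\CK,\pi^!\CF)=0$, which is a different (and true) statement.

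Beyond this, there is a circularity concern in the last paragraph: you appeal to \propref{p:simple Braden}(2) for the "hard step", but the paper explicitly notes that \lemref{l:specialization} \emph{is} a particular case of \propref{p:simple Braden}(2); the purpose of the proof in the text is to give a short self-contained argument independent of that proposition (whose proof is deferred to \cite{DrGa2}). The paper's actual route is quite different from yours: it notes that both existence of $\pi_!$ and the isomorphism can be checked after passing to the quotient, i.e.\ for the corresponding functors $\Dmod((\BA^1/\BG_m)\times\CY)\to\Dmod((\on{pt}/\BG_m)\times\CY)$, then uses the tensor decompositions $\Dmod((\BA^1/\BG_m)\times\CY)\simeq\Dmod(\BA^1/\BG_m)\otimes\Dmod(\CY)$ and $\Dmod((\on{pt}/\BG_m)\times\CY)\simeq\Dmod(\on{pt}/\BG_m)\otimes\Dmod(\CY)$ to reduce to the case $\CY=\on{pt}$, where the assertion is a direct computation. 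Your open--closed strategy can likely be made to work if you use the correct triangle as indicated above, but as written the key reduction is to a false statement.
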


\begin{proof}
The morphism in one direction
$$\iota_0^!\to \pi_!$$ (assuming that $\pi_!$ exists) is given by
$$\iota_0^!\simeq \pi_!\circ (\iota_0)_!\circ \iota_0^!\to \pi_!.$$

To verify the existence of $\pi_!$ and the fact that the above map is an isomorphism, it suffices to do so for the 
corresponding functors 
$\Dmod((\BA^1/\BG_m)\times \CY)\to \Dmod((\on{pt}/\BG_m)\times \CY)$.
Since 
$$\Dmod((\BA^1/\BG_m)\times \CY)\simeq \Dmod(\BA^1/\BG_m)\otimes \Dmod(\CY)$$
and
$$\Dmod((\on{pt}/\BG_m)\times \CY)\simeq \Dmod(\on{pt}/\BG_m)\otimes \Dmod(\CY),$$
the assertion reduces to the case when $\CY=\on{pt}$, which is straightforward. 
\end{proof}

Now define the specialization morphism \eqref{e:specialization} to be the composition
$$\iota_1^!(\CK)\simeq \pi_!\circ (\iota_1)_!\circ \iota_1^!(\CK)\to \pi_!(\CK)\simeq \iota_0^!(\CK)$$
(note that since $\iota_1$ is a closed embedding the functor $(\iota_1)_!$ is well-defined).

\begin{rem}
One can show (using only general nonsense) that the above composition is equal to the composition
\[
\iota_1^!(\CK )\to\iota_1^!\circ\pi^!\circ\pi_! (\CK )\simeq\pi_! (\CK )\simeq\iota_0^!(\CK ).
\]
\end{rem}

\ssec{Recollections from \cite{DrGa3}}  \label{ss:interpolZ}

In this subsection we will recall some constructions from \cite[Sects. 1-2]{DrGa3}, to be subsequently applied to
the group $G$. 

\sssec{Attractors and repellers}  \label{sss:Z+}

Let $Z$ be a quasi-compact scheme acted on by $\BG_m$.
According to \cite[Proposition 1.3.4 and Corollary 1.5.3(ii)]{DrGa3}, there exist quasi-compact schemes 
$Z^0$, $Z^+$, and $Z^-$ representing the following functors:
$$\Maps(S,Z^0)=\Maps^{\BG_m}(S,Z),$$
$$\Maps(S,Z^+)=\Maps^{\BG_m}(\BA^1\times S,Z),$$
$$\Maps(S,Z^-)=\Maps^{\BG_m}(\BA^1_-\times S,Z),$$
where $S$ is a test affine scheme and $\BA^1$ (resp.~$\BA^1_-$) is the affine line equipped with the usual 
$\BG_m$-action (resp. the $\BG_m$-action opposite to the usual one).

\medskip

The scheme $Z^0$ (resp.~$Z^+$ and $Z^-$) is called the \emph{scheme of $\BG_m$-fixed points}
(resp.~the \emph{attractor} and \emph{repeller}).

\medskip

Let $p^+:Z^+\to Z$ and $q^+:Z^+\to Z^0$ denote the maps corresponding to evaluating a $\BG_m$-equivariant morphism 
$\BA^1\times S\to Z$ at $1\in \BA^1$  and $0\in \BA^1$, respectively. One defines
$p^-:Z^-\to Z$ and $q^-:Z^-\to Z^0$ similarly. 

\medskip

Let $i^+:Z^0\to Z^+$ (resp. $i^-:Z^0\to Z^-$) denote the morphism induced 
by the projection $\BA^1\times S\to S$ (resp.~$\BA^1_-\times S\to S$).

\medskip
 
If $Z$ is affine (the case of interest for us) then the existence of $Z^0$ and $Z^\pm$ (i.e., the representability of the corresponding functors) 
is very easy to prove. Moreover, in this case 
$Z^0$ and $Z^\pm$ are affine, and the morphisms $p^{\pm}:Z^{\pm}\to Z$ are closed embeddings.

\sssec{The family of hyperbolas}    \label{sss:hyperbolas}

We now consider the following family of curves over $\BA^1$, denoted by $\BX$: as a scheme, 
$\BX=\BA^2$, and the map $\BX\to \BA^1$ is $(t_1,t_2)\mapsto t_1\cdot t_2\,$. 
The fibers of this map are hyperbolas; the zero fiber is the coordinate cross, i.e., a degenerate hyperbola.

\medskip

We let $\BG_m$ act on $\BX$ hyperbolically: 
$$\lambda\cdot (t_1,t_2)=(\lambda\cdot t_1\, ,\lambda^{-1}\cdot t_2).$$

\sssec{The scheme $\wt{Z}$}  \label{sss:tilde Z}

According to \cite[Theorem 2.4.2]{DrGa3}, there exists a quasi-compact scheme $\wt{Z}$ over $\BA^1$ representing 
the following functor on the category of schemes over $\BA^1$:
$$\Maps_{\BA^1}(S,\wt{Z}):=\Maps^{\BG_m}(\BX\underset{\BA^1}\times S,Z).$$

\medskip

Again, if $Z$ is affine the existence of $\wt{Z}$ is very easy to prove; moreover, in this case $\wt{Z}$ is 
affine. Let us also mention that if $Z$ is affine and smooth (the case of interest for us) then \propref{p:smooth}(ii) below 
gives a description of $\wt{Z}$ which some readers may prefer to consider as a definition.

\sssec{}

One has a canonical map $\wt{p}:\wt{Z}\to \BA^1\times Z\times Z$. To define it, first note that any 
section 
$\sigma :\BA^1\to\BX$ of the morphism $\BX\to\BA^1$ defines a map $$\sigma^*:\Maps^{\BG_m} 
(\BX\underset{\BA^1}\times S\, ,Z)\to\Maps (S,Z)$$
and therefore a morphism $\wt{Z}\to Z$. Let $\pi_1:\wt{Z}\to Z$ and $\pi_2:\wt{Z}\to Z$ 
denote the morphisms corresponding to the sections
\[
t\mapsto (1,t)\in \BX \quad \text{ and }\quad t\mapsto (t,1)\in \BX\, ,
\]
respectively. 

\medskip

Finally, define 
\begin{equation} \label{e:from tilde}
\wt{p}:\wt{Z}\to \BA^1\times Z\times Z\, 
\end{equation}
to be the morphism whose first component is the tautological projection $\wt{Z}\to \BA^1$, 
and the second and the third components are $\pi_1$ and $\pi_2$, respectively. 

\medskip

It is easy to check that if $Z$ is affine then $\wt{p}$ is a closed embedding (see \cite[Proposition 2.3.6]{DrGa3}). So 
$\wt{p}$ identifies $\wt{Z}$ with a closed subscheme of $\BA^1\times Z\times Z$.

\sssec{}

Let $\wt{Z}_t$ denote the preimage of $t\in \BA^1$ under the projection $\wt{Z}\to \BA^1$. 
Let $\wt{p}_t$ denote the corresponding map $\wt{Z}_t\to Z\times Z$.

\medskip

By definition, $(\wt{Z}_1,\wt{p}_1)$ identifies with $(Z,\Delta_Z)$. For any $t\in \BA^1-\{0\}$, the pair 
$(\wt{Z}_t,\wt{p}_t)$ is the graph of the action of $t\in \BG_m$ on $Z$. Moreover, the morphism $\wt{p}$ induces an isomorphism
\begin{equation}  \label{e:graph}
\BG_m\underset{\BA^1}\times\wt{Z}\iso\Gamma , \quad\quad \Gamma:=\{ (t,z_1, z_2)\,|\,t\cdot z_1=z_2 \}.
\end{equation}

The scheme $\wt{Z}_0$ identifies with $Z^+\underset{Z^0}\times Z^-$ so that the morphism
$\wt{p}_0:\wt{Z}_0\to Z\times Z$ identifies with the composition 
$$Z^+\underset{Z^0}\times Z^-\hookrightarrow Z^+\times Z^- \overset{p^+\times p^-}\longrightarrow Z\times Z.$$ 
The above-mentioned identification comes from the fact that the coordinate cross $\BX_0$ is a union of $\BA^1$ and
$\BA^1_-$ glued together along $0$, see \cite[Proposition 2.2.9]{DrGa3}.

\sssec{The action of $\BG_m\times \BG_m$\,} \label{sss:action on tilde}

In what follows we will need the action of $\BG_m\times \BG_m$ on $\wt{Z}$ that corresponds to the following 
 action of $\BG_m\times \BG_m$ on $\BX$:
$$(\lambda_1,\lambda_2)\cdot (t_1,t_2)=(\lambda_1\cdot t_1\,,\lambda_2\cdot t_2);
\quad \lambda_1,\lambda_2\in \BG_m\, , \quad (t_1,t_2)\in\BX\, .$$

Note that the map $\wt{p}$ is equivariant with respect to the following action of $\BG_m\times \BG_m$ on
$\BA^1\times Z\times Z$:
$$(\lambda_1,\lambda_2)\cdot (t,z_1,z_2)=(\lambda_1^{-1}\cdot \lambda_2^{-1}\cdot t,\lambda_1\cdot z_1\, ,\lambda^{-1}_2\cdot z_2).$$

\sssec{Smoothness}  \label{sss:smoothness}
The following facts are proved in \cite[Propositions 2.5.2 and 2.5.5]{DrGa3}.

\begin{prop}   \label{p:smooth}  \hfill

\smallskip

\noindent{\em(i)} If $Z$ is smooth then so is the morphism $\wt{Z}\to\BA^1$. 

\smallskip

\noindent{\em(ii)} If $Z$ is smooth and affine then the morphism $\wt{p}:\wt{Z}\to \BA^1\times Z\times Z$ induces an isomorphism 
\[
\wt{Z}\iso\overline{\Gamma},
\]
where $\Gamma$ is as in formula~\eqref{e:graph} and $\overline{\Gamma}$ is the scheme-theoretic closure of $\Gamma$ in
$ \BA^1\times Z\times Z\,$.
\end{prop}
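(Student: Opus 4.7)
I would establish part (i) by the infinitesimal criterion for smoothness of $\wt Z\to\BA^1$, then deduce part (ii) from the flatness produced by (i), combined with the already-stated closed-embedding property of $\wt p$ and the identification of $\BG_m\times_{\BA^1}\wt Z$ with $\Gamma$ given by \eqref{e:graph}.

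For part (i), let $S_0\hookrightarrow S$ be a square-zero thickening of affine $\BA^1$-schemes, with ideal $I$. By the defining functor of points of $\wt Z$, an $\BA^1$-morphism $S_0\to\wt Z$ is the same datum as a $\BG_m$-equivariant map $f_0\colon \BX\times_{\BA^1}S_0\to Z$, and to lift it one must produce a $\BG_m$-equivariant $f\colon\BX\times_{\BA^1}S\to Z$ restricting to $f_0$. Because $\BX\to\BA^1$ is flat, $\BX\times_{\BA^1}S_0\hookrightarrow\BX\times_{\BA^1}S$ is again a square-zero thickening, so the smoothness of $Z$ furnishes a (not necessarily equivariant) lift $f$, and the set of all such lifts is a torsor under the space $V$ of global sections of $f_0^*T_Z$ tensored with the pulled-back ideal; this $V$ is a rational $\BG_m$-representation for the natural actions on source and target. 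The ``conjugation'' action of $\BG_m$ on the set of non-equivariant lifts preserves the torsor structure and fixes precisely the equivariant lifts, so the obstruction to finding a fixed point is a class in $H^1(\BG_m,V)$, which vanishes because $\BG_m$ is linearly reductive. This yields the required equivariant lift and hence smoothness.

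For part (ii), by (i) the morphism $\wt Z\to\BA^1$ is flat; equivalently, $\wt Z$ has no $t$-torsion as a closed subscheme of $\BA^1\times Z\times Z$, where $t$ denotes the coordinate on $\BA^1$. The map $\wt p$ is already known to be a closed embedding, and \eqref{e:graph} identifies the restriction of $\wt p$ over $\BG_m\subset\BA^1$ with the inclusion $\Gamma\hookrightarrow\BG_m\times Z\times Z$. Thus $\wt Z$ is a $t$-torsion-free closed subscheme of $\BA^1\times Z\times Z$ whose restriction to $\BG_m$ is $\Gamma$. An elementary check with ideals shows that such a subscheme must coincide with the scheme-theoretic closure of $\Gamma$ in $\BA^1\times Z\times Z$, which by definition is $\overline{\Gamma}$.

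The principal technical point is the averaging argument in (i): one must verify that the module $V$ of infinitesimal non-equivariant lifts carries a natural rational $\BG_m$-action compatible with the affine action on the torsor of lifts, so that the vanishing of $H^1(\BG_m,-)$ on rational representations applies. Once this is in place, part (ii) follows essentially by formal considerations on $\BA^1$-flat closed subschemes.
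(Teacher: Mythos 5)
Your proof is correct, but it is a genuinely different argument from the one in the paper. For part~(i) you use the infinitesimal criterion: given a square-zero thickening $S_0\hookrightarrow S$ over $\BA^1$, a (not necessarily equivariant) lift of a $\BG_m$-equivariant map $\BX\times_{\BA^1}S_0\to Z$ exists because the source is affine and $Z$ is smooth, and the torsor of lifts under the rational $\BG_m$-module $V=H^0(\BX\times_{\BA^1}S_0,f_0^*T_Z\otimes\wt I)$ has a $\BG_m$-fixed point since $H^1(\BG_m,V)=0$ by linear reductivity (valid in any characteristic, since $\BG_m$ is diagonalizable). For part~(ii) you observe that $\wt Z$ is then a $t$-torsion-free closed subscheme of $\BA^1\times Z\times Z$ agreeing with $\Gamma$ over $\BG_m$, and the elementary ideal-theoretic fact ($t^n a\in I$ and $A/I$ torsion-free $\Rightarrow a\in I$) forces $\wt Z=\overline\Gamma$. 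The paper instead proves (i) and (ii) simultaneously by a global dimension count: it reduces to $Z$ of pure dimension $n$, notes that $\wt Z_0=Z^+\times_{Z^0}Z^-$ is smooth of pure dimension $n$ and that $\overline\Gamma$ is flat over $\BA^1$ with $(\overline\Gamma)_0$ also of pure dimension $n$; the inclusion $(\overline\Gamma)_0\subset\wt Z_0$ is then an equality because $(\overline\Gamma)_0$ meets every component (via $\BA^1\times\Delta_Z(Z^0)\subset\wt Z$), and the resulting closed embedding $\overline\Gamma\hookrightarrow\wt Z$ is seen to be an isomorphism by the fibrewise-openness Lemma~\ref{l:openness by fibers}. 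Your route buys greater generality in (i) — it works verbatim for smooth but non-affine $Z$, where one cannot talk about $\overline\Gamma\subset\BA^1\times Z\times Z$ being a closed embedding — and is closer in spirit to the general proof referred to in~\cite{DrGa3}; the paper's route is more elementary (no group cohomology), is tailored to the affine case the authors actually need, and yields the identification of the special fibre and its smoothness as an explicit by-product of the argument rather than a separately invoked fact.
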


We will need the proposition only in the case that $Z$ is affine. Let us give a proof in this case, 
which is different from the one in \cite{DrGa3}.

\begin{proof}
We can assume that $Z$ has pure dimension $n$. This easily implies that 
$\wt{Z}_0=Z^+\underset{Z^0}\times Z^-$ is smooth and of pure dimension $n$.

\medskip

Since $Z$ is affine, $\wt{p}$ is a closed embedding; so we can consider $\wt{Z}$ as a closed subscheme of
$\BA^1\times Z\times Z$. We have $\wt{Z}\cap (\BG_m\times Z\times Z)=\Gamma$, so 
$\overline{\Gamma}\subset \wt{Z}$.

\medskip

Let us show that
\begin{equation}  \label{e:equality of fibers}
(\overline{\Gamma})_t =\wt{Z}_t \mbox{ for all } t\in\BA^1
\end{equation}
(here $(\overline{\Gamma})_t$ is the fiber over $t$). The only nontrivial case is $t=0$.
Both $(\overline{\Gamma})_0$ and $\wt{Z}_0$ have pure dimension $n$, and $\wt{Z}_0$ is smooth. Since 
$(\overline{\Gamma})_0\subset \wt{Z}_0\,$, it remains to check that $(\overline{\Gamma})_0$ meets each 
connected component of $\wt{Z}_0$. This follows from the obvious inclusion $\wt{Z}\supset\BA^1\times\Delta_Z(Z^0)$.

\medskip

By definition, $\overline{\Gamma}$ is flat over $\BA^1$. By \eqref{e:equality of fibers}, $(\overline{\Gamma})_t$ 
is smooth for each $t\in\BA^1$. So $\overline{\Gamma}$ is smooth over $\BA^1$. 
It remains to show that the emebdding $\overline{\Gamma}\mono \wt{Z}$ is an isomorphism. 
By \eqref{e:equality of fibers}, it suffices to check that it is an open embedding. This follows from the next lemma.
\end{proof}

\begin{lem}   \label{l:openness by fibers}
Let $Y$ and $Y'$ be schemes of finite type over a Noetherian scheme $S$, and let $f:Y'\to Y$ be an $S$-morphism.
Assume that: 

\smallskip

\noindent{\em(i)} $Y'$ is flat over $S$,

\smallskip

\noindent{\em(ii)} for any $s\in S$ the morphism $Y'_s\to Y_s$ induced by $f$ is an open embedding.

\smallskip

Then $f$ is an open embedding.
\end{lem}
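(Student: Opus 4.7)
The plan is to verify that $f$ is a flat monomorphism locally of finite presentation; any such morphism is automatically an open embedding (a standard fact, e.g.\ EGA IV$_4$, 17.9.1, or Stacks Project Tag 025G). Note that $f$ is of finite presentation since $S$ is Noetherian and both $Y$ and $Y'$ are of finite type over $S$.

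First, I would establish flatness of $f$. Since $Y'\to S$ is flat by hypothesis~(i), $f$ is locally of finite type, and each fiber $f_s\colon Y'_s\to Y_s$ is an open embedding (hence flat) by hypothesis~(ii), the fibral criterion of flatness (EGA IV$_3$, 11.3.10, or Stacks Project Tag 039A) gives that $f$ is flat at every point of $Y'$.

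Second, I would show that $f$ is a monomorphism. Recall that a morphism is a monomorphism if and only if for every point $y$ of the target the scheme-theoretic fiber is either empty or $\operatorname{Spec} k(y)$. Given $y\in Y$ mapping to $s\in S$, the fiber of $f$ over $y$ coincides with the fiber of the restricted map $f_s\colon Y'_s\to Y_s$ over the point $y\in Y_s$; since $f_s$ is an open embedding, this fiber is indeed either empty or $\operatorname{Spec} k(y)$. Combining the two steps with the finite-presentation hypothesis and the standard criterion cited above completes the proof.

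I don't anticipate a serious obstacle: the only step requiring care is the invocation of the fibral flatness criterion, whose hypotheses (flatness of $Y'\to S$, flatness of each fiber of $f$, and local Noetherianity of $S$) are all in place. No characteristic-zero assumption enters, consistent with the purely geometric nature of this portion of the construction.
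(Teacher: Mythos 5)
Your proof is correct. It carries out the first route the paper indicates in passing — deduction from Grothendieck's crit\`ere de platitude par fibres — by (a) applying the fibral flatness criterion to conclude that $f$ is flat, (b) observing that the scheme-theoretic fiber of $f$ over any $y\in Y$ equals the fiber of $Y'_s\to Y_s$ over $y$, hence is empty or $\Spec k(y)$, so $f$ is a monomorphism (for locally finite type morphisms this fiber condition is equivalent to being a monomorphism), and (c) invoking the fact that a flat, finitely presented monomorphism is an open embedding. The paper, by contrast, writes out a shorter self-contained ``direct proof'': it passes to completed local rings $A$, $B$, $B'$ at $s$, $y$, $y'$, writes $B'=B/I$, combines the $A$-flatness of $B'$ with the hypothesis that $I$ dies in $B/\mathfrak{m}B$ to get $I/\mathfrak{m}I=0$, and concludes $I=0$ by Nakayama; this shows $f$ is \'etale at each $y'$, which together with the fiberwise injectivity yields the open embedding. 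Your approach is more modular and cites standard black-box results; the paper's direct argument avoids invoking the fairly heavy fibral flatness criterion, trading it for an explicit computation with completions.
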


This follows from Grothendieck's ``Crit\`ere de platitude par fibres" (Corollary 11.3.11 from EGA IV-3).
Here is a direct proof (which is much shorter than the proof of ``Crit\`ere de platitude par fibres"):

\begin{proof}
It suffices to show that $f$ is \'etale at any $y'\in Y'$. Let $y$ and $s$ be the images of $y'$ in $Y$ and $S$.  Let
$A$ be the completed local ring of $S$ at $s$. Let $B$ be
the completed local ring of $Y$ at $y$. Let $B'$ be the completed local ring of
$Y'$ at $y'$. The problem is to show that the homomorphism $\varphi :B\to B'$ induced by $f$ is an isomorphism. 

By assumption, $\varphi$ induces an isomorphism $B/m B\iso B'/mB'$, where $m$ is the maximal ideal of $A$. Since $B'$ is $m$-adically complete this implies that $\varphi$ is surjective. It remains to show that the ideal $I:=\Ker (B\to B')$ equals 0. Since $B'$ is flat over $A$ we have an exact sequence 
$$0\to I/mI\to B/mB\to B'/mB'\to 0.$$
So $I/mI=0$. By Nakayama's lemma, this implies that $I=0$.

\end{proof}

\ssec{The interpolating family of groups}   \label{ss:Interpolating}

\sssec{}    \label{sss:group set-up}

Fix a co-character
\begin{equation} \label{e:co-character}
\gamma:\BG_m\to M
\end{equation}
mapping to the center of $M$, which is dominant and regular with respect to $P$.

\medskip

We will apply the set-up of \secref{ss:interpolZ} when $Z=G$ and the $\BG_m$-action on $G$ is the adjoint action corresponding 
to the co-character \eqref{e:co-character}.

\sssec{}  \label{sss:interp group}

In this situation $Z^+=P$, $Z^-=P^-$, and $Z^0=M$. 

\medskip

Recall that the scheme
$$\wt{G}:=\wt{Z}$$
is a closed subscheme of $\BA^1\times G\times G$. 

\medskip

Since the constructions of \secref{ss:interpolZ} are functorial in $Z$, 
this subscheme 
is a group-scheme over $\BA^1$, which is a closed group-subscheme of the constant group-scheme 
$\BA^1\times G\times G$. By \propref{p:smooth}, $\wt{G}$ is smooth over $\BA^1$.

\medskip

The fiber $\wt{G}_1$ of $\wt{G}$ over $1\in \BA^1$ is the diagonal copy of $G$, and the fiber $\wt{G}_0$
over $0\in \BA^1$ is $P\underset{M}\times P^-$. 

\sssec{}
Since $\wt{G}$ is flat over $\BA^1$ the quotient $(\BA^1\times G\times G)/\wt{G}$ exists as an algebraic space of finite 
type over $\BA^1$. 
In Appendix \ref{s:quasiaffine} we will prove the following statement.

\begin{prop}   \label{p:quasiaffine}
The quotient $(\BA^1\times G\times G)/\wt{G}$ is a quasi-affine scheme.
\end{prop}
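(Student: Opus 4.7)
The plan is to exhibit $(\BA^1\times G\times G)/\wt{G}$ as an open subscheme of an affine scheme; by definition this gives both schemeness and quasi-affineness. Since $\wt{G}$ is smooth over $\BA^1$ and acts on the affine scheme $\BA^1\times G\times G$, the quotient exists a priori as an algebraic space of finite type over $\BA^1$, so the content is to construct an affine scheme $\CV$ and a $\wt{G}$-invariant morphism $\phi:\BA^1\times G\times G\to\CV$ whose induced map $\bar\phi:(\BA^1\times G\times G)/\wt{G}\to\CV$ is an open embedding. The natural candidate for $\CV$ is provided by the Vinberg enveloping semigroup of $G$ specialized along the cocharacter $\gamma$, as discussed in Appendix~\ref{s:Vinberg}.

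First I would analyze the two special fibers of the quotient. For $t\in\BG_m$, the subgroup $\wt{G}_t=\{(h,\gamma(t)h\gamma(t)^{-1}):h\in G\}$ is the graph of $\Ad_{\gamma(t)}$, so the invariant $(g_1,g_2)\mapsto \Ad_{\gamma(t)}(g_1^{-1})\cdot g_2$ identifies $(G\times G)/\wt{G}_t$ with $G$, which is of course open in itself. At $t=0$ one has $\wt{G}_0=P\underset{M}{\times}P^-$, and the normal subgroup $U(P)\times U(P^-)$ with reductive quotient $M$ yields
$$
(G\times G)/(P\underset{M}{\times}P^-)\;\cong\;\bigl(G/U(P)\times G/U(P^-)\bigr)\big/M_{\mathrm{diag}}.
$$
Standard facts about the basic affine space (for reductive $G$ in characteristic $0$) say that $G/U(P)$ is quasi-affine with affine closure $\overline{G/U(P)}$ carrying a compatible $(G\times M)$-action, and similarly for $P^-$. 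Since $M$ is reductive, the above quotient sits as an open subscheme of the affine scheme $\bigl(\overline{G/U(P)}\times\overline{G/U(P^-)}\bigr)\sslash M_{\mathrm{diag}}$.

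Next I would carry out the global construction: using the Vinberg semigroup one produces an affine $\BA^1$-family $\CV$ with $G\times G$-action whose fiber over $t\in\BG_m$ is $G$ and whose fiber over $0$ is the affine scheme appearing in the previous paragraph. Together with a canonical $\wt{G}$-invariant map $\phi:\BA^1\times G\times G\to\CV$ that realizes the two fiberwise quotient maps described above, this gives a morphism $\bar\phi:(\BA^1\times G\times G)/\wt{G}\to\CV$ which is fiberwise an open embedding (an isomorphism over $\BG_m$, and an open embedding at $0$). The quotient $(\BA^1\times G\times G)/\wt{G}$ is flat over $\BA^1$ because $\BA^1\times G\times G$ is and $\wt{G}/\BA^1$ is smooth, so \lemref{l:openness by fibers} applies and upgrades the fiberwise statement to a global open embedding $\bar\phi:(\BA^1\times G\times G)/\wt{G}\hookrightarrow\CV$. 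Since $\CV$ is affine, this is the sought-after conclusion.

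The main obstacle is the global construction of the affine family $\CV$ and the map $\phi$: one cannot simply interpolate the diagonal $M$-action at $t=0$ to a $\gamma(t)$-twisted action at $t\neq 0$ by naive conjugation, because such a twist becomes singular as $t\to 0$. The role of the Vinberg enveloping monoid in Appendix~\ref{s:Vinberg} is precisely to organize this interpolation into an honest algebraic family of affine varieties over $\BA^1$, extending smoothly across the special fiber. Once this family and the invariant map are in hand, the fiberwise checks at $t=1$ and $t=0$ described above, combined with \lemref{l:openness by fibers}, complete the argument.
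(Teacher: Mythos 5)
Your approach is correct in outline, but it follows what the paper offers as an \emph{alternative} proof (the Vinberg semigroup route of Appendix~\ref{s:Vinberg}, culminating in \corref{c:q-aff}), whereas the paper's own proof in Appendix~\ref{s:quasiaffine} takes a genuinely different and more elementary path. You aim to realize the quotient as an open subscheme of an affine $\BA^1$-family $\CV$, constructing $\CV$ from the Vinberg monoid, identifying the special fiber, and invoking \lemref{l:openness by fibers}. The paper instead applies the criterion that a separated quasi-finite morphism to an affine scheme is quasi-affine, and builds such a morphism directly: it sets $R_{\check\lambda}:=\End_k(V_{\check\lambda})$, defines $f_{\check\lambda}(t):=\check\lambda(\gamma(t))\cdot\rho_{\check\lambda}(\gamma(t)^{-1})$, and proves $\bigl(\bigcap_{\check\lambda}\Stab_t^{\check\lambda}\bigr)_{\red}=\wt{G}_t$. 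This avoids the Vinberg semigroup, affine closures of basic affine spaces, and GIT altogether --- which is why the paper calls it ``short and self-contained.'' Note also the structural asymmetry: the paper needs the stabilizer only up to a finite group scheme (quasi-finiteness suffices), whereas your open-embedding strategy requires exact control of the stabilizer.

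Within your own route there is a genuine gap you should flag. The assertion that $(G/U(P)\times G/U(P^-))/M$ sits as an \emph{open} subscheme of the affine GIT quotient of $\overline{G/U(P)}\times\overline{G/U(P^-)}$ by the diagonal $M$-action is not formal: one must check that the $M$-orbits of points with both coordinates in the open loci are closed in the product of affine closures and are separated by invariants, so that the categorical quotient neither collapses distinct orbits nor fails to receive the geometric quotient as an open piece. Similarly, producing the interpolating affine family $\CV$ together with the $\wt{G}$-invariant map $\phi$ is precisely the content of Appendix~\ref{s:Vinberg}, which rests on cited facts about $\ol{G_{\on{enh}}}$ (that it is affine of finite type and normal, that $\overset{\circ}{\ol{G_{\on{enh}}}}$ is smooth, and that $G\times G$ acts transitively on it relative to $\ol{T_{\on{adj}}}$) and on the identification $\wt{G}=\Stab_{G\times G}(\gamma')$ of \propref{p:stabilizer}. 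You correctly locate where the real work lies, but you defer essentially all of it; the paper's Appendix~\ref{s:quasiaffine} argument is designed precisely to avoid having to carry that weight.
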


\sssec{}

In Appendix \ref{s:Vinberg} we will show how the group scheme $\wt{G}$ can be described using the Vinberg semi-group 
corresponding to $G$, see \propref{p:stabilizer}. 
This description immediately implies \propref{p:quasiaffine} (see  \corref{c:q-aff}). 
However, the proof of \propref{p:quasiaffine} given in Appendix \ref{s:quasiaffine} has the advantage of being short and 
self-contained.

\ssec{The interpolation of moduli of bundles}   \label{ss:interpolG}

\sssec{}

The functor that assigns to an affine scheme $S$ over $\BA^1$ the groupoid of torsors on $S\times X$ with respect to the 
group-scheme 
$(\wt{G}\underset{\BA^1}\times S)\times X$ is an (a priori, non-algebraic) stack over $\BA^1$, denoted by 
$\Bun_{\wt{G}}\,$. The morphism $$\wt{p}:\wt{G}\to \BA^1\times G\times G$$
gives rise to a morphism 
\footnote{We warn the reader of the clash of notations: the map $\wt\sfp:\Bun_{\wt{G}}\to \BA^1\times \Bun_G\times \Bun_G$
introduced above has nothing to do with the map $\wt\sfp:\wt{\Bun}_P\to \Bun_G$ of \secref{sss:BunPtilde}. The symbol
$\wt\sfp$ has been chosen in both cases in order to be consistent with both \cite{DrGa3} and \cite{BG}. 
The two are unlikely to be confused,
as the stack $\wt\Bun_P$ will not appear again in this paper.}
\begin{equation}  \label{e:tilde p}
\wt\sfp:\Bun_{\wt{G}}\to \BA^1\times \Bun_G\times \Bun_G.
\end{equation}

\begin{prop}   \label{p:Bun_tilde_G} \hfill

\smallskip

\noindent{\em(i)} $\Bun_{\wt{G}}$ is an algebraic stack smooth over $\BA^1$, with an affine diagonal.

\medskip

\noindent{\em(ii)} The map $\wt\sfp$ is of finite type and representable. Moreover, it is schematic.
\end{prop}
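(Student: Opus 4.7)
The plan is to prove (ii) first and then derive (i) as a consequence. For (ii), fix a test morphism $(t, \CE_1, \CE_2) : S \to \BA^1 \times \Bun_G \times \Bun_G$. The fiber product $\Bun_{\wt G} \underset{\BA^1 \times \Bun_G \times \Bun_G}\times S$ parameterizes reductions, over $X \times S$, of the $(G \times G)$-torsor $\CE_1 \times \CE_2$ to the closed subgroup scheme $\wt{G} \subset \BA^1 \times G \times G$ (taken relative to $S \to \BA^1$). Such a reduction is the same datum as a section over $X \times S \to S$ of the associated $X \times S$-scheme
\[
\CY \; := \; \bigl((\CE_1 \times \CE_2) \times^{G \times G} ((\BA^1 \times G \times G)/\wt G)\bigr) \underset{\BA^1}\times S .
\]
By \propref{p:quasiaffine}, $(\BA^1 \times G \times G)/\wt G$ is a quasi-affine scheme of finite type over $\BA^1$; hence $\CY$ is quasi-affine and of finite type over $X \times S$. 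Since $X$ is a projective curve, the functor assigning to $S$ the sections of such a $\CY$ is representable by a separated scheme of finite type over $S$ (this is a standard fact about relative Hom-schemes with quasi-affine target over a projective base). This shows that $\wt\sfp$ is schematic and of finite type, proving (ii).

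For (i), algebraicity of $\Bun_{\wt G}$ (locally of finite type) is immediate from (ii), since $\BA^1 \times \Bun_G \times \Bun_G$ is an algebraic stack locally of finite type and $\wt\sfp$ is schematic. Smoothness over $\BA^1$ follows from the standard deformation-theoretic argument: because $\wt G$ is smooth over $\BA^1$ by \propref{p:smooth}(i), the obstruction to deforming a $\wt G$-torsor $\F$ on $X$ across a square-zero thickening (relative to $\BA^1$) lies in $H^2(X, \mathrm{ad}(\F))$, which vanishes as $X$ is a curve and the adjoint bundle is an $\CO_X$-module of finite rank. For the affine diagonal, factor $\Delta_{\Bun_{\wt G}}$ as the relative diagonal $\Bun_{\wt G} \to \Bun_{\wt G} \underset{\BA^1}\times \Bun_{\wt G}$ followed by $\Bun_{\wt G} \underset{\BA^1}\times \Bun_{\wt G} \to \Bun_{\wt G} \times \Bun_{\wt G}$, where the latter is the base change of the closed embedding $\BA^1 \hookrightarrow \BA^1 \times \BA^1$ and hence is itself a closed embedding (affine). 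The relative diagonal represents the functor of isomorphisms between two $\wt G$-torsors on $X \times S$; this is a closed subscheme of the scheme of homomorphisms of torsors, which is affine over $S$ since $\wt G$ is affine over $\BA^1$ and $X$ is projective. Composing two affine morphisms gives affineness of $\Delta_{\Bun_{\wt G}}$.

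The key technical input — and essentially the only step not reducible to standard moduli theory — is the appeal in (ii) to \propref{p:quasiaffine}. Without the quasi-affineness of $(\BA^1 \times G \times G)/\wt G$, one would only obtain that the fibers of $\wt\sfp$ are algebraic spaces, and one could not conclude schematicness nor finite type directly. Once this is in hand, the remainder of the argument consists of well-established applications of deformation theory and of the representability of $\mathrm{Sect}_X(\CY)$ by a scheme of finite type for $\CY \to X$ quasi-affine of finite type.
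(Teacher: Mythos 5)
Your proof is correct and follows the same approach as the paper: derive (ii) from \propref{p:quasiaffine} via representability of section spaces over the projective curve $X$, then obtain (i) — algebraicity from (ii), affine diagonal from the affineness of $\wt G$ over $\BA^1$, and smoothness from vanishing of $H^2$ on a curve. The paper states these steps without elaboration; you simply fill in the standard details (your phrase ``closed subscheme of the scheme of homomorphisms of torsors'' for the relative diagonal is slightly loose — the cleaner formulation is that the Isom-sheaf of two $\wt G$-torsors is an fppf-form of $\wt G$ over $X\times S$, hence affine, so its section space over $S$ is affine — but the intended argument is the same).
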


\begin{proof}
Point (ii) follows from \propref{p:quasiaffine}.

\medskip

Let us prove (i). Statement (ii) implies that the stack $\Bun_{\wt{G}}$ is algebraic and locally of finite type. 
The fact that it has an affine diagonal is immediate from the fact that $\wt{G}$ itself is affine over $\BA^1$. 
It remains to check that the morphism $\Bun_{\wt{G}}\to\BA^1$ is formally smooth. As usual, this follows from 
the fact that any coherent sheaf on $X$ has a trivial $H^2$.
\end{proof}

\sssec{}

Consider the action of $\BG_m\times \BG_m$ on $\wt{G}$ defined as in \secref{sss:action on tilde}. 
The morphism $\wt{G}\to \BA^1$ is $\BG_m\times \BG_m$-equivariant if $\BA^1$ is
equipped  with the following $\BG_m\times \BG_m$-action:
$$(\lambda_1,\lambda_2)\cdot t=\lambda_1^{-1}\cdot \lambda_2^{-1}\cdot t.$$
Moreover, the action of $\BG_m\times \BG_m$ on $\wt{G}$ respects the group 
structure on $\wt{G}$. Therefore it induces a $\BG_m\times \BG_m$-action on the stack $\Bun_{\wt{G}}\,$, 
which covers the above $\BG_m\times \BG_m$-action on $\BA^1$.


\begin{lem}  \label{l:geom equiv}
The map $\wt\sfp:\Bun_{\wt{G}}\to \BA^1\times \Bun_G\times \Bun_G$ is equivariant
with respect to the above $\BG_m\times \BG_m$-action on $\Bun_{\wt{G}}$ and the
$\BG_m\times \BG_m$-action on $\BA^1\times \Bun_G\times \Bun_G$ via the $\BA^1$-factor. 
\end{lem}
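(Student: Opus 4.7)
The plan is to derive the asserted equivariance from two ingredients: the $\BG_m\times\BG_m$-equivariance at the level of the underlying group schemes, and the fact that inner conjugation acts canonically trivially on $\Bun_G$.

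For the first ingredient, I would recall from Section~\ref{sss:action on tilde} that the morphism $\wt p:\wt G\to\BA^1\times G\times G$ is $\BG_m\times\BG_m$-equivariant when the target carries the action $(\lambda_1,\lambda_2)\cdot(t,g_1,g_2)=(\lambda_1^{-1}\lambda_2^{-1}t,\Ad_{\gamma(\lambda_1)}(g_1),\Ad_{\gamma(\lambda_2)^{-1}}(g_2))$, in view of the fact that the $\BG_m$-action on $Z=G$ is conjugation via $\gamma$. Passing to torsors, this yields an a priori equivariance of $\wt\sfp$ with respect to the action on $\BA^1\times\Bun_G\times\Bun_G$ that acts on $\BA^1$ as above and on the two $\Bun_G$ factors by pushforward along $\Ad_{\gamma(\lambda_1)}$ and $\Ad_{\gamma(\lambda_2)^{-1}}$, respectively. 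This step is purely formal once one observes that $\wt\sfp$ is obtained from $\wt p$ by forming the associated stack of torsors.

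For the second ingredient, I would show that the induced action of $\BG_m\times\BG_m$ on $\Bun_G\times\Bun_G$ is canonically trivializable. The key remark is that for any $g\in G$, pushforward of a $G$-torsor $\CP$ along the inner automorphism $\Ad_g$ is canonically isomorphic to $\CP$ itself: right translation by $g$ on the total space intertwines the original $G$-action with the $\Ad_g$-twisted one and provides an isomorphism $(\Ad_g)_*\CP\iso\CP$. Applied to $g=\gamma(\lambda)$ and combined via the multiplicativity of $\gamma$ and of right translation, this yields a coherent trivialization of the $\BG_m\times\BG_m$-action of the first step on the $\Bun_G\times\Bun_G$ factor. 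Combining both ingredients then identifies the target action with the one coming only via the $\BA^1$-factor, as required.

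The only place where care is needed, and thus the main potential obstacle, is coherence: the trivializations $(\Ad_{\gamma(\lambda)})_*\iso\id$ must be compatible with composition in $\BG_m$ in order to define genuine equivariance data rather than a pointwise equivalence. This compatibility reduces to the associativity of right translation and to $\gamma$ being a group homomorphism, so no substantive geometric difficulty appears—everything is bookkeeping of natural transformations at the level of the classifying stack.
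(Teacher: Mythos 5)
Your proposal is correct and follows essentially the same two-step argument as the paper: first, the $\BG_m\times\BG_m$-equivariance of $\wt p:\wt G\to\BA^1\times G\times G$ from \secref{sss:action on tilde}, and second, the fact that since the $\BG_m\times\BG_m$-action on $G\times G$ is inner (via $\Ad_{\gamma(\cdot)}$), the induced action on $\Bun_G\times\Bun_G$ is canonically trivialized. The paper states the second point in one line; your elaboration of the right-translation trivialization and the coherence check are exactly the content that is implicit in the paper's phrase ``canonically isomorphic to the trivial one.''
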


\begin{proof}

This follows from the $\BG_m\times \BG_m$-equivariance of the map
$$\wt{p}:\wt{G}\to \BA^1\times G\times G$$
(see \secref{sss:action on tilde}), and the fact that since the $\BG_m\times \BG_m$-action on $G\times G$ is inner, 
the induced action on $\Bun_G\times \Bun_G$ is canonically isomorphic to the trivial one.
\end{proof}

\begin{rem}
The action on $\Bun_{\wt{G}}$ of the subgroup 
\begin{equation}   \label{e:Antidiag}
\{ (\lambda_1,\lambda_2)\in \BG_m\times\BG_m\, |\, \lambda_1\cdot \lambda_2=1\}\subset \BG_m\times\BG_m
\end{equation}
is canonically trivial because its action on $\wt{G}$ is inner. Moreover, the action of the sub\-group~\eqref{e:Antidiag}
on the triple $(\Bun_{\wt{G}}\, , \BA^1\times \Bun_G\times \Bun_G\, , \wt{p})$ is canonically trivial. \end{rem}

\sssec{}

Let $(\Bun_{\wt{G}})_t$ denote the fiber if $\Bun_{\wt{G}}$ over $t\in \BA^1$. Let
$$\wt\sfp_t:(\Bun_{\wt{G}})_t\to \Bun_G\times \Bun_G$$
denote the corresponding map. 

\medskip

By construction, $(\Bun_{\wt{G}})_1$ identifies with $\Bun_{\wt{G}_1}=\Bun_G\,$, and the morphism 
$$\wt\sfp_1:(\Bun_{\wt{G}})_1:\to\Bun_G\times\Bun_G,$$ 
identifies with the diagonal $$\Delta_{\Bun_G}:\Bun_G\to \Bun_G\times \Bun_G.$$

\medskip

Similarly, $(\Bun_{\wt{G}})_0$ identifies with
$\Bun_{\wt{G}_0}\simeq \Bun_{P\underset{M}\times P^-}$. We now claim:

\begin{lem}  \label{l:fiber product}
The natural map $\Bun_{P\underset{M}\times P^-}\to \Bun_P\underset{\Bun_M}\times \Bun_{P^-}$
is an isomorphism.
\end{lem}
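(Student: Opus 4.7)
The plan is to deduce the lemma from a more general principle: for any diagram $H_1 \to H_0 \leftarrow H_2$ of affine algebraic groups, the natural map
\[
\Bun_{H_1 \times_{H_0} H_2} \to \Bun_{H_1} \times_{\Bun_{H_0}} \Bun_{H_2}
\]
is an isomorphism of stacks. In our setting $H_1 = P$, $H_2 = P^-$, and $H_0 = M$, and the two surjections $P \twoheadrightarrow M \twoheadleftarrow P^-$ identify $H_1 \times_{H_0} H_2$ with $P \times_M P^-$. I would prove the general statement by writing explicit mutually inverse functors on $T$-points, for any test scheme $T$.

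Concretely, the forward functor is constructed via extension of structure group: given a principal $(P \times_M P^-)$-bundle $\CE$ on $T \times X$, the two projections $\pi_P : P \times_M P^- \to P$ and $\pi_{P^-} : P \times_M P^- \to P^-$ produce associated bundles $\CE_P := \CE \times^{P \times_M P^-} P$ and $\CE_{P^-} := \CE \times^{P \times_M P^-} P^-$. The equality of the two compositions $P \times_M P^- \to M$ provides a canonical isomorphism $\alpha$ between the induced $M$-bundles, so we obtain an object of $\Bun_P \times_{\Bun_M} \Bun_{P^-}$. In the opposite direction, given $(\CE_P, \CE_{P^-}, \alpha)$, denote by $\CE_M$ the common $M$-bundle (identified via $\alpha$) and form the scheme-theoretic fiber product $\CE := \CE_P \times_{\CE_M} \CE_{P^-}$ over $T \times X$. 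It carries a natural action of $P \times_M P^-$ via the actions of $P$ on $\CE_P$ and of $P^-$ on $\CE_{P^-}$.

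To verify that this $\CE$ is actually a principal $(P \times_M P^-)$-bundle and that the two functors are mutually inverse, one works fppf-locally on $T \times X$, where both $\CE_P$ and $\CE_{P^-}$ trivialize compatibly with the trivialization of $\CE_M$. In a local trivialization the fiber product $\CE_P \times_{\CE_M} \CE_{P^-}$ becomes the trivial $(P \times_M P^-)$-bundle, since $P \times_M P^-$ is by definition the fiber product of $P$ and $P^-$ over $M$. The tautological map $\CE \to \CE_P \times_{\CE_M} \CE_{P^-}$ is therefore a local isomorphism of torsors and hence a global one, which establishes that the forward composed with the backward functor is the identity. The converse composition is similar and equally formal.

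There is no serious obstacle: the statement is essentially the commutation of the functor ``take the stack of principal bundles'' with finite limits in the group argument, which holds because the condition of being a principal bundle is local and the fiber product of groups acts on the fiber product of torsors in the obvious way. The only point requiring attention is that one should check locally on $T \times X$ (rather than just pointwise) to get the correct global statement, which is why the proof is phrased in terms of fppf-local trivializations.
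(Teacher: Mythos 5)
Your claimed general principle is false as stated. Take $H_1 = H_2 = \{1\}$ and $H_0 = \BG_m$: then $\Bun_{H_1 \times_{H_0} H_2} = \on{pt}$, whereas $\Bun_{H_1} \times_{\Bun_{H_0}} \Bun_{H_2} = \on{pt} \times_{\Pic(X)} \on{pt}$ is the automorphism group of the trivial line bundle, i.e.\ $\BG_m$. So the functor ``take $\Bun$'' emphatically does \emph{not} commute with all finite limits of groups, and any proof of the unqualified general statement must contain an error.

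The error in your argument is in the step ``one works fppf-locally on $T\times X$, where both $\CE_P$ and $\CE_{P^-}$ trivialize compatibly with the trivialization of $\CE_M$.'' This is exactly the point that needs a hypothesis. After trivializing $\CE_P$ locally (inducing a trivialization of $\CE_M$), you need a local section of $\CE_{P^-}$ lifting the resulting section of $\CE_M$; equivalently, you need the map $\CE_{P^-} \to \CE_M$ to be an fppf surjection. This holds because $P^- \twoheadrightarrow M$ is surjective (the fibers are torsors for the smooth group $N(P^-)$), but fails for a general homomorphism $H_2 \to H_0$. So the hypothesis is being used silently. The correct general statement is the one the paper proves as Lemma~\ref{l:2fiber product}: it is an isomorphism if $f_1(H_1)\cdot f_2(H_2)=H$, and only an open embedding if $f_1(H_1)\cdot f_2(H_2)$ is merely open in $H$. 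Your case satisfies the stronger hypothesis since both $P\to M$ and $P^-\to M$ are surjective.

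Once the hypothesis is inserted, your torsor-by-torsor argument does go through and is a valid alternative to the paper's proof. The paper's proof is slicker: it reduces to showing $\on{pt}/(H_1\times_H H_2)\to \on{pt}/H_1 \times_{\on{pt}/H}\on{pt}/H_2$ is an isomorphism (then applies $\on{Maps}(X,-)$), and proves that by presenting both sides as quotients of $f_1(H_1)f_2(H_2)$ and of $H$ by the $(H_1\times H_2)$-action $(h_1,h_2)*h = f_1(h_1)\,h\,f_2(h_2)^{-1}$. That quotient picture makes the role of the hypothesis $f_1(H_1)\cdot f_2(H_2)=H$ completely transparent, and simultaneously delivers the ``open embedding'' variant needed for the map $\sfj$ elsewhere in the paper. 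Your direct gluing argument, with the surjectivity hypothesis made explicit, would give (i) but not so directly (ii).
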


The above lemma is a particular case of the following one.

\begin{lem}  \label{l:2fiber product}
Let $$H_1\overset{f_1}\longrightarrow H\overset{f_2}\longleftarrow H_2$$ be a diagram of algebraic groups.

\smallskip

\noindent{\em(i)} If $f_1(H_1)\cdot f_2(H_2)=H$ then the natural map 
$\Bun_{H_1\underset{H}\times H_2}\to \Bun_{H_1}\underset{\Bun_H}\times \Bun_{H_2}$
is an isomorphism.

\smallskip

\noindent{\em(ii)} If $f_1(H_1)\cdot f_2(H_2)$ is open in $H$ then the above map is an open embedding.
\end{lem}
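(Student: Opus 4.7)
The proof amounts to constructing an inverse to the natural functor $F: \Bun_K \to \Bun_{H_1}\times_{\Bun_H} \Bun_{H_2}$, where $K := H_1\times_H H_2$, and analyzing where this inverse is defined. Conceptually, both statements follow from the observation that $\Maps(X,-)$ commutes with fiber products, combined with the presentation $BH_1\times_{BH} BH_2 \cong [H/(H_1\times H_2)]$, where $H_1\times H_2$ acts on $H$ by $(h_1,h_2)\cdot h := f_1(h_1)\cdot h\cdot f_2(h_2)^{-1}$. The stabilizer of $1\in H$ for this action is precisely $K$, so the orbit of $1$ is $U := f_1(H_1)\cdot f_2(H_2)$, and $[U/(H_1\times H_2)] \cong BK$ sits inside $[H/(H_1\times H_2)]$ as (a priori) a locally closed substack.

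Concretely, I would proceed as follows. Given a triple $(\mathcal{P}_1,\mathcal{P}_2,\phi)$, set $\mathcal{P}_H := \mathcal{P}_1\times^{H_1} H$, so that $\phi$ provides $H_i$-equivariant morphisms $\mathcal{P}_i \to \mathcal{P}_H$ via the respective $f_i$. Form the $X$-scheme
\[
\mathcal{P} := \mathcal{P}_1\times_{\mathcal{P}_H} \mathcal{P}_2,
\]
on which $K$ acts by componentwise right multiplication through the inclusion $K\hookrightarrow H_1\times H_2$; this is well-defined exactly because $f_1(k_1)=f_2(k_2)$ for $(k_1,k_2)\in K$. In a local trivialization where $\phi$ is recorded by a function $g: X\to H$, the fiber of $\mathcal{P}$ above $x$ identifies with the preimage $\mu^{-1}(g(x))$ of
\[
\mu: H_1\times H_2 \to H, \qquad (h_1,h_2)\mapsto f_1(h_1)\cdot f_2(h_2)^{-1},
\]
whose image is $U$ and which makes $H_1\times H_2 \to U$ into a principal $K$-bundle (by the orbit--stabilizer description above). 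Hence $\mathcal{P}$ is a $K$-torsor precisely when $g(X)\subset U$.

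Under hypothesis (i) we have $U=H$, so $\mathcal{P}$ is unconditionally a $K$-torsor, and the construction $(\mathcal{P}_1,\mathcal{P}_2,\phi)\rightsquigarrow \mathcal{P}$ is an inverse to $F$, proving the isomorphism. Under hypothesis (ii) the condition $g(X)\subset U$, equivalently that the relative position of the two reductions $\mathcal{P}_1,\mathcal{P}_2$ of $\mathcal{P}_H$ lies in the open stratum at every point of $X$, is an \emph{open} condition on the parameter scheme $S$: for a family, the ``bad'' locus in $S$ is the image of the closed subscheme $g^{-1}(H\smallsetminus U)\subset S\times X$ under the proper projection $S\times X\to S$, and is therefore closed. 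This realizes $\Bun_K$ as the corresponding open substack. The main technical point to verify is that $\mathcal{P}$ is fppf-locally trivial as a $K$-torsor, which reduces to the (straightforward) assertion that $\mu: H_1\times H_2 \to U$ itself is a principal $K$-bundle; everything else is a routine descent and naturality check.
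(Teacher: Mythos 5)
Your proposal is correct and takes essentially the same approach as the paper: both hinge on the presentation $\on{pt}/H_1\underset{\on{pt}/H}\times\on{pt}/H_2\simeq H/(H_1\times H_2)$ for the action $(h_1,h_2)*h=f_1(h_1)\cdot h\cdot f_2(h_2)^{-1}$, with $\on{pt}/(H_1\underset{H}\times H_2)$ realized as the quotient of the orbit $f_1(H_1)\cdot f_2(H_2)$ of $1\in H$. The paper's proof consists of just this reduction to classifying stacks, leaving the passage to $\Bun$ (in particular that the open-orbit condition descends to an open condition on the base via properness of $X$) implicit, whereas you unpack that step explicitly by constructing the $K$-torsor $\mathcal P$ and invoking the proper projection $S\times X\to S$.
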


\begin{proof}
It suffices to show that the morphism of stacks
\begin{equation} \label{e:map of stacks}
\on{pt}/(H_1\underset{H}\times H_2)\to  \on{pt}/H_1 \underset{ \on{pt}/H}\times  \on{pt}/H_2
\end{equation}
is an isomorphism if (i) holds and an open embedding if (ii) holds. To this end, consider the action of
$H_1\times H_2$ on $H$ defined by
\[
(h_1,h_2)*h:=f_1(h_1)\cdot h\cdot f_2(h_2)^{-1}, \quad\quad h_i\in H_i\, ,\, h\in H\, 
\]
and note that the morphism \eqref{e:map of stacks} can be obtained from the embedding 
$f_1(H_1)\cdot f_2(H_2)\hookrightarrow H$ by passing to the quotient with respect to the action of $H_1\times H_2$.
\end{proof}

\ssec{The interpolating kernel}
 
\sssec{}  \label{sss:tilde mu}

Let $\Bun_{\wt{G}}^\mu$ denote the open substack of $\Bun_{\wt{G}}$ obtained by removing
from the special fiber 
$$\Bun_{\wt{G}_0}\simeq \Bun_P\underset{\Bun_M}\times \Bun_{P^-}$$
the union of the connected components $\Bun^{\mu'}_P\underset{\Bun^{\mu'}_M}\times \Bun^{\mu'}_{P^-}$
with $\mu'\neq \mu$.  By a slight abuse of notation, the restriction of the map \eqref{e:tilde p} to 
$\Bun_{\wt{G}}^\mu$ will be denoted by the same symbol
\begin{equation} \label{e:wtsfp}
\wt\sfp:\Bun_{\wt{G}}^\mu\to \BA^1\times \Bun_G\times \Bun_G\, .
\end{equation}

By \lemref{l:fiber product},
\begin{equation}  \label{e:tilde mu 0}
(\Bun_{\wt{G}}^\mu)_0\simeq \Bun^\mu_P\underset{\Bun^\mu_M}\times \Bun^\mu_{P^-}\, .
\end{equation}

On the other hand, the map
\begin{equation}  \label{e:tilde mu 1}
(\Bun_{\wt{G}}^\mu)_1\to (\Bun_{\wt{G}})_1\simeq \Bun_G
\end{equation}
is an isomorphism.

\sssec{}  \label{sss:defining K}
By \propref{p:Bun_tilde_G}(ii),
the morphism $\wt\sfp$ in \eqref{e:wtsfp} is quasi-compact, so the functor
$\wt\sfp_\blacktriangle$ is well-defined. 

\begin{rem}
By \propref{p:Bun_tilde_G}(ii), $\wt\sfp$ is representable, and in fact schematic. 
As was already mentioned in \secref{sss:blacktriangle}, this implies that
the morphism $\wt\sfp_\blacktriangle\to \wt\sfp_*$ is, in fact, an isomorphism.
\end{rem}

\medskip

We define $\CQ\in \Dmod(\BA^1\times \Bun_G\times \Bun_G)$ by
$$\CQ:=\wt\sfp_\blacktriangle(\omega_{\Bun_{\wt{G}}^\mu}).$$

\medskip

Let us check that $\CQ$ satisfies properties (i)-(ii) from \secref{sss:properties of K}.

\sssec{}

By \eqref{e:tilde mu 0} and \eqref{e:tilde mu 1} and base change, 
the objects $\CQ_0$ and $\CQ_1$ from \secref{sss:intr K} identify
with the !-restrictions of $\CQ$ under the maps
$$\iota_0,\iota_1:\Bun_G\times \Bun_G\to \BA^1\times \Bun_G\times \Bun_G$$
corresponding to $0\in \BA^1$ and $1\in \BA^1$, respectively. 

\medskip

This establishes property (i).

\sssec{}    \label{sss:Q mon}

By \lemref{l:geom equiv} and base change, $\CQ$ is naturally $\BG_m\times \BG_m$-equivariant, i.e., 
it is naturally a pullback of an object of 
$$\Dmod((\BA^1/(\BG_m\times \BG_m))\times \Bun_G\times \Bun_G),$$
where the $\BG_m\times \BG_m$-action on $\BA^1$ is
$$(\lambda_1,\lambda_2)\cdot t=\lambda_1^{-1}\cdot \lambda_2^{-1}\cdot t.$$

\medskip

In particular, $\CQ$ is $\BG_m$-monodromic with respect to the action of $\BG_m$ on $\BA^1$
by dilations. 

\medskip

This establishes property (ii).

\section{Verification of adjunction}  \label{s:Verifying}

We have the natural transformation \eqref{e:expl co-unit}
$$\on{CT}^{\mu,-}_*\circ \Eis_*^\mu\to \on{Id}_{\Dmod(\Bun^\mu_M)}$$
and the natural transfrmation \eqref{e:unit} 
$$\on{Id}_{\Dmod(\Bun_G)}\to  \Eis_*^\mu\circ \on{CT}^{\mu,-}_*,$$
where the latter is defined using the map $\CQ_1\to \CQ_0$ of \eqref{e:map of kernels}. 

\medskip

It remains to show that these two natural transformations satisfy the adjunction properties. That is, we have to show that the composition
\begin{equation} \label{e:first composition}
\on{CT}^{\mu,-}_*\to \on{CT}^{\mu,-}_*\circ \Eis_*^\mu\circ \on{CT}^{\mu,-}_*\to
\on{CT}^{\mu,-}_*
\end{equation}
is isomorphic to the identity endomorphism of $\on{CT}^{\mu,-}_*$, and
\begin{equation} \label{e:second composition}
\Eis_*^\mu\to \Eis_*^\mu\circ \on{CT}^{\mu,-}_*\circ \Eis_*^\mu\to 
\Eis_*^\mu
\end{equation} 
is isomorphic\footnote{In the future we will 
skip the words ``isomorphic to"  in similar situations. (This is a slight abuse of language since we work with the DG categories of 
D-modules rather than with their homotopy categories.)}
 to the identity endomorphism of $\Eis_*^\mu$.

\medskip

We will do so for the composition \eqref{e:first composition}. The case of \eqref{e:second composition}
is similar and will be left to the reader. 

\medskip

The computation of the composition \eqref{e:first composition} repeats \emph{verbatim} the 
corresponding computation in \cite[Sect. 5]{DrGa3}. We include it for the sake of completeness.

\ssec{The diagram describing the composed functor}

\sssec{}

We will use the notation
\begin{equation}   \label{e:BIG}
\Phi:=\on{CT}^{\mu,-}_*\circ \Eis_*^\mu\circ \on{CT}^{\mu,-}_*=
\left((\sfq^-)_\blacktriangle\circ (\sfp^-)^!\right)\circ \left((\sfp^+)_\blacktriangle \circ (\sfq^+)^!\right)\circ
\left((\sfq^-)_\blacktriangle\circ (\sfp^-)^!\right).
\end{equation}

\medskip

By base change, $\Phi$ is given by pull-push along the following diagram: 

\begin{equation} \label{e:comp diag 1}
\xy
(-20,0)*+{\Bun_M^\mu}="X";
(20,0)*+{\Bun_G\, .}="Y";
(0,20)*+{\Bun^\mu_{P^-}}="Z";
(-40,20)*+{\Bun_P^\mu}="W";
(-60,0)*+{\Bun_G}="U";
(-20,40)*+{\Bun^\mu_P\underset{\Bun_M^\mu}\times \Bun^\mu_{P^-}}="V";
(-100,0)*+{\Bun_M^\mu}="T";
(-80,20)*+{\Bun^\mu_{P-}}="S";
(-60,40)*+{\Bun^\mu_{P-}\underset{\Bun_G}\times \Bun^\mu_P}="R";
(-40,60)*+{\Bun^\mu_{P^-}\underset{\Bun_G}\times \Bun_P^\mu\underset{\Bun_M^\mu}\times \Bun^\mu_{P^-}}="Q";
{\ar@{->}_{\sfq^-} "Z";"X"};
{\ar@{->}^{\sfp^-} "Z";"Y"};
{\ar@{->}^{\sfq^+} "W";"X"};
{\ar@{->}_{\sfp^+} "W";"U"}; 
{\ar@{->} "V";"Z"};
{\ar@{->} "V";"W"};
{\ar@{->}_{\sfq^-} "S";"T"};
{\ar@{->}^{\sfp^-} "S";"U"};
{\ar@{->} "R";"S"};
{\ar@{->} "R";"W"};
{\ar@{->} "Q";"R"};
{\ar@{->} "Q";"V"};
\endxy
\end{equation}

\sssec{}

Recall the stack $\Bun_{\wt{G}}^\mu$, see \secref{sss:tilde mu}. 

\medskip

Consider the stack
\begin{equation}   \label{e:tilde BunP-}
\Bun_{P^-,\wt{G}}^\mu:=\Bun^\mu_{P^-}\underset{\Bun_G}\times \Bun^\mu_{\wt{G}},
\end{equation}
where the fiber product is formed using the composition
\[
\Bun_{\wt{G}}^\mu\overset{\wt\sfp}\longrightarrow 
\BA^1\times \Bun_G\times \Bun_G\to \Bun_G\times \Bun_G \overset{\on{pr}_1}\longrightarrow \Bun_G.
\]

\medskip

Let 
$$\sfr: \Bun_{P^-,\wt{G}}^\mu\to \BA^1\times \Bun_M^\mu\times \Bun_G$$
denote the composition
$$\Bun_{P^-,\wt{G}}^\mu=\Bun^\mu_{P^-}\underset{\Bun_G}\times \Bun^\mu_{\wt{G}}\to  
\BA^1\times \Bun^\mu_{P^-}\times \Bun_G \overset{\id_{\BA^1}\times \sfq^-\times \id_{\Bun_G}}\longrightarrow 
\BA^1\times \Bun_M^\mu\times \Bun_G,$$
where the first arrow is obtained by base change from 
$$\wt\sfp:\Bun_{\wt{G}}^\mu\to \BA^1\times \Bun_G\times \Bun_G.$$

\medskip

Let $(\Bun_{P^-,\wt{G}}^\mu)_t$ denote the fiber of $\Bun_{P^-,\wt{G}}^\mu$ over $t\in \BA^1$. Let $\sfr_t$ denote the corresponding map
$(\Bun_{P^-,\wt{G}}^\mu)_t\to \Bun_M^\mu\times \Bun_G$.

\sssec{}   \label{sss:small diagram}

The isomorphism \eqref{e:tilde mu 0} defines an isomorphism
$$(\Bun_{P^-,\wt{G}}^\mu)_0\simeq \Bun^\mu_{P^-}\underset{\Bun_G}\times \Bun_P^\mu\underset{\Bun_M^\mu}\times \Bun^\mu_{P^-}$$
such that the compositions
$$\Bun^\mu_{P^-}\underset{\Bun_G}\times \Bun_P^\mu\underset{\Bun_M^\mu}\times \Bun^\mu_{P^-}\to
\Bun^\mu_{P^-}\underset{\Bun_G}\times \Bun_P^\mu\to \Bun^\mu_{P^-}\to \Bun^\mu_M$$
and
$$\Bun^\mu_{P^-}\underset{\Bun_G}\times \Bun_P^\mu\underset{\Bun_M^\mu}\times \Bun^\mu_{P^-}\to
\Bun_P^\mu\underset{\Bun_M^\mu}\times \Bun^\mu_{P^-}\to \Bun^\mu_{P^-}\to \Bun_G$$
from diagram \eqref{e:comp diag 1} are equal, respectively, to the compositions
\[
(\Bun_{P^-,\wt{G}}^\mu)_0\overset{\sfr}\longrightarrow \Bun_M^\mu\times  \Bun_G \overset{\on{pr}_1}\longrightarrow \Bun_M^\mu
\]
and
\[
(\Bun_{P^-,\wt{G}}^\mu)_0\overset{\sfr}\longrightarrow \Bun_M^\mu\times \Bun_G 
\overset{\on{pr}_2}\longrightarrow \Bun_G.
\]

\medskip

Hence, the functor $\Phi$ is given by pull-push along the diagram

\begin{equation}  \label{e:comp diag B}
\xy
(-20,0)*+{\Bun_M^\mu}="X";
(20,0)*+{\Bun_G.}="Y";
(0,20)*+{(\Bun_{P^-,\wt{G}}^\mu)_0}="Z";
{\ar@{->}_{\on{pr}_1\circ \sfr}  "Z";"X"};
{\ar@{->}^{\on{pr}_2\circ \sfr}  "Z";"Y"};
\endxy
\end{equation}

\ssec{The natural transformations at the level of kernels}  \label{ss:nat trans via kernels}

\sssec{}

Denote
$$\CS:=\sfr_\blacktriangle (\omega_{\Bun_{P^-,\wt{G}}^\mu})\in \Dmod(\BA^1\times \Bun_M^\mu\times \Bun_G).$$

\medskip

As in \secref{sss:Q mon} one shows that
$$\CS\in \Dmod(\BA^1\times \Bun_M^\mu\times \Bun_G)^{\BG_m\on{-mon}}.$$

\medskip

Set also
$$\CS_0:=(\sfr_0)_\blacktriangle (\omega_{(\Bun_{P^-,\wt{G}}^\mu)_0})\in \Dmod(\Bun_M^\mu\times \Bun_G)$$
and
$$\CS_1:=(\sfr_1)_\blacktriangle (\omega_{(\Bun_{P^-,\wt{G}}^\mu)_1})\in \Dmod(\Bun_M^\mu\times \Bun_G).$$

\sssec{}

By Sects. \ref{sss:corr} and \ref{sss:small diagram}, the functor $\Phi$ identifies with $\sF_{\CS_0}$. Let
us now describe the natural transformations 
$$\Phi\to (\sfp^-)_\blacktriangle\circ (\sfq^-)^! \text{ and } (\sfp^-)_\blacktriangle\circ (\sfq^-)^! \to \Phi$$
at the level of kernels.

\medskip

Set
$$\CT:=(\sfq^-\times \sfp^-)_\blacktriangle(\omega_{\Bun^\mu_{P^-}}).$$ 
We have 
$$(\sfp^-)_\blacktriangle\circ (\sfq^-)^! \simeq \sF_{\CT}.$$

\sssec{}   \label{sss:j tilde}

Recall the open embedding 
$$\sfj:\Bun_M^\mu\hookrightarrow \Bun^\mu_{P^-}\underset{\Bun_G}\times \Bun^\mu_P.$$

\medskip

Let $\sfj_{P^-}$ denote the open embedding
$$\Bun^\mu_{P^-}\hookrightarrow \Bun^\mu_{P^-}\underset{\Bun_G}\times \Bun_P^\mu\underset{\Bun_M^\mu}\times \Bun^\mu_{P^-}
\simeq (\Bun_{P^-,\wt{G}}^\mu)_0,$$
obatined by base change. 

\medskip 

We have
$$(\sfq^-\times \sfp^-)=\sfr_0\circ \sfj_{P^-},\quad \Bun^\mu_{P^-}\to \Bun_M^\mu\times \Bun_G.$$

\sssec{}

By construction, the natural transformation $\Phi\to (\sfp^-)_\blacktriangle\circ (\sfq^-)^!$
comes from the map 
\begin{equation} \label{e:S_0 to T}
\CS_0\to \CT
\end{equation}
equal to
$$(\sfr_0)_\blacktriangle (\omega_{(\Bun_{P^-,\wt{G}}^\mu)_0})\to
(\sfr_0)_\blacktriangle\circ (\sfj_{P^-})_\blacktriangle (\omega_{\Bun^\mu_{P^-}})\simeq (\sfq^-\times \sfp^-)_\blacktriangle(\omega_{\Bun^\mu_{P^-}}),$$
where the first arrow comes from
$$\omega_{(\Bun_{P^-,\wt{G}}^\mu)_0}\to (\sfj_{P^-})_*\circ (\sfj_{P^-})^*(\omega_{(\Bun_{P^-,\wt{G}}^\mu)_0})\simeq 
(\sfj_{P^-})_*(\omega_{\Bun^\mu_{P^-}})\simeq
(\sfj_{P^-})_\blacktriangle(\omega_{\Bun^\mu_{P^-}}).$$

\sssec{}

The identification $(\Bun^\mu_{\wt{G}})_1\simeq \Bun_G$ of \eqref{e:tilde mu 1} gives rise to an identification
\begin{equation}  \label{e:- 1}
(\Bun_{P^-,\wt{G}}^\mu)_1\simeq \Bun^\mu_{P^-},
\end{equation}
so that 
$$\sfr_1=(\sfq^-\times \sfp^-).$$

\medskip

Hence, we obtain a tautological identification
\begin{equation} \label{e:T to S_1}
\CT\simeq \CS_1.
\end{equation}

\sssec{}

Now, the map $\on{Sp}_\CS$ of \eqref{e:specialization} defines a canonical map
\begin{equation} \label{e:S_1 to S_0}
\CS_1\to \CS_0.
\end{equation}

By the functoriality of the construction of the natural transformation $\on{Sp}$ (see, e.g., \cite[Sect. 4.1.5]{DrGa3}), 
the natural transformation $(\sfp^-)_\blacktriangle\circ (\sfq^-)^!\to \Phi$
comes from the map
\begin{equation} \label{e:T to S_0}
\CT\to \CS_1\to \CS_0,
\end{equation}
equal to the composition of \eqref{e:T to S_1} and \eqref{e:S_1 to S_0}.

\sssec{}

We obtain that in order to prove that the composition \eqref{e:first composition} is the identity map, 
it suffices to show that the composed map
\begin{equation} \label{e:composed kernels}
\CT\to \CS_1\to \CS_0\to \CT
\end{equation}
is the identity map on $\CT$. 

\ssec{Passing to an open substack}

\sssec{}

Recall the open embedding
$$\sfj_{P^-}:\Bun^\mu_{P^-}\hookrightarrow  (\Bun_{P^-,\wt{G}}^\mu)_0,$$
introduced in \secref{sss:j tilde}. 

\medskip

Let 
$$\overset{\circ}\Bun{}_{P^-,\wt{G}}^\mu\subset \Bun_{P^-,\wt{G}}^\mu$$
be the open substack obtained by removing the closed substack
$$\left((\Bun_{P^-,\wt{G}}^\mu)_0-\sfj_{P^-}(\Bun^\mu_{P^-})\right)\subset (\Bun_{P^-,\wt{G}}^\mu)_0\subset 
\Bun_{P^-,\wt{G}}^\mu.$$

\medskip

Let $(\overset{\circ}\Bun{}_{P^-,\wt{G}}^\mu)_t$ denote the fiber of $\overset{\circ}\Bun{}_{P^-,\wt{G}}^\mu$ over $t\in \BA^1$. 

\medskip

Note that the isomorphism $\Bun^\mu_{P^-}\to (\Bun_{P^-,\wt{G}}^\mu)_1$ of \eqref{e:- 1}
defines an isomorphism 
\begin{equation} \label{e:fiber at 1 open}
\Bun^\mu_{P^-}\to (\overset{\circ}\Bun{}_{P^-,\wt{G}}^\mu)_1.
\end{equation}

\medskip

By definition, the open embedding $\sfj_{P^-}:\Bun^\mu_{P^-}\hookrightarrow  (\Bun_{P^-,\wt{G}}^\mu)_0$ 
defines an \emph{isomorphism}
\begin{equation} \label{e:fiber at 0 open}
\Bun^\mu_{P^-}\to (\overset{\circ}\Bun{}_{P^-,\wt{G}}^\mu)_0.
\end{equation}

\sssec{}

Let 
$$\osfr:\overset{\circ}\Bun{}_{P^-,\wt{G}}^\mu\to \BA^1\times \Bun_M^\mu\times \Bun_G 
\text{ and }
\osfr_t:\overset{\circ}\Bun{}_{P^-,\wt{G}}^\mu\to \Bun_M^\mu\times \Bun_G$$
denote the corresponding maps.

\medskip

\medskip

Set
$$\oCS:=\osfr_\blacktriangle(\omega_{\overset{\circ}\Bun{}_{P^-,\wt{G}}^\mu}),$$
and also
$$\oCS_0:=(\osfr_0)_\blacktriangle(\omega_{(\overset{\circ}\Bun{}_{P^-,\wt{G}}^\mu)_0}) \text{ and }
\oCS_1:=(\osfr_1)_\blacktriangle(\omega_{(\overset{\circ}\Bun{}_{P^-,\wt{G}}^\mu)_1}).$$

\medskip

The open embedding $\overset{\circ}\Bun{}_{P^-,\wt{G}}^\mu\hookrightarrow \Bun_{P^-,\wt{G}}^\mu$ gives rise to the maps
$$\CS\to \oCS,\quad  \CS_0\to \oCS_0, \quad \CS_1\to \oCS_1.$$

\medskip

As in \secref{ss:nat trans via kernels}, we have the natural transformations
\begin{equation}  \label{e:composed kernels open}
\CT\to \oCS_1\to \oCS_0\to \CT.
\end{equation}

Moreover, the diagram 
$$
\CD
\CT   @>>>  \CS_1  @>>> \CS_0  @>>>  \CT  \\
@V{\id}VV  @VVV    @VVV   @VV{\id}V   \\
\CT  @>>>  \oCS_1   @>>>   \oCS_0  @>>>  \CT
\endCD
$$
commutes. 

\medskip

Hence, we obtain that in order to show that the composed map \eqref{e:composed kernels} is the identity map,
it suffices to show that the composed map \eqref{e:composed kernels open} is the identity map. 

\ssec{Digression: description of fiber products}

\sssec{}

Let $Z$ be a quasi-compact scheme equipped with a $\BG_m$-action.
Consider the fiber product $Z^-\underset{Z}\times \wt{Z}$, formed using the composition
$$\wt{Z}\overset{\wt{p}}\longrightarrow \BA^1\times Z\times Z\to Z\times Z \overset{\on{pr}_1}\longrightarrow Z.$$

\medskip

In \cite[Sect. 2.6]{DrGa3} we define a canonical morphism
\begin{equation}  \label{e:canonical_maps}
\BA^1\times Z^-\to Z^-\underset{Z}\times \wt{Z}
\end{equation}
of schemes over $\BA^1$. 

\sssec{}

If $Z$ is affine (the case of interest for us) the definition from \cite{DrGa3} can be reformulated as follows. 
Recall that if $Z$ is affine then $Z^-$ is a closed subscheme of $Z$ and
$\wt{Z}$ is a closed subscheme of $\BA^1\times Z\times Z$, so 
$Z^-\underset{Z}\times \wt{Z}$ is a closed subscheme of $\BA^1\times Z\times Z$. 

\medskip

Now define the map 
$$\BA^1\times Z^-\to Z^-\underset{Z}\times \wt{Z}\subset\BA^1\times Z\times Z$$ by
\begin{equation}   \label{e:can_map2}
(t,z)\mapsto (t\,,t^{-1}\cdot z\, , z)\in\BA^1\times Z\times Z\,.
\end{equation}

It is easy to see that the image of the map \eqref{e:can_map2} indeed belongs to 
$Z^-\underset{Z}\times \wt{Z}$. Indeed, it suffices to consider the case $t\neq 0$,
when this is obvious. 

\sssec{}

By \cite[Proposition 2.6.3 and Remark 2.6.4]{DrGa3}, for any $Z$ the moprhism \eqref{e:canonical_maps} is an open 
embedding, and if $Z$ is affine it is an isomorphism. 

\medskip

The latter is very easy to check if $Z$ is affine and smooth: 
in this case the map \eqref{e:canonical_maps} is a morphism between \emph{smooth} schemes over $\BA^1$, 
and it suffices to check that it induces an isomorphism between the fibers over any $k$-point of $\BA^1$.

\sssec{}

Consider now the situation of \secref{sss:group set-up}, i.e.,  $Z=G$ and the $\BG_m$-action on $G$ is the adjoint 
action corresponding to a co-character $\gamma:\BG_m\to M$ such that $\gamma(\BG_m)$ is contained in the 
center of $M$ and $\gamma$ is dominant and regular with respect to $P$. 

\medskip

Then the map \eqref{e:canonical_maps} is a homomorphism 
\begin{equation}    \label{e:Gcanonical_map-}
\BA^1\times P^-\to P^-\underset{G}\times \wt{G}
\end{equation}
of group-schemes over $\BA^1$. 
Its composition with the embedding $P^-\underset{G}\times \wt{G}\mono\BA^1\times G\times G$
is given by $$(t,g)\mapsto (t\,,\gamma (t)^{-1}\cdot g\cdot \gamma (t)\, , g).$$

\sssec{}

The homorphism \eqref{e:Gcanonical_map-} induces a maps of moduli of bundles
\begin{equation} \label{e:-}
\BA^1\times \Bun_{P^-}\to \Bun_{P^-}\underset{\Bun_G}\times \Bun_{\wt{G}}.
\end{equation}

\begin{lem} \label{l:open}
The map \eqref{e:-} is an open embedding. 
\end{lem}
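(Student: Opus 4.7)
The plan is to identify the map \eqref{e:-} with the tautological comparison map for a fiber product of group-schemes over $\BA^1$, and then to invoke the appropriate relative version of \lemref{l:2fiber product}(ii).

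Since $G$ is affine, \cite[Proposition~2.6.3]{DrGa3} tells us that the homomorphism \eqref{e:Gcanonical_map-} is already an \emph{isomorphism} of group-schemes over $\BA^1$. So we get a canonical identification
$$\BA^1\times \Bun_{P^-}=\Bun_{\BA^1\times P^-}\cong \Bun_{P^-\underset{G}\times\wt{G}},$$
under which the map \eqref{e:-} becomes the natural comparison
$$\Bun_{P^-\underset{G}\times\wt{G}}\to \Bun_{P^-}\underset{\Bun_G}\times\Bun_{\wt{G}}$$
associated with the diagram of group-schemes $\BA^1\times P^-\hookrightarrow \BA^1\times G\hookleftarrow \wt{G}$ over $\BA^1$ (here $\wt{G}\to \BA^1\times G$ is $\wt p$ post-composed with projection to the first $G$-factor).

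The proof of \lemref{l:2fiber product}(ii) carries over verbatim to group-schemes over a base: one realizes the comparison of relative classifying stacks as the quotient of $(\BA^1\times P^-)\cdot f_2(\wt{G})\hookrightarrow \BA^1\times G$ by the natural $(\BA^1\times P^-)\times_{\BA^1}\wt{G}$-action, and $\Maps(X,-)$ preserves open embeddings. So it suffices to verify that the subscheme $(\BA^1\times P^-)\cdot f_2(\wt{G})\subset \BA^1\times G$ is open.

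This openness is the geometric heart of the argument. Fiberwise over $\BA^1$: for $t\ne 0$, $\wt{G}_t$ is the graph of $\Ad_{\gamma(t)}$, so $f_2(\wt{G}_t)=G$ and the fiber of the product is all of $G$; for $t=0$, $\wt{G}_0=P\underset{M}\times P^-$ and the first projection lands in $P$, so the fiber of the product is the big open Bruhat cell $P^-\cdot P\subset G$. The complement of $(\BA^1\times P^-)\cdot f_2(\wt{G})$ in $\BA^1\times G$ is therefore $\{0\}\times(G\setminus P^-\cdot P)$, which is closed; hence the product is open. The main obstacle is not conceptual but bookkeeping: extending \lemref{l:2fiber product} from algebraic groups to group-schemes over $\BA^1$, which should be routine.
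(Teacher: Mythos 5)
Your argument follows the route suggested in the paper's own Remark immediately after the proof: pass through the isomorphism \eqref{e:Gcanonical_map-} to identify \eqref{e:-} with the tautological comparison map $\Bun_{P^-\underset{G}\times\wt{G}}\to \Bun_{P^-}\underset{\Bun_G}\times\Bun_{\wt{G}}$ and apply a relative version of \lemref{l:2fiber product}(ii). By contrast, the proof given in the body of the paper takes a shorter route: it invokes \lemref{l:openness by fibers} (valid for stacks) to reduce to showing that the map is an open embedding fiberwise over $\BA^1$, notes that for $t\ne 0$ this is clear, and identifies the $t=0$ fiber map with $\sfj\times\on{id}:\Bun_M\underset{\Bun_M}\times\Bun_{P^-}\hookrightarrow(\Bun_{P^-}\underset{\Bun_G}\times\Bun_P)\underset{\Bun_M}\times\Bun_{P^-}$, where $\sfj$ is the open embedding of the big Bruhat cell. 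That route avoids having to re-prove \lemref{l:2fiber product}(ii) over a base.

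There is, however, a gap in your version which the paper's Remark explicitly warns about. You state that the only thing to verify is that the product $(\BA^1\times P^-)\cdot f_2(\wt{G})\subset\BA^1\times G$ is open, and you compute this at the level of underlying sets. But that is not the right hypothesis for the relative version of \lemref{l:2fiber product}(ii). Over a base $S$, the correct hypothesis is that the multiplication morphism $\mu:H_1\underset{S}\times H_2\to H$, $(h_1,h_2)\mapsto f_1(h_1)\cdot f_2(h_2)$, is \emph{flat}; this is exactly what the Remark says one ``requires instead of openness.'' Flatness is needed to realize the open image as a quotient of $H_1\underset{S}\times H_2$ by the flat group-scheme $H_1\underset{H}\times H_2$, which is what makes the comparison of classifying stacks an open embedding. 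Set-theoretic openness of the image alone does not give you that.

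In the situation at hand the two are in fact equivalent, but for a nontrivial reason that your argument suppresses: all three group-schemes are \emph{smooth} over $\BA^1$ (by \propref{p:smooth}(i)), so both the source $(\BA^1\times P^-)\underset{\BA^1}\times\wt{G}$ and the target $\BA^1\times G$ are regular; your fiberwise computation shows that the fibers of $\mu$ over its image have constant dimension ($=\dim P^-$); and then miracle flatness (crit\`ere de platitude par fibres) upgrades this to flatness of $\mu$ onto the open image. This is precisely the kind of step that needs to be said explicitly — it is not ``routine bookkeeping,'' and for merely \emph{flat} (non-smooth) group-schemes over $S$ the implication would fail. Once this is supplied your proof becomes complete and matches the alternative sketched in the paper's Remark.
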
 

This lemma is a counterpart of \cite[Proposition 2.6.3]{DrGa3}.

\begin{proof} 
Note that \lemref{l:openness by fibers} remains valid for stacks. So it suffices to show that the map \eqref{e:-} induces an open embedding 
of fibers over any $t\in \BA^1$. This is clear if $t\ne 0$, so it remains to consider the case of $t=0$. 

\medskip

The morphsim in question is the map
\begin{multline*}
\Bun_{P^-}\to  \Bun_{P^-}\underset{\Bun_G} \times \Bun_{P\underset{M}\times P^-}\simeq 
\Bun_{P^-}\underset{\Bun_G}\times (\Bun_P \underset{\Bun_M}\times \Bun_{P^-})=\\
=(\Bun_{P^-}\underset{\Bun_G}\times \Bun_P) \underset{\Bun_M}\times \Bun_{P^-}
\end{multline*}
which equals the map
$$\Bun_{P^-}\simeq 
\Bun_M \underset{\Bun_M}\times \Bun_{P^-}\overset{\sfj\times \on{id}}
\hookrightarrow (\Bun_{P^-}\underset{\Bun_G}\times \Bun_P) \underset{\Bun_M}\times \Bun_{P^-}\; .$$

\medskip

It is an open embedding because so is the morphism 
$\sfj :\Bun_M\to\Bun_{P^-}\underset{\Bun_G}\times \Bun_P $.
\end{proof}
 
 \begin{rem}
One can also prove \lemref{l:open} using a variant of \lemref{l:2fiber product}(ii). In this variant $H_1\,$, 
$H_2\,$, and $H$ are flat group-schemes over some scheme $S$ (e.g., $S=\BA^1$) and instead of openness of 
$f_1(H_1)\cdot f_2(H_2)$ one requires flatness of the morphism $H_1\underset{S}\times H_2\to H$ given by 
$(h_1, h_2)\mapsto f_1(h_1)\cdot f_2(h_2)\,$.
\end{rem}
 
\ssec{The key argument}  

\sssec{}

By construction, the open embedding \eqref{e:-} defines an \emph{isomorphism}
$$\BA^1\times \Bun^\mu_{P^-}\to \overset{\circ}\Bun{}_{P^-,\wt{G}}^\mu$$
with the following properties:

\begin{itemize}

\item 
The map $$\osfr:\overset{\circ}\Bun{}_{P^-,\wt{G}}^\mu\to  \BA^1\times \Bun_M^\mu\times \Bun_G$$
identifies with the map
$$\id_{\BA^1}\times (\sfq^-\times \sfp^-).$$

\item 
The isomorphism $\Bun^\mu_{P^-}\to (\overset{\circ}\Bun{}_{P^-,\wt{G}}^\mu)_1$ of \eqref{e:fiber at 1 open}
corresponds to the identity map
$$\Bun^\mu_{P^-}\simeq (\BA^1\times \Bun^\mu_{P^-})\underset{\BA^1}\times \{1\}\simeq \Bun^\mu_{P^-}.$$

\item 
The isomorphism $\Bun^\mu_{P^-}\to (\overset{\circ}\Bun{}_{P^-,\wt{G}}^\mu)_0$ of \eqref{e:fiber at 0 open}
corresponds to the identity map
$$\Bun^\mu_{P^-}\simeq (\BA^1\times \Bun^\mu_{P^-})\underset{\BA^1}\times \{0\}\simeq \Bun^\mu_{P^-}.$$

\end{itemize}

\sssec{}

Hence, we obtain that the composition \eqref{e:composed kernels open} identifies with
$$\CT\simeq \iota_1^!(\omega_{\BA^1}\boxtimes \CT) \to \iota_0^!(\omega_{\BA^1}\boxtimes \CT) \simeq \CT,$$
where 
$$\iota_1^!(\omega_{\BA^1}\boxtimes \CT) \to \iota_0^!(\omega_{\BA^1}\boxtimes \CT)$$
is the specialization map \eqref{e:specialization} for the object
$$\omega_{\BA^1}\boxtimes \CT \in \Dmod(\BA^1\times \CZ^0\times \CZ).$$

\medskip

However, the latter is readily seen to be the identity map. 

\section{An alternative proof}  \label{s:mainproof}

In this section we will give an alternative proof of \thmref{t:main adj} by directly deducing it from
Braden's theorem (\cite{Br}). It has the advantage of being more elementary than the proof of \thmref{t:main adj}
given in \secref{s:proof of adj}, if one accepts Braden's theorem as a black box. 

\medskip

However, the two proofs are closely related, because
Braden's theorem itself can be proved by an argument similar to one used in the proof of 
\thmref{t:main adj} from \secref{s:proof of adj}. 

\ssec{Contraction principle}  \label{ss:contr princ}

\sssec{}    \label{sss:mon}

Let $\CY$ be a stack equipped with an action of $\BG_m$. We let
$$\Dmod(\CY)^{\BG_m,\on{mon}}\subset \Dmod(\CY)$$ denote the corresponding monodromic subcategory, i.e., the 
full subcategory generated by the essential image of the pullback functor 
$\Dmod(\CY/\BG_m)\to \Dmod(\CY)$. 

\medskip

Note that if the $\BG_m$-action is trivial then 
$\Dmod(\CY)^{\BG_m,\on{mon}}=\Dmod(\CY)$ (because the morphism $\CY\to \CY/\BG_m$ admits a section).

\sssec{}  \label{sss:stacky contraction}

Let $\CY$ be an algebraic stack equipped
with an action of $\BA^1$,  where $\BA^1$ is viewed as a monoid with respect to multiplication. 
(A concrete example of this situation will be considered in \secref{sss:contraction on moduli simple} below).

\medskip

Define the stack $\CY^0$ by
$$\Maps(S,\CY^0):=\Maps^{\BA^1}(S,\CY), \quad S\in \on{Sch}^{\on{aff}}.$$

\medskip

The groupoid $\Maps(S,\CY^0)$ admits also the following description (which was used in \cite[Sect. C.5]{DrGa2} as a definition): 
objects of  $\Maps(S,\CY^0)$ are pairs $(y,\alpha)$, where 
$y\in \Maps(S,\CY)$ and $\alpha$ is an isomorphism $0\cdot y\iso y$ such that the two isomorphisms 
\[
0\cdot (0\cdot y)=(0\cdot 0)\cdot y=0\cdot y \overset{\alpha}\iso y \text{ and }
0\cdot (0\cdot y) \overset{0\cdot \alpha}\iso 0\cdot y \overset{\alpha}\iso y \]
$0\cdot (0\cdot y)\rightrightarrows y$ are equal to each other. Morphisms in $\Maps(S,\CY^0)$ are defined as
morphisms between the $y$'s that intertwine the data of $\alpha$. 

\medskip

From the latter description, it is easy to see that $\CY^0$ is again a (locally QCA) stack. 

\sssec{}

The forgetful map and the action of $0\in \BA^1$ define a retraction
\begin{equation} \label{e:stacky contraction}
\CY^0\overset{\imath}\longrightarrow \CY\overset{\sfq}\longrightarrow \CY^0.
\end{equation}

Under these circumstances we will say that the we are given a \emph{contraction} of $\CY$ 
onto $\CY^0\overset{\imath}\longrightarrow \CY$.

\medskip

Since $\sfq\circ \imath=\on{id}_{\CY^0}$, it follows that $\imath$ is quasi-compact and representable.

\sssec{}

From now on we will assume that the morphism $\sfq$ is quasi-compact. In this case the functor
$$\sfq_\blacktriangle:\Dmod(\CY)\to \Dmod(\CY^0)$$ is defined. 

\medskip

We have the following assertion: 

\medskip

\begin{prop}  \label{p:simple Braden}  \hfill

\smallskip

\noindent{\em(1)} The partially defined left adjoint $\imath^*:\Dmod(\CY)\to \Dmod(\CY^0)$ of $\imath_*\simeq \imath_\blacktriangle$ is defined
on $\Dmod(\CY)^{\BG_m,\on{mon}}$, and we have a canonical isomorhism
$$i^*|_{\Dmod(\CY)^{\BG_m,\on{mon}}}\simeq q_\blacktriangle|_{\Dmod(\CY)^{\BG_m,\on{mon}}} \; .$$
More precisely, for each $\CF\in \Dmod(\CY)^{\BG_m,\on{mon}}$
the natural map
$$q_\blacktriangle(\CF)\to q_\blacktriangle\circ i_*\circ i^*(\CF)\simeq q_\blacktriangle\circ i_\blacktriangle\circ i^*(\CF)=
(q\circ i)_\blacktriangle\circ i^*(\CF)=i^*(\CF)$$
is an isomorphism.

\smallskip

\noindent{\em(2)} The partially defined left adjoint $q_!:\Dmod(\CY)\to \Dmod(\CY^0)$ of $q^!$ is defined
on $\Dmod(\CY)^{\BG_m,\on{mon}}$, and we have a canonical isomorhism
$$q_!|_{\Dmod(\CY)^{\BG_m,\on{mon}}}\simeq i^!|_{\Dmod(\CY)^{\BG_m,\on{mon}}} \; .$$
More precisely, for each $\CF\in \Dmod(\CY)^{\BG_m,\on{mon}}$
the natural map
$$i^!(\CF)\to i^!\circ q^!\circ q_!(\CF) =(q\circ i)^!\circ q_!(\CF) =q_! (\CF)$$
is an isomorphism.
\end{prop}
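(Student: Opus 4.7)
My plan is to prove both parts in parallel, as they are variants of a single ``contraction principle'' whose key instance — when the $\BA^1$-action is scaling on an $\BA^1$-factor — is already handled by \lemref{l:specialization}. The natural transformations in (1) and (2) are formally constructed from $\sfq\circ \imath=\on{id}_{\CY^0}$ together with $\imath_*\simeq \imath_\blacktriangle$ (which holds because $\imath$ is quasi-compact representable); the substantive content is to show that the partial left adjoints $\imath^*$ and $\sfq_!$ are defined on $\Dmod(\CY)^{\BG_m,\on{mon}}$ and that the canonical maps are isomorphisms.

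The main step is a reduction from the general $\BA^1$-action on $\CY$ to the base case of the standard scaling action on $\BA^1\times \CY^0$. I would interpolate between $\on{id}_\CY$ and $\imath\circ \sfq$ using the action map $\sigma\colon \BA^1\times \CY\to \CY$; over $\BG_m\subset \BA^1$ the combined morphism $(\on{pr}_{\BA^1},\sigma)\colon \BG_m\times \CY\to \BG_m\times \CY$ is an isomorphism, while at $t=0$ the second component degenerates to $\imath\circ \sfq$. For $\CF\in \Dmod(\CY)^{\BG_m,\on{mon}}$, an appropriate pull-push along the interpolating diagram produces an auxiliary object $\CK$ on $\BA^1\times \CY^0$ whose !-restrictions at $t=1$ and $t=0$ compute $\sfq_\blacktriangle(\CF)$ and $\imath^*(\CF)$ respectively in part (1) (and analogously $\imath^!(\CF)$ and $\sfq_!(\CF)$ in part (2)), with the transition supplied by the specialization map of \secref{sss:specialization}. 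The necessary $\BG_m$-monodromicity of $\CK$ with respect to the $\BA^1$-factor follows from the $\BG_m$-equivariance of $\sigma$ together with the fact that the $\BG_m$-action on $\CY^0$ is canonically trivial (since $\CY^0$ parametrizes $\BA^1$-fixed points).

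The remaining step — showing the specialization map is an isomorphism — reduces to the case $\CY^0=\on{pt}$ via the tensor-product decomposition
$$\Dmod((\BA^1/\BG_m)\times \CY^0)\simeq \Dmod(\BA^1/\BG_m)\otimes \Dmod(\CY^0),$$
exactly as in the proof of \lemref{l:specialization}, and in the base case it is an elementary calculation on the standard $\BG_m$-action on $\BA^1$. The main obstacle I anticipate is the careful construction and monodromicity verification of the auxiliary kernel $\CK$ near $t=0$: it is precisely here that the full $\BA^1$-action (and not merely the underlying $\BG_m$-action) is essential, since the contraction of $\CY$ onto $\CY^0$ is the geometric input that promotes $\BG_m$-equivariance on $\BG_m\times \CY$ to the required monodromicity on $\BA^1\times \CY^0$. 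With this in place, parts (1) and (2) follow by dual treatments of the same kernel.
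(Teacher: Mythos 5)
The paper does not prove Proposition~\ref{p:simple Braden}: it cites \cite[Theorem C.5.3]{DrGa2}, so your outline cannot be compared with a proof in this text. The high-level idea you use — interpolating between $\on{id}_\CY$ and $\imath\circ\sfq$ via the action map $\sigma\colon\BA^1\times\CY\to\CY$ and then invoking the $\BG_m$-monodromicity of $\sigma^!(\CF)$ along the $\BA^1$-factor (which holds because $\sigma(\lambda t,y)=\lambda\cdot\sigma(t,y)$) — is sound and is, in spirit, how one establishes the contraction principle. The relation $\sfq\circ\sigma=\sfq\circ\on{pr}_\CY$ is the key geometric input that makes the zigzag identities for the $(\imath^!,\sfq^!)$-adjunction verifiable.

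However, the step where you produce an auxiliary kernel $\CK$ on $\BA^1\times\CY^0$ and assert that the specialization map $\iota_1^!(\CK)\to\iota_0^!(\CK)$ is an isomorphism has a real gap. First, your description of $\iota_0^!(\CK)$ as ``computing $\imath^*(\CF)$'' (resp.\ $\sfq_!(\CF)$) is circular unless you separately show it satisfies the relevant universal property — and the most natural candidate $\CK:=(\on{id}_{\BA^1}\times\sfq)_\blacktriangle\,\sigma^!(\CF)$ fails to match your labels: base change gives $\iota_1^!(\CK)\simeq\sfq_\blacktriangle(\CF)$, but $\iota_0^!(\CK)\simeq\sfq_\blacktriangle\sfq^!\imath^!(\CF)$, which is $\imath^!(\CF)$ rather than $\imath^*(\CF)$ after one knows $\sfq_\blacktriangle\sfq^!\simeq\on{id}$; in particular your pairing of endpoints mixes part (1) with part (2). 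Second, \lemref{l:specialization} does \emph{not} say that the specialization map is an isomorphism for $\BG_m$-monodromic objects — it says that $\pi_!$ exists and coincides with $\iota_0^!$ on such objects, which is a different (and weaker) statement; the specialization map $\iota_1^!\to\iota_0^!$ is generally not an isomorphism even for monodromic sheaves on $\BA^1$ (consider $\delta_0$). The correct strategy is to use the specialization construction to manufacture the \emph{unit} $\CF\to\sfq^!\imath^!(\CF)$ of the adjunction (via $\iota_1^!(\sigma^!\CF)\to\pi_!(\sigma^!\CF)\simeq\iota_0^!(\sigma^!\CF)=\sfq^!\imath^!(\CF)$), take the counit to be the identity $\imath^!\sfq^!(\CG)=\CG$, and verify the zigzag identities using $\sigma\circ(\on{id}_{\BA^1}\times\imath)=\imath\circ\on{pr}_{\CY^0}$ and $\sfq\circ\sigma=\sfq\circ\on{pr}_\CY$. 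As written, your argument asserts a claim (specialization is an isomorphism) that is neither what \lemref{l:specialization} gives nor true in general, and the reduction to $\CY^0=\on{pt}$ does not address this because the issue is $\CK$-dependent, not $\CY^0$-dependent.
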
  

For the proof see \cite[Theorem C.5.3]{DrGa2}.

\sssec{A contraction of $\Bun_P$ onto $\Bun_M\,$}  \label{sss:contraction on moduli simple}  \hfill

\medskip

Let us take $\CY=\Bun_P\,$. Let $\gamma:\BG_m\to M$ be a co-character as in \eqref{e:co-character}. The resulting 
adjoint action of $\BG_m$ on $P$ extends to an action of $\BA^1$ on $P$ such that $0\in \BA^1$
acts as
$$P\twoheadrightarrow M=P\cap P^-\hookrightarrow P.$$
The action of $\BA^1$ on $P$ induces on action of $\BA^1$ on $\Bun_P\,$.

\medskip

It is easy to check that the resulting diagram \eqref{e:stacky contraction} identifies with
$$\Bun_M\overset{\iota}\longrightarrow \Bun_P\overset{\sfq}\longrightarrow \Bun_M,$$
where the maps $\iota$ and $\sfq$ come from the above homomorphisms $M\hookrightarrow P$ and $P\twoheadrightarrow M$.

\medskip

Note that the $\BG_m$-action on $\Bun_P$, corresponding to the above $\BA^1$-action,
is canonically isomorphic to the trivial one\footnote{This trivialization is \emph{not} compatible with the projection $\sfq:\Bun_P\to \Bun_M$, 
so the $\BG_m$-action on the \emph{fibers} of the morphism $\Bun_P\to\Bun_M$ is non-trivial\,!} (because the $\BG_m$-action on $P$ 
comes from a homomorphism $\BG_m\to P$ and the conjugation action of $P$ on itself).

\medskip

Therefore, the inclusion
$$\Dmod(\Bun_P)^{\BG_m,\on{mon}}\subset \Dmod(\Bun_P)$$
is an equality. So by \propref{p:simple Braden}, the functors
$$\sfq_!:\Dmod(\Bun_P)\to  \Dmod(\Bun_M) \text{ and } \iota^*:\Dmod(\Bun_P)\to \Dmod(\Bun_M)$$
left adjoint to $\sfq^!$ and $\iota_*\simeq \iota_\blacktriangle$ are defined and are canonically isomorhic to 
$\iota^!$ and $\sfq_\blacktriangle$, respectively.  

\medskip

Note also that in this example, $\sfq_\blacktriangle\simeq\sfq_*$, because the morphism $\sfq$ is safe. 

\medskip

The same is, of course, true for $P$ replaced by $P^-$, when we replace the above action of $\BG_m$
by its inverse.

\ssec{Hyperbolic restrictions and Braden's theorem}  \label{ss:hyper}

In this subsection we will recall the statement of Braden's theorem in the set-up and notations
of \cite[Sect. 3.1]{DrGa3}.  

\sssec{}  \label{sss:pro}

The material in this subsection uses the notion of pro-completion $\on{Pro}(\bC)$ of
a DG category $\bC$; we refer the reader to \cite[Appendix A]{DrGa3}, where the corresponding definitions
are given.

\medskip

Let $f:\CY_1\to \CY_2$ be a quasi-compact and safe morphism between stacks. Consider the functors
$$f^!:\Dmod(\CY_2)\to \Dmod(\CY_1) \text{ and } f_*\simeq f_\blacktriangle:\Dmod(\CY_1)\to \Dmod(\CY_2),$$
We will regard their partially defined left adjoints as functors
$$f_!:\Dmod(\CY_1)\to \on{Pro}(\Dmod(\CY_2)) \text{ and } f^*:\Dmod(\CY_2)\to \on{Pro}(\Dmod(\CY_1)).$$

\sssec{}

Let $Z$, $Z^0$, $Z^{\pm}$, $p^{\pm}$,  $i^{\pm}$ be as in \secref{sss:Z+}. 
\medskip

Consider the corresponding commutative (but not necessarily Cartesian) diagram
\begin{equation} \label{e:attr rep diagram}
\CD
Z^0  @>{i^+}>>  Z^+  \\
@V{i^-}VV    @VV{p^+}V   \\
Z^-  @>{p^-}>> Z.
\endCD
\end{equation}

We enlarge it to the diagram
\begin{equation} \label{e:square with arrow}
\xy
(0,0)*+{Z^+\underset{Z}\times Z^-}="X";
(30,0)*+{Z^+}="Y";
(0,-30)*+{Z^-}="Z";
(30,-30)*+{Z,}="W";
(-20,20)*+{Z^0}="U";
{\ar@{->}_{p^-} "Z";"W"};
{\ar@{->}^{p^+} "Y";"W"};
{\ar@{->}_{'p^-} "X";"Y"};
{\ar@{->}^{'p^+} "X";"Z"};
{\ar@{->}_{j} "U";"X"};
{\ar@{->}_{i^-} "U";"Z"};
{\ar@{->}^{i^+} "U";"Y"};
\endxy
\end{equation}
in which the square is Cartesian, and where (according to \cite[Proposition 1.9.4]{DrGa3}) the map
$$j:Z^0\to Z^+\underset{Z}\times Z^-$$
is an open embedding. 

\medskip

We consider the categories
$$\Dmod(Z)^{\BG_m\on{-mon}},\,\, \Dmod(Z^+)^{\BG_m\on{-mon}}, \Dmod(Z^-)^{\BG_m\on{-mon}}$$ and   
$$\Dmod(Z^0)^{\BG_m\on{-mon}}=\Dmod(Z^0).$$

Consider the functors
$$(p^+)^!:\Dmod(Z)^{\BG_m\on{-mon}}\to \Dmod(Z^+)^{\BG_m\on{-mon}} \text{ and } 
(i^-)^!:\Dmod(Z^-)^{\BG_m\on{-mon}}\to \Dmod(Z^0).$$

Consider also the functors 

$$(p^-)^*: \Dmod(Z)^{\BG_m\on{-mon}}\to \on{Pro}(\Dmod(Z^-)^{\BG_m\on{-mon}})$$  and 
$$(i^+)^*: \Dmod(Z^+)^{\BG_m\on{-mon}}\to \on{Pro}(\Dmod(Z^0)),$$
left adjoint in the sense of \cite[Sect. A.3]{DrGa3} to
$$(p^-)_*:\Dmod(Z^-)^{\BG_m\on{-mon}}\to \Dmod(Z)^{\BG_m\on{-mon}}$$ and 
$$(i^+)_*: \Dmod(Z^0)\to  \Dmod(Z^+)^{\BG_m\on{-mon}},$$
respectively. 

\sssec{}

Consider the composed functors
$$(i^+)^*\circ (p^+)^! \text{ and } (i^-)^!\circ (p^-)^*,\quad 
\Dmod(Z)^{\BG_m\on{-mon}}\to \on{Pro}(\Dmod(Z^0)).$$

They are called the functors of \emph{hyperbolic restriction}.  

\medskip 

We note that there is a canonical natural transformation
\begin{equation} \label{e:Braden trans}
(i^+)^*\circ (p^+)^! \to (i^-)^!\circ (p^-)^*.
\end{equation}

Namely, the natural transformation \eqref{e:Braden trans} is obtained via the $((i^+)^*,(i^+)_*)$-adjunction from the natural transformation
$$(p^+)^! \to (i^+)_*\circ (i^-)^!\circ (p^-)^*,$$
defined in terms of \eqref{e:square with arrow} as follows:
\begin{multline*}
(p^+)^!\to (p^+)^!\circ (p^-)_* \circ (p^-)^* \simeq ({}'p^-)_*\circ ({}'p^+)^!\circ (p^-)^*\to \\
\to ({}'p^-)_*\circ j_*\circ j^! \circ ({}'p^+)^! \circ (p^-)^*
\simeq  (i^+)_*\circ (i^-)^!\circ (p^-)^*,
\end{multline*}
where the map 
$$\on{Id}\to j_*\circ j^!$$
comes from the $(j^!,j_*)$-adjunction using the fact that $j$ is an open embedding, and where $(p^+)^!\circ (p^-)_*\simeq ({}'p^-)_*\circ ({}'p^+)^!$
is the base change isomorphism.

\sssec{}

Braden's theorem of \cite{Br} (as stated in \cite[Theorem 3.1.6]{DrGa3}) reads: 

\begin{thm} \label{t:Braden original}  
The functors 
$$(i^+)^*\circ (p^+)^! \text{ and } (i^-)^!\circ (p^-)^*, \quad \Dmod(Z)^{\BG_m\on{-mon}}\to \on{Pro}(\Dmod(Z^0))$$
take values in $\Dmod(Z^0)\subset \on{Pro}(\Dmod(Z^0))$
and the map
\eqref{e:Braden trans}
is an isomophism.
\end{thm}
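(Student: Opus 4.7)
The plan is to prove Braden's theorem by the same method that the paper uses for Theorem \ref{t:main adj}, namely by mimicking the interpolation argument of Sections \ref{s:proof of adj}--\ref{s:Verifying} in the scheme-theoretic setting. The reformulation to aim for is that, on the subcategory $\Dmod(Z)^{\BG_m\on{-mon}}$, the functor $(i^-)^!\circ(p^-)^*$ is well-defined (i.e.\ takes values in $\Dmod(Z^0)$, not merely in its pro-completion) and is left adjoint to $(p^+)_\blacktriangle\circ(i^+)_\blacktriangle$, with counit the transpose of \eqref{e:Braden trans}. From this reformulation Braden's theorem follows: both $(i^+)^*\circ(p^+)^!$ and $(i^-)^!\circ(p^-)^*$ become left adjoints of the same functor, hence are canonically isomorphic, and both automatically land in $\Dmod(Z^0)$.

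The counit would be built, exactly as in Section \ref{sss:co-unit}, by base change around diagram \eqref{e:square with arrow} together with the $(j^!,j_*)$-adjunction for the open embedding $j:Z^0\hookrightarrow Z^+\underset{Z}\times Z^-$. The unit would come from the interpolating scheme $\wt{Z}$ of Section \ref{sss:tilde Z}: setting
\[
\CQ:=\wt{p}_\blacktriangle(\omega_{\wt Z})\in\Dmod(\BA^1\times Z\times Z),
\]
the $\BG_m\times\BG_m$-equivariance of $\wt{p}$ recorded in Section \ref{sss:action on tilde} makes $\CQ$ $\BG_m$-monodromic along the $\BA^1$-factor, and the specialization map \eqref{e:specialization} then supplies a morphism $\CQ_1\to\CQ_0$. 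By Section \ref{sss:corr}, $\CQ_1$ is the kernel of $\on{Id}_{\Dmod(Z)}$, while $\CQ_0$ is the kernel of the composite $(p^+)_\blacktriangle\circ(i^+)_\blacktriangle\circ(i^-)^!\circ(p^-)^*$; this is the desired unit.

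The hard part will be the verification of the two triangle identities, which parallels Section \ref{s:Verifying}. The decisive geometric input is the isomorphism $\BA^1\times Z^-\iso Z^-\underset{Z}\times\wt{Z}$ furnished by \eqref{e:canonical_maps}, which is a genuine isomorphism when $Z$ is affine: passing to an open subscheme analogous to $\overset{\circ}\Bun{}_{P^-,\wt{G}}^\mu$ of Section \ref{s:Verifying}, the pullback of $\CQ$ becomes a constant $\BA^1$-family, so the induced specialization map reduces to the identity by Lemma \ref{l:specialization}. Aside from this identification, the rest is routine $\blacktriangle$-pushforward and base change bookkeeping, which is unproblematic since $Z$ and every scheme in sight is affine and every morphism involved is quasi-compact.
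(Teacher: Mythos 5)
Note first that the paper does not prove Braden's theorem itself: it quotes it as \cite[Theorem 3.1.6]{DrGa3} and uses it as a black box in \secref{s:mainproof}. So there is no internal proof to compare against; but your overall plan (interpolation via $\wt Z$ for the unit, big Bruhat cell for the counit, verification of the triangle identities) is indeed the strategy of \cite{DrGa3}, which the paper mimics in its first proof of \thmref{t:main adj}. The plan is the right one.

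However, the adjoint reformulation you state is wrong in a way that would derail the whole argument. The functor $(p^+)_\blacktriangle\circ(i^+)_\blacktriangle=(p^+\circ i^+)_*$ is the ordinary $*$-pushforward along the closed embedding $Z^0\hookrightarrow Z$ of the fixed locus; its (partially defined) left adjoint is the naive $*$-restriction $(i^+)^*\circ(p^+)^*$, \emph{not} a hyperbolic restriction. In particular your claim that both $(i^+)^*\circ(p^+)^!$ and $(i^-)^!\circ(p^-)^*$ are left adjoints of $(p^+)_\blacktriangle\circ(i^+)_\blacktriangle$ fails from the start. You have confused the section $i^\pm\colon Z^0\to Z^\pm$, together with the $*$-pushforward along it, with the smooth retraction $q^\pm\colon Z^\pm\to Z^0$, together with the $!$-pullback along it; these play opposite roles. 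The correct common right adjoint is the ``Eisenstein''-type functor $(p^-)_*\circ(q^-)^!$, i.e.\ the pull-push along $Z^0\xleftarrow{q^-}Z^-\xrightarrow{p^-}Z$. Indeed, on monodromic objects \propref{p:simple Braden}(2) gives $(i^-)^!\simeq(q^-)_!$, so $(i^-)^!\circ(p^-)^*\simeq(q^-)_!\circ(p^-)^*$ is \emph{automatically} a left adjoint of $(p^-)_*\circ(q^-)^!$ (landing a priori only in the pro-completion), while \propref{p:simple Braden}(1) gives $(i^+)^*\simeq(q^+)_*$, so $(i^+)^*\circ(p^+)^!$ lands in $\Dmod(Z^0)$ for free but is not obviously a left adjoint. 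The content of Braden's theorem, in this reformulation, is exactly that $(i^+)^*\circ(p^+)^!$ is also left adjoint to $(p^-)_*\circ(q^-)^!$, with counit the big-cell map; this is the scheme-level specialization of \lemref{l:adj and hyp} combined with \propref{p:simple Braden}. Correspondingly your identification of $\sF_{\CQ_0}$ must be replaced by the composite $(p^-)_\blacktriangle\circ(q^-)^!\circ(q^+)_\blacktriangle\circ(p^+)^!$, not $(p^+)_\blacktriangle\circ(i^+)_\blacktriangle\circ(i^-)^!\circ(p^-)^*$ --- otherwise the counit you construct from the big cell does not even target the functor you claimed to be the right adjoint, and the triangle identities cannot be set up.
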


\ssec{The setting for stacks}   

\sssec{}  \label{sss:stacky version}

Assume now that we are given a commuttaive diagram of algebraic stacks:

\begin{equation} \label{e:stacky Braden diag}
\xy
(0,0)*+{\CY^+\underset{\CY}\times \CY^-}="X";
(30,0)*+{\CY^+}="Y";
(0,-30)*+{\CY^-}="Z";
(30,-30)*+{\CY,}="W";
(-20,20)*+{\CY^0}="U";
{\ar@{->}_{\sfp^-} "Z";"W"};
{\ar@{->}^{\sfp^+} "Y";"W"};
{\ar@{->}_{'\sfp^-} "X";"Y"};
{\ar@{->}^{'\sfp^+} "X";"Z"};
{\ar@{->}_{\sfj} "U";"X"};
{\ar@{->}_{\iota^-} "U";"Z"};
{\ar@{->}^{\iota^+} "U";"Y"};
\endxy
\end{equation}
where the square is Cartesian, and the map $\sfj:\CY^0\to \CY^+\underset{\CY}\times \CY^-$
is an open embedding. 

\medskip

We will assume that all morphisms in \eqref{e:stacky Braden diag} are quasi-compact and safe. 

\medskip

Consider the functors 
$$(\iota^-)^*\circ (\sfp^-)^! \text{ and } (\iota^+)^!\circ (\sfp^+)^*: \Dmod(\CY)\to \on{Pro}(\Dmod(\CY^0)).$$

\medskip

As in the case of \eqref{e:Braden trans}, we obtain a natural transformation
\begin{equation} \label{e:Braden trans stacks}
(\iota^-)^*\circ (\sfp^-)^!\to (\iota^+)^!\circ (\sfp^+)^*.
\end{equation}

\sssec{Definition of hyperbolicity for stacks}  \label{sss:hyperb stacks}

We will say that the diagram
\eqref{e:stacky Braden diag} is \emph{hyperbolic} if the functors
$(\iota^-)^* \circ (\sfp^-)^!$ and $(\iota^+)^! \circ (\sfp^+)^*$ take values in $$\Dmod(\CY^0)\subset  \on{Pro}(\Dmod(\CY^0)),$$ and 
the map \eqref{e:Braden trans stacks} is an isomorphism. 

\sssec{Checking hyperbolicity}

Let $Z$ be again as in \secref{sss:Z+}. Let us be given a commutative cube
\begin{equation} \label{e:cube}
\xy
(0,0)*+{Z}="X";
(-20,20)*+{Z^-}="Y";
(-20,50)*+{Z^0}="Z";
(0,30)*+{Z^+}="W";
(50,0)*+{\CY,}="cX";
(30,20)*+{\CY^-}="cY";
(30,50)*+{\CY^0}="cZ";
(50,30)*+{\CY^+}="cW";
{\ar@{->}^{i^-} "Z";"Y"};
{\ar@{->}^{p^+} "W";"X"};
{\ar@{->}^{p^-} "Y";"X"};
{\ar@{->}^{i^+} "Z";"W"};
{\ar@{->}^{\iota^-} "cZ";"cY"};
{\ar@{->}^{\sfp^+} "cW";"cX"};
{\ar@{->}^{\sfp^-} "cY";"cX"};
{\ar@{->}^{\iota^+} "cZ";"cW"};
{\ar@{->}^{\psi}  "X";"cX"};
{\ar@{->}^{\psi^-}  "Y";"cY"};
{\ar@{->}^{\psi^0}  "Z";"cZ"};
{\ar@{->}_{\psi^+}  "W";"cW"};
\endxy
\end{equation}
where the maps
$$\psi:Z\to \CY,\,\,\psi^+:Z^+\to \CY^+,\,\, \psi^-:Z^-\to \CY^-,\,\,  \psi^0:Z^0\to \CY^0$$
are smooth, and $\psi^0$ surective. Assume also that the morphism $\psi:Z\to \CY$
can be given a $\BG_m$-equivariant structure (with respect to the trivial action of $\BG_m$
on $\CY$). 

\medskip

We claim:

\begin{thm} \label{t:hyperb}
Under the above circumstances the diagram \eqref{e:stacky Braden diag} is hyperbolic.
\end{thm}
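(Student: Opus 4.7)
The plan is to reduce Theorem~\ref{t:hyperb} to Braden's theorem for schemes (\thmref{t:Braden original}) via smooth descent along $\psi^0:Z^0\to \CY^0$. The key observation is that because $\BG_m$ acts trivially on $\CY$ and $\psi:Z\to \CY$ is $\BG_m$-equivariant, for any $\CF\in \Dmod(\CY)$ the pullback $\psi^!(\CF)$ lies in $\Dmod(Z)^{\BG_m\on{-mon}}$; in fact, it is canonically $\BG_m$-equivariant. Thus the functors on the $Z$-side, when precomposed with $\psi^!$, land in the setting where \thmref{t:Braden original} applies.

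First I would establish the smooth base-change identifications
\[
(\psi^0)^!\circ (\iota^-)^*\circ (\sfp^-)^!\ \simeq\ (i^-)^*\circ (p^-)^!\circ \psi^!
\qquad\text{and}\qquad
(\psi^0)^!\circ (\iota^+)^!\circ (\sfp^+)^*\ \simeq\ (i^+)^!\circ (p^+)^*\circ \psi^!,
\]
viewed as functors $\Dmod(\CY)\to \on{Pro}(\Dmod(Z^0))$. The $!$-pullback piece is unproblematic: smoothness of $\psi,\psi^\pm,\psi^0$ together with commutativity of the cube \eqref{e:cube} identifies $(\psi^0)^!\circ (\iota^-)^! \simeq (i^-)^!\circ (\psi^-)^!$ and likewise for the other faces, up to a cohomological shift. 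The delicate piece is commuting $(\psi^0)^!$ past the partially defined $(\iota^-)^*$ (and similarly $(\psi^+)^!$ past $(i^+)^*$); this is where pro-objects enter. It is handled by observing that these upper-$*$ functors are defined as left adjoints in $\on{Pro}$, so the base-change map is obtained by passage to pro-adjoints from the base-change isomorphism for the corresponding $_*$-pushforwards, which in turn follows from the safety assumption and ordinary smooth base change along the smooth, quasi-compact maps $\psi^0,\psi^-,\psi^+,\psi$.

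Next I would invoke \thmref{t:Braden original} on $Z$: for any $\CG\in\Dmod(Z)^{\BG_m\on{-mon}}$ the functors $(i^-)^*\circ (p^-)^!$ and $(i^+)^!\circ (p^+)^*$ take value in $\Dmod(Z^0)\subset \on{Pro}(\Dmod(Z^0))$ and the Braden natural transformation between them is an isomorphism. Applying this with $\CG=\psi^!(\CF)$ for arbitrary $\CF\in\Dmod(\CY)$ (which is legitimate by the opening observation), and combining with the base-change identifications above, we conclude that $(\psi^0)^!\circ (\iota^-)^*\circ (\sfp^-)^!(\CF)$ and $(\psi^0)^!\circ (\iota^+)^!\circ (\sfp^+)^*(\CF)$ belong to $\Dmod(Z^0)$, and that the pullback by $(\psi^0)^!$ of the natural transformation \eqref{e:Braden trans stacks} is an isomorphism. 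One must also verify that the base-change isomorphisms established in the first step do intertwine the Braden transformations on the two sides; this is a diagram chase using the commutativity of the cube and the compatibility of the open embedding $\sfj$ of \eqref{e:stacky Braden diag} with its scheme-level counterpart $j$ from \eqref{e:square with arrow}.

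Finally, I would descend along $\psi^0$. Since $\psi^0$ is smooth and surjective, the functor $(\psi^0)^!:\Dmod(\CY^0)\to \Dmod(Z^0)$ is conservative, and more generally one checks that a pro-object in $\on{Pro}(\Dmod(\CY^0))$ lies in $\Dmod(\CY^0)$ precisely when its $(\psi^0)^!$-pullback lies in $\Dmod(Z^0)$ (using smooth descent for $\Dmod$). Conservativity then upgrades the isomorphism of $(\psi^0)^!$-pullbacks to an isomorphism on $\CY^0$ itself, proving hyperbolicity. The main obstacle is the first step, i.e.\ justifying smooth base change for the partially defined upper-$*$ functors at the level of $\on{Pro}$; everything else is either a formal diagram chase or a direct application of \thmref{t:Braden original} and smooth descent.
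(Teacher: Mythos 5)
Your overall strategy---pull everything back to the scheme $Z$ via $\psi$, invoke Braden's theorem there, and descend along the smooth surjection $\psi^0$---is exactly the paper's approach. The structure of the argument is also the same: first show the two hyperbolic restriction functors land in $\Dmod(\CY^0)$ rather than just $\on{Pro}(\Dmod(\CY^0))$, then use conservativity of $(\psi^0)^!$ to check the Braden natural transformation is an isomorphism.

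However, there is a genuine gap in your descent step. You assert that ``a pro-object in $\on{Pro}(\Dmod(\CY^0))$ lies in $\Dmod(\CY^0)$ precisely when its $(\psi^0)^!$-pullback lies in $\Dmod(Z^0)$.'' This is not true for an arbitrary pro-object, and the two functors behave asymmetrically here. For $(\iota^-)^*\circ(\sfp^-)^!(\CF)$ the pro-object has the special form $(\iota^-)^*(\CF')$ with $\CF'=(\sfp^-)^!(\CF)$ an \emph{honest} object, and descent for such pro-objects (Lemma~\ref{l:left adjoint as pullback}) is established directly from \v{C}ech descent and the adjunction $(\iota^-)^*\dashv(\iota^-)_*$. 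But $(\iota^+)^!\circ(\sfp^+)^*(\CF)$ is $(\iota^+)^!$ applied to a pro-object $(\sfp^+)^*(\CF)$ that is \emph{not} a priori honest, so it is not of the form $f^*(\text{honest object})$ and Lemma~\ref{l:left adjoint as pullback} does not apply. The paper handles this by first proving (Lemma~\ref{l:pro bdd}, using compact generation of $\Dmod(\CY)$, bounded amplitude of $(\iota^+)^!$, $(\sfp^+)^!$ and Verdier duality) that for $\CF$ \emph{compact} the pro-object $(\iota^+)^!\circ(\sfp^+)^*(\CF)$ is cohomologically bounded, and then invoking a separate descent lemma valid only for cohomologically bounded pro-objects. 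Without the boundedness hypothesis the totalization over the \v{C}ech nerve need not converge, so your unqualified descent claim does not hold; you need to supply a boundedness argument and restrict attention to compact generators.

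A smaller point: your justification of the two ``base-change identifications'' appeals to ``ordinary smooth base change'' along the faces of the cube, but the faces of \eqref{e:cube} are only assumed commutative, not Cartesian, so ordinary base change does not apply. The identifications you want do hold, but for a different reason: pull-backs of the same decoration compose, so, e.g., $(\psi^0)^*\circ(\iota^-)^*\simeq(i^-)^*\circ(\psi^-)^*$ by passing to pro-left adjoints of the composition $(\iota^-)_*(\psi^0)_*=(\psi^-)_*(i^-)_*$, and then smoothness converts $^!$ to $^*$ up to shifts. Because the relative dimensions of $\psi$, $\psi^0$, $\psi^\pm$ need not match, the shifts in your two identifications are generally different, and to conclude that the pullback of the Braden transformation \eqref{e:Braden trans stacks} agrees with the Braden transformation on $Z$ one must track these discrepancies. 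This is precisely what the objects $\CK^-$ and $\CK^0$ and the isomorphism $\CK^0\to(i^-)^!(\CK^-)$ do in Step~3 of Appendix~\ref{s:check hyperb}. Since the $\CK$'s are shifts of the dualizing complex the twist is harmless for detecting isomorphisms, but it is not merely a ``formal diagram chase'': the non-Cartesianness of the cube faces is exactly what produces these objects, and the compatibility of $\sfj$ with $j$ enters in establishing \eqref{e:K}. You should spell this out rather than dismissing it.

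So: your high-level plan is correct and matches the paper's proof, the first base-change/descent half is fine after fixing the justification, but the second half requires the cohomological boundedness input that you have not supplied, and this is a real missing idea rather than a routine verification.
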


The proof is given in Appendix \ref{s:check hyperb}. 

\ssec{Hyperbolocity and adjunction}  

\sssec{}  \label{sss:q for stacks}

Assume that in the situation of diagram \eqref{e:stacky Braden diag}, 
the morphisms $\iota^-$ and $\iota^+$ in \eqref{e:stacky Braden diag}
admit left inverses, denoted $\sfq^-$ and $\sfq^+$, respectively. Assume also that
the morphisms $\sfq^{\pm}$ are quasi-compact.

\medskip

The diagram

$$
\xy
(20,0)*+{\CY}="X";
(-20,0)*+{\CY^0}="Y";
(0,20)*+{\CY^+}="Z";
(40,20)*+{\CY^-}="W";
(60,0)*+{\CY^0}="U";
(20,40)*+{\CY\underset{\CY}\times \CY^-}="V";
(20,70)*+{\CY^0}="T";
{\ar@{->}_{\sfp^+} "Z";"X"};
{\ar@{->}^{\sfq^+} "Z";"Y"};
{\ar@{->}^{\sfp^-} "W";"X"};
{\ar@{->}_{\sfq^-} "W";"U"}; 
{\ar@{->}^{'\sfp^-} "V";"Z"};
{\ar@{->}_{'\sfp^+} "V";"W"};
{\ar@{->}_{\sfj} "T";"V"};
{\ar@{->}_{\on{id}} "T";"Y"};
{\ar@{->}^{\on{id}} "T";"U"};
\endxy
$$
gives rise to a natural transformation
\begin{equation} \label{e:adj abstr}
(\sfq^-)_\blacktriangle\circ (\sfp^-)^!\circ (\sfp^+)_* \circ (\sfq^+)^!\simeq 
(\sfq^-)_\blacktriangle\circ (\sfp^-)^!\circ (\sfp^+)_\blacktriangle \circ (\sfq^+)^!\to \on{Id}_{\Dmod(\CY^0)},
\end{equation}
which by adjunction gives rise to a natural transformation
\begin{equation} \label{e:qp}
(\sfq^-)_\blacktriangle\circ (\sfp^-)^!\to (\sfq^+)_!\circ (\sfp^+)^*, \quad \Dmod(\CY)\to \on{Pro}(\Dmod(\CY^0)).
\end{equation}

\sssec{}   \label{sss:adj and hyp}

Assume now that the natural transformation
$$(\sfq^-)_\blacktriangle\circ (\iota^-)_*\simeq (\sfq^-)_\blacktriangle\circ (\iota^-)_\blacktriangle
\overset{\sim}\to \on{Id}_{\Dmod(\CY^0)}$$
identifies the functor $(\sfq^-)_\blacktriangle$ with the left adjoint $(\iota^-)^*$ 
of $(\iota^-)_*$.

\medskip

Assume also that the natural transformation
$$(\iota^+)^!\circ (\sfq^+)^!\overset{\sim}\to \on{Id}_{\Dmod(\CY^0)}$$
identifies the functor $(\iota^+)^!$ with the left adjoint $(\sfq^+)_!$ of  
$(\sfq^+)^!$.

\medskip

In particular, both of the above left adjoints take values in
$$\Dmod(\CY^0)\subset \on{Pro}(\Dmod(\CY^0)).$$

\medskip

(For example, such behavior of the morphisms $(\sfq^-,\imath^-)$ and $(\sfq^+,\imath^+)$ occurs in the situation
described in \secref{sss:stacky contraction}.)

\medskip

In this case diagram chase shows that the following diagram of natural transformations commutes:
$$
\CD
(\iota^-)^*\circ (\sfp^-)^!  @>{\text{\eqref{e:Braden trans stacks}}}>>  (\iota^+)^!\circ (\sfp^+)^* \\
@V{\sim}VV    @VV{\sim}V  \\
(\sfq^-)_\blacktriangle\circ (\sfp^-)^!   @>{\text{\eqref{e:qp}}}>> (\sfq^+)_!\circ (\sfp^+)^*.
\endCD
$$

From here we obtain:

\begin{lem}   \label{l:adj and hyp}
The diagram \eqref{e:stacky Braden diag} is hyperbolic if and only if the natural transformation
$$\left((\sfq^-)_\blacktriangle\circ (\sfp^-)^!\right)\circ \left((\sfp^+)_* \circ (\sfq^+)^!\right) \to \on{Id}_{\Dmod(\CY^0)}
$$
of \eqref{e:adj abstr} defines the co-unit of an adjunction, making the functor $(\sfq^-)_\blacktriangle\circ (\sfp^-)^!$
into a left adjoint of $(\sfp^+)_* \circ (\sfq^+)^!$.
\end{lem}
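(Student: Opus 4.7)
The plan is to reduce the assertion to a purely formal statement about partially defined adjoints, leveraging the commutative square of natural transformations displayed immediately above the lemma. First, under the canonical identifications $(\iota^-)^* \simeq (\sfq^-)_\blacktriangle$ and $(\iota^+)^! \simeq (\sfq^+)_!$ furnished by the hypotheses of \secref{sss:adj and hyp}, hyperbolicity of \eqref{e:stacky Braden diag} (i.e., the Braden transformation \eqref{e:Braden trans stacks} being an isomorphism with target landing in $\Dmod(\CY^0) \subset \on{Pro}(\Dmod(\CY^0))$) is tautologically equivalent to the analogous assertion about \eqref{e:qp}: that \eqref{e:qp} is an isomorphism in $\on{Pro}(\Dmod(\CY^0))$ whose target lands in $\Dmod(\CY^0)$.

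Next I would unpack how \eqref{e:qp} was built. Following the conventions of \secref{sss:pro}, the composite $(\sfq^+)_! \circ (\sfp^+)^*$, viewed as a functor to $\on{Pro}(\Dmod(\CY^0))$, is the partially defined left adjoint of $G := (\sfp^+)_* \circ (\sfq^+)^!$. By the defining universal property of such partial adjoints, natural transformations from an honest functor $F := (\sfq^-)_\blacktriangle \circ (\sfp^-)^!$ to $(\sfq^+)_! \circ (\sfp^+)^*$ are in bijection with natural transformations $F \circ G \to \on{Id}_{\Dmod(\CY^0)}$; and by the very construction of \eqref{e:qp} out of \eqref{e:adj abstr}, these two natural transformations correspond to each other under this bijection.

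To conclude I would invoke the general categorical fact that the map \eqref{e:qp} from an honest functor $F$ to the partially defined left adjoint of $G$ is an isomorphism (with target in $\Dmod(\CY^0)$) precisely when $F$, equipped with the corresponding map $F \circ G \to \on{Id}$, itself represents the left adjoint of $G$: in one direction the isomorphism \eqref{e:qp} transfers the universal property to $F$, and in the other direction uniqueness of partial left adjoints produces \eqref{e:qp}. Applying this with the co-unit being \eqref{e:adj abstr} yields the desired equivalence. I do not expect any genuine obstacle once the commutative diagram above the lemma is granted; the only point needing care is precisely what that diagram encodes, namely that passage from the Braden transformation to the adjunction transformation via the two identifications of \secref{sss:adj and hyp} really does send \eqref{e:Braden trans stacks} to \eqref{e:qp}.
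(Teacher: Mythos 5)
Your proof is correct and follows essentially the same route the paper takes: the paper derives the lemma directly from the commutative square of natural transformations displayed just above it (together with the identifications of \secref{sss:adj and hyp}), leaving the remaining formal-category-theory step implicit, and your argument simply spells that step out. The one point you should state a bit more carefully — though it causes no real gap — is that under the identification $(\iota^-)^*\simeq(\sfq^-)_\blacktriangle$ the source of \eqref{e:Braden trans stacks} automatically lands in $\Dmod(\CY^0)$, so the ``isomorphism onto an object of $\Dmod(\CY^0)$'' formulation is equivalent to full hyperbolicity as defined in \secref{sss:hyperb stacks}.
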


\sssec{}

Note that \lemref{l:adj and hyp}, combined with \propref{p:simple Braden}, imply that the assertion of 
\thmref{t:main adj} (with the co-unit specified in \secref{sss:co-unit}) is equivalent to the following:

\begin{cor}
The diagram
$$
\CD
\Bun_M^\mu @>{\iota}>>  \Bun_P^\mu  \\
@V{\iota^-}VV    @VV{\sfp}V  \\
\Bun^\mu_{P^-}  @>{\sfp^-}>>  \Bun_G
\endCD
$$
is hyperbolic. 
\end{cor}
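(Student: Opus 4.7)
The strategy is to deduce hyperbolicity from \thmref{t:hyperb}, which reduces the problem to producing a smooth $\BG_m$-equivariant scheme cover. More precisely, one needs to produce a scheme $Z$ with a $\BG_m$-action, a smooth surjective $\BG_m$-equivariant morphism $\psi:Z\to\Bun_G$ (where $\BG_m$ acts trivially on $\Bun_G$), and smooth morphisms from the attractor $Z^+$, fixed-point scheme $Z^0$, and repeller $Z^-$ to $\Bun_P^\mu$, $\Bun_M^\mu$, $\Bun_{P^-}^\mu$ respectively, with $Z^0\to\Bun_M^\mu$ surjective.

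The natural candidate for $Z$ is furnished by \emph{level structures at a single point}. Fix a $k$-point $x\in X$, and for a large integer $n$ let $\Bun_G^{(n)}$ be the moduli stack parametrizing a $G$-bundle on $X$ together with a trivialization over the $n$-th infinitesimal neighborhood of $x$. The forgetful map $\Bun_G^{(n)}\to\Bun_G$ is a $G(\CO_n)$-torsor, hence smooth and surjective; for $n$ large enough, its restriction over any fixed quasi-compact open of $\Bun_G$ is schematic. The cocharacter $\gamma:\BG_m\to M\hookrightarrow G$, viewed as a constant loop, yields a homomorphism $\BG_m\to G(\CO_n)$ and therefore an action of $\BG_m$ on $\Bun_G^{(n)}$ covering the trivial action on $\Bun_G$. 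Defining $\Bun_P^{(n),\mu}$, $\Bun_M^{(n),\mu}$, $\Bun_{P^-}^{(n),\mu}$ analogously, one obtains smooth surjective covers of $\Bun_P^\mu$, $\Bun_M^\mu$, $\Bun_{P^-}^\mu$ by their level-$n$ versions.

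The key step is to identify the $\BG_m$-attractor, fixed-point, and repeller of $\Bun_G^{(n)}$ with the level-$n$ parabolic and Levi stacks
\[
(\Bun_G^{(n)})^+\;\simeq\;\Bun_P^{(n),\mu},\qquad
(\Bun_G^{(n)})^0\;\simeq\;\Bun_M^{(n),\mu},\qquad
(\Bun_G^{(n)})^-\;\simeq\;\Bun_{P^-}^{(n),\mu}
\]
(after restricting to the appropriate $\mu$-component). Via the $G(\CO_n)$-torsor structure, this identification reduces to the scheme-level statement that $G^+=P$, $G^0=M$, $G^-=P^-$ for the adjoint $\BG_m$-action on the affine group $G$ recalled in \secref{sss:interp group}, applied to the jet group $G(\CO_n)$. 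Once this is in place, I would pick, for each quasi-compact open $V\subset\Bun_M^\mu$, a $\BG_m$-invariant quasi-compact open $Z\subset\Bun_G^{(n)}$ (with $n$ chosen depending on $V$) whose fixed-point locus $Z^0$ surjects onto $V$, and invoke \thmref{t:hyperb} to get hyperbolicity of the restricted square. Hyperbolicity being a local property on $\CY^0=\Bun_M^\mu$ (all four functors entering \eqref{e:Braden trans stacks} commute with $!$-pullback to open substacks of $\CY^0$), exhausting by such $V$'s yields the global statement.

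The main obstacle is the functorial identification of the attractor/fixed/repeller schemes of $\Bun_G^{(n)}$ with $\Bun_P^{(n),\mu}$, $\Bun_M^{(n),\mu}$, $\Bun_{P^-}^{(n),\mu}$, which must be proved in families rather than merely on $k$-points. Concretely, one must show that specifying the limit of a level-$n$-trivialized $G$-bundle along the $\gamma$-flow as $t\to 0$ is the same datum as specifying a $P$-reduction compatible with the level structure. This follows from the corresponding fact for the affine scheme $G$ itself, but its promotion to the level of moduli requires a careful analysis of how the $G(\CO_n)$-torsor structure on $\Bun_G^{(n)}\to\Bun_G$ interacts with the $\BG_m$-action, and of the smoothness of the induced maps $Z^{\pm}\to\Bun_{P^{\pm}}^{\mu}$ and $Z^0\to\Bun_M^\mu$.
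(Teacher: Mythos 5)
You have the right overall strategy: invoke \thmref{t:hyperb} via a smooth $\BG_m$-equivariant cover coming from level structures at a point, which is exactly what the paper does in \secref{ss:proof via Braden}. But the way you propose to carry out the crucial comparison step has a genuine gap. You claim that the attractor of $\Bun_G^{(n)}$ \emph{is} $\Bun_P^{(n),\mu}$ (and similarly for the repeller and fixed-point scheme), and that this ``reduces'' via the $G(\CO_n)$-torsor structure to the statement $G^+=P$. Neither half survives scrutiny. The attractor construction does not commute with passing to a torsor: $\Bun_G^{(n)}\to\Bun_G$ is a $G(\CO_n)$-torsor, but the attractor of the total space cannot be computed fiber-by-fiber from the attractor of the group, because the fibers have no preferred basepoint. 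More to the point, the paper never proves --- and does not need --- that the attractor $Z^+$ of $Z:=U_G\underset{\Bun_G}\times\Bun_G^{n\cdot x}$ \emph{equals} the level-structure parabolic stack. Instead, the paper constructs a map $U_M\underset{\Bun_M}\times\Bun_P^{n\cdot x}\to Z^+$ (in this direction) using the $\BA^1$-contraction of $\Bun_P^{n\cdot x}$ onto $\Bun_M^{n\cdot x}$ of \secref{sss:contraction on moduli simple}, observes that it is a locally closed embedding, and proves only the weaker fact that it is an \emph{open embedding on a Zariski neighborhood} of the fixed-point locus. That is enough: one then replaces $Z$ by a smaller $\BG_m$-invariant open over which diagram \eqref{e:level structure} coincides with the attractor diagram \eqref{e:attr rep diagram}, and applies \thmref{t:hyperb}.

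The verification of that open-embedding claim is where the real content lies, and it is missing from your proposal. It is a tangent-space computation: at a $k$-point $z$ lying over an $M$-bundle $\CF_M$, one has $T_z Z \simeq H^1(X,\fg_{\CF_M}(-n\cdot x))$, while the tangent spaces of the level-structure versions of $\Bun_M$, $\Bun_P$, $\Bun_{P^-}$ are $H^1(X,\fm_{\CF_M}(-n\cdot x))$, $H^1(X,\fp_{\CF_M}(-n\cdot x))$, $H^1(X,\fp^-_{\CF_M}(-n\cdot x))$. Since $T_z Z^0$, $T_z Z^+$, $T_z Z^-$ are the zero-, non-negative-, and non-positive-weight subspaces of the linearized $\BG_m$-action, the weight decomposition of $\fg$ into $\fm$ and the nilpotent radicals of $\fp$ and $\fp^-$ gives the comparison of tangent spaces, and smoothness of the source then upgrades this to an open embedding. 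Without this argument your proposal does not close; the torsor heuristic you lean on will not supply a substitute.
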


\ssec{Proof of \thmref{t:main adj}: reduction to the quasi-compact case}  \label{ss:red to qc}

\sssec{}

We need to show that for $\CF_G\in \Dmod(\Bun_G)$ and $\CF_M\in \Dmod(\Bun^\mu_M)$, 
the map 
\begin{equation} \label{e:Hom to check}
\CHom_{\Dmod(\Bun_G)}(\CF_G,\Eis^\mu_*(\CF_M))\to \CHom_{\Dmod(\Bun^\mu_M)}(\on{CT}^{\mu,-}_*(\CF_G),\CF_M),
\end{equation}
induced by \eqref{e:expl co-unit}, is an isomorphism.

\medskip

In this subsection we will show that it is sufficient to prove that \eqref{e:Hom to check} is an isomorphism for any $\CF_M$ of the form 
$(\jmath_M)_*(\CF'_M)$, where 
$$U_M\overset{\jmath_M}\hookrightarrow \Bun^\mu_M$$
is an open quasi-compact substack and $\CF'_M\in \Dmod(U_M)$. 

\sssec{}

Let $\on{open-qc}_G$ denote the poset of open quasi-compact substacks of $\Bun_G$, and let $\on{open-qc}_M$
be the corresponding poset for $\Bun^\mu_M$. 

\medskip

For every
$$(U_M\overset{\jmath_M}\hookrightarrow \Bun^\mu_M)\in \on{open-qc}_M$$ we have a commutative diagram

$$
\CD
\CHom(\CF_G,\Eis^\mu_*(\CF_M))  @>>>  \CHom(\CF_G,\Eis^\mu_*\circ 
(\jmath_M)_*\circ (\jmath_M)^*(\CF_M))  \\
@VVV   @VVV  \\
\CHom(\on{CT}^{\mu,-}_*(\CF_G),\CF_M) 
@>>> \CHom(\on{CT}^{\mu,-}_*(\CF_G),(\jmath_M)_*\circ (\jmath_M)^*(\CF_M)).
\endCD
$$

\medskip

Hence, it suffices to show that the maps
\begin{multline} \label{e:inv limit M}
\CHom_{\Dmod(\Bun_M)}(\on{CT}^{\mu,-}_*(\CF_G),\CF_M)\to \\
\to \underset{U_M\in \on{open-qc}_M}{\underset{\longleftarrow}{lim}}\, 
\CHom_{\Dmod(\Bun_M)}(\on{CT}^{\mu,-}_*(\CF_G),(\jmath_M)_*\circ (\jmath_M)^*(\CF_M))
\end{multline}
and 
\begin{multline}  \label{e:inv limit G}
\CHom_{\Dmod(\Bun_G)}(\CF_G,\Eis^\mu_*(\CF_M))\to \\
\to \underset{U_M\in \on{open-qc}_M}{\underset{\longleftarrow}{lim}}\, 
\CHom_{\Dmod(\Bun_G)}(\CF_G,\Eis^\mu_*\circ 
(\jmath_M)_*\circ (\jmath_M)^*(\CF_M)) 
\end{multline}
are isomorphisms.

\sssec{}

The fact that \eqref{e:inv limit M} is an isomorphism is evident from the definition of $\Dmod(\Bun_M)$ as
$$\underset{U_M\in \on{open-qc}_M}{\underset{\longleftarrow}{lim}}\, \Dmod(U_M),$$
since
\begin{multline*} 
\CHom_{\Dmod(\Bun_M)}(\on{CT}^{\mu,-}_*(\CF_G),(\jmath_M)_*\circ (\jmath_M)^*(\CF_M))\simeq \\
\simeq \CHom_{\Dmod(U_M)}((\jmath_M)^*\circ \on{CT}^{\mu,-}_*(\CF_G),(\jmath_M)^*(\CF_M)).
\end{multline*}

Similarly, the map
\begin{multline}   \label{e:inv limit G next}
\CHom_{\Dmod(\Bun_G)}(\CF_G,\Eis^\mu_*(\CF_M))\to \\
\to \underset{U_G\in \on{open-qc}_G}{\underset{\longleftarrow}{lim}}\, 
\CHom_{\Dmod(\Bun_G)}(\CF_G,(\jmath_G)_*\circ (\jmath_G)^*\circ \Eis^\mu_*(\CF_M))
\end{multline}
is an isomorphism. 

\sssec{}

Note now that we have a commutative diagram
$$
\xy
(-20,0)*+{\CHom(\CF_G,\Eis^\mu_*(\CF_M))}="X";
(0,-30)*+{\underset{U_G\in \on{open-qc}_G}{\underset{\longleftarrow}{lim}} \CHom(\CF_G,(\jmath_G)_*\circ (\jmath_G)^*\circ \Eis^\mu_*(\CF_M))}="Y";
(70,0)*+{ \underset{U_M\in \on{open-qc}_M}{\underset{\longleftarrow}{lim}} \CHom(\CF_G,\Eis^\mu_*\circ (\jmath_M)_*\circ (\jmath_M)^*(\CF_M))}="Z";
(60,-50)*+{\underset{U_G,U_M}{\underset{\longleftarrow}{lim}}
\CHom(\CF_G,(\jmath_G)_*\circ (\jmath_G)^*\circ \Eis^\mu_*\circ (\jmath_M)_*\circ (\jmath_M)^* (\CF_M))}="W";
{\ar@{->}^{\text{\eqref{e:inv limit G next}}} "X";"Y"};
{\ar@{->}^{\text{\eqref{e:inv limit G}}} "X";"Z"};
{\ar@{->}^{\text{\eqref{e:inv limit G next}}} "Z";"W"};
{\ar@{->} "Y";"W"};
\endxy
$$

It remains to show that the map
\begin{multline*} 
\underset{U_G\in \on{open-qc}_G}{\underset{\longleftarrow}{lim}} \CHom(\CF_G,(\jmath_G)_*\circ (\jmath_G)^*\circ \Eis^\mu_*(\CF_M)) \to \\
\to 
\underset{U_G,U_M}{\underset{\longleftarrow}{lim}}
\CHom(\CF_G,(\jmath_G)_*\circ (\jmath_G)^*\circ \Eis^\mu_*\circ (\jmath_M)_*\circ (\jmath_M)^* (\CF_M))
\end{multline*}
is an isomorphism. 

\medskip

However, we claim that for any fixed $U_G$, the corresponding map
\begin{multline*} 
\CHom(\CF_G,(\jmath_G)_*\circ (\jmath_G)^*\circ \Eis^\mu_*(\CF_M)) \to \\
\to 
\underset{U_M\in \on{open-qc}_M}{\underset{\longleftarrow}{lim}}
\CHom(\CF_G,(\jmath_G)_*\circ (\jmath_G)^*\circ \Eis^\mu_*\circ (\jmath_M)_*\circ (\jmath_M)^* (\CF_M))
\end{multline*}
is already an isomorphism. In fact, the map
$$(\jmath_G)^*\circ \Eis^\mu_*(\CF_M)\to 
(\jmath_G)^*\circ \Eis^\mu_*\circ (\jmath_M)_*\circ (\jmath_M)^* (\CF_M)$$
is an isomorphism,
whenever $U_M$ is such that 
$$\sfq(\sfp^{-1}(U_G)\subset U_M,$$ 
and such $U_M$ are cofinal in $\on{open-qc}_M$. 

\ssec{Proof of \thmref{t:main adj}: reduction to the question of hyperbolicity}  

\sssec{}

According to \secref{ss:red to qc}, it suffices to show that for an open quasi-compact substack
$$U_M\overset{\jmath_M}\hookrightarrow \Bun^\mu_M,$$
the natural transformation
$$\jmath_M^*\circ \on{CT}_*^{\mu,-}\circ \Eis^\mu_*\circ (\jmath_M)_*\overset{\text{\eqref{e:expl co-unit}}}
\longrightarrow \jmath_M^*\circ (\jmath_M)_*\to \on{Id}_{\Dmod(U_M)}$$
defines the co-unit of an adjunction. 

\sssec{}

Let $U_G\subset \Bun_G$ be a quasi-compact open substack such that
$$\sfp(\sfq^{-1}(U_M))\subset U_G \text{ and } \sfp^-((\sfq^-)^{-1}(U_M))\subset U_G.$$

\medskip

Consider the corresponding diagram
$$
\xy
(30,0)*+{U_M\underset{\Bun_M}\times \Bun_P}="Y";
(0,-30)*+{U_M\underset{\Bun_M}\times \Bun_{P^-}}="Z";
(30,-30)*+{U_G,}="W";
(-20,20)*+{U_M}="U";
{\ar@{->}_{\sfp_U^-} "Z";"W"};
{\ar@{->}^{\sfp_U} "Y";"W"};
{\ar@{->}^{\iota_U^-} "U";"Z"};
{\ar@{->}_{\iota_U} "U";"Y"};
{\ar@<-1.3ex>_{\sfq_U} "Y";"U"};
{\ar@<1.3ex>^{\sfq^-_U} "Z";"U"};
\endxy
$$

\medskip

It suffices to show that the functor
$$(\sfq_U)_!\circ \sfp_U^*:\Dmod(U_G)\to \on{Pro}(\Dmod(U_M))$$
takes values in $\Dmod(U_M)$, and that the natural transformation 
\begin{equation} \label{e:nat trans U}
(\sfq^-_U)_*\circ (\sfp_U^-)^!\to (\sfq_U)_!\circ (\sfp_U)^*
\end{equation}
of \eqref{e:qp} is an isomorphism. 

\sssec{}

The contraction of $\Bun_P$ (resp., $\Bun_{P^-}$) onto $\Bun_M$ (see \secref{sss:contraction on moduli simple})
and \propref{p:simple Braden} imply that the morphisms
$$\sfq_U:U_M\underset{\Bun_M}\times \Bun_P\rightleftarrows U_M:\iota_U$$
and 
$$\sfq^-_U:U_M\underset{\Bun_M}\times \Bun_{P^-}\rightleftarrows U_M:\iota^-_U$$
satisfy the assumptions of \secref{sss:adj and hyp}.

\medskip

Hence, applying \lemref{l:adj and hyp}, we obtain that it is enough to show that the diagram
\begin{equation} \label{e:U hyperb}
\CD
U_M @>{\iota_U}>> U_M\underset{\Bun_M}\times \Bun_P  \\
@V{\iota^-_U}VV   @VV{\sfp_U}V   \\
U_M\underset{\Bun_M}\times \Bun_{P^-}  @>{\sfp^-_U}>>  U_G
\endCD
\end{equation} 
is hyperbolic in the sense of \secref{sss:hyperb stacks}. 

\ssec{Proof of \thmref{t:main adj}: verification of hyperbolicity}  \label{ss:proof via Braden}

\sssec{}

Fix any point $x\in X$. For $H=G,P,P^-$ or $M$, let $\Bun_H^{n\cdot x}$ be the stack classifying $G$-bundles with a 
structure of level $n$ at $x$. It is known that for $n$ large enough, the open substacks 
$$U_G\underset{\Bun_G}\times \Bun_G^{n\cdot x}\subset \Bun_G^{n\cdot x}$$
$$U_M \underset{\Bun_M}\times \Bun_M^{n\cdot x}\subset \Bun_M^{n\cdot x},$$
$$U_M \underset{\Bun_M}\times \Bun_{P}^{n\cdot x}\simeq
(U_M \underset{\Bun_M}\times \Bun_{P}) \underset{\Bun_{P}}\times \Bun_{P}^{n\cdot x}$$
and 
$$U_M \underset{\Bun_M}\times \Bun_{P^-}^{n\cdot x}\simeq
(U_M \underset{\Bun_M}\times \Bun_{P^-}) \underset{\Bun_{P^-}}\times \Bun_{P^-}^{n\cdot x}$$
are actually quasi-compact separated schemes. 

\medskip

By making $n$ even larger, we can assume that the maps in the diagram
\begin{equation} \label{e:level structure}
\CD
U_M \underset{\Bun_M}\times \Bun_M^{n\cdot x} @>>>  U_M \underset{\Bun_M}\times \Bun_{P}^{n\cdot x} \\
@VVV    @VVV   \\
U_M \underset{\Bun_M}\times \Bun_{P^-}^{n\cdot x}  @>>> U_G\underset{\Bun_G}\times \Bun_G^{n\cdot x}
\endCD
\end{equation}
are locally closed embeddings. 

\medskip

Combining, we obtain the following diagram:

\begin{equation} \label{e:Bun U}
\xy
(0,0)*+{U_G\underset{\Bun_G}\times \Bun_G^{n\cdot x}}="X";
(-20,20)*+{U_M \underset{\Bun_M}\times \Bun_{P^-}^{n\cdot x}}="Y";
(-20,50)*+{U_M \underset{\Bun_M}\times \Bun_M^{n\cdot x}}="Z";
(0,30)*+{U_M \underset{\Bun_M}\times \Bun_{P}^{n\cdot x}}="W";
(50,0)*+{U_G}="cX";
(30,20)*+{U_M \underset{\Bun_M}\times \Bun_{P^-}}="cY";
(30,50)*+{U_M}="cZ";
(50,30)*+{U_M \underset{\Bun_M}\times \Bun_P}="cW";
{\ar@{->} "Z";"Y"};
{\ar@{->} "W";"X"};
{\ar@{->} "Y";"X"};
{\ar@{->} "Z";"W"};
{\ar@{->} "cZ";"cY"};
{\ar@{->}  "cW";"cX"};
{\ar@{->} "cY";"cX"};
{\ar@{->} "cZ";"cW"};
{\ar@{->} "X";"cX"};
{\ar@{->} "Y";"cY"};
{\ar@{->} "Z";"cZ"};
{\ar@{->} "W";"cW"};
\endxy
\end{equation}

\sssec{}

Let $\gamma:\BG_m\to M$ be a co-character as in \eqref{e:co-character}, and consider the corresponding
homomorphism 
$$\BG_m\to H.$$ 
The adjoint action of $\BG_m$ on $H$ defines a $\BG_m$-action on each $\Bun_H^{n\cdot x}$ so that
the forgetful maps
$$\Bun_H^{n\cdot x}\to \Bun_H$$
are $\BG_m$-equivariant. In particular, the scheme 
$$Z:=U_G\underset{\Bun_G}\times \Bun_G^{n\cdot x}$$
acquires a $\BG_m$-action, which is \emph{locally linear}, i.e., $Z$ can be covered by affine subschemes
that are preserved by the $\BG_m$-action. 

\medskip

Consider the corresponding diagram of \eqref{e:attr rep diagram}:
\begin{equation}  \label{e:Z diag again}
\CD
Z^0 @>>>  Z^+ \\
@VVV   @VVV   \\
Z^-  @>>>  Z
\endCD
\end{equation}
for $Z$ as above.

\medskip

By \thmref{t:hyperb}, in order to show that the diagram \eqref{e:U hyperb} is hyperbolic, it is enough to
show that the digram \eqref{e:level structure} admits a map to the diagram \eqref{e:Z diag again},
which is an open embedding \emph{on a Zariski neighborhood of $U_M \underset{\Bun_M}\times \Bun_M^{n\cdot x}$}. 
(Indeed, in this case the diagram \eqref{e:level structure} will identify with the diagram \eqref{e:attr rep diagram}
for an open $\BG_m$-invariant subscheme of $Z$.) 

\sssec{}

We construct the corresponding maps 
\begin{equation} \label{e:incl M}
U_M \underset{\Bun_M}\times \Bun_M^{n\cdot x}\to Z^0,
\end{equation} 
\begin{equation} \label{e:incl P}
U_M \underset{\Bun_M}\times \Bun_{P}^{n\cdot x}\to Z^+,
\end{equation} 
and
\begin{equation} \label{e:incl P-}
U_M \underset{\Bun_M}\times \Bun_{P^-}^{n\cdot x}\to Z^0
\end{equation} 
as follows.

\medskip

The map \eqref{e:incl M} is evident, since the adjoint action of $\BG_m$ on $M$ is trivial. 

\medskip 

The map \eqref{e:incl P} (resp.,  \eqref{e:incl P-}) is given by the $\BA^1$-action on $\Bun_{P}^{n\cdot x}$
(resp., $\Bun_{P^-}^{n\cdot x}$) that contracts it onto $\Bun_{M}^{n\cdot x}$ as in 
\secref{sss:contraction on moduli simple}. 

\sssec{}

The map \eqref{e:incl M} (resp., \eqref{e:incl P}, \eqref{e:incl P-}) is a locally closed embedding, because
its composition with the map $Z^0\to Z$ (resp.,, $Z^+\to Z$, $Z^-\to Z$) is. 

\medskip

It remains to show that the maps \eqref{e:incl M}, \eqref{e:incl P} and \eqref{e:incl P-} are open embeddings
on a Zariski neighborhood of $U_M \underset{\Bun_M}\times \Bun_M^{n\cdot x}$. 

\medskip

Since the left-hand sides are
smooth, it is enough to show that maps in question give rise to isomorphisms of tangent spaces at every
$k$-point of $U_M \underset{\Bun_M}\times \Bun_M^{n\cdot x}$.

\medskip

Let $z$ be a $k$-point of $Z$, where $Z$ is as in \secref{ss:hyper}.
The $\BG_m$-action on $Z$ induces a linear action of $\BG_m$ on the tangent
space $T_z(Z)$. It is easy to see that the subspaces
$$T_z(Z^0),\,\, T_z(Z^+) \text{ and } T_z(Z^-)$$
identify, respectively, with the subspace of zero, non-negative and non-positive characters of $\BG_m$
on $T_z(Z)$.

\medskip

Let $Z:=U_G\underset{\Bun_G}\times \Bun_G^{n\cdot x}$, and let $z$ be a geometric point of 
$U_M \underset{\Bun_M}\times \Bun_M^{n\cdot x}$, corresponding to an $M$-bundle $\CF_M$. 

\medskip

The tangent space $T_z(U_G\underset{\Bun_G}\times \Bun_G^{n\cdot x})$ identifies with $H^1(X,\fg_{\CF_M}(-n\cdot x))$,
while the tangent spaces 
$$T_z(U_M \underset{\Bun_M}\times \Bun_M^{n\cdot x}),\,\, T_z(U_M \underset{\Bun_M}\times \Bun_P^{n\cdot x}) \text{ and }
T_z(U_M \underset{\Bun_M}\times \Bun_{P^-}^{n\cdot x})$$
identify with
$$H^1(X,\fm_{\CF_M}(-n\cdot x)),\,\, H^1(X,\fp_{\CF_M}(-n\cdot x)) \text{ and } H^1(X,\fp^-_{\CF_M}(-n\cdot x)),$$
respectvely.

\medskip

This makes the required assertion manifest.

\appendix

\section{Hyperbolicity for stacks: proof of \thmref{t:hyperb}}  \label{s:check hyperb}

With no restriction of generality, we can assume that $\CY$  is quasi-compact (and hence QCA). Indeed,
otherwise, replace it by the image of the map $\psi$.

\ssec{Step 1}

In this subsection we will show that the functors
\begin{equation} \label{e:hyperb restr}
(\iota^-)^*\circ (\sfp^-)^! \text{ and } (\iota^+)^*\circ (\sfp^+)^!
\end{equation} 
take values in $\Dmod(\CY^0)\subset  \on{Pro}(\Dmod(\CY^0))$. By symmetry, it is sufficient to treat 
the case of the former functor.

\sssec{}

Note that for $\CF\in \Dmod(\CY)$, the object
$$(\psi^0)^*\circ (\iota^-)^*\circ (\sfp^-)^!(\CF),$$
is, isomorphic to $(i^-)^*\circ (\psi^-)^*\circ (\sfp^-)^!(\CF)$ and the latter is, up to a cohomological
shift, isomorphic to $(i^-)^*\circ (p^-)^!\circ \psi^*(\CF)$, since the morphisms $\psi$ and $\psi^-$ are both
smooth.

\medskip

Since $\psi^*(\CF)\in \Dmod(Z)^{\BG_m\on{-mon}}$,
from \thmref{t:Braden original}, we obtain that 
$$(i^-)^*\circ (p^-)^!\circ \psi^*(\CF)\in \Dmod(Z^0).$$

\medskip

Hence, we obtain that 
$$(\psi^0)^*\circ (\iota^-)^*\circ (\sfp^-)^!(\CF)\in \Dmod(Z^0).$$

Now, the fact that $(\iota^-)^*\circ (\sfp^-)^! (\CF)$ belongs to $\Dmod(\CY^0)$ follows from the next
general assertion:

\begin{lem}  \label{l:left adjoint as pullback}
Let $\CZ\overset{g}\longrightarrow \CY_1\overset{f}\longrightarrow \CY_2$ be maps
between quasi-compact stacks with $g$ smooth and surjective. Then for $\CF\in \Dmod(\CY_2)$,
the object $f^*(\CF)$ belongs to $\Dmod(\CY_1)$ if and only if 
$g^*\circ f^*(\CF)$ belongs to $\Dmod(\CZ)$.
\end{lem}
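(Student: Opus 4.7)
The ``only if'' direction is immediate: since $g$ is smooth, $g^*$ is defined on all of $\Dmod(\CY_1)$ (agreeing with $g^!$ up to a cohomological shift), so if $f^*(\CF)$ is an honest object of $\Dmod(\CY_1)$ then so is $g^*\circ f^*(\CF)\simeq (f\circ g)^*(\CF)$.

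For the converse, my plan is to construct an honest object $\CG\in\Dmod(\CY_1)$ that represents the pro-object $f^*(\CF)$ by smooth descent. Let $g^{(n)}:\CZ^{(n)}\to\CY_1$ denote the $n$-fold fibre power of $g$, so that $\CZ^{(\bullet)}$ is the \v Cech nerve of $g$; since $g$ is smooth and surjective one has the descent equivalence $\Dmod(\CY_1)\simeq\Tot_n\Dmod(\CZ^{(n)})$ with transition maps given by $*$-pullback. Every projection $\CZ^{(n)}\to\CZ$ is smooth, so the hypothesis $g^*\circ f^*(\CF)\in\Dmod(\CZ)$ implies that $(g^{(n)})^*\circ f^*(\CF)\simeq (f\circ g^{(n)})^*(\CF)$ is an honest object of $\Dmod(\CZ^{(n)})$ on every level. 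These honest objects are automatically compatible with the simplicial structure of $\CZ^{(\bullet)}$, so they glue into an object $\CG\in\Dmod(\CY_1)$ characterized by $(g^{(n)})^*\CG\simeq (f\circ g^{(n)})^*\CF$.

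To see that this $\CG$ actually represents $f^*(\CF)$, I would verify $\Hom_{\Dmod(\CY_1)}(\CG,\CH)\simeq\Hom_{\Dmod(\CY_2)}(\CF,f_*\CH)$ for every $\CH\in\Dmod(\CY_1)$. Descent for $\Hom$ on the left gives
\[
\Hom(\CG,\CH)\simeq\Tot_n\Hom_{\Dmod(\CZ^{(n)})}((f\circ g^{(n)})^*\CF,\,(g^{(n)})^*\CH);
\]
on each level one applies the honest $((f\circ g^{(n)})^*,(f\circ g^{(n)})_*)$-adjunction (legitimate since $(f\circ g^{(n)})^*\CF$ has been identified with an honest object), pulls $\Tot_n$ into the second slot of $\Hom(\CF,-)$, rewrites $(f\circ g^{(n)})_*=f_*\circ(g^{(n)})_*$, and invokes smooth descent for $\CH$ in the form $\CH\simeq\Tot_n(g^{(n)})_*(g^{(n)})^*\CH$ to arrive at $\Hom(\CF,f_*\CH)$. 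The main obstacle is this very last step: the canonical map $f_*\CH\to\Tot_n(f\circ g^{(n)})_*(g^{(n)})^*\CH$ must be an isomorphism, and since $f$ is not assumed safe, $f_*$ need not \emph{a priori} preserve the \v Cech totalization. However, this identification should be available because it is built into the construction of $\Dmod$ on stacks via smooth descent carried out in \cite{DrGa1}.
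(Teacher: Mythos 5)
Your proposal is essentially the paper's proof: reduce to the \v{C}ech nerve of $g$, use the descent equivalence $\Dmod(\CY_1)\simeq\on{Tot}(\Dmod(\CZ^\bullet))$ to produce a candidate object $\CG$, and then check that $\CG$ has the universal property of $f^*(\CF)$ by rewriting $\CHom(\CG,\CH)$ termwise via the pullback/pushforward adjunction. You also correctly flag the one genuinely nontrivial step, namely that the canonical map $f_*(\CH)\to\on{Tot}\bigl((f\circ g^\bullet)_*\circ(g^\bullet)^*(\CH)\bigr)$ must be an isomorphism; the paper cites \cite[Lemma 7.5.2]{DrGa1} for exactly this. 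One small remark on your diagnosis of that step: safety is irrelevant here, since safety governs the \emph{continuity} (commutation with colimits) of $f_*$, whereas $\on{Tot}$ is a limit; the issue is rather that for general $f$ the functor $f_*$ is not constructed as a right adjoint on $\Dmod(\CY_1)$, so commutation with the cosimplicial limit is not formal and genuinely requires the descent input from \cite{DrGa1}.
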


\sssec{Proof of \lemref{l:left adjoint as pullback}}

The ``only" if direction is obvious. For the ``if" direction, with no restriction of generality, we can assume that $\CZ$ 
is a quasi-compact scheme.

\medskip

Let $\CZ^\bullet$ denote the \v{C}ech nerve of the map $g$, and let $g^\bullet:\CZ^\bullet\to \CY_1$ denote the
corresponding maps. By assumption $(g^\bullet)^*\circ f^*(\CF)$ is a well-defined an object of $\on{Tot}(\Dmod(\CZ^\bullet))$.
Since $g$ is smooth and surjective, the functor
\begin{equation} \label{e:to Tot}
(g^\bullet)^*:\Dmod(\CY_1)\to \on{Tot}(\Dmod(\CZ^\bullet))
\end{equation}
is an equivalence. Let $\CF'\in \Dmod(\CY_1)$ denote the object corresponding to $(g^\bullet)^*\circ f^*(\CF)$
under the equivalence \eqref{e:to Tot}. We claim that $\CF'\simeq f^*(\CF)$. 

\medskip

It suffices to show that for $\CF_1\in \Dmod(\CY_1)$, we have a canonical ismorphism
$$\CHom_{\Dmod(\CY_1)}(\CF',\CF_1)\simeq \CHom_{\Dmod(\CY_2)}(\CF,f_*(\CF_1)).$$

We have:
$$\CHom_{\Dmod(\CY_1)}(\CF',\CF_1)\simeq 
\on{Tot}\left(\CHom_{\Dmod(\CZ^\bullet)}((g^\bullet)^*\circ f^*(\CF),(g^\bullet)^*(\CF_1))\right),$$
which can be rewritten by adjunction as
\begin{multline*}
\on{Tot}\left(\CHom_{\Dmod(\CY_2)}(\CF,(f\circ g^\bullet )_*\circ (g^\bullet)^*(\CF_1))\right)\simeq  \\
\simeq \CHom_{\Dmod(\CY_2)}\left(\CF, \on{Tot}((f\circ g^\bullet)_* \circ (g^\bullet)^*(\CF_1))\right).
\end{multline*}

Hence, it suffices to show that the canonical map
$$f_*(\CF_1)\to \on{Tot}((f\circ g^\bullet)_* \circ (g^\bullet)^*(\CF_1))$$
is an isomorphism. However, this is given by \cite[Lemma 7.5.2]{DrGa1}. 

\qed

\ssec{Step 2}

In this subsection we will how that the functors 
\begin{equation} \label{e:hyperb restr other}
(\iota^-)^!\circ (\sfp^-)^* \text{ and } (\iota^+)^!\circ (\sfp^+)^*
\end{equation} 
take values in $\Dmod(\CY^0)\subset  \on{Pro}(\Dmod(\CY^0))$. Again, by symmetry, it is sufficient to do this
for $(\iota^-)^!\circ (\sfp^-)^*$.

\sssec{}

Note that the functor $(\iota^-)^!$ admits a (possibly non-continuous) \emph{right} adjoint, denoted $((\iota^-)^!)^R$.
Hence, our assertion is that the functor $(\sfp_-)_*\circ ((\iota^-)^!)^R$ admits a \emph{left} adjoint, which takes  
values in $\Dmod(\CY^0)$, rather than in $\on{Pro}(\Dmod(\CY^0))$. 

\medskip

Since $\CY$ was assumed QCA, the category 
$\Dmod(\CY)$ is compactly generated. Hence, it is enough to show that the value of $(\iota^-)^!\circ (\sfp^-)^*$ 
on $\CF\in \Dmod(\CY)^c$ belongs to $\Dmod(\CY^0)$.

\sssec{}

Recall from \cite[Sect. 4.2.4]{GR} that the category $\on{Pro}(\Dmod(\CY^0))$ inherits a t-structure from 
$\Dmod(\CY^0)$. We claim:

\begin{lem}   \label{l:pro bdd}
For $\CF\in \Dmod(\CY)^c$, the object $(\iota^-)^!\circ (\sfp^-)^*(\CF)\in \on{Pro}(\Dmod(\CY^0))$ is cohomologically
bounded, i.e., belongs to $\on{Pro}(\Dmod(\CY^0))^{\geq -n,\leq n}$
for some $n$. 
\end{lem}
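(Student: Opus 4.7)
The plan is to reduce the boundedness assertion to the scheme-level version, where it follows from Braden's theorem (\thmref{t:Braden original}) applied to a bounded $\BG_m$-monodromic object. This parallels Step 1, except that we must now track the pro-nature of the partially defined left adjoint $(\sfp^-)^*$.

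First I would use the smooth surjection $\psi^0:Z^0\to\CY^0$ from the cube \eqref{e:cube}. The functor $(\psi^0)^!$, being t-exact up to a cohomological shift by $2\dim\psi^0$, extends naturally to pro-categories and is conservative on boundedness: an object of $\on{Pro}(\Dmod(\CY^0))$ is cohomologically bounded iff its $(\psi^0)^!$-pullback is. So it suffices to bound $(\psi^0)^!\circ(\iota^-)^!\circ(\sfp^-)^*(\CF)$ in $\on{Pro}(\Dmod(Z^0))$.

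Next I use the commutativity of the cube, specifically $\iota^-\circ\psi^0=\psi^-\circ i^-$ and $\sfp^-\circ\psi^-=\psi\circ p^-$, combined with the smoothness of $\psi^-$ (which lets us swap $(\psi^-)^!$ with $(\psi^-)^*[2\dim\psi^-]$), to obtain
\begin{equation*}
(\psi^0)^!\circ(\iota^-)^!\circ(\sfp^-)^*(\CF)\simeq (i^-)^!\circ(p^-)^*\circ\psi^*(\CF)
\end{equation*}
up to a cohomological shift. The identification $(\psi^-)^*\circ(\sfp^-)^*\simeq(p^-)^*\circ\psi^*$ is just the composition identity for partially defined $*$-pullbacks, dual to the composition $\sfp^-\circ\psi^-=\psi\circ p^-$. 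Now $\psi^*(\CF)$ lies in $\Dmod(Z)^{\BG_m\on{-mon}}$ by the $\BG_m$-equivariance of $\psi$ (with trivial action on $\CY$), and is cohomologically bounded because $\CF$ is compact on the QCA stack $\CY$ (hence bounded and coherent) and $\psi$ is smooth. By \thmref{t:Braden original}, the object $(i^-)^!\circ(p^-)^*\circ\psi^*(\CF)$ is an honest D-module on $Z^0$, canonically isomorphic to $(i^+)^*\circ(p^+)^!\circ\psi^*(\CF)$.

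The main obstacle is the final step: verifying that this honest D-module is cohomologically bounded. In the expression $(i^+)^*\circ(p^+)^!\circ\psi^*(\CF)$, the functor $(p^+)^!$ preserves cohomological boundedness (with a shift) because $p^+$ is a finite-type morphism of schemes, while $(i^+)^*$ is only partially defined and can \emph{a priori} produce objects of $\on{Pro}(\Dmod(Z^0))$ that are not cohomologically bounded. However, the containment in the honest category $\Dmod(Z^0)$ forced by Braden's theorem, combined with the finiteness of the inputs, implies boundedness; this is the same coherence/finiteness property that underlies the standard statement that hyperbolic restriction preserves the bounded coherent derived category of monodromic objects. Justifying this last point cleanly — rather than merely citing it — is the part of the argument that requires the most care.
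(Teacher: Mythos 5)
There is a genuine gap at exactly the point you flag. Braden's theorem, as stated in \thmref{t:Braden original}, asserts only that $(i^+)^*\circ(p^+)^!$ and $(i^-)^!\circ(p^-)^*$ land in $\Dmod(Z^0)\subset\on{Pro}(\Dmod(Z^0))$; it gives no control on cohomological amplitude, and an honest object of $\Dmod(Z^0)$ need not be bounded. So ``the containment in the honest category forced by Braden's theorem $\ldots$ implies boundedness'' does not follow from anything stated in the paper, and the ``standard statement'' about hyperbolic restriction preserving bounded coherent complexes is exactly the kind of refinement you would need to supply a separate proof for. In addition, there is a secondary difficulty in your reduction step: you use that the pro-extension of $(\psi^0)^!$ detects cohomological boundedness of pro-objects. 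This is not automatic from t-exactness and conservativity of $(\psi^0)^!$ on $\Dmod(\CY^0)$ --- conservativity of a pro-extension is a delicate matter --- and the paper deliberately arranges the logic the other way around: it proves boundedness first (this lemma) and only \emph{then} uses the smooth surjection $\psi^0$ to descend honesty, with boundedness as a \emph{hypothesis} of the descent lemma.

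The paper's argument avoids the scheme-level reduction entirely and is much more elementary. Boundedness above: $\CF$ is bounded, $(\iota^-)^!$ has bounded amplitude, and the right adjoint $(\sfp^-)_*$ of $(\sfp^-)^*$ is left t-exact, so the left adjoint $(\sfp^-)^*$ is right t-exact up to shift. Boundedness below: since $\CF$ is compact and $\CY$ is QCA, $\BD_\CY(\CF)$ is again compact, hence cohomologically bounded; $(\sfp^-)^!\circ\BD_\CY(\CF)$ therefore lies in $\Dmod(\CY^-)^{\leq m}$, and writing it as a filtered colimit $\underset{i}{\underset{\longrightarrow}{lim}}\,\CF^-_i$ with $\CF^-_i\in\Dmod(\CY^-)^{\leq m}$ yields a presentation $(\sfp^-)^*(\CF)\simeq\underset{i}{\underset{\longleftarrow}{``lim"}}\,\BD_{\CY^-}(\CF^-_i)$. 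The crucial input, which your proposal never invokes, is that $\BD_{\CY^-}$ has bounded cohomological amplitude on the QCA stack $\CY^-$ (a non-obvious fact from \cite{DrGa1}), which forces $\BD_{\CY^-}(\CF^-_i)\in\Dmod(\CY^-)^{\geq -l}$ uniformly; applying the bounded-amplitude functor $(\iota^-)^!$ termwise gives the lower bound. If you want to rescue your approach, the missing ingredient is this Verdier-duality bound on the partially-defined $*$-pullback; citing Braden will not supply it.
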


\begin{proof}

The fact that  $(\iota^-)^!\circ (\sfp^-)^*(\CF)$ is bounded above follows by combining the following
three facts: (i) $\CF$ is bounded above; (ii) the functor $(\iota^-)^!$ has a bounded cohomological amplitude;
(iii) the functor $\sfp^-_*$, right adjoint to $(\sfp^-)^*$, is left t-exact.

\medskip

Let us show that $(\iota^-)^!\circ (\sfp^-)^*(\CF)$ is bounded below. Consider the object $\BD_{\CY}(\CF)\in \Dmod(\CY)^c$, 
where $\BD_\CY$ denotes the Verdier duality functor on $\CY$ (see \cite[Sect. 7.3.4 and Corollary 8.4.2]{DrGa1}). 

\medskip

The functor
$(\sfp^-)^!$ has a bounded cohomological amplitude, hence $(\sfp^-)^!\circ \BD_{\CY}(\CF)$ belongs to
$\Dmod(\CY^-)^{\leq m}$ for some $m$. Hence, we can write $(\sfp^-)^!\circ \BD_{\CY}(\CF)\in \Dmod(\CY^-)$ as a filtered colimit
$$\underset{i}{\underset{\longrightarrow}{lim}}\, \CF^-_i,$$
where $\CF^-_i\in \Dmod(\CY^-)^{\leq m}$. In this case  
$$(\sfp^-)^*(\CF)\simeq \underset{i}{\underset{\longleftarrow}{``lim"}}\, \BD_{\CY^-}(\CF^-_i).$$

However, the functor $\BD_{\CY^-}$ has a bounded cohomological amplitude, so all $\BD_{\CY^-}(\CF^-_i)$ belong to
$\Dmod(\CY^-)^{\geq -l}$ for some $l$. Now, 
$$(\iota^-)^!\circ (\sfp^-)^*(\CF)\simeq 
\underset{i}{\underset{\longleftarrow}{``lim"}}\, (\iota^-)^!\circ \BD_{\CY^-}(\CF^-_i),$$
and the boundedness below follows from the fact that the functor $(\iota^-)^!$ has a 
bounded cohomological amplitude.

\end{proof}

\sssec{}

Let us now observe that 
for any $\CF\in \Dmod(\CY)$, the object $(\psi^0)^!\circ (\iota^-)^!\circ (\sfp^-)^*(\CF)$
belongs to $\Dmod(Z^0)$. Indeed, we rewrite 
$$(\psi^0)^!\circ (\iota^-)^!\circ (\sfp^-)^*\simeq (\iota^0)^!\circ (\psi^-)^!\circ  (\sfp^-)^*,$$
and the latter is isomorphic, up to a cohomological shift to
$$(i^-)^!\circ (\psi^-)^*\circ  (\sfp^-)^*\simeq (i^-)^!\circ (p^-)^*\circ \psi^*.$$

However, $\psi^*(\CF)\in \Dmod(Z)^{\BG_m\on{-mon}}$, and hence
$$(i^-)^!\circ (p^-)^*\circ \psi^*(\CF)\in \Dmod(Z^0)$$ by 
\thmref{t:Braden original}.

\medskip

Since $\psi^0$ is smooth (and so $(\psi^0)^!$ is isomorphic to $(\psi^0)^*$ up to a cohomological shift),
we obtain that the object $(\psi^0)^*\circ (\iota^-)^!\circ (\sfp^-)^*(\CF)$
belongs to $\Dmod(Z^0)$

\sssec{}

Now, the fact that $(\iota^-)^!\circ (\sfp^-)^*(\CF)$ belongs to $\Dmod(\CY^0)$ for $\CF\in \Dmod(\CY)^c$ follows
from \lemref{l:pro bdd} and the next observation:

\begin{lem}
Let $g:\CZ\to \CW$ be a smooth surjective map between quasi-compact stacks. Let
$\wt\CF$ be a \emph{cohomologically bounded} object of $\on{Pro}(\Dmod(\CW))$. Then 
$\wt\CF\in \Dmod(\CW)$ if and only if $g^*(\wt\CF)$ belongs to $\Dmod(\CZ)\subset \on{Pro}(\Dmod(\CZ))$.
\end{lem}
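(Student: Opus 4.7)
The ``only if'' direction is immediate: since $g$ is smooth, $g^*$ is everywhere defined on $\Dmod(\CW)$ and takes values in $\Dmod(\CZ)$.

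For the converse the plan is to mimic the \v Cech-nerve argument of \lemref{l:left adjoint as pullback}. Let $\CZ^\bullet$ denote the \v Cech nerve of $g$ with face maps $g^n\colon \CZ^n\to \CW$; each $g^n$ is smooth and surjective, so smooth descent yields the equivalence
$$
(g^\bullet)^*\colon \Dmod(\CW)\iso \on{Tot}(\Dmod(\CZ^\bullet)).
$$
Since every face map $\CZ^n\to \CZ$ is smooth, the hypothesis $g^*(\wt\CF)\in \Dmod(\CZ)$ propagates to $(g^n)^*(\wt\CF)\in \Dmod(\CZ^n)$ for every $n$. Hence $(g^\bullet)^*(\wt\CF)$ assembles into a genuine object of $\on{Tot}(\Dmod(\CZ^\bullet))$, which corresponds under the equivalence above to some $\CF\in \Dmod(\CW)$ satisfying $g^*(\CF)\simeq g^*(\wt\CF)$ compatibly with higher descent data.

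It remains to identify $\wt\CF$ with $\CF$ inside $\on{Pro}(\Dmod(\CW))$. To this end I would consider the canonical comparison morphism
$$
\wt\CF\to \on{Tot}\bigl((g^\bullet)_*(g^\bullet)^*\wt\CF\bigr)
$$
in $\on{Pro}(\Dmod(\CW))$; once this is shown to be an isomorphism, the right-hand side is identified with $\on{Tot}\bigl((g^\bullet)_*(g^\bullet)^*\CF\bigr)\simeq \CF$ via descent for $\CF\in \Dmod(\CW)$ together with \cite[Lemma 7.5.2]{DrGa1}.

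The main obstacle is precisely this last isomorphism, and this is where the boundedness hypothesis must be used. The plan is to write $\wt\CF$ as a Pro-system $\{\CF_i\}$ with all $\CF_i$ lying in a fixed bounded range of cohomological degrees, invoking the t-structure on $\on{Pro}(\Dmod(\CW))$ from \cite[Sect.~4.2.4]{GR}. For each individual $\CF_i\in \Dmod(\CW)$ the comparison map $\CF_i\iso \on{Tot}\bigl((g^\bullet)_*(g^\bullet)^*\CF_i\bigr)$ is an isomorphism by \cite[Lemma 7.5.2]{DrGa1}. Uniform boundedness then ensures that totalization of the resulting cosimplicial Pro-object can be computed degree-wise by a finite limit, so that it commutes with the Pro-limit over $i$; passing to this Pro-limit yields the desired identification $\wt\CF\simeq \on{Tot}\bigl((g^\bullet)_*(g^\bullet)^*\wt\CF\bigr)$, and thus $\wt\CF\simeq \CF \in \Dmod(\CW)$.
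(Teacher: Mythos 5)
Your construction of $\CF$ via \v Cech descent is identical to the paper's, and you correctly identify that the key technical input is the equivalence $\on{Pro}(\Dmod(\CW))^{\geq -n,\leq n}\simeq \on{Pro}\bigl(\Dmod(\CW)^{\geq -n,\leq n}\bigr)$ from \cite[Sect.~4.2.4]{GR}. The divergence is in how the isomorphism $\wt\CF\simeq\CF$ is established, and there your argument has a genuine gap.

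You try to prove $\wt\CF\iso\on{Tot}\bigl((g^\bullet)_*(g^\bullet)^*\wt\CF\bigr)$ directly inside $\on{Pro}(\Dmod(\CW))$, by expanding $\wt\CF$ as a pro-system $\{\CF_i\}$, using descent for each genuine $\CF_i$, and then commuting $\on{Tot}$ past the pro-limit over $i$. The step ``uniform boundedness ensures that totalization \dots\ can be computed degree-wise by a finite limit, so that it commutes with the Pro-limit'' is where the argument breaks. The totalization is \emph{not} a finite limit; it is the sequential limit of the $\on{Tot}_N$-tower, and the Yoneda embedding $\Dmod(\CW)\hookrightarrow\on{Pro}(\Dmod(\CW))$, while it preserves finite limits, does \emph{not} preserve this one. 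Concretely: even if the cosimplicial object $(g^\bullet)_*(g^\bullet)^*\CF_i$ consists of genuine objects of $\Dmod(\CW)$, its totalization computed in $\on{Pro}(\Dmod(\CW))$ is the formal pro-object $``\lim\text{''}_N\,\on{Tot}_N$, which is a non-constant pro-system and in general is \emph{not} pro-isomorphic to the genuine object $\on{Tot}^{\Dmod(\CW)}=\CF_i$. Boundedness of the cosimplicial object implies that the tower $\{\on{Tot}_N\}$ stabilizes on each cohomology $H^m$ after finitely many stages, but it does not make the tower pro-constant, and the pro-limit over $i$ cannot simply be interchanged with $\lim_N$ at the level of objects. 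This is precisely the delicate point, and ``degree-wise finite limit'' does not resolve it.

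The paper sidesteps this entirely by testing the isomorphism on mapping complexes: after placing $\wt\CF$ in $\on{Pro}(\Dmod(\CW)^{[-n,n]})$, it suffices to produce a functorial isomorphism $\CHom(\wt\CF,\CF')\simeq\CHom(\CF,\CF')$ for cohomologically bounded $\CF'\in\Dmod(\CW)$. There the analogous commutation is between a filtered colimit of $\CHom$-complexes (in $\Vect$) and a totalization, which under the boundedness hypotheses (of $\wt\CF$ above and $\CF'$ below) is a classical convergence statement for the Tot tower of uniformly bounded chain complexes. If you want to save your version, you would need to supply an argument showing that the pro-system $\{\on{Tot}_N((g^\bullet)_*(g^\bullet)^*\CF_i)\}_{N}$ is pro-equivalent to the constant system $\CF_i$ uniformly in $i$; that does not follow merely from convergence of the Tot tower, and it is cleaner (and is what the paper does) to dualize and work with Homs.
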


\begin{proof}

The ``only if" direction is obvious. For the ``if" direction we proceed as follows. 

\medskip

Let $g^\bullet:\CZ^\bullet\to \CW$ be the \v{C}ech nerve of $g:\CZ\to \CW$. By assumption,
$(g^\bullet)^*(\wt\CF)$ gives rise to a well-defined object of
$$\on{Tot}(\Dmod(\CZ^\bullet)).$$

Applying the inverse of the equivaence
\begin{equation} \label{e:! Tot}
(g^\bullet)^*:\Dmod(\CY)\to \on{Tot}(\Dmod(\CZ^\bullet)),
\end{equation}
from $(g^\bullet)^*(\wt\CF)$ we obtain a well-defined object of $\Dmod(\CW)$ that we denote by $\CF$.  
We claim that $\wt\CF\simeq \CF$. 

\medskip

By construction, the object  $\CF\in \Dmod(\CY)$ is cohomologically bounded. 
Assume that $\wt\CF$ belongs to $\on{Pro}(\Dmod(\CW))^{\geq -n,\leq n}$. By 
\cite[Sect. 4.2.4]{GR}, we have a canonical equivalence 
$$\on{Pro}(\Dmod(\CW))^{\geq -n,\leq n}\simeq \on{Pro}\left(\Dmod(\CW)^{\geq -n,\leq n}\right).$$

\medskip

Hence, in order to establish an isomorphism $\CF\simeq \wt\CF$, it is sufficient to construct a 
functorial isomorphism
$$\CHom(\wt\CF,\CF')\simeq \CHom(\CF,\CF')$$
for $\CF'\in \Dmod(\CW)$ \emph{which is also cohomologically bounded}. 

\medskip

By construction,
$$\CHom(\wt\CF,\CF')\simeq \on{Tot}\left(\CHom\left((g^\bullet)^*(\wt\CF),(g^\bullet)^*(\CF')\right)\right).$$

Now, the fact that $\wt\CF$ is bounded above and that $\CF'$ is bounded below
implies that
$$\on{Tot}\left(\CHom\left((g^\bullet)^*(\wt\CF),(g^\bullet)^*(\CF')\right)\right)
\simeq
\CHom\left(\wt\CF, \on{Tot}\left((g^\bullet)_*\circ (g^\bullet)^*(\CF')\right)\right).$$

\medskip

Now,
$$\CF'\to \on{Tot}\left((g^\bullet)_*\circ (g^\bullet)^*(\CF')\right)$$
is an isomorphism since \eqref{e:! Tot} is an equivalence. 

\end{proof}

\ssec{Step 3}

Having established that both sides of \eqref{e:Braden trans stacks} belong to $\Dmod(\CY^0)$, we are finally
ready to show that the map \eqref{e:Braden trans stacks}, is an isomorphism. 

\medskip

Since the functor 
$$(\psi^0)^!:\Dmod(\CY^0)\to \Dmod(Z^0)$$
is conservative, it suffices to show that the natural transformation 
\begin{equation} \label{e:Braden trans stacks pullback}
(\psi^0)^*\circ (\iota^-)^*\circ (\sfp^-)^*\to (\psi^0)^*\circ  (\iota^+)^!\circ (\sfp^+)^*,
\end{equation}
induced by \eqref{e:Braden trans stacks}, is an isomorphism. 

\sssec{}

Consider the map $\beta^-:Z^-\to \CY^-\underset{\CY}\times Z$. 
Denote
$$\CK^-:=(\beta^-)^*(\omega_{\CY^-\underset{\CY}\times Z})\in \Dmod(Z^-).$$
The object $\CK^-$ is isomorphic to $\omega_{Z^-}$ up to a cohomological
shift, since both $Z^-$ and $\CY^-\underset{\CY}\times Z$ are smooth over $\CY^-$. 

\medskip

Note that for $\CF\in \Dmod(\CY)$ there exists a canonical isomorphism
\begin{equation} \label{e:* to ! -}
(\psi^-)^*\circ (\sfp^-)^!(\CF)\simeq \CK^-\sotimes (p^-)^!\circ \psi^*(\CF).
\end{equation}

\sssec{}

Consider now the map
$\beta^0:Z^0\to \CY^0\underset{\CY^+}\times Z^+$. 
Denote
$$\CK^0:=(\beta^+)^*(\omega_{\CY^0\underset{\CY^+}\times Z^+})\in \Dmod(Z^0).$$
The object $\CK^0$ is isomorphic to $\omega_{Z^0}$ up to a cohomological
shift, since both $Z^-$ and $\CY^0\underset{\CY^+}\times Z^+$ are smooth over $\CY^0$. 

\medskip

Note that for $\CF^+\in \Dmod(\CY^+)$ there exists a canonical isomorphism
\begin{equation} \label{e:* to ! 0}
(\psi^0)^*\circ (\iota^+)^!(\CF^+)\simeq \CK^0\sotimes (i^+)^!\circ (\psi^+)^*(\CF^+).
\end{equation}

\sssec{}

Note now that in the diagram 
$$
\xy
(0,0)*+{Z^- \underset{\CY^-\underset{\CY}\times Z}\times(\CY^0\underset{\CY^+}\times Z^+)}="X";
(40,0)*+{\CY^0\underset{\CY^+}\times Z^+}="Y";
(0,-30)*+{Z^-}="Z";
(40,-30)*+{\CY^-\underset{\CY}\times Z}="W";
(-40,30)*+{Z^0}="U";
{\ar@{->}_{\beta^-} "Z";"W"};
{\ar@{->}^{\sfp^-\times i^+} "Y";"W"};
{\ar@{->} "X";"Y"};
{\ar@{->} "X";"Z"};
{\ar@{->} "U";"X"};
{\ar@{->}_{i^-} "U";"Z"};
{\ar@{->}^{\beta^+} "U";"Y"};
\endxy
$$
the map
$$Z^0\to (\CY^0\underset{\CY^+}\times Z^+)\underset{\CY^-\underset{\CY}\times Z}\times Z^-$$
is an open embedding. Hence, we obtain a canonical isomorphism
\begin{equation} \label{e:K}
\CK^0\to (i^-)^!(\CK^-).
\end{equation}

\sssec{}

Now, diagram chase shows that for $\CF\in \Dmod(\CY)$ the following diagram 
commutes
$$
\CD
(\psi^0)^*\circ (\iota^-)^*\circ (\sfp^-)^!(\CF)  @>{\text{\eqref{e:Braden trans stacks pullback}}}>>  (\psi^0)^*\circ  (\iota^+)^!\circ (\sfp^+)^* (\CF) \\
@V{\sim}VV   @VV{\text{\eqref{e:* to ! 0}}}V   \\
(i^-)^*\circ (\psi^-)^*\circ (\sfp^-)^!(\CF) & &   \CK^0\sotimes (\iota^+)^! \circ (\psi^+)^* \circ (\sfp^+)^* (\CF)  \\
@V{\text{\eqref{e:* to ! -}}}VV    @VV{\sim}V   \\
(i^-)^*\left(\CK^-\sotimes (p^-)^!\circ \psi^*(\CF)\right) & & \CK^0\sotimes (\iota^+)^! \circ (p^+)^*\circ \psi^*(\CF)   \\
@V{\sim}VV         @VV{\text{\eqref{e:K}}}V  \\
(i^-)^!(\CK^-)\sotimes (i^-)^*\circ (p^-)^!\circ \psi^*(\CF)  
@>{\text{\eqref{e:Braden trans}}}>>  (i^-)^!(\CK^-)\sotimes (i^+)^!\circ (p^+)^*\circ \psi^*(\CF). 
\endCD
$$

Hence, we obtain that \eqref{e:Braden trans stacks pullback} is an isomorphism, as desired.

\section{The support of cuspidal objects of $\Dmod(\Bun_G)$} \label{s:support}
The goal of this Appendix is to prove \propref{p:cuspidality}, i.e., to 
construct an open substack $\CU\subset\Bun_G$ having quasi-compact intersection with each connected component of $\Bun_G$ and
such that the $*$-support and $!$-support of each object of 
$\Dmod(\Bun_G)_{\on{cusp}}$ are contained in $\CU$.

\ssec{Definition of $\CU$}   \label{ss:CU}

\sssec{}

Let $\Lambda_G$ denote the coweight lattice of $G$. 
Set $\Lambda_G^{\BQ}:=\Lambda_G\otimes\BQ$  and let $\Lambda_G^{+,\BQ}\subset \Lambda_G^{\BQ}$ be the dominant cone.

\medskip

We will use the Harder-Narasimhan-Shatz stratification 
of $\Bun_G$ (see \cite[Sect. 7.4]{DrGa2}).

\medskip

Its strata are labeled by elements $\lambda\in\Lambda_G^{+,\BQ}$ and denoted by $\Bun_G^{(\lambda )}$.
The set of those $\lambda$ for which $\Bun_G^{(\lambda )}\ne\emptyset$ is discrete in 
$\Lambda_G\otimes\BR$.

\sssec{}

Set
$$\CU:=\underset{\lambda\in \Sigma}\bigcup\, \Bun_G^{(\lambda)},
$$
where $\Sigma$ is the set of all $\lambda\in\Lambda_G^{+,\BQ}$ such that the image of $\lambda$ in
$\Lambda_{G_{\on{adj}}}^{\BQ}$ is $\le (2g-2)\rho_G$. Here $g$ is the genus of the curve $X$, 
$\rho_G\in\Lambda_{G_{\on{adj}}}^{\BQ}$ is the half-sum of positive coroots, and the ordering $\le$ is the usual one
(i.e., $\lambda_1\le\lambda_2$ if $\lambda_2 -\lambda_1$ is a linear combination of simple coroots with 
non-negative coefficients).

\begin{lem}
The union of strata $\CU$ is an open substack of $\Bun_G$. Its intersection with each connected component of 
$\Bun_G$ is quasi-compact.
\end{lem}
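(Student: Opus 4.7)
The plan is to deduce both claims from the three standard properties of the HN-Shatz stratification recalled in \cite[Sect.~7.4]{DrGa2}: (a) the closure relation $\overline{\Bun_G^{(\lambda)}}\subseteq\bigcup_{\lambda'\geq\lambda}\Bun_G^{(\lambda')}$; (b) discreteness of the set of $\lambda\in\Lambda_G^{+,\BQ}$ with $\Bun_G^{(\lambda)}\neq\emptyset$ in $\Lambda_G\otimes\BR$; (c) quasi-compactness of each individual stratum $\Bun_G^{(\lambda)}$.

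For the openness assertion I would verify that $\Sigma$ is downward-closed in $\Lambda_G^{+,\BQ}$: if $\lambda'\leq\lambda$, then $\lambda-\lambda'$, and hence also $\bar\lambda-\bar\lambda'$, is a non-negative $\BQ$-linear combination of simple coroots, so $\bar\lambda\leq(2g-2)\rho_G$ forces $\bar\lambda'\leq(2g-2)\rho_G$. Combined with (a), this makes the complement of $\CU$ a union of closures of HN strata, hence closed.

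For the quasi-compactness assertion I would fix $\mu\in\pi_1(G)$ and show that the set
\[
\Sigma_\mu:=\{\lambda\in\Sigma : \Bun_G^{(\lambda)}\subseteq\Bun_G^\mu,\ \Bun_G^{(\lambda)}\neq\emptyset\}
\]
is finite; the lemma then follows from (c) together with the fact that a finite disjoint union of quasi-compact substacks is quasi-compact. To prove finiteness, I would use the canonical $\BQ$-splitting
\[
\Lambda_G\otimes\BQ\ \simeq\ (\Lambda_{Z(G)^\circ}\otimes\BQ)\oplus(\Lambda_{G_{\on{adj}}}\otimes\BQ)
\]
to decompose $\lambda=\lambda_c+\bar\lambda$. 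Under this decomposition $\lambda_c$ is determined by $\mu$ because $\pi_1(G)\otimes\BQ=\Lambda_{Z(G)^\circ}\otimes\BQ$, while $\bar\lambda$ lies in the bounded polytope $\{0\leq\bar\nu\leq(2g-2)\rho_G\}$ in the dominant cone of $\Lambda_{G_{\on{adj}}}\otimes\BR$ (the lower bound reflects positivity of the inverse Cartan matrix, i.e.\ that every dominant rational coweight is a non-negative $\BQ$-combination of simple coroots). Thus $\lambda$ ranges in a bounded subset of $\Lambda_G\otimes\BR$, and by (b) only finitely many such $\lambda$ can actually occur as HN-types.

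The main obstacle is the bookkeeping of the central component of $\lambda$ in the finiteness step: one must argue that, after fixing $\mu$, the direction along $\Lambda_{Z(G)^\circ}\otimes\BQ$ is pinned, so that bounding the adjoint image $\bar\lambda$ really bounds $\lambda$ itself. Together with the standard positivity of the inverse Cartan matrix, this is what makes the boundedness-plus-discreteness argument close the proof.
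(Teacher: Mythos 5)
Your proof is correct and follows essentially the same route as the paper's, which is a terse two-line reference to the standard HN-Shatz properties of \cite[Lemma~7.4.9]{DrGa2} together with the "obvious fact" that for each $\lambda_0$ the union of $\Bun_G^{(\lambda)}$ over $\lambda\in(\lambda_0+\Lambda_{[G,G]}^{\BQ})\cap\Lambda_G^{+,\BQ}$ is open and closed. Your observation that fixing $\mu\in\pi_1(G)$ pins down the $\Lambda_{Z(G)^\circ}\otimes\BQ$-component of $\lambda$ is precisely a restatement of that "obvious fact" (since $\Lambda_{[G,G]}\otimes\BQ=\Lambda_{G_{\on{adj}}}\otimes\BQ$ and $\pi_1(G)\otimes\BQ\simeq\Lambda_{Z(G)^\circ}\otimes\BQ$), and the remainder of your argument — downward-closure of $\Sigma$ giving openness, boundedness of $\{0\le\bar\nu\le(2g-2)\rho_G\}$ plus discreteness giving finiteness of strata per component — is exactly what the paper is implicitly invoking under "standard properties." You have simply filled in the details the authors elide.
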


\begin{proof}
Use the standard properties of the Harder-Narasimhan-Shatz stratification (see \cite[Lemma 7.4.9]{DrGa2}) and the 
following obvious fact: for any $\lambda_0\in \Lambda_G^{\BQ}$,  the union of 
$\Bun_G^{(\lambda)}$ for $\lambda\in (\lambda_0+\Lambda_{[G,G]}^{\BQ})\cap\Lambda_G^{+,\BQ}$ 
is both open and closed.
\end{proof}

\ssec{Statement of the result}

\sssec{}

Let $\jmath:\CU\hookrightarrow\Bun_G$ denote the embedding. In \secref{ss:proof} we will prove the following

\begin{prop}   \label{p:cusp_support}
For any $\CF\in \Dmod(\Bun_G)_{\on{cusp}}\,$, the canonical maps
$$\jmath_!\circ \jmath^*(\CF)\to \CF\to \jmath_*\circ \jmath^*(\CF)$$
are isomorphisms. 
\end{prop}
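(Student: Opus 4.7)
The plan is to use the Harder--Narasimhan stratification $\Bun_G=\bigsqcup_\lambda\Bun_G^{(\lambda)}$ and to prove that $i_\lambda^!(\CF)=0$ for every $\lambda\notin\Sigma$, where $i_\lambda\colon\Bun_G^{(\lambda)}\hookrightarrow\Bun_G$ denotes the locally closed embedding. Since $\CU=\bigcup_{\lambda\in\Sigma}\Bun_G^{(\lambda)}$, this yields $\CF\iso\jmath_*\jmath^*(\CF)$ directly. The companion isomorphism $\jmath_!\jmath^*(\CF)\iso\CF$ then follows by Verdier duality: the coincidence of $!$- and $*$-cuspidality (a corollary of \thmref{t:main}) makes $\Dmod(\Bun_G)_{\on{cusp}}$ stable under Verdier duality, and duality exchanges $\jmath_!\leftrightarrow\jmath_*$ as well as $i_\lambda^*\leftrightarrow i_\lambda^!$.

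Fix $\lambda\notin\Sigma$, and let $P=P(\lambda)$ be the canonical HN parabolic with Levi $M=M(\lambda)$, together with the component $\mu\in\pi_1(M)$ determined by $\lambda$; the hypothesis $\lambda\notin\Sigma$ forces $\lambda_{\on{adj}}\ne 0$ in $\Lambda_{G_{\on{adj}}}^{\BQ}$, so $P$ is proper. Introduce the open substack $\Bun_M^{\mu,\on{ss},\lambda}\subset\Bun_M^\mu$ of semistable $M$-bundles of slope $\lambda$, and its preimage $\Bun_P^{\mu,\on{ss},\lambda}:=\sfq^{-1}(\Bun_M^{\mu,\on{ss},\lambda})$. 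Standard HN theory supplies two facts: (a) $\sfp$ sends $\Bun_P^{\mu,\on{ss},\lambda}$ into the open substack $\Bun_G^{\le\lambda}\subset\Bun_G$; (b) by uniqueness of the HN reduction, the closed substack $\Bun_P^{(\lambda)}:=\sfp^{-1}(\Bun_G^{(\lambda)})\cap\Bun_P^{\mu,\on{ss},\lambda}$ is mapped isomorphically onto $\Bun_G^{(\lambda)}$ by $\sfp$, and under this identification $\iota_P^!(\sfp|_{\Bun_P^{\mu,\on{ss},\lambda}})^!(\CF)\simeq i_\lambda^!(\CF)$, where $\iota_P$ is the closed embedding of $\Bun_P^{(\lambda)}$. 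By cuspidality $\on{CT}^\mu_*(\CF)=0$, and base change along the open inclusion $\Bun_M^{\mu,\on{ss},\lambda}\hookrightarrow\Bun_M^\mu$ delivers the global vanishing
\[
(\sfq|_{\Bun_P^{\mu,\on{ss},\lambda}})_*(\sfp|_{\Bun_P^{\mu,\on{ss},\lambda}})^!(\CF)=0.
\]

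The main obstacle is extracting $i_\lambda^!(\CF)=0$ from this global pushforward vanishing, i.e.\ isolating the contribution of the closed stratum $\Bun_P^{(\lambda)}$ from that of its open complement in $\Bun_P^{\mu,\on{ss},\lambda}$. The critical input is Riemann--Roch on $X$: the definition of $\Sigma$ via the bound $\lambda_{\on{adj}}\le(2g-2)\rho_G$ is calibrated precisely so that $\lambda\notin\Sigma$ forces the relevant $H^1$-vanishing for the $\fu(P)$-valued bundles on $X$ attached to semistable $M$-bundles of slope $\lambda$. Under this $H^1$-vanishing the restriction $\sfq|_{\Bun_P^{(\lambda)}}\colon\Bun_P^{(\lambda)}\to\Bun_M^{\mu,\on{ss},\lambda}$ becomes an affine-space fibration (a torsor for the $H^0$-vector bundle), along which pushforward is an equivalence up to cohomological shift; combining this with the above global vanishing then forces $i_\lambda^!(\CF)=0$. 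This is the geometric counterpart of the classical statement that cuspidal automorphic forms are rapidly decreasing towards the boundary of a Siegel domain, and the detailed bookkeeping --- in particular, verifying that the exact choice of $\Sigma$ matches the Riemann--Roch threshold and controlling the interaction with lower HN strata --- is the content of Appendix~\ref{s:support}.
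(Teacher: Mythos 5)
Your plan follows the same general strategy as the paper (compare the constant term to an HN stratum, exploit an $H^1$-vanishing so that pushforward along $\sfq$ is an equivalence), but there are three substantive gaps.

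First and most seriously, you use the canonical HN parabolic $P(\lambda)$, whose Levi $M(\lambda)$ has simple roots $\check\alpha_i$ with $\langle\lambda,\check\alpha_i\rangle=0$, and assert that $\lambda\notin\Sigma$ forces $H^1$-vanishing for the $\fu(P(\lambda))$-valued bundles. This is false: $\lambda\notin\Sigma$ only guarantees $\langle\lambda,\check\alpha_i\rangle>2g-2$ for \emph{some} simple root, not for every root of $\fu(P(\lambda))$. Concretely, take $G=GL_3$ and $\lambda=(2g-1,\,1,\,-2g)$ with $g\geq 1$: one checks $\lambda\notin\Sigma$ and the HN parabolic is the Borel, yet $\langle\lambda,\check\alpha_1\rangle=2g-2$, so $H^1(X,L_1\otimes L_2^{-1})$ is nonzero whenever $L_1\otimes L_2^{-1}\cong K_X$. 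The paper's Lemma~\ref{l:2cuspidality} instead uses the \emph{larger} parabolic whose Levi contains precisely those simple roots $\check\alpha_i$ with $\langle\lambda,\check\alpha_i\rangle\leq 2g-2$; then every root of $\fu(P)$ has pairing $>2g-2$ with $\lambda$, and the needed $H^1$-vanishing over $V=\Bun_M^{(\lambda)}$ follows from \cite[Proposition~10.1.3]{DrGa2} and characteristic~$0$. Without this vanishing the fibers of $\sfq:\sfq^{-1}(V)\to V$ are not contractible, $\sfq_*$ is neither an equivalence nor conservative, and the global vanishing $\sfq_*\sfp^!(\CF)=0$ cannot be propagated to $\imath_\lambda^!(\CF)=0$. (Relatedly, your ``open complement'' in $\Bun_P^{\mu,\on{ss},\lambda}$ does not exist: for the HN parabolic, all of $\sfq^{-1}(\Bun_M^{\mu,\on{ss},\lambda})$ maps into the single stratum $\Bun_G^{(\lambda)}$; the real issue is the failure of contractibility of the $\sfq$-fibers, not a stratification of the source.)

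Second, the reduction of $\jmath_!\jmath^*(\CF)\iso\CF$ to the $\jmath_*$-statement by Verdier duality is not available here: cuspidal objects need not be compact (or bounded coherent), so $\BD_{\Bun_G}$ is not an involution on $\Dmod(\Bun_G)_{\on{cusp}}$, and the exchange $\on{CT}_*\leftrightarrow\on{CT}_!$ under duality is only meaningful on a suitable subcategory. The paper proves $\imath_\lambda^*(\CF)=0$ directly from $\on{CT}^\lambda_!(\CF)=0$ via the $(\on{CT}^\lambda_!,\Eis^\lambda_*)$-adjunction of Theorem~\ref{t:main}: one gets $\Hom(\CF,\Eis^\lambda_*(\CE))=0$, and taking $\CE$ to be a $*$-pushforward from $V$ yields $\Hom(\CF,(\imath_\lambda)_*(\CF'))=0$ for all $\CF'\in\Dmod(\Bun_G^{(\lambda)})$.

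Third, your argument silently assumes $g\geq 1$. Your claim that $\lambda\notin\Sigma$ forces $\lambda_{\on{adj}}\neq 0$ fails for $g=0$: then $\Sigma=\emptyset$ (for non-torus $G$), so $\lambda=0$ satisfies $\lambda\notin\Sigma$, while $P(0)=G$ is not a proper parabolic. The paper handles $g=0$ separately, taking $P=B$ and using that every $G$-bundle on $\BP^1$ reduces to a maximal torus; there $\sfp|_{\sfq^{-1}(V)}$ is only surjective, which still suffices because $\sfp^!$ is conservative. It also observes (Remark~\ref{r:again g=0}) that for $g=0$ the $\jmath_!$-statement follows formally from the $\jmath_*$-statement, since $\CU$ is $\emptyset$ or all of $\Bun_G$.
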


The statement about  $\jmath_!\circ \jmath^*(\CF)$ will be understood as follows: the partially 
defined\footnote{In fact, the embedding $\jmath:\CU\hookrightarrow\Bun_G$ has the remarkable property that the functor $\jmath_!$ is defined 
on all of $\Dmod(\CU)$, see \cite[Theorem 9.1.2]{DrGa2}. We will not use this fact.} functor 
$\jmath_!$ is defined on $\jmath^*(\CF)$ and the map $\jmath_!\circ \jmath^*(\CF)\to \CF$ is an isomorphism.

\begin{rem}    \label{r:torus}
If $G$ is a torus then $\Sigma=\Lambda_G^{+,\BQ}=\Lambda_G^{\BQ}\,$, $\CU=\Bun_G$, and 
$\Dmod(\Bun_G)_{\on{cusp}}=\Dmod(\Bun_G)$. So the proposition holds tautologically.
\end{rem}

\begin{rem}   \label{r:g=0}
If $g=0$ and $G$ is not a torus then $\Sigma=\emptyset$ and therefore $\CU=\emptyset$. In this case the proposition says that
$\Dmod(\Bun_G)_{\on{cusp}}=0$. 
\end{rem}

\begin{rem}   \label{r:again g=0}
Let $g=0$ and $\CF\in \Dmod(\Bun_G)$. If the map 
$\CF\to \jmath_*\circ \jmath^*(\CF)$ is  an isomorphism then so is
the map  $\jmath_!\circ \jmath^*(\CF)\to \CF$. This follows from the fact that the open substack $\CU\subset\Bun_G$ 
equals either $\emptyset$ or $\Bun_G$ (see Remarks~\ref{r:torus}-\ref{r:g=0}).
\end{rem}

\ssec{The key lemma} 

\sssec{}

Recall that for every parabolic $P\subset G$ we denote by $$\sfp:\Bun_P\to\Bun_G \text{ and }
\sfq:\Bun_P\to\Bun_M$$ the corresponding morphisms (as usual, $M$ is the Levi quotient of $P$).

\begin{lem}    \label{l:2cuspidality}
Let $\Sigma$ 
be as in \secref{ss:CU}. Let $\lambda\in\Lambda_G^{+,\BQ}$, 
$\lambda\not\in \Sigma$. Then there exists a proper parabolic $P\subset G$ and a locally closed substack 
$V\subset\Bun_M$ such that:

\begin{enumerate}
\item The fiber of $\sfq:\Bun_P\to\Bun_M$ over any geometric point of $V$ has only one isomorphism class of geometric points;

\item $\sfq^{-1}(V)\subset\sfp^{-1}(\Bun_G^{(\lambda)})$;

\item The morphism $\sfp|_{\sfq^{-1}(V)}:\sfq^{-1}(V)\to\Bun_G^{(\lambda)}$ is
an isomorphism if $g>0$ and is surjective if $g=0$.
\end{enumerate}
\end{lem}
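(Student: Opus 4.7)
Given $\lambda\in\Lambda_G^{+,\BQ}\setminus\Sigma$, the failure of $\lambda\leq (2g-2)\rho_G$ in the simple-coroot order guarantees the existence of a fundamental weight $\omega_{i_0}$ of $G_{\on{adj}}$ with
\[
\langle\omega_{i_0},\lambda\rangle>(2g-2)\langle\omega_{i_0},\rho_G\rangle.
\]
My plan is to take $P$ to be a proper standard parabolic compatible with this $i_0$ — in the main case $P=P_\lambda$, the parabolic whose Levi $M$ centralizes the one-parameter subgroup given by $\lambda$, so that $\lambda$ is central in $M$ and dominant regular for $P$. (In the edge case where $\lambda$ is central in $G$, which by \remref{r:g=0} forces $g=0$, I take $P$ to be a Borel and $V$ a semistable $T$-bundle; only surjectivity is needed and the verification is easy.) For the main case I take $V:=\Bun_M^{(\lambda)}\subset \Bun_M$, the locally closed Harder--Narasimhan stratum of semistable $M$-bundles of slope~$\lambda$.

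Properties (2) and the surjectivity half of (3) I handle by invoking the standard HN description of $\Bun_G^{(\lambda)}$: every $F_G\in\Bun_G^{(\lambda)}$ admits a canonical reduction to $P_\lambda$ whose Levi quotient is semistable of slope $\lambda$, and conversely any $P$-bundle with semistable slope-$\lambda$ Levi induces a $G$-bundle in the $\lambda$-HN stratum. Thus $\sfq^{-1}(V)$ maps onto $\Bun_G^{(\lambda)}$ under $\sfp$. For $g>0$, uniqueness up to unique isomorphism of the HN reduction upgrades this to an isomorphism of stacks; for $g=0$, the automorphisms of $\BP^1$ may obstruct uniqueness of the isomorphism, leaving only surjectivity on geometric points, which matches the statement.

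The substantive content is property (1). For a geometric point $F_M\in V$, the fiber $\sfq^{-1}(F_M)$ is the moduli stack of torsors for the twisted unipotent group scheme $U(P)_{F_M}$ on $X$. Filter $\fu(P)$ by its $\omega_{i_0}^\vee$-weight, with graded pieces $\fu_n$ ($n\geq 1$) that are $M$-representations; then by the non-abelian long exact sequence for $H^1$, the fiber has a single geometric isomorphism class as soon as each $H^1\bigl(X,(\fu_n)_{F_M}\bigr)=0$. By semistability of $F_M$, each $(\fu_n)_{F_M}$ is a semistable vector bundle on $X$ whose slope is $\langle\det\fu_n,\lambda\rangle/\dim\fu_n$. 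I plan to show this slope strictly exceeds $2g-2$ as a consequence of the chosen inequality on $\langle\omega_{i_0},\lambda\rangle$, and then conclude $H^1=0$ by Serre duality applied to semistable bundles.

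The principal obstacle is exactly this slope estimate. The hypothesis $\lambda\notin\Sigma$ gives only a single inequality involving one fundamental-weight pairing, whereas property~(1) demands slope lower bounds on \emph{every} graded piece $\fu_n$. Translating between the two requires a careful root-theoretic computation of $\det\fu_n$ and an application of the positivity of the inverse Cartan entries to propagate the $\omega_{i_0}$-pairing bound into uniform bounds on all slopes. If this cannot be achieved cleanly with $P=P_\lambda$, the backup plan is to replace it with the maximal parabolic attached to~$i_0$ (absorbing the ``small'' simple roots into the Levi) and refine $V$ accordingly, so that every root appearing in $\fu(P)$ has $\alpha_{i_0}$-coefficient $\geq 1$ and the slope is forced to be large.
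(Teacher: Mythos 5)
Your overall strategy is the right one and matches the paper at several points: you take $V$ to be a Harder--Narasimhan stratum $\Bun_M^{(\lambda)}$, you reduce property~(1) to a vanishing $H^1(X,\fn_\CP)=0$ via the filtration of the unipotent radical, and you handle $g=0$ separately via a Borel. But the choice of parabolic is where the argument breaks, and the ``backup plan'' does not repair it.

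The paper's choice of $P$ is: the simple roots of $M$ are precisely those $\check\alpha_i$ with $\langle\lambda,\check\alpha_i\rangle\le 2g-2$. This is proper exactly because $\lambda\notin\Sigma$ (if all $\langle\lambda,\check\alpha_i\rangle\le 2g-2$ then $(2g-2)\rho_G-\bar\lambda$ is dominant and hence $\ge 0$ in the coroot order, forcing $\lambda\in\Sigma$), and it has the crucial feature that \emph{every} root $\check\alpha$ of $\fn=\fu(P)$ satisfies $\langle\lambda,\check\alpha\rangle>2g-2$: any such $\check\alpha$ contains with positive coefficient at least one simple root $\check\alpha_i$ outside $M$, and the remaining simple coordinates contribute nonnegatively by dominance. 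That strict slope bound is what drives the $H^1$-vanishing (via \cite[Proposition 10.1.3]{DrGa2}, or by the kind of HN-filtration argument you sketch).

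Your proposed $P=P_\lambda$, whose Levi is the centralizer of $\lambda$ (simple roots with $\langle\lambda,\check\alpha_i\rangle=0$), is too small: $\fu(P_\lambda)$ can contain simple roots $\check\alpha_j$ with $0<\langle\lambda,\check\alpha_j\rangle\le 2g-2$, whose associated graded line bundles on a general semistable $F_M\in V$ have $H^1\ne 0$. Concretely, take $G=SL_3$, genus $g=2$, $\lambda=10\omega_1^\vee+\omega_2^\vee$ (so $\langle\lambda,\check\alpha_1\rangle=10$, $\langle\lambda,\check\alpha_2\rangle=1$, and $\lambda\notin\Sigma$); here $P_\lambda=B$, and the weight-$\check\alpha_2$ line in $\fu(B)$ has slope $1\le 2g-2=2$, so $H^1$ does not vanish for all $T$-bundles of degree $\lambda$, and property~(1) fails. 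The backup plan is also flawed: the condition $\langle\omega_{i_0},\lambda\rangle>(2g-2)\langle\omega_{i_0},\rho_G\rangle$ controls the coefficient of the simple \emph{coroot} $\alpha_{i_0}$ in $\lambda$, whereas the slopes of the pieces of $\fn_\CP$ are governed by the pairings $\langle\lambda,\check\alpha_i\rangle$ of $\lambda$ against the simple \emph{roots}; these are not interchangeable. In the same example, $i_0=2$ satisfies your $\omega$-inequality ($\langle\omega_2,\lambda\rangle=4>2$), yet $\fu(P_{i_0})$ contains $\fg_{\check\alpha_2}$ with slope $1$, so the maximal parabolic attached to this $i_0$ also fails. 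The hoped-for ``propagation via the inverse Cartan matrix'' cannot work, since $\lambda\notin\Sigma$ only yields a single coefficient inequality and does not bound all pairings $\langle\lambda,\check\alpha_i\rangle$ from below. What does follow from $\lambda\notin\Sigma$ is that \emph{some} $i$ has $\langle\lambda,\check\alpha_i\rangle>2g-2$, and the paper's parabolic is designed around exactly this.
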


\begin{rem}   \label{r:fiber of q}
Let $N(P)$ denote the unipotent radical of $P$.
For any $\CP\in \Bun_ {M}$ let $N(P)_{\CP}$ denote the group-scheme 
$N(P)_{\CP}$ over $X$ (here we regard $ {M}$
as acting on $N(P)$ via the embedding $ M\hookrightarrow P$).
Then the fiber of $\sfq$ over $\CP\in \Bun_ {M}$
is the stack of $N(P)_{\CP}$-torsors on $X$. 
Condition (1) in \lemref{l:2cuspidality} says that if $\CP\in V$ then all such torsors are trivial. 
\end{rem}

\begin{proof}

Let us first assume that $g=0$, i.e., $X=\BP^1$. We can also assume that $G$ is not a torus 
(otherwise $\Sigma=\Lambda_G^{+,\BQ}$ and there is nothing to prove).
Take $P=B$ to be a Borel subgroup (so $M=T$ is the Cartan). 
Define $V\subset\Bun_T$ to be the substack of $T$-bundles of degree $\lambda$. Property (1) follows from the fact that 
$H^1(\BP^1,\CO (n))=0$ for $n\ge 0$. Property (3) holds because any $G$-bundle on $\BP^1$ admits a reduction to a maximal torus 
of $G$ (see \cite{BN} and references therein). Property (2) is clear.

\medskip

Let us now assume that $X$ has genus $g>0$. 
The simple roots of $G$ will be denoted by $\check\alpha_i$.
Define $P$ and $M$ by the following condition: the simple roots of $M$ are those $\check\alpha_i$'s for which
$\langle \lambda\, , \check\alpha_i \rangle\le 2g-2$. We have $P\ne G$ (indeed, let 
$\bar\lambda\in\Lambda_{G_{\on{adj}}}^{+,\BQ}$ be the image of $\lambda$, then the equality $P=G$ would imply that $(2g-2)\rho_G-\bar\lambda$ 
is dominant and therefore $\bar\lambda\le (2g-2)\rho_G\,$, contrary to the assumption that $\lambda\not\in \Sigma$).

\medskip

Note that $\Lambda_G^{+,\BQ}\subset\Lambda_M^{+,\BQ}$, so we can consider the Harder-Narasimhan-Shatz stratum
$\Bun_M^{(\lambda)}$. Set $V:=\Bun_M^{(\lambda)}$. Then $V$ has the required properties. Properties~(2)-(3) follow from the definition of 
the Harder-Narasimhan-Shatz strata (see \cite[Theorem 7.4.3]{DrGa2})
and the assumption $2g-2\ge 0$.   Remark~\ref{r:fiber of q} shows that to prove property~(1) it suffices to check
the equality 
\[
H^1(X, \fn_{\CP})=0 \quad \mbox{for all }\CP\in\Bun_M^{(\lambda)},
\]
where $\fn := \Lie (N)$. This equality holds
by \cite[Proposition~10.1.3]{DrGa2} and the characteristic~0 assumption on $k$.
\end{proof}

\sssec{}

In the situation of Lemma~\ref{l:2cuspidality} consider the corresponding morphism
$$\sfq:\sfq^{-1}(V)\to V.$$ 

\begin{lem}    \label{l:3cuspidality}  \hfill 

\smallskip
\begin{enumerate}
\item[(i)] Each fiber of the morphism $\sfq:\sfq^{-1}(V)\to V$ is a classifying
space of a unipotent group.

\medskip

\item[(ii)] The functors
$$\sfq_*:\Dmod(\sfq^{-1}(V))\to \Dmod(V) \text{ and } \sfq_!:\Dmod(\sfq^{-1}(V))\to \Dmod(V)$$ 
are equivalences (that differ by a cohomological shift), with the inverses provided by
$$\sfq^*:\Dmod(V)\to \Dmod(\sfq^{-1}(V)) \text{ and } \sfq^!:\Dmod(V)\to \Dmod(\sfq^{-1}(V)),$$
respectively.
\end{enumerate}
\end{lem}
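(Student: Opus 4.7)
The plan is to exploit the explicit description of the fibers of $\sfq$ given in \remref{r:fiber of q}, together with the section $\iota\colon \Bun_M \to \Bun_P$ coming from the Levi embedding $M\hookrightarrow P$, the same section that is used in \secref{sss:contraction on moduli simple}.

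For (i), \remref{r:fiber of q} identifies the fiber of $\sfq$ over a geometric point $\CP\in V$ with the stack of $N(P)_{\CP}$-torsors on $X$, and property~(1) of \lemref{l:2cuspidality} asserts that all such torsors are trivial. Hence this stack is the classifying stack $B H_\CP$, where $H_\CP := H^0(X, N(P)_\CP)$ is the group of global sections. Since $N(P)$ is a unipotent algebraic group, it carries an $M$-stable filtration by normal subgroups whose successive quotients are isomorphic, as $M$-representations, to direct sums of copies of $\BG_a$. Twisting by $\CP$ yields a filtration of $N(P)_\CP$ by subgroup schemes over $X$ whose successive quotients are vector bundles, viewed as vector group schemes. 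Taking $H^0(X,-)$ displays $H_\CP$ as an iterated extension of finite-dimensional vector groups, and hence as a connected unipotent algebraic group.

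For (ii), write $\sfq'$ for the restriction $\sfq\vert_{\sfq^{-1}(V)}\colon \sfq^{-1}(V) \to V$, and let $\iota_V\colon V \to \sfq^{-1}(V)$ be the restriction of the Levi section to $V$; then $\sfq' \circ \iota_V = \on{id}_V$. Let $\mathcal{H} \to V$ denote the relative automorphism group scheme of $\iota_V$ over $V$. By (i), the geometric fiber $\mathcal{H}_\CP$ is the unipotent group $H_\CP$, and the filtration constructed above globalizes (via the $\CP$-twisted associated vector bundles varying in $V$) to exhibit $\mathcal{H}$ as a smooth affine group scheme over $V$, fibrewise connected unipotent. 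The section $\iota_V$ together with the tautological $\mathcal{H}$-action produces a morphism of $V$-stacks
\[
B_V\mathcal{H} \longrightarrow \sfq^{-1}(V),
\]
which I claim is an isomorphism: both sides are smooth over $V$ of the same relative dimension, and by (i) the morphism induces an isomorphism on geometric fibers, so the general criterion that a smooth morphism of smooth algebraic stacks inducing isomorphisms on geometric fibers is itself an isomorphism applies.

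Granted this identification, (ii) reduces to the claim that for a smooth affine group scheme $\mathcal{H} \to V$ with fibrewise connected unipotent structure, the pullback $(\sfq')^*\colon \Dmod(V) \to \Dmod(B_V\mathcal{H})$ is an equivalence with quasi-inverse $(\sfq')_* \simeq (\sfq')_\blacktriangle$; the identification of $(\sfq')_*$ with $(\sfq')_\blacktriangle$ holds because $\sfq'$ is safe (its automorphism groups are unipotent). The corresponding statement for $((\sfq')_!, (\sfq')^!)$ then follows from smoothness of $\sfq'$, which gives $(\sfq')^! \simeq (\sfq')^*$ up to a cohomological shift and a twist. To verify the equivalence, I would induct along the central series of $\mathcal{H}$ to reduce to the case where $\mathcal{H}$ is a vector bundle group scheme over $V$; after smooth-local trivialization of such a bundle, the assertion reduces further to the classical fact that $\Dmod(B\BG_a) \simeq \Dmod(\on{pt})$ via pullback, which is standard in characteristic zero. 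The main obstacle is the global identification $B_V\mathcal{H} \iso \sfq^{-1}(V)$: fibrewise the content is exactly (i), but matching the stack structures coherently over all of $V$ is where care is needed, and the device of rigidifying via $\iota_V$ and then invoking dimension/smoothness criteria is what bridges the fibrewise picture to the global one.
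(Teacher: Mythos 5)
Your proof of (i) is correct and fills in the detail the paper's one-line citation to Remark~\ref{r:fiber of q} leaves implicit; in particular, the unipotence of $H^0(X,N(P)_\CP)$ via an $M$-stable filtration of $N(P)$ with vector-group subquotients is exactly what is needed and is not written out in the paper.

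For (ii), the paper's proof is the single sentence ``follows from (i) because $\sfq$ is smooth,'' whose intended reading is presumably the abstract base-change argument: $\sfq'$ is smooth (so $\sfq'^!\simeq\sfq'^*$ up to shift) and safe by (i) (so $\sfq'_*\simeq\sfq'_\blacktriangle$ satisfies base change), $\sfq'^*$ is conservative (smooth surjective), and the unit/counit of the adjunctions can be checked on geometric fibers of $V$, where they reduce to $\Dmod(BH_\CP)\simeq\Dmod(\on{pt})$ for $H_\CP$ connected unipotent in characteristic zero. You instead globalize the gerbe structure, identifying $\sfq^{-1}(V)$ with $B_V\CH$ for a group scheme $\CH$ over $V$, and then reduce along a filtration of $\CH$. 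This is a valid and genuinely more explicit route, and it has the pedagogical virtue of making the gerbe structure visible. Two places need a touch more care: (a) the ``general criterion'' you invoke for $B_V\CH\to\sfq^{-1}(V)$ being an isomorphism is most cleanly justified as \lemref{l:openness by fibers} (valid for stacks, as the paper notes in the proof of \lemref{l:open}) combined with surjectivity, rather than stated as a smoothness-plus-fiberwise-iso principle; (b) the claim that the filtration of $\CH$ globalizes with vector-bundle quotients over $V$ is slightly too strong, since only $H^1(X,\fn_\CP)=0$ is guaranteed on $V$, not the vanishing of $H^1$ of the individual subquotients of $\fn_\CP$, so the graded pieces of $\CH$ may jump in dimension. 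Fortunately the argument survives without the global filtration: one only needs that $\CH\to V$ is a smooth affine group scheme (smoothness follows from $H^1(X,\fn_\CP)=0$, which kills obstructions, together with local constancy of $\dim H^0(X,\fn_\CP)=\chi(\fn_\CP)$) with fibrewise connected unipotent fibers, and then $\Dmod(B_V\CH)\simeq\Dmod(V)$ follows by smooth base change to a scheme and the fibrewise unipotent statement you already quote.
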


\begin{proof}
Statement (i) follows from the interpretation of the fibers of $\sfq$ given in Remark~\ref{r:fiber of q}.
Statement (ii) follows from (i) because $\sfq$ is smooth.
\end{proof}

\ssec{Proof of \propref{p:cusp_support}} \label{ss:proof}

\sssec{}

Let $\Sigma$ be as in \secref{ss:CU}. Let $\lambda\in\Lambda_G^{+,\BQ}$, 
$\lambda\not\in \Sigma$. Let $\imath_\lambda:\Bun_G^{(\lambda )}\hookrightarrow\Bun_G$ be the corresponding 
locally closed embedding.
We need to show that for any cuspidal $\CF\in \Dmod(\Bun_G)$  one has
$$\imath^!_\lambda(\CF)=0,\quad  \imath^*_\lambda(\CF)=0.$$

\medskip

Let $P$ and $V$ be as in \lemref{l:2cuspidality}. Consider the corresponding morphisms
$$\sfp|_{\sfq^{-1}(V)}:\sfq^{-1}(V)\to \Bun_G^{(\lambda)} \text{ and } \sfq|_{\sfq^{-1}(V)} : \sfq^{-1}(V)\to V.$$

\sssec{}

Let us first prove that $\imath^!_\lambda(\CF)=0$. By cuspidality, $\sfq_*\left(\sfp^!(\CF)\right)=0$, so 
$$(\sfq|_{\sfq^{-1}(V)})_*\circ (\sfp|_{\sfq^{-1}(V)})^! \circ\imath^!_\lambda( \CF)=0.$$
By Lemma~\ref{l:3cuspidality}(ii), this means that $(\sfp|_{\sfq^{-1}(V)})^!\circ\imath^!_\lambda( \CF)=0$. Since 
$\sfp|_{\sfq^{-1}(V)}: \sfq^{-1}(V)\to \Bun_G^{(\lambda)}$ is surjective, the functor $(\sfp|_{\sfq^{-1}(V)})^!$ is 
conservative by \cite[Lemma 5.1.6]{DrGa1}. So $\imath^!_\lambda(\CF)=0$.

\sssec{}

Let us now prove that $\imath^*_\lambda(\CF)=0$. By Remark~\ref{r:again g=0}, we can assume that $g>0$,
so the map $\sfp|_{\sfq^{-1}(V)}: \sfq^{-1}(V)\to \Bun_G^{(\lambda)}$ is an isomorphism (see Lemma~\ref{l:2cuspidality}).

\medskip

We have $\on{CT}^\lambda_!(\CF)=0$, where $\on{CT}^\lambda_!$ is the constant term functor with respect to $P$. 
So the adjunction from Theorem~\ref{t:main} implies that $\Hom (\CF ,\Eis^\lambda_*(\CE ))=0$ for all 
$\CE\in\Dmod(\Bun_M^{\lambda})$. 

\medskip

Taking $\CE$ to be the $*$-pushforward of an object 
$\CE'\in\Dmod (V)$ we have
$$\Eis_*(\CE )\simeq (\imath_\lambda)_*\circ (\sfp|_{\sfq^{-1}(V)})_* \circ (\sfq|_{\sfq^{-1}(V)})^! (\CE'),$$
and hence 
\[
\Hom (\CF, (\imath_\lambda)_*\circ  (\sfp|_{\sfq^{-1}(V)})_* \circ (\sfq|_{\sfq^{-1}(V)})^!  (\CE'))=0.
\]
Since  $(\sfp|_{\sfq^{-1}(V)})_*$ and $(\sfq|_{\sfq^{-1}(V)})^!$ are equivalences, we obtain
$$\Hom(\CF,(\imath_\lambda)_*(\CF'))=0, \quad \forall\, \CF'\in \Dmod(\Bun_G^{(\lambda)}),$$
i.e., $\imath^*_\lambda(\CF)=0$. \qed

\section{Proof of \propref{p:quasiaffine}} \label{s:quasiaffine}
In this Appendix (just as in \propref{p:quasiaffine} itself) the characteristic of the ground field $k$ can be arbitrary.

\ssec{Reduction of \propref{p:quasiaffine} to \lemref{l:existence of Y}}

\sssec{}
We have to prove that the quotient $(\BA^1\times G\times G)/\wt{G}$ is a quasi-affine scheme.
We know that this quotient exists as an algebraic space (because $\wt{G}$ is flat over $\BA^1$). 
This space is separated because $\wt{G}$ is closed in  $\BA^1\times G\times G$. 

\sssec{}

Recall that a separated 
quasi-finite morphism is quasi-affine (see \cite[Theorem A.2]{LM}  or  \cite[Ch. II, Theorem 6.15]{Kn}). 
So it remains to construct a quasi-finite morphism from $(\BA^1\times G\times G)/\wt{G}$ to an affine scheme $Y$. 

\medskip

Thus it remains to prove the following lemma:

\begin{lem}     \label{l:existence of Y}
There exists an affine scheme $Y$ equipped with an action of $G\times G$ and a morphism $f:\BA^1\to Y$ 
such that for any $t\in\BA^1 (k)$ the stabilizer of $f(t)$ contains $\wt{G}_t$ as a subgroup of finite index.
\end{lem}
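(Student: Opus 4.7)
The plan is to realize $\wt{G}_t$ as (up to finite index) the stabilizer of a $\BA^1$-family of points in an affine $G\times G$-scheme built from representations of $G$. I will choose a faithful finite-dimensional representation $\rho : G \hookrightarrow \GL(V)$ and decompose $V = \bigoplus_n V_n$ into $\gamma$-weight subspaces. Let $N$ (resp.\ $N^*$) denote the negative of the minimum $\gamma$-weight on $V$ (resp.\ on $V^*$). Then
\[
A(t) := t^N \rho(\gamma(t)) \in \End(V), \qquad B(t) := t^{N^*} \rho^*(\gamma(t)) \in \End(V^*)
\]
are polynomial in $t$ and hence define morphisms $\BA^1 \to \End(V)$ and $\BA^1 \to \End(V^*)$. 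I take $Y := \End(V) \times \End(V^*)$ (affine), equipped with the $G\times G$-action
\[
(g_1,g_2)\cdot(X,Z) := \bigl(\rho(g_2)\, X\, \rho(g_1)^{-1},\; \rho^*(g_2)\, Z\, \rho^*(g_1)^{-1}\bigr),
\]
and put $f(t) := (A(t), B(t))$.

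At a point $t \in \BG_m$, the central scalars $t^N, t^{N^*}$ drop out of the stabilizer computation, and faithfulness of $\rho$ collapses the stabilizer equation to $g_2 \gamma(t) = \gamma(t) g_1$, giving $\Stab(f(t)) = \wt{G}_t$ on the nose. At $t = 0$, the two components $A(0), B(0)$ become the projections onto the lowest $\gamma$-weight subspaces $V_{-N}$ and $V^*_{-N^*}$ respectively. These extremal weight subspaces are annihilated by the opposite-sign nilpotent radicals, so $V_{-N}$ is $P^-$-stable and its complement is $P$-stable (and similarly for $V^*_{-N^*}$). Writing $g \in P$ as $m(g)\cdot n$ and $g\in P^-$ as $m(g)\cdot n^-$, and using that $n^-$ acts trivially on $V_{-N}$, an elementary computation will give
\[
\Stab(f(0)) = \bigl\{(g_1,g_2)\in P\times P^- : m(g_1) \text{ and } m(g_2) \text{ induce the same endomorphism of } V_{-N}\oplus V^*_{-N^*}\bigr\}.
\]
This subgroup contains $\wt{G}_0 = P\times_M P^-$, and the quotient is isomorphic to the kernel $K$ of $M\to \GL(V_{-N})\times \GL(V^*_{-N^*})$.

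The main obstacle is arranging that $K$ is finite, so that $\wt{G}_0$ sits inside $\Stab(f(0))$ with finite index. This can fail for a single faithful $V$ (for $G=\GL_n$ with its standard representation, $K$ contains an unwanted $\GL_{n_1}$-factor). The fix is to take $V$ to be a direct sum of finitely many $G$-representations whose extremal $\gamma$-weight subspaces jointly form a faithful $M$-representation; this is possible because $M$ is reductive and because every $M$-irreducible arises as the extremal $\gamma$-weight subspace of some $G$-irreducible $V_\lambda$, via a $G$-dominant lift of the $M$-highest weight. Appendix~\ref{s:Vinberg} will give a more conceptual alternative, identifying $Y$ with a piece of the Vinberg semigroup of $G$, where the finiteness of $K$ is automatic.
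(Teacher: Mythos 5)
Your plan is structurally the same as the paper's: build a $(G\times G)$-module out of $\End$'s of $G$-representations, let $f(t)$ be a suitable scalar multiple of $\rho(\gamma(t))$ so that it extends polynomially to $t=0$ and specializes there to a projection onto an extremal $\gamma$-weight block, and verify the stabilizer condition. For $t\neq 0$ your computation is fine. The gap is at $t=0$: your displayed formula for $\Stab(f(0))$ silently assumes $\Stab(f(0))\subset P\times P^-$, and your sketched ``elementary computation'' (writing $g\in P$ as $m(g)\cdot n$) only computes the stabilizer \emph{inside} $P\times P^-$. But a priori $\Stab(f(0))$ only forces $g_1$ to preserve $V_{\geq -N+1}$ (resp.\ $V_{\max}$, from $V^*$) and $g_2$ to preserve $V_{-N}$ (resp.\ $V_{\leq\max-1}$), and these stabilizers can be strictly larger than $P$ and $P^-$. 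This is a genuine failure, not a bookkeeping issue: take $G=Sp_4$ with $V$ the standard representation and $P=B$; here $V\cong V^*$, and the stabilizer of the lowest $\gamma$-weight line is the maximal parabolic $P^-_{\alpha_{\mathrm{long}}}$, which strictly contains $B^-$. So $\Stab(f(0))$ is larger than $\wt G_0$ by a positive-dimensional amount, and finiteness of your $K$ is irrelevant. Your closing patch (enlarge $V$ so the extremal weight spaces are a faithful $M$-module) only controls $K$, i.e.\ the $M$-compatibility part of the stabilizer, and does nothing to force $\Stab(f(0))$ into $P\times P^-$ in the first place.

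The missing ingredient is exactly the paper's \lemref{l:next lemma}: if $\check\lambda$ is \emph{strictly dominant}, then the reduced stabilizer in $G$ of the extremal $M$-block of $V_{\check\lambda}$ equals $P$ (resp.\ $P^-$), and hence $(\Stab_0^{\check\lambda})_{\red}\subset P\times P^-$. (The proof is short but has content: a bigger reduced stabilizer would be a larger parabolic, hence contain some $SL_2^{(i)}$ with $\alpha_i$ not a root of $M$, and then the weight $\check\lambda - \langle\check\lambda,\alpha_i\rangle\check\alpha_i$ being in the block forces $\langle\check\lambda,\alpha_i\rangle=0$, contradicting strict dominance.) To repair your argument, include among your summands an irreducible $V_{\check\lambda}$ with $\check\lambda$ strictly dominant, and invoke this lemma (or its dual for $V^*$) to get the containment in $P\times P^-$; your faithfulness-of-extremal-blocks criterion then finishes the finite-index claim. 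Note also that the paper uses $\End(V_{\check\lambda})$ for irreducibles precisely because a single $\End(V_{\check\lambda})$ with $f_{\check\lambda}(0)$ the projection onto the \emph{top} $M$-block simultaneously constrains $g_1$ to $\Stab(V'_{\check\lambda})$ and $g_2$ to $\Stab(V''_{\check\lambda})$; this makes $V^*$ unnecessary and the contribution of each weight transparent.
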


\ssec{Plan of the proof of \lemref{l:existence of Y}}

\sssec{}

For each dominant weight $\check\lambda$ of $G$ we will construct a finite-dimensional 
$(G\times G)$-module $R_{\check\lambda}$, equipped with a polynomial map 
$f_{\check\lambda}:\BA^1\to R_{\check\lambda}\,$.

\medskip

Let $\Stab_t^{\check\lambda}\subset G\times G$ denote the stabilizer of $f_{\check\lambda}(t)$.
We will show that
\begin{equation}   \label{e:Stab1}
\Big(\bigcap_{\check\lambda}\, \Stab_t^{\check\lambda}\Big)_{\red}=\wt{G}_t\, .
\end{equation}

%

\sssec{}

This will immediately imply \lemref{l:existence of Y}: indeed, one can take
\[
Y=\prod_{\check\lambda} R_{\check\lambda}, \quad\quad f=(f_{\check\lambda}),
\]
where $\check\lambda$ runs through the set of all dominant weights of $G$ 
(or if you prefer, a sufficiently large finite collection of weights).

\medskip

Recall that $\wt{G}_t$ is reduced by \propref{p:smooth}(i), so \eqref{e:Stab1} is equivalent to the inclusions
\begin{equation}    \label{e:Stab2}
 \Big(\bigcap_{\check\lambda}\, \Stab_t^{\check\lambda}\Big)_{\red}\subset \wt{G}_t \subset 
 \bigcap_{\check\lambda}\Stab_t^{\check\lambda}.
\end{equation}

Thus it remains to construct $R_{\check\lambda}$ and $f_{\check\lambda}\,$,  and to prove \eqref{e:Stab2}.

\ssec{Construction of $R_{\check\lambda}$ and $f_{\check\lambda}(t)$}
\sssec{Definition of $R_{\check\lambda}$}    \label{sss:Llambda}
Let $V_{\check\lambda}$ denote the irreducible $G$-module with highest weight $\check\lambda$.
Set $R_{\check\lambda}:=\End_k(V_{\check\lambda})$. We have a homomorphism of monoids
\[
\rho_{\check\lambda}:G\to \End_k(V_{\check\lambda}).
\]
The action of $G\times G$ on $R_{\check\lambda}:=\End_k(V_{\check\lambda})$ is defined as follows:  to 
$(g_1,g_2)\in G\times G$ we associate the operator
\[
\End_k(V_{\check\lambda})\to \End_k(V_{\check\lambda})\, ,\quad\quad A\mapsto \rho_{\check\lambda}(g_1)\cdot A
\cdot\rho_{\check\lambda}(g_2)^{-1}.
\]


\sssec{Definition of $f_{\check\lambda}\,$}   \label{sss:flambda}
Recall that to define $\wt{G}$ we fixed in \secref{e:co-character} a co-character
$$\gamma:\BG_m\to M$$ mapping to the center of $M$, which is dominant and regular with respect to $P$. 
Let us also fix a maximal torus $T\subset G$ contained in $M$ and a Borel $B\subset G$ such that
$T\subset B\subset P$.

\medskip

Now, for $t\in\BG_m$ we set
\begin{equation}   \label{e:flambdat}
f_{\check\lambda}(t):=\check\lambda (\gamma (t))\cdot\rho_{\check\lambda}(\gamma (t)^{-1}).
\end{equation}

Thus we get a homomorphism of algebraic groups $\BG_m\to\Aut_k(V_{\check\lambda})\,$. It extends to
a homomorphism of algebraic monoids
\[
f_{\check\lambda}:\BA^1\to\End_k(V_{\check\lambda})
\]
because $\gamma$ is dominant and all the weights of $T$ in $V_{\check\lambda}$ are $\le\check\lambda$.

%

\sssec{The operator $f_{\check\lambda}(0)$}   \label{sss:f(0)}
Set $S_{\check\lambda}:=f_{\check\lambda}(0)$. Then $S_{\check\lambda}$ is a $T$-equivariant idempotent operator 
$V_{\check\lambda}\to V_{\check\lambda}\,$.
The subspaces
\[
V'_{\check\lambda}:=\on{Im}(S_{\check\lambda})\, , \quad V''_{\check\lambda}:=\Ker(S_{\check\lambda})
\]
can be described in terms of the decomposition
\[
V_{\check\lambda}=\bigoplus_{\check\mu}V_{\check\lambda ,\check\mu}
\]
with respect to the weights of $T$. Namely,
\[
V'_{\check\lambda}=\bigoplus_{\check\mu\in A} V_{\check\lambda ,\check\mu}\, , \quad V''_{\check\lambda}=
\bigoplus_{\check\mu\notin A} V_{\check\lambda ,\check\mu}\, ,
\]
where $A$ is the set of weights $\check\mu$ such that $\check\lambda -\check\mu$ is in the root lattice of $M$ (which is a sublattice in that of $G$).

\ssec{The stabilizers $\Stab_t^{\check\lambda}$}

\sssec{}
Let $\Stab_t^{\check\lambda}\subset G\times G$ denote the stabilizer of $f_{\check\lambda}(t)\in\End_k(V_{\check\lambda})$ 
with respect to the action of 
$G\times G$ defined in \secref{sss:Llambda}. Explicitly,
\begin{equation}   \label{e:Stab_tlambda}
\Stab_t^{\check\lambda}=\{ (g_1,g_2)\in G\times G\, |\,\rho_{\check\lambda}(g_1)\cdot f_{\check\lambda}(t)=f_{\check\lambda}(t)\cdot
\rho_{\check\lambda}(g_2)\}.
\end{equation}
Our goal is to prove the inclusions \eqref{e:Stab2}.

\sssec{The case $t\ne 0$}
Let $H_{\check\lambda}\subset G$ denote the kernel of $\rho_{\check\lambda}:G\to \Aut_k(V_{\check\lambda})$.

\begin{lem}
If $t\ne 0$ then 
$\Stab_t^{\check\lambda}=\wt{G}_t\cdot (H_{\check\lambda}\times H_{\check\lambda})$ and
$$\underset{\check\lambda}\bigcap\, \Stab_t^{\check\lambda}=\wt{G}_t\cdot 
\left(\underset{\check\lambda}\bigcap\, (H_{\check\lambda}\times H_{\check\lambda})\right).$$
\end{lem}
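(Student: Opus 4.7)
The plan is to directly unwind the definition of $\Stab_t^{\check\lambda}$ and compare it with the description of $\wt{G}_t$ for $t\ne 0\,$.

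First, I would use formula~\eqref{e:Stab_tlambda} together with the explicit form $f_{\check\lambda}(t) = \check\lambda(\gamma(t))\cdot \rho_{\check\lambda}(\gamma(t)^{-1})$ from \secref{sss:flambda}. Since $t\ne 0\,$, the scalar $\check\lambda(\gamma(t))\in k^\times$ and the operator $\rho_{\check\lambda}(\gamma(t)^{-1})$ are both invertible, so cancelling them from both sides of the stabilizer condition transforms it into
\[
\rho_{\check\lambda}\bigl(\gamma(t)\cdot g_1 \cdot \gamma(t)^{-1}\bigr) = \rho_{\check\lambda}(g_2),
\]
i.e., into the condition $g_2\cdot \bigl(\gamma(t)\cdot g_1 \cdot\gamma(t)^{-1}\bigr)^{-1}\in H_{\check\lambda}\,$.

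Next, I would recall from~\eqref{e:graph} (applied to our $Z=G$ with the conjugation $\BG_m$-action via $\gamma$) that for $t\ne 0$ the fiber $\wt{G}_t$ is the graph $\{(g_1,g_2)\in G\times G\,|\,g_2=\gamma(t)\cdot g_1\cdot\gamma(t)^{-1}\}$. Combining this with the previous step and using the fact that $H_{\check\lambda}$ is normal in $G$ (being the kernel of a group homomorphism), so that the product $\wt{G}_t\cdot(\{e\}\times H_{\check\lambda})$ is the same as $\wt{G}_t\cdot(H_{\check\lambda}\times H_{\check\lambda})\,$, yields the first equality $\Stab_t^{\check\lambda}=\wt{G}_t\cdot (H_{\check\lambda}\times H_{\check\lambda})\,$.

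For the intersection formula, the key observation is that the decomposition of any $(g_1,g_2)\in\Stab_t^{\check\lambda}$ as $(g_1,\gamma(t)\cdot g_1\cdot\gamma(t)^{-1})\cdot (e,h_{\check\lambda})$ is \emph{canonical}: the element $h_{\check\lambda}:=\bigl(\gamma(t)\cdot g_1\cdot\gamma(t)^{-1}\bigr)^{-1}\cdot g_2$ is determined by $(g_1,g_2)$ and does not depend on $\check\lambda\,$. Thus if $(g_1,g_2)$ belongs to $\Stab_t^{\check\lambda}$ for every $\check\lambda\,$, the single element $h=h_{\check\lambda}$ lies in $\bigcap_{\check\lambda} H_{\check\lambda}\,$, which gives the nontrivial inclusion $\bigcap_{\check\lambda}\Stab_t^{\check\lambda}\subset \wt{G}_t\cdot\bigl(\bigcap_{\check\lambda}(H_{\check\lambda}\times H_{\check\lambda})\bigr)\,$; the reverse inclusion is immediate. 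There is no real obstacle: the whole argument is a direct computation once one writes down $f_{\check\lambda}(t)$ and $\wt{G}_t$ explicitly, and the proof does not use the structure of $P\,$, $P^-\,$, or $M$ beyond what enters through the co-character $\gamma\,$.
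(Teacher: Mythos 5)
Your proof is correct and follows essentially the same computational path as the paper's (which only writes out the explicit description of $\Stab_t^{\check\lambda}$ and concludes ``the lemma follows''). You have simply spelled out the details the paper leaves implicit: cancelling the invertible scalar $\check\lambda(\gamma(t))$ and the invertible operator $\rho_{\check\lambda}(\gamma(t)^{-1})$ to reduce the stabilizer condition to $g_2\cdot\bigl(\gamma(t) g_1\gamma(t)^{-1}\bigr)^{-1}\in H_{\check\lambda}\,$, using normality of $H_{\check\lambda}$ to pass from $\wt{G}_t\cdot(\{e\}\times H_{\check\lambda})$ to $\wt{G}_t\cdot(H_{\check\lambda}\times H_{\check\lambda})\,$, and observing that the correction factor $h=(\gamma(t) g_1\gamma(t)^{-1})^{-1} g_2$ is independent of $\check\lambda\,$, which is exactly why the intersection formula holds (and is the one point worth saying aloud, since taking intersections does not in general commute with taking products of subgroups).
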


\begin{proof}
Recall that $\wt{G}_t=\{ (g_1,g_2)\in G\times G\, |\, g_2=\gamma (t)\cdot g_1\cdot\gamma (t)^{-1}\}$. 
On the other hand, by \eqref{e:flambdat} and \eqref{e:Stab_tlambda}, we have 
\[
\Stab_t^{\check\lambda}=\{ (g_1,g_2)\in G\times G\, |\,\rho_{\check\lambda}(g_2)=
\rho_{\check\lambda}(\gamma (t))\cdot\rho_{\check\lambda}(g_1)\cdot\rho_{\check\lambda}(\gamma (t)^{-1})\}.
\]
The lemma follows.
\end{proof}

The inclusions \eqref{e:Stab2} for $t\ne 0$ follow from the above lemma and the next one.
\begin{lem}   \label{l:almost trivial intersection}
The group $\big(\bigcap\limits_{\check\lambda}\, H_{\check\lambda}\big)_{\red}$ is trivial.
\end{lem}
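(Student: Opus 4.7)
The plan is to show that every $g\in G(k)$ satisfying $\rho_{\check\lambda}(g)=\id_{V_{\check\lambda}}$ for every dominant weight $\check\lambda$ must equal the identity. Since $k$ is algebraically closed, the reduced closed subgroup scheme $\big(\bigcap_{\check\lambda}H_{\check\lambda}\big)_{\red}$ is determined by its $k$-points, so this implication yields the lemma.

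I would first show that any such $g$ is central. Let $H\subseteq G$ denote the reduced closed subgroup of $G$ defined by these $k$-points; it is clearly normal. By the standard structure theory of reductive groups, any nontrivial closed connected normal subgroup of $G$ is an almost direct product of a subcollection of the almost simple factors of $[G,G]$ with a subtorus of the connected center $Z(G)^\circ$. So if $H^\circ$ were nontrivial it would contain either an almost simple factor $G_i$ of $[G,G]$ or a positive-dimensional subtorus $T_1\subseteq Z(G)^\circ$. The first alternative is ruled out as follows: for any nontrivial irreducible representation $V$ of $G_i$, extending $V$ trivially to the other almost simple factors and tensoring with a suitable character of $Z(G)^\circ$ (chosen so that the resulting representation of the almost direct product descends to $G$) produces an irreducible representation of $G$ on which $G_i$ acts nontrivially, a contradiction. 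The second alternative will be ruled out below. Consequently $H$ is finite; since $G$ is connected and acts by conjugation on the finite set $H(k)$, this action is trivial, so $H\subseteq Z(G)$.

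Second, for $g\in Z(G)(k)$ Schur's lemma gives $\rho_{\check\lambda}(g)=\check\lambda(g)\cdot\id_{V_{\check\lambda}}$, where $g$ is viewed as a point of a maximal torus $T\supseteq Z(G)$ and $\check\lambda$ as a character of $T$. The hypothesis thus becomes $\check\lambda(g)=1$ for every dominant $\check\lambda$. Since $2\rho=\sum_{\alpha>0}\alpha\in X^*(T)$ is dominant, and since for any $\check\mu\in X^*(T)$ both $\check\mu+2n\rho$ and $2n\rho$ are dominant for $n$ sufficiently large, the dominant weights generate $X^*(T)$ as an abelian group; hence $\chi(g)=1$ for every $\chi\in X^*(T)$. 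As characters separate points of the diagonalizable group $T$, this forces $g=e$. The same reasoning, applied with $T_1$ in place of $T$ and using the surjectivity of the restriction map $X^*(T)\twoheadrightarrow X^*(T_1)$, rules out the second alternative left open in the previous paragraph.

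The principal obstacle is invoking the classification of closed connected normal subgroups of a reductive group; once that structural input is accepted, the remainder is a uniform character-theoretic argument valid in any characteristic, and so handles both characteristic zero and positive characteristic simultaneously.
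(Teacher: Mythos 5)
Your proof is correct, but it takes a somewhat more circuitous route than the paper's. The paper disposes of the lemma in one line: the reduced normal subgroup $H:=\big(\bigcap_{\check\lambda}H_{\check\lambda}\big)_{\red}$ intersects the maximal torus $T$ trivially (because $g\in H\cap T$ gives $\check\lambda(g)=1$ for every dominant $\check\lambda$, and dominant weights generate $X^*(T)$), and a reduced normal subgroup of a connected reductive group meeting $T$ trivially must be trivial. That last structural fact handles both possible sources of a nontrivial $H^\circ$ (a simple factor $G_i$ or a central subtorus $T_1$) uniformly, since each would contribute a nontrivial subtorus of $T$ lying in $H$. You instead invoke the almost-direct-product decomposition of $H^\circ$, then run a separate representation-theoretic argument to rule out $G_i\subseteq H^\circ$ (building an irreducible $G$-module from a nontrivial $G_i$-module and a character of $Z(G)^\circ$), and handle the central part by the same character argument the paper uses. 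Both routes ultimately rely on the character-lattice observation; the paper's is shorter because it intersects with $T$ before appealing to structure theory, so a single argument covers all cases. One small point worth spelling out in your simple-factor step: to make the extended representation descend to $G=([G,G]\times Z(G)^\circ)/F$ you need a character of $Z(G)^\circ$ restricting on $F$ to the inverse of the central character of $V$, which exists because restriction $X^*(Z(G)^\circ)\to X^*(F)$ is surjective for the finite group $F$ inside a torus; as written this is just asserted.
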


\begin{proof}
$\big(\bigcap\limits_{\check\lambda}\, H_{\check\lambda}\big)_{\red}$ is a reduced normal subgroup of $G$, which has 
trivial intersection with $T$. The lemma follows.
\end{proof}

\begin{rem}  \label{r:warning}
If $k$ has characteristic 2 and $G=PGL(2)$ then the group scheme $\bigcap\limits_{\check\lambda}\, H_{\check\lambda}$
has order 4. In fact, it equals the kernel of the homomorphism $g:PGL(2)\to SL(2)$ such that the composition
$SL(2)\to PGL(2)\overset{g}\longrightarrow SL(2)$ is the Frobenius endomorphism of $SL(2)$.
\end{rem}

\sssec{The case $t=0$}
Now let us study  $\Stab_0^{\check\lambda}$ (i.e., the stabilizer $\Stab_t^{\check\lambda}$ for $t=0$). By \eqref{e:Stab_tlambda}, we have
\begin{equation}    \label{e:Stab_0lambda}
\Stab_0^{\check\lambda}=\{ (g_1,g_2)\in G\times G\, |\,\rho_{\check\lambda}(g_1)\cdot S_{\check\lambda}=S_{\check\lambda}\cdot
\rho_{\check\lambda}(g_2)\},
\end{equation}
where $S_{\check\lambda}:=f_{\check\lambda}(0)$.

\medskip

Note that the subspace $V'_{\check\lambda}\subset V_{\check\lambda}$ is $M$-stable. So we have a homomorphism
$M\to\Aut_k(V'_{\check\lambda})$ and therefore homomorphisms
\[
P\twoheadrightarrow M\to \Aut_k(V'_{\check\lambda}), \quad P^-\twoheadrightarrow M\to \Aut_k(V'_{\check\lambda})\, .
\]

\begin{lem}   \label{l:theintersection}
The intersection $\Stab_0^{\check\lambda}\cap (P\times P^-)$ is equal to the fiber product of $P$ and $P^-$ over
$\Aut_k(V'_{\check\lambda})$.
\end{lem}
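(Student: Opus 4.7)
The plan is to work in block-matrix coordinates adapted to the decomposition $V_{\check\lambda} = V'_{\check\lambda} \oplus V''_{\check\lambda}$ from \secref{sss:f(0)}, and to reduce the stabilizer condition in \eqref{e:Stab_0lambda} to the identity $\bar{g}_1 = \bar{g}_2$ in $\Aut_k(V'_{\check\lambda})$, where $\bar{g}_1,\bar{g}_2$ are the images of $g_1,g_2$ under $P\twoheadrightarrow M\to\Aut_k(V'_{\check\lambda})$ and $P^-\twoheadrightarrow M\to\Aut_k(V'_{\check\lambda})$ respectively.

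First I would reinterpret the decomposition via the cocharacter $\gamma$: since $\gamma(\BG_m)$ acts on a $T$-weight space $V_{\check\lambda,\check\mu}$ by $t\mapsto t^{\langle \check\mu,\gamma\rangle}$, and every weight $\check\mu$ of $V_{\check\lambda}$ satisfies $\check\lambda-\check\mu = \sum_i c_i\check\alpha_i$ with $c_i\geq 0$ (hence $\langle\check\lambda-\check\mu,\gamma\rangle\geq 0$, with equality iff $c_i=0$ whenever $\check\alpha_i$ is not a simple root of $M$), the subspace $V'_{\check\lambda}$ is precisely the $\gamma$-eigenspace of maximal weight $\langle\check\lambda,\gamma\rangle$, while $V''_{\check\lambda}$ is the sum of all strictly smaller $\gamma$-eigenspaces. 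From this I would deduce two facts. First, $V'_{\check\lambda}$ is $P$-stable and $P$ acts on it through $P\twoheadrightarrow M\to\Aut_k(V'_{\check\lambda})$: indeed, $\fn(P)$ has strictly positive $\gamma$-weights, so it annihilates the top $\gamma$-eigenspace $V'_{\check\lambda}$, forcing $N(P)$ to act trivially. Second, $V''_{\check\lambda}$ is $P^-$-stable, since $\fn(P^-)$ has strictly negative $\gamma$-weights and $M$ preserves each $\gamma$-eigenspace; moreover, the natural projection $V_{\check\lambda}\twoheadrightarrow V_{\check\lambda}/V''_{\check\lambda}$ identifies the quotient with $V'_{\check\lambda}$ as an $M$-module, so the induced $P^-$-action on this quotient factors as $P^-\twoheadrightarrow M\to\Aut_k(V'_{\check\lambda})$.

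These facts translate into the block-matrix descriptions
\[
\rho_{\check\lambda}(g_1)=\begin{pmatrix}\bar{g}_1 & \ast\\ 0 & \ast\end{pmatrix},\qquad
\rho_{\check\lambda}(g_2)=\begin{pmatrix}\bar{g}_2 & 0\\ \ast & \ast\end{pmatrix}\qquad (g_1\in P,\ g_2\in P^-)
\]
with respect to $V_{\check\lambda}=V'_{\check\lambda}\oplus V''_{\check\lambda}$, while $S_{\check\lambda}$ itself is the projection $\bigl(\begin{smallmatrix}I&0\\0&0\end{smallmatrix}\bigr)$. A direct product computation then yields $\rho_{\check\lambda}(g_1)\cdot S_{\check\lambda}=\bigl(\begin{smallmatrix}\bar{g}_1&0\\0&0\end{smallmatrix}\bigr)$ and $S_{\check\lambda}\cdot\rho_{\check\lambda}(g_2)=\bigl(\begin{smallmatrix}\bar{g}_2&0\\0&0\end{smallmatrix}\bigr)$, so the defining equation of $\Stab_0^{\check\lambda}\cap(P\times P^-)$ from \eqref{e:Stab_0lambda} becomes $\bar{g}_1=\bar{g}_2$, which is precisely the defining condition of the fiber product $P\underset{\Aut_k(V'_{\check\lambda})}{\times} P^-$.

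The single point requiring care is the $P$-stability of $V'_{\check\lambda}$, which relies specifically on $\check\lambda$ being the \emph{highest} weight of $V_{\check\lambda}$ (so that no $\gamma$-weight on $V_{\check\lambda}$ exceeds $\langle\check\lambda,\gamma\rangle$); I expect this to be a routine verification rather than a genuine obstacle.
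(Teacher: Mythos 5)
Your proof is correct and follows essentially the same route as the paper's: the paper's one-line proof (``follows from \eqref{e:Stab_0lambda} and the description of $S_{\check\lambda}$'') is exactly the block-matrix computation you carry out, with the two stability facts ($V'_{\check\lambda}$ is $P$-stable with $N(P)$ acting trivially, $V''_{\check\lambda}$ is $P^-$-stable with $N(P^-)$ acting trivially on the quotient) being the implicit content of ``the description of $S_{\check\lambda}$.'' The point you flag as requiring care — that $V'_{\check\lambda}$ is the top $\gamma$-eigenspace because $\gamma$ is dominant and $\check\lambda$ is the highest weight — is indeed routine and your argument for it is complete.
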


\begin{proof}
Follows from \eqref{e:Stab_0lambda} and the description of $S_{\check\lambda}$ given in \secref{sss:f(0)}.
\end{proof}

Recall that $\wt{G}_0=P\underset{M}\times P^-$.
The above lemma implies that 
\[
 \Big(\bigcap_{\check\lambda}\Stab_0^{\check\lambda} \Big)\cap (P\times P^-)=P\underset{M/H}\times P^-\, ,
 \]
where 
\[
H:= \bigcap_{\check\lambda}\Ker (M\to \Aut_k(V'_{\check\lambda})).
\]
Similarly to \lemref{l:almost trivial intersection}, the group $H_{\red}$ is trivial. Therefore, 
to prove \eqref{e:Stab2} for $t=0$ it remains to show that
\[
 \Big(\bigcap_{\check\lambda}\, \Stab_0^{\check\lambda}\Big)_{\red}\subset  P\times P^-\, .
\]
This follows from the next lemma.

\begin{lem}   \label{l:next lemma}
If $\check\lambda$ is strictly dominant then $(\Stab_0^{\check\lambda})_{\red}\subset P\times P^-$.
\end{lem}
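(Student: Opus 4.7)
The plan is to use \eqref{e:Stab_0lambda} to split the stabilizer condition across the two factors. Since $S_{\check\lambda}$ is the $T$-equivariant projection of $V_{\check\lambda}$ onto $V'_{\check\lambda}$ along $V''_{\check\lambda}$, the equation $\rho_{\check\lambda}(g_1)\cdot S_{\check\lambda}=S_{\check\lambda}\cdot\rho_{\check\lambda}(g_2)$ immediately forces $g_1 V'_{\check\lambda}\subset V'_{\check\lambda}$ and $g_2 V''_{\check\lambda}\subset V''_{\check\lambda}$. Thus $(\Stab_0^{\check\lambda})_{\red}$ embeds into the product of the reduced $G$-stabilizers of $V'_{\check\lambda}$ and $V''_{\check\lambda}$, and it will suffice to prove, under the strict dominance hypothesis, that those two reduced stabilizers equal $P$ and $P^-$, respectively.

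First I would identify $V'_{\check\lambda}$ with the $M$-submodule $U(\fm)\cdot v_{\check\lambda}\subset V_{\check\lambda}$ generated by the highest weight vector. The decomposition $\fg=\fn^-_P\oplus\fm\oplus\fn$ gives $V_{\check\lambda}=U(\fn^-_P)\cdot U(\fm)\cdot v_{\check\lambda}$, and a weight count shows that the weights of $U(\fm)\cdot v_{\check\lambda}$ all lie in the set $A$ defined in \secref{sss:f(0)}, while any nontrivial further action by $U(\fn^-_P)$ shifts the weight by a nonzero sum of positive roots of $\fn_P$, which never lies in the root lattice of $M$; hence $V'_{\check\lambda}=U(\fm)\cdot v_{\check\lambda}$. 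From this the containment $P\subset\Stab_G(V'_{\check\lambda})$ is immediate: $M$ acts on $V'_{\check\lambda}$ as on an $M$-module, and $N$ acts trivially because $\fn\cdot v_{\check\lambda}=0$ combined with $[\fn,\fm]\subset\fn$ gives $\fn\cdot U(\fm)v_{\check\lambda}=0$ by induction on the degree in $U(\fm)$. A symmetric weight-lattice argument shows $P^-\subset\Stab_G(V''_{\check\lambda})$: for a weight $\check\mu\notin A$ and a positive root $\alpha$ of $\fn_P$, the shift $\check\mu-\alpha$ still satisfies $\check\lambda-(\check\mu-\alpha)\notin$ root lattice of $M$, because $\check\lambda-\check\mu$ is a non-negative combination of simple roots of $G$ involving at least one simple root outside $M$.

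It remains to promote $P\subset\Stab_G(V'_{\check\lambda})_{\red}$ and $P^-\subset\Stab_G(V''_{\check\lambda})_{\red}$ to equalities, which is where strict dominance enters. Any reduced closed subgroup of $G$ containing a parabolic is itself a parabolic (by self-normalization), so $\Stab_G(V'_{\check\lambda})_{\red}$ is a parabolic containing $P$; were it strictly larger, its Lie algebra would contain $\fg_{-\check\alpha_i}$ for some simple root $\check\alpha_i$ of $G$ not in $M$. But then $f_i\cdot v_{\check\lambda}$ would have to lie in $V'_{\check\lambda}$, whereas strict dominance ensures that $f_i\cdot v_{\check\lambda}$ is a nonzero weight vector of weight $\check\lambda-\check\alpha_i\notin A$, hence in $V''_{\check\lambda}$, a contradiction. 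The argument for $\Stab_G(V''_{\check\lambda})_{\red}=P^-$ is symmetric: if $e_{\check\alpha_i}$ preserved $V''_{\check\lambda}$ for some $\check\alpha_i$ not in $M$, then applying it to $v:=f_i\cdot v_{\check\lambda}\in V''_{\check\lambda}$ would yield $e_{\check\alpha_i}\cdot v=[e_{\check\alpha_i},f_i]\cdot v_{\check\lambda}=\langle\check\lambda,\check\alpha_i^\vee\rangle\cdot v_{\check\lambda}\in V'_{\check\lambda}$, nonzero by strict dominance, again a contradiction.

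The only real obstacle is the weight-lattice bookkeeping in the second paragraph; once it is in place, the two reduced-stabilizer computations collapse to the one-line $[e_{\check\alpha_i},f_i]=h_i$ identity combined with strict dominance of $\check\lambda$.
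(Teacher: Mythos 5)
Your overall strategy matches the paper's: split $\Stab_0^{\check\lambda}\subset K\times K^-$ with $K=\Stab_G(V'_{\check\lambda})$, $K^-=\Stab_G(V''_{\check\lambda})$, observe $K_{\red}$ and $K^-_{\red}$ are parabolics containing $P$ and $P^-$, and derive a contradiction from the assumption that one is strictly larger. Your reduction to $K\times K^-$ and the containments $P\subset K$, $P^-\subset K^-$ are fine (though the $U(\fm)\cdot v_{\check\lambda}$ identification via PBW is more than is needed; the weight bookkeeping on the set $A$ suffices and is what the paper relies on).

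The genuine gap is in the final contradiction. You conclude from $\fg_{-\check\alpha_i}\cdot V'_{\check\lambda}\subset V'_{\check\lambda}$ that $f_i\cdot v_{\check\lambda}$ is a \emph{nonzero} vector of weight $\check\lambda-\check\alpha_i\notin A$, and from $[e_i,f_i]\cdot v_{\check\lambda}=\langle\check\lambda,\alpha_i\rangle v_{\check\lambda}$ that this pairing is nonzero \emph{in $k$}. Both steps fail in positive characteristic: for $G=SL_2$, $\on{char}\,k=2$, $\check\lambda=2$, the irreducible $V_{\check\lambda}$ is the Frobenius twist of the standard module, so $f\cdot v_{\check\lambda}=0$ even though $\check\lambda$ is strictly dominant, and $\langle\check\lambda,\alpha\rangle=2=0$ in $k$. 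Since Appendix~\ref{s:quasiaffine} is explicitly stated to allow arbitrary characteristic, this is a real gap. The paper sidesteps it by using the Weyl reflection: if the $\on{SL}(2)$ for $\check\alpha_i$ preserves $V'_{\check\lambda}$, then the \emph{extremal} weight $s_i(\check\lambda)=\check\lambda-\langle\check\lambda,\alpha_i\rangle\check\alpha_i$ occurs in $V'_{\check\lambda}$ (it does in any characteristic, being a Weyl translate of the highest weight line), forcing $\langle\check\lambda,\alpha_i\rangle\check\alpha_i$ into the root lattice of $M$ and hence $\langle\check\lambda,\alpha_i\rangle=0$ as an \emph{integer}, contradicting strict dominance. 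Replacing your $f_i\cdot v_{\check\lambda}$ with $s_i\cdot v_{\check\lambda}$ throughout (and the integer pairing rather than its image in $k$) repairs the argument and makes it characteristic-free.
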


\begin{proof}
Set $V''_{\check\lambda}:=\Ker(S_{\check\lambda})\,$. It is clear that  $\Stab_0^{\check\lambda}\subset K\times K^-$, where 
$$K:=\{g\in G\,|\,\rho_{\check\lambda}(g)(V'_{\check\lambda})=V'_{\check\lambda}\}, \quad\quad
K^-:=\{g\in G\,|\,\rho_{\check\lambda}(g)(V''_{\check\lambda})=V''_{\check\lambda}\}.$$ 
So it remains to show that $K_{\red}=P$, $K^-_{\red}=P^-$. Let us prove that $K_{\red}=P$ (the proof of the other equality is similar).

\medskip

Clearly $P\subset K$, so $K_{\red}$ is a parabolic containing $P$. Thus if $K_{\red}\ne P$ then $K_{\red}$ 
contains the subgroup $SL(2)$ corresponding to some simple root $\check\alpha_i$ of $G$ which is not a root of 
$M$. Then the weight $\check\lambda -\langle\check\lambda\, ,\alpha_i\rangle\cdot\check\alpha_i$ should occur 
in the weight decomposition of $V'_{\check\lambda}$ with respect to $T$. By the definition of $V'_{\check\lambda}\,$, 
this means that $\langle\check\lambda\, ,\alpha_i\rangle=0$, which is contrary to the assumption 
that $\check\lambda$ is strictly dominant.
\end{proof}

Thus we have proved \lemref{l:existence of Y} and \propref{p:quasiaffine}.

\begin{rem}   \label{r:C.4.9}
It is not hard to show  that if  char$\,k\ne 2$ or if the group $[G,G]$ is simply connected then the equality \eqref{e:Stab1}
can be replaced by the stronger equality
\begin{equation}   \label{e:stronger equality}
\bigcap_{\check\lambda}\, \Stab_t^{\check\lambda}=\wt{G}_t\, .
\end{equation}
On the other hand, \eqref{e:stronger equality} does not hold if char$\,k = 2$ and $G=PGL(2)$ (this follows from Remark~\ref{r:warning}).
\end{rem}

\begin{rem}
In \secref{sss:Llambda} we defined the $(G\times G)$-module $R_{\check\lambda}$ to be equal to
$\End_k(V_{\check\lambda})\,$, where  $V_{\check\lambda}$ is the \emph{irreducible} $G$-module with highest weight 
$\check\lambda$. The reader may prefer to define $R_{\check\lambda}$ by 
\begin{equation}   \label{e:Weyl modules}
R_{\check\lambda}:=\End_k(\Delta_{\check\lambda})\times\End_k (\nabla_{\check\lambda})\, ,
\end{equation}
where $\Delta_{\check\lambda}$ (resp.$ \nabla_{\check\lambda}$) is obtained from the 1-dimensional $B$-module corresponding to 
$\check\lambda$ by applying the functor
\[
\{B\mbox{-modules}\}\to \{G\mbox{-modules}\}
\]
left adjoint (resp. right adjoint) to the restriction functor  \{$G$-modules\} $\to \{B$-modules\}. It is not hard to show that if one defines $R_{\check\lambda}$ by \eqref{e:Weyl modules} then the equality \eqref{e:stronger equality} holds for \emph{any} reductive $G$ and \emph{any} characteristic of $k$.
\end{rem}

\section{Relation to Vinberg's semi-group}  \label{s:Vinberg}

In \secref{ss:Relation} we will explain how the group-scheme $\wt{G}$ of \secref{ss:Interpolating} can be 
obtained from the Vinberg semi-group corresponding to $G$ (a.k.a. enveloping semigroup of $G$). 

\medskip

Before doing this, we give a brief review of the standard material on the Vinberg semi-group\footnote{It was defined 
and studied by E.~Vinberg \cite{Vi} in characteristic 0 and then by A.Rittatore in arbit\-ra\-ry characteristic, see 
\cite{Ri1,Ri2,Ri3,Ri4}.}, which is contained in \cite{Vi,Re,Ri1,Ri3,Bo}.
For the general theory of reductive monoids, see  \cite{Pu, Re, Vi} and \cite[Ch.~5, Sect.~27]{Ti}.

\medskip

In this Appendix (as in \secref{ss:Interpolating}) the characteristic of the ground field $k$ can be arbitrary.

\ssec{The group $G_{\on{enh}}\,$}

\sssec{}

Let $G$ be a reductive group. Let $T$ denote its \emph{abstract} Cartan. 
Let $\check\Lambda_G=\check\Lambda_T$ denote the weight lattice of $G$ (=the lattice of characters of $T$).
It is equipped with the usual partial order relation, denoted by $\le\,$.

\sssec{}
Let $Z(G)$ denote the center of $G$. Consider the group
$$G_{\on{enh}}:=(G\times T)/Z(G),$$
where $Z(G)$ maps to $G\times T$ anti-diagonally. Note that $Z(G_{\on{enh}})=T$.

\medskip

Set $T_{\on{adj}}:=T/Z(G)$. We have a canonical homomorphism of algebraic groups
$$G_{\on{enh}}:=(G\times T)/Z(G)\to T/Z(G)=T_{\on{adj}}\, .$$

\ssec{Vinberg's semi-group: definition}
\sssec{}
The Vinberg semi-group of $G$, denoted by $\ol{G_{\on{enh}}}\,$, is an affine algebraic monoid containing 
$G_{\on{enh}}$ as its group of invertible elements. Such a monoid is uniquely determined by the full monoidal 
subcategory 
$$\Rep(\ol{G_{\on{enh}}})\subset \Rep(G_{\on{enh}})$$
consisting of those representations for which the action of 
$G_{\on{enh}}$ extends to that of $\ol{G_{\on{enh}}}\,$. 
The coordinate ring of $\ol{G_{\on{enh}}}$ is reconstructed from 
$\Rep(\ol{G_{\on{enh}}})$ as the set of matrix elements of all representations from 
$\Rep(\ol{G_{\on{enh}}})$. 

\medskip

The category $\Rep(\ol{G_{\on{enh}}})$ is the following. Note that an object $V\in \Rep(G_{\on{enh}})$
canonicaly splits as 
$$V\simeq \underset{\check\mu\in \check\Lambda_T}\oplus\, V_{\check\mu}$$
according to the action of $T=Z(G_{\on{enh}})$. Let us also note that by the definition of $G_{\on{enh}}\,$, 
the central character of the action of $G\hookrightarrow G_{\on{enh}}$ on each $V_{\check\mu}$ equals 
$\check\mu|_{Z(G)}\,$. Now $\Rep(\ol{G_{\on{enh}}})$ is defined to consist of all $V\in \Rep(G_{\on{enh}})$ 
with the following property: for every $\check\mu$, the \emph{weights}
of $V_{\check\mu}$, regarded as a $G$-representation, are $\leq \check\mu\,$.

\medskip

Note that the set of weights of a given $G$-representation is invariantly defined, i.e., 
independent of the choice of the Cartan or Borel subgroups. 

\sssec{} L.~Renner proved that the affine scheme $\ol{G_{\on{enh}}}$ is of finite type and normal 
(this follows from \cite[Thm 5.4 (a)]{Re}).
In the characteristic zero case this is also proved in \cite{Vi}.

\sssec{} The Tannakian formalism provides the following description of the $S$-points of $\ol{G_{\on{enh}}}$
for any $k$-scheme $S$: an element of the monoid $\Maps(S,\ol{G_{\on{enh}}})$ is a collection of 
assigments
\begin{equation} \label{e:pts of V}
V\in \Rep(\ol{G_{\on{enh}}})\quad \rightsquigarrow \quad g_V\in \End_{\CO_S}(V\otimes \CO_S),
\end{equation}
compatible with maps $V_1\to V_2$ in $\Rep(\ol{G_{\on{enh}}})$ and such that
$$g_{V_1\otimes V_2}=g_{V_1}\otimes g_{V_2}.$$

\ssec{The homomorphism $\bar\pi:\ol{G_{\on{enh}}}\to \ol{T_{\on{adj}}}\, $}
\sssec{The algebraic monoid  $\ol{T_{\on{adj}}}\,$}

As before, set $T_{\on{adj}}:=T/Z(G)$.  Let $I$ denote the set of vertices of the Dynkin diagram of $G$. 
The simple root corresponding to $i\in I$ is a homomorphism  $T_{\on{adj}}\to\BG_m\,$. These homorphisms
define a canonical isomorphism
$$T_{\on{adj}}\iso \underset{i\in I}\Pi\, \BG_m\, .$$
The inverse isomorphism is given by the fundamental coweights $\BG_m\to T_{\on{adj}}\,$.

\medskip

Let $\ol{T_{\on{adj}}}$ denote the algebraic monoid $\underset{i\in I}\Pi\, \BA^1$. 
The open embedding $\BG_m\hookrightarrow \BA^1$ induces a canonical open embedding
$$T_{\on{adj}}\hookrightarrow \ol{T_{\on{adj}}}\, .$$

\sssec{The homomorphism $\bar\pi$}   \label{sss:barpi}
Define a homomorphism of algebraic groups $\pi :G_{\on{enh}}\to T_{\on{adj}}$ to be the composition
$$G_{\on{enh}}:=(G\times T)/Z(G)\to T/Z(G)=T_{\on{adj}}\, .$$
The functor $\Rep(T_{\on{adj}})\to\Rep(G_{\on{enh}})$ corresponding to $\pi$ maps $\Rep(\ol{T_{\on{adj}}})$
to   $\Rep(\ol{G_{\on{enh}}})$. So $\pi$ extends to a homomorphism
of algebraic monoids
\begin{equation}   \label{e:pi}
\bar\pi:\ol{G_{\on{enh}}}\to \ol{T_{\on{adj}}}\, .
\end{equation}
Note that 
$$\bar\pi^{-1}(T_{\on{adj}})=G_{\on{enh}}\, .$$
This follows from the fact that for every $V\in\Rep(G_{\on{enh}})$ there exists a character
$\chi :T_{\on{adj}}\to\BG_m$ such that $V\otimes (\chi\circ\pi)\in \Rep(\ol{G_{\on{enh}}})$ 
(here $\chi\circ\pi$ is considered as a 1-dimensional representation of $G_{\on{enh}}$).

\ssec{The non-degenerate locus of Vinberg's semi-group}
In this subsection we recall some facts whose proofs can be found in \cite[Sect.~8]{Vi} in the 
characteristic zero case and in \cite{Ri1,Ri3} in general (there are also some hints in \cite[Sect.~1.1]{Bo}).

\sssec{}
Recall that $G_{\on{enh}}:=(G\times T)/Z(G)$, so a weight of $G_{\on{enh}}$ is a pair 
$$(\check\lambda_1 ,\check\lambda_2)\in \check\Lambda_G\times \check\Lambda_T$$
such that $\check\lambda_1-\check\lambda_2$ belongs to the root lattice. For each dominant  
$\check\lambda\in\Lambda_G$ let $V_{\check\lambda,\check\lambda}\in\Rep (\ol{G_{\on{enh}}})$ denote the irreducible 
representation with highest weight $(\check\lambda ,\check\lambda)$. 

\medskip

Given a point $g\in G_{\on{enh}}(k)$, let $A_g$ denote the set of dominant weights $\check\lambda\in\Lambda_G$
such that the action of $g$ on $V_{\check\lambda,\check\lambda}$ is \emph{non-zero}.

\medskip

The following fact is well-known:

\begin{lem}
The following properties of $g\in G_{\on{enh}}$ are equivalent:

\smallskip

\noindent{\em(i)} $A_g$ is the set of all dominant weights of $G$;

\smallskip

\noindent{\em(ii)} $A_g$ is not contained in any wall of the dominant cone of $\check\Lambda_G\otimes\BQ$.

\end{lem}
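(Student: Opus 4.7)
The direction (i) $\Rightarrow$ (ii) is immediate: the full dominant cone is not contained in any codimension-one face.

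For (ii) $\Rightarrow$ (i), I plan to establish two structural properties of $A_g$:
\begin{enumerate}
\item[(a)] \emph{Additive closure:} $\check\lambda,\check\mu\in A_g\ \Longrightarrow\ \check\lambda+\check\mu\in A_g$;
\item[(b)] \emph{Saturation:} if $\check\lambda+\check\mu\in A_g$ with both summands dominant, then $\check\lambda,\check\mu\in A_g$.
\end{enumerate}
Granted (a) and (b), condition (ii) yields, for each wall of the dominant cone, some element of $A_g$ not lying on that wall. Summing these via (a) produces $\check\lambda^{\ast}\in A_g$ in the interior of the dominant cone, i.e.\ strictly dominant. For any dominant $\check\mu$, choose $n$ large enough that $n\check\lambda^{\ast}-\check\mu$ is also dominant; then $n\check\lambda^{\ast}=\check\mu+(n\check\lambda^{\ast}-\check\mu)\in A_g$ by iterating (a), and (b) then yields $\check\mu\in A_g$. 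Hence $A_g$ is the set of all dominant weights, proving (i).

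Both structural properties will rest on the canonical $\ol{G_{\on{enh}}}$-equivariant \emph{Cartan embedding}
\[
\iota\colon V_{\check\lambda+\check\mu,\check\lambda+\check\mu}\hookrightarrow V_{\check\lambda,\check\lambda}\otimes V_{\check\mu,\check\mu},\qquad v_{\check\lambda+\check\mu}\mapsto v_{\check\lambda}\otimes v_{\check\mu},
\]
under which $g$ acts on the target by $g|_{V_{\check\lambda,\check\lambda}}\otimes g|_{V_{\check\mu,\check\mu}}$, by Tannakian compatibility of $\Rep(\ol{G_{\on{enh}}})$ with tensor products. Property (b) is then the contrapositive statement: if $g|_{V_{\check\lambda,\check\lambda}}=0$ then $g$ annihilates the target, so injectivity of $\iota$ forces $g|_{V_{\check\lambda+\check\mu,\check\lambda+\check\mu}}=0$. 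For (a), I use the invariance $A_g=A_{gh}$ for every $h\in G$: if $g|_{V_{\check\lambda,\check\lambda}}\neq 0$, the locus $\{h\in G:(gh)\cdot v_{\check\lambda}\neq 0\}$ is a non-empty Zariski-open subset of $G$, and likewise for $\check\mu$; irreducibility of $G$ produces $h$ in both open subsets simultaneously. Then $(gh)\cdot(v_{\check\lambda}\otimes v_{\check\mu})=((gh)\cdot v_{\check\lambda})\otimes((gh)\cdot v_{\check\mu})\neq 0$, and injectivity of $\iota$ yields $(gh)\cdot v_{\check\lambda+\check\mu}\neq 0$, so $\check\lambda+\check\mu\in A_{gh}=A_g$.

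The main obstacle will be constructing $\iota$ as a morphism in $\Rep(\ol{G_{\on{enh}}})$ when $k$ has positive characteristic. In characteristic zero, $V_{\check\lambda,\check\lambda}\otimes V_{\check\mu,\check\mu}$ is semisimple as a $G$-module and the Cartan summand splits off $\ol{G_{\on{enh}}}$-equivariantly. In characteristic $p$ the tensor product is typically non-semisimple: $V_{\check\lambda+\check\mu,\check\lambda+\check\mu}$ appears only as a subquotient, and the cyclic $G$-submodule generated by $v_{\check\lambda}\otimes v_{\check\mu}$ need not even be $\ol{G_{\on{enh}}}$-stable. Repairing the argument in this setting appears to require a more delicate Tannakian input, for instance a direct characterization of $A_g$ via the vanishing of canonical ``Plücker-type'' regular functions on $\ol{G_{\on{enh}}}$, or routing through the Weyl modules in $\Rep(\ol{G_{\on{enh}}})$ where a universal Cartan map is formally available. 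The combinatorial deduction of (i) from (a) and (b) is unchanged.
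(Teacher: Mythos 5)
The paper states this lemma without proof (it is labeled ``well-known''), so there is no internal argument to compare against. Your strategy — prove additive closure (a) and saturation (b) for $A_g$, then deduce (i) from (ii) purely combinatorially — is a clean and correct route, and the combinatorial deduction is right. In characteristic zero the whole argument is complete: the Cartan embedding $V_{\check\lambda+\check\mu,\check\lambda+\check\mu}\hookrightarrow V_{\check\lambda,\check\lambda}\otimes V_{\check\mu,\check\mu}$ is a $G_{\on{enh}}$-equivariant split inclusion, both sides lie in $\Rep(\ol{G_{\on{enh}}})$, and since that subcategory is \emph{full} in $\Rep(G_{\on{enh}})$, the inclusion is automatically a morphism there; hence the Tannakian compatibility with tensor products that you invoke is legitimate, and the irreducibility/open-density argument in $G$ for (a) is fine.

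You have also correctly identified the genuine gap: the appendix is stated in arbitrary characteristic, and in characteristic $p$ the Cartan component $L(\check\lambda+\check\mu)$ of $L(\check\lambda)\otimes L(\check\mu)$ is in general only a subquotient — neither a submodule (needed for (a)) nor a quotient (which would suffice for (b)). Your suggested repair through (co)standard modules is the right instinct: the cup product $\nabla_{\check\lambda,\check\lambda}\otimes\nabla_{\check\mu,\check\mu}\twoheadrightarrow\nabla_{\check\lambda+\check\mu,\check\lambda+\check\mu}$ is surjective in all characteristics (Ramanan–Ramanathan, Mehta–Ramanathan), and dually $\Delta_{\check\lambda+\check\mu,\check\lambda+\check\mu}\hookrightarrow\Delta_{\check\lambda,\check\lambda}\otimes\Delta_{\check\mu,\check\mu}$ is injective; both lie in $\Rep(\ol{G_{\on{enh}}})$. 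But there is a further step still missing: $A_g$ is defined via the irreducibles $V_{\check\lambda,\check\lambda}$, and while $g|_{L}\neq 0$ forces $g|_{\Delta}\neq 0$ and $g|_{\nabla}\neq 0$, the converses are not automatic, so running your argument with $\Delta$ or $\nabla$ only controls the a priori larger sets $A_g^{\Delta}$ and $A_g^{\nabla}$. You should either show these coincide with $A_g$, or reformulate the non-degeneracy criterion directly in terms of $\Delta_{\check\lambda}$ and $\nabla_{\check\lambda}$ — note that the authors themselves make exactly this substitution in the remark closing Appendix~\ref{s:quasiaffine}, replacing $\End_k(V_{\check\lambda})$ by $\End_k(\Delta_{\check\lambda})\times\End_k(\nabla_{\check\lambda})$ precisely to sidestep small-characteristic pathologies.
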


It is clear that the set of elements of $G_{\on{enh}}(k)$ with property (ii) corresponds to an open subscheme
of $\ol{G_{\on{enh}}}$; we denote it by $\overset{\circ}{\ol{G_{\on{enh}}}}$. 

\begin{defn}
The subscheme $\overset{\circ}{\ol{G_{\on{enh}}}}\subset \ol{G_{\on{enh}}}$ is called the  \emph{non-dege\-nerate locus}
of $\ol{G_{\on{enh}}}$.
\end{defn}

\sssec{}   \label{sss:2sided translations}
An element $(g_1,g_2)\in G_{\on{enh}}\times G_{\on{enh}}$ defines a regular map $\ol{G_{\on{enh}}}\to\ol{G_{\on{enh}}}$ given by 
$$x\mapsto g_1\cdot x\cdot g_2^{-1}.$$ Thus $G_{\on{enh}}\times G_{\on{enh}}$ acts on the variety  $\ol{G_{\on{enh}}}\,$. 
The open subscheme $\overset{\circ}{\ol{G_{\on{enh}}}}\subset\ol{G_{\on{enh}}}$ is clearly preserved under this action.

\sssec{}   \label{sss:transitive}

It is known that the scheme $\overset{\circ}{\ol{G_{\on{enh}}}}$ is smooth and moreover, the restriction of the morphism
$\bar\pi:\ol{G_{\on{enh}}}\to \ol{T_{\on{adj}}}$ to $\overset{\circ}{\ol{G_{\on{enh}}}}$ is smooth.

\medskip

It is also known that the action of $G\times G$
on $\overset{\circ}{\ol{G_{\on{enh}}}}$ is \emph{transtivite relative to} $\ol{T_{\on{adj}}}\,$, in the sense that the 
action map
\begin{equation}   \label{e:action map}
(G\times G)\times \overset{\circ}{\ol{G_{\on{enh}}}}\to \overset{\circ}{\ol{G_{\on{enh}}}}\underset{\ol{T_{\on{adj}}}}\times \overset{\circ}{\ol{G_{\on{enh}}}}
\end{equation}
is surjective and flat. Moreover, it is known that the morphism \eqref{e:action map} is smooth.

\sssec{}   \label{sss:Stab}
Consider $(G\times G)\times \overset{\circ}{\ol{G_{\on{enh}}}}$ as a group-scheme over 
$\overset{\circ}{\ol{G_{\on{enh}}}}$. Define a closed group-subscheme
$\on{Stab}_{G\times G}\subset (G\times G)\times \overset{\circ}{\ol{G_{\on{enh}}}}$ to be the pre-image of the diagonal
$\overset{\circ}{\ol{G_{\on{enh}}}}\mono\overset{\circ}{\ol{G_{\on{enh}}}}\underset{\ol{T_{\on{adj}}}}\times 
\overset{\circ}{\ol{G_{\on{enh}}}}$ with respect to the map~\eqref{e:action map}. The fiber of $\on{Stab}_{G\times G}$ over 
$x\in \overset{\circ}{\ol{G_{\on{enh}}}} (k)$ is the stabilizer of $x$ in $G\times G$; it will be denoted by 
$\on{Stab}_{G\times G}(x)\,$.

It follows from \secref{sss:transitive} that the scheme $\on{Stab}_{G\times G}$ is \emph{smooth} over 
$\overset{\circ}{\ol{G_{\on{enh}}}}\,$.

\begin{rem}
The torus $T=Z(G_{\on{enh}})$ acts on $\ol{G_{\on{enh}}}$ by translations.
It is easy to show that the resulting action of $T$ on $\overset{\circ}{\ol{G_{\on{enh}}}}$ is free.
\footnote{Let us note that for most groups $G$ (e.g., for $G=SL(3)$) the set of points of $\ol{G_{\on{enh}}}$ 
with trivial stablizer in $T$ is \emph{larger} than $\overset{\circ}{\ol{G_{\on{enh}}}}\,$.} It is also easy to show that the quotient 
$\overset{\circ}{\ol{G_{\on{enh}}}}/T$ depends only on $G_{\on{adj}}:=G/Z(G)\,$. 

\medskip

It is well known that this 
quotient is projective and smooth; this is the 
\emph{wonderful compactification}\footnote{In characteristic 0 the wonderful compactification was 
defined in \cite{DCP} (one of the equivalent definitions from \cite{DCP} is as follows: consider 
$\on{diag}(\fg)\subset\fg\times\fg$ as a point of the Grassmannian of $\fg\times\fg$, then take the closure of its
($G\times G$)-orbit.) Without the characteristic zero assumption, the wonderful compactification was 
constructed in \cite{Str} and then, in a different way, in \cite{BKu}. An axiomatic definition of the wonderful 
compactification and a brief survey can be found in  \cite[Sect.~1]{BP}.} 
of $G_{\on{adj}}:=G/Z(G)$. 

\medskip

It is clear that the subscheme 
$\on{Stab}_{G\times G}(x)\subset G\times G$ corresponding to $x\in \overset{\circ}{\ol{G_{\on{enh}}}} (k)$ 
depends only on the image of $x$ in $\overset{\circ}{\ol{G_{\on{enh}}}}/T$.
\end{rem}

\ssec{The section $\bar\fs:\ol{T_{\on{adj}}}\to 
\ol{G_{\on{enh}}}\,$}

Now let us choose a Cartan \emph{sub}group $T_{\on{sub}}\subset G$ and a Borel $B\supset T_{\on{sub}}\,$.
We will construct a section for the map 
$\bar\pi:\ol{G_{\on{enh}}}\to \ol{T_{\on{adj}}}\,$, which depends on these choices.

\sssec{}  \label{sss:thesection}
Let $\varphi :T\iso T_{\on{sub}}\,$ denote the isomorphism corresponding to the chosen Borel 
$B\supset T_{\on{sub}}\,$. Define 
$$\fs:T_{\on{adj}}=T/Z(G)\to (T_{\on{sub}}\times T)/Z(G)\mono (G\times T)/Z(G)=G_{\on{enh}}$$
to be the homomorphism induced by the map
\[
T\to T_{\on{sub}}\times T, \quad \tau\mapsto (\varphi (\tau )^{-1},\tau ).
\]
The composition 
$T_{\on{adj}}\overset{\fs}\longrightarrow G_{\on{enh}}\overset{\pi}\longrightarrow T_{\on{adj}}$ equals the identity.

\begin{lem}   \label{l:barfs}  \hfill 

\smallskip

\noindent{\em(i)} The homomorphism 
$\fs:T_{\on{adj}}\to G_{\on{enh}}$ extends to a homomorphism of algebraic monoids
$$\bar\fs:\ol{T_{\on{adj}}}\to \ol{G_{\on{enh}}}\,.$$

\smallskip

\noindent{\em(ii)} $\bar\fs ( \ol{T_{\on{adj}}})\subset  \overset{\circ}{\ol{G_{\on{enh}}}}\,$.
\end{lem}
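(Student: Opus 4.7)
The plan is to use the Tannakian description of $\ol{G_{\on{enh}}}$ and $\ol{T_{\on{adj}}}$ to verify both assertions by a single weight-space computation. For part (i), extending the homomorphism $\fs$ to $\bar\fs$ amounts, via Tannaka, to showing that for every $V \in \Rep(\ol{G_{\on{enh}}})$ the pullback $\fs^*(V)$ lies in $\Rep(\ol{T_{\on{adj}}})$. Since $\ol{T_{\on{adj}}} \simeq \prod_{i \in I} \BA^1$ with coordinates the simple roots $\check\alpha_i$, a $T_{\on{adj}}$-representation lies in $\Rep(\ol{T_{\on{adj}}})$ exactly when all its weights are non-negative integer combinations of the $\check\alpha_i$. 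So the task reduces to an explicit calculation of the weights of $\fs^*(V)$.

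The key step is the following computation. Decompose $V = \bigoplus_{\check\mu} V_{\check\mu}$ under $T = Z(G_{\on{enh}})$, and let $v \in V_{\check\mu}$ be a weight vector of $T_{\on{sub}}$-weight $\check\nu$. Since $\fs(\tau)$ is the image in $G_{\on{enh}}$ of $(\varphi(\tau)^{-1},\tau)$, I will check that it acts on $v$ by the scalar $(\check\mu - \check\nu)(\tau)$, where I use $\varphi$ to identify $T_{\on{sub}}$ with $T$ so as to regard $\check\nu$ as a character of $T$. The definition of $\Rep(\ol{G_{\on{enh}}})$ says exactly that $\check\nu \leq \check\mu$, i.e.\ $\check\mu - \check\nu$ is a non-negative integer combination of the $\check\alpha_i$; this gives (i).

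For (ii), the same computation specialized to the irreducible $\ol{G_{\on{enh}}}$-module $V_{\check\lambda,\check\lambda}$ shows that the highest-weight vector (of $T_{\on{sub}}$-weight $\check\lambda$) is \emph{fixed} by every $\bar\fs(\tau)$, since $\check\lambda - \check\lambda = 0$ and the corresponding multiplier extends to the constant function $1$ on $\ol{T_{\on{adj}}}$. Hence $\bar\fs(\tau)$ acts non-trivially on $V_{\check\lambda,\check\lambda}$ for \emph{every} dominant $\check\lambda$, so $A_{\bar\fs(\tau)}$ is the entire dominant cone and (ii) follows from characterization (i) of $\overset{\circ}{\ol{G_{\on{enh}}}}$ recalled just before Lemma~\ref{l:barfs}.

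The only genuinely delicate point is the bookkeeping that identifies the abstract Cartan $T$ with the chosen Cartan subgroup $T_{\on{sub}}$ via $\varphi$, and the resulting signs in the scalar $(\check\mu-\check\nu)(\tau)$; one must verify that the character of $T_{\on{adj}}$ so obtained is genuinely $\sum_i n_i \check\alpha_i$ with $n_i \geq 0$, not its inverse. Once this is set up correctly, both parts of the lemma fall out of the single scalar formula above without further work.
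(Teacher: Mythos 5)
Your proposal is correct and is essentially the proof in the paper: the paper's weight $(\check\lambda_1,\check\lambda_2)$ for the enlarged torus $T_{\on{enh}}=(T_{\on{sub}}\times T)/Z(G)$ is exactly your pair $(\check\nu,\check\mu)$, the scalar $(\check\mu-\check\nu)(\tau)$ is the paper's $\check\lambda_2-\check\lambda_1$, and part (ii) is identically the observation that the highest-weight line of $V_{\check\lambda,\check\lambda}$ carries $T_{\on{adj}}$-weight $0$ and is therefore fixed. The bookkeeping you flag as the delicate point is handled correctly: the inverse in $\varphi(\tau)^{-1}$ is exactly what produces $\check\mu-\check\nu$ rather than $\check\nu-\check\mu$, and the constraint $\check\nu\le\check\mu$ is the defining condition of $\Rep(\ol{G_{\on{enh}}})$.
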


\begin{proof}   
(i) It suffices to show that the functor $\Rep(G_{\on{enh}})\to\Rep(T_{\on{adj}})$ corresponding to the homomorphism 
$\fs:T_{\on{adj}}\to G_{\on{enh}}$ maps $\Rep(\ol{G_{\on{enh}}})$ to  $\Rep(\ol{T_{\on{adj}}})$. 

\medskip

Set 
$$T_{\on{enh}}:=(T\times T)/Z(G)\overset{\varphi\times \on{id}}\simeq (T_{\on{sub}}\times T)/Z(G)\subset G_{\on{enh}}.$$
The weights of $T_{\on{enh}}$ are pairs $(\check\lambda_1 ,\check\lambda_2)\in\check\Lambda_T\times\check\Lambda_T$,
such that $\lambda_1|_{Z(G)}=\lambda_2|_{Z(G)}$. 

\medskip 

Let $V\in\Rep(\ol{G_{\on{enh}}})$. Then each weight 
$(\check\lambda_1 ,\check\lambda_2)$ of $T_{\on{enh}}$ on $V$
satisfies $\lambda_1\le\check\lambda_2$ (by the definition of $\ol{G_{\on{enh}}}\,$). 

\medskip

By the definition of $\fs$, the weights of
$T_{\on{adj}}$ on $V$ have the form $\check\lambda_2-\check\lambda_1\,$, where $\check\lambda_1$ and 
$\check\lambda_2$ are as above. So they are $\ge 0$. Therefore the action of $T_{\on{adj}}$ on $V$ extends to an action of $\ol{T_{\on{adj}}}\,$.

\medskip

\noindent(ii) If $V=V_{\check\lambda ,\check\lambda}$ then one of the weights of $\ol{T_{\on{adj}}}$ in $V$ equals
$\check\lambda-\check\lambda=0$, so $\ol{T_{\on{adj}}}$ acts on this subspace by the identity endomorphism. 
\end{proof}

It is clear that the map $\bar\fs:\ol{T_{\on{adj}}}\to\overset{\circ}{\ol{G_{\on{enh}}}}\subset \ol{G_{\on{enh}}}$ defined in the lemma
is a section for the projection 
$\bar\pi:\ol{G_{\on{enh}}}\to \ol{T_{\on{adj}}}\,$. This section depends on the choice of $T_{\on{sub}}$ and $B$.

\sssec{}  

Let $\on{Stab}_{G\times G}(\bar\fs )$ denote the $\bar\fs$-pullback of the subscheme
$\on{Stab}_{G\times G}\subset \overset{\circ}{\ol{G_{\on{enh}}}}\times (G\times G)$ defined in \secref{sss:Stab}.
Clearly  $\on{Stab}_{G\times G}(\bar\fs )$ is a closed
group-subscheme of $\ol{T_{\on{adj}}}\times (G\times G)$ viewed as a group-scheme over 
$\ol{T_{\on{adj}}}\,$.

\begin{cor}   \label{c:universal q-aff}
$\on{Stab}_{G\times G}(\bar\fs )$ is smooth over $\ol{T_{\on{adj}}}\,$. The quotient
\begin{equation}\label{e:q-affine quotient}
(\ol{T_{\on{adj}}}\times (G\times G))/\on{Stab}_{G\times G}(\bar\fs )
\end{equation}
exists and is isomorphic to $\overset{\circ}{\ol{G_{\on{enh}}}}$ as a scheme over $\ol{T_{\on{adj}}}$
equipped with an action of $G\times G$ and a section. In particular, the quotient is a quasi-affine scheme of finite type.
\end{cor}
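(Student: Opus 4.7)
The smoothness of $\on{Stab}_{G\times G}(\bar\fs)$ over $\ol{T_{\on{adj}}}$ is immediate from \secref{sss:Stab}: the group-scheme $\on{Stab}_{G\times G}$ is smooth over $\overset{\circ}{\ol{G_{\on{enh}}}}$, and $\on{Stab}_{G\times G}(\bar\fs)$ is its pullback along $\bar\fs$. For the main assertion, the plan is to introduce the orbit morphism
$$\alpha:\ol{T_{\on{adj}}}\times (G\times G)\to\overset{\circ}{\ol{G_{\on{enh}}}},\qquad (\tau,g_1,g_2)\mapsto g_1\cdot\bar\fs(\tau)\cdot g_2^{-1},$$
and to show that $\alpha$ realizes $\overset{\circ}{\ol{G_{\on{enh}}}}$ as the fppf quotient of its source by the right action of $\on{Stab}_{G\times G}(\bar\fs)$ on the $(G\times G)$-factor. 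Since $\bar\pi\circ\bar\fs=\id$, the morphism $\alpha$ lies over $\ol{T_{\on{adj}}}$, and it is $(G\times G)$-equivariant for the action by left translations on the source and the action of \secref{sss:2sided translations} on the target.

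Surjectivity of $\alpha$ and the identification of its fibers will both follow at once from the transitivity-relative-to-$\ol{T_{\on{adj}}}$ of \secref{sss:transitive}: any $x\in\overset{\circ}{\ol{G_{\on{enh}}}}$ with $\bar\pi(x)=\tau$ has the form $g_1\cdot\bar\fs(\tau)\cdot g_2^{-1}$, and the set of such pairs $(g_1,g_2)$ is a torsor over $\on{Stab}_{G\times G}(\bar\fs(\tau))$. The real work will be to prove that $\alpha$ is smooth. I plan to do this via a tangent-space check at a point of the form $(\tau_0,e,e)$; by $(G\times G)$-equivariance this will suffice. At such a point, the image of $d\alpha:T_{\tau_0}\ol{T_{\on{adj}}}\oplus\fg\oplus\fg\to T_{\bar\fs(\tau_0)}\overset{\circ}{\ol{G_{\on{enh}}}}$ will contain $d\bar\fs(T_{\tau_0}\ol{T_{\on{adj}}})$, which is a complement to the vertical tangent space $T_{\bar\fs(\tau_0)}\bar\pi^{-1}(\tau_0)$ because $\bar\pi\circ\bar\fs=\id$, together with the image of the infinitesimal orbit action $\fg\oplus\fg\to T_{\bar\fs(\tau_0)}\bar\pi^{-1}(\tau_0)$. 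This latter image will in fact exhaust the vertical tangent space, because the orbit map $G\times G\to\bar\pi^{-1}(\tau_0)$ at $\bar\fs(\tau_0)$ is smooth: it arises by base change from the smooth morphism \eqref{e:action map} restricted to the fiber over $\tau_0$. Hence $d\alpha$ will be surjective everywhere, and since the source and the target of $\alpha$ are both smooth over $k$, this will force $\alpha$ itself to be smooth.

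Once this is in hand, the smoothness, surjectivity, and torsor structure of $\alpha$ will together imply by fppf descent that $\alpha$ identifies $\overset{\circ}{\ol{G_{\on{enh}}}}$ with $(\ol{T_{\on{adj}}}\times (G\times G))/\on{Stab}_{G\times G}(\bar\fs)$, with the canonical section $\tau\mapsto [\tau,e,e]$ of the quotient corresponding precisely to $\bar\fs$. The quasi-affineness and finite-typeness of the quotient are then automatic: by construction, $\overset{\circ}{\ol{G_{\on{enh}}}}$ is an open subscheme of the affine scheme $\ol{G_{\on{enh}}}$. The principal obstacle in this plan is the smoothness of $\alpha$, and more precisely the smoothness of the orbit map on a single fiber of $\bar\pi$, which is exactly what the smoothness half of \secref{sss:transitive} is designed to deliver.
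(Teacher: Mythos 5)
Your argument is correct and ultimately rests on the same facts the paper's one‑line proof invokes (Sects.~\ref{sss:transitive}--\ref{sss:Stab}), but the tangent‑space check for the smoothness of $\alpha$ is a detour that you can skip. Observe that the square
\[
\CD
\ol{T_{\on{adj}}}\times(G\times G) @>{(\tau,g_1,g_2)\,\mapsto\,((g_1,g_2),\,\bar\fs(\tau))}>> (G\times G)\times\overset{\circ}{\ol{G_{\on{enh}}}} \\
@V{\alpha}VV @VV{\eqref{e:action map}}V \\
\overset{\circ}{\ol{G_{\on{enh}}}} @>{y\,\mapsto\,(y,\,\bar\fs(\bar\pi(y)))}>> \overset{\circ}{\ol{G_{\on{enh}}}}\underset{\ol{T_{\on{adj}}}}\times\overset{\circ}{\ol{G_{\on{enh}}}}
\endCD
\]
is Cartesian: a point of the fibre product is a triple $((g_1,g_2),x,y)$ with $x=\bar\fs(\bar\pi(y))$ and $y=g_1xg_2^{-1}$, which is the same data as $(\tau,g_1,g_2)$ with $\tau=\bar\pi(y)$. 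Since \eqref{e:action map} is smooth and surjective (\secref{sss:transitive}), $\alpha$ inherits both properties at once by base change, and the torsor structure of $\alpha$ under $\on{Stab}_{G\times G}(\bar\fs)$ is the base change of the statement that $\on{Stab}_{G\times G}$ is the preimage of the diagonal under \eqref{e:action map}. Your tangent‑space argument is logically sound — the criterion ``surjective differential at every $k$-point, source and target smooth over the algebraically closed field $k$, hence smooth morphism'' is valid in any characteristic — but notice that its last step already appeals to the smoothness of \eqref{e:action map} (via the orbit map on a single fibre), so it re-derives by hand exactly what the base change gives for free. In short: you correctly identified the orbit map and the relevant inputs; the one improvement is to fold surjectivity, smoothness, and the torsor structure of $\alpha$ into a single Cartesian‑square argument rather than treating smoothness separately.
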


\begin{proof}
Follows from Sects. ~\ref{sss:transitive}-\ref{sss:Stab}.
\end{proof}

\ssec{Vinberg's semigroup and the ``interpolating" group-scheme $\wt{G}$}  \label{ss:Relation}
\sssec{}
Let $P,P^-\subset G$ be parabolics opposite to each other and $M:=P\cap P^-$.
Let $$\gamma:\BG_m\to Z(M)$$ be a co-character dominant and regular with respect to $P$.
Using these data we defined in \secref{sss:interp group} a closed group-subscheme 
$\wt{G}\subset\BA^1\times G\times G$. On the other hand, in \secref{sss:Stab} we defined a closed group-subscheme 
$\on{Stab}_{G\times G}\subset (G\times G)\times \overset{\circ}{\ol{G_{\on{enh}}}}=
\overset{\circ}{\ol{G_{\on{enh}}}}\times (G\times G)$. We will show that $\wt{G}$ can, in fact, be obtained from
$\on{Stab}_{G\times G}$ by base change with respect to a certain map 
$\BA^1\to \overset{\circ}{\ol{G_{\on{enh}}}}$ (see \propref{p:stabilizer} below).

\sssec{}
Choose a Cartan subgroup $T_{\on{sub}}\subset G$ contained in $M$; then $T_{\on{sub}}\supset Z(M)$. 
Choose a Borel $B\subset G$ so that $T_{\on{sub}}\subset B\subset P$. Then $T_{\on{sub}}$ identifies with $T$.

\medskip

The composition
\[
\BG_m\overset{\gamma}\longrightarrow Z(M)\mono T_{\on{sub}}\iso T\to T_{\on{adj}}
\]
is a \emph{dominant} co-character $\BG_m\to T_{\on{adj}}\,$, so it extends to a homomorphism of algebraic monoids
\begin{equation} \label{e:map to T adj bar}
\BA^1\to \ol{T_{\on{adj}}}\, .
\end{equation} 
Composing \eqref{e:map to T adj bar} with the map 
$\bar\fs:\ol{T_{\on{adj}}}\to \overset{\circ}{\ol{G_{\on{enh}}}}$ defined in \lemref{l:barfs}, we get a map
\[
\gamma':\BA^1\to  \overset{\circ}{\ol{G_{\on{enh}}}}\, .
\]

\sssec{}
Let $\on{Stab}_{G\times G}(\gamma' )$ denote the $\gamma'$-pullback of the subscheme
$\on{Stab}_{G\times G}\subset \overset{\circ}{\ol{G_{\on{enh}}}}\times (G\times G)$ defined in \secref{sss:Stab}.
Clearly  $\on{Stab}_{G\times G}(\gamma' )$ is a closed
group-subscheme of $\BA^1\times (G\times G)$ viewed as a group-scheme over $\BA^1$.

\begin{prop}  \label{p:stabilizer}
The group-subscheme $\wt{G}\subset\BA^1\times (G\times G)$ defined in \secref{sss:interp group} is equal to 
$\on{Stab}_{G\times G}(\gamma' )$.
\end{prop}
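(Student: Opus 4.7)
The plan is to argue that both $\wt{G}$ and $\on{Stab}_{G\times G}(\gamma')$ are flat closed subschemes of $\BA^1\times G\times G$ which coincide over the dense open $\BG_m\subset\BA^1$; since flat closed subschemes of an ambient scheme over an integral base are determined by any dense open restriction (each equals the scheme-theoretic closure thereof), this forces equality.

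Flatness of $\wt{G}$ over $\BA^1$ is \propref{p:smooth}(i). Flatness (in fact smoothness) of $\on{Stab}_{G\times G}(\gamma')$ over $\BA^1$ follows from the smoothness of $\on{Stab}_{G\times G}\to\overset{\circ}{\ol{G_{\on{enh}}}}$ noted in \secref{sss:Stab}, by base change along $\gamma':\BA^1\to\overset{\circ}{\ol{G_{\on{enh}}}}$. Both are closed in $\BA^1\times(G\times G)$ by construction.

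The core computation is matching the restrictions to $\BG_m$. By \eqref{e:graph}, $\wt{G}|_{\BG_m}=\Gamma:=\{(t,g_1,g_2):g_2=\gamma(t)g_1\gamma(t)^{-1}\}$. For $\on{Stab}_{G\times G}(\gamma')|_{\BG_m}$, fix $t\in\BG_m$ and let $\tau\in T$ be the preimage of $\gamma(t)\in T_{\on{sub}}$ under $\varphi:T\iso T_{\on{sub}}$. Unwinding \secref{sss:thesection}, $\gamma'(t)$ is the class of $(\varphi(\tau)^{-1},\tau)=(\gamma(t)^{-1},\tau)\in G\times T$ in $G_{\on{enh}}=(G\times T)/Z(G)$. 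An element $(g_1,g_2)\in G\times G$, acting by two-sided translation via its embedding $g\mapsto$ class of $(g,1)$ into $G_{\on{enh}}$, sends $\gamma'(t)$ to the class of $(g_1\gamma(t)^{-1}g_2^{-1},\tau)$. Equality with $\gamma'(t)$ in the quotient by the anti-diagonal $Z(G)\hookrightarrow G\times T$ forces, from the already-matching $T$-component, the $Z(G)$-ambiguity to be trivial, whence $g_1\gamma(t)^{-1}g_2^{-1}=\gamma(t)^{-1}$, i.e., $g_2=\gamma(t)g_1\gamma(t)^{-1}$. Thus $\on{Stab}_{G\times G}(\gamma')|_{\BG_m}=\Gamma=\wt{G}|_{\BG_m}$.

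Combining the two points yields the equality: by \propref{p:smooth}(ii), $\wt{G}$ coincides with the scheme-theoretic closure $\overline{\Gamma}$ in $\BA^1\times G\times G$, and since $\on{Stab}_{G\times G}(\gamma')$ is a closed subscheme whose restriction to $\BG_m$ is $\Gamma$, it contains $\overline{\Gamma}=\wt{G}$; being itself flat over $\BA^1$ with the same generic fiber, it cannot be strictly larger, so the two agree. The main obstacle is purely bookkeeping in the stabilizer computation above: one must carefully track the anti-diagonal embedding $Z(G)\hookrightarrow G\times T$, the embedding $G\hookrightarrow G_{\on{enh}}$ used in the $(G\times G)$-action, and the interaction of $\varphi$ with $\bar\fs$; the rest of the argument is a standard flatness-and-closure observation.
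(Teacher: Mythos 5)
Your proof is correct and follows essentially the same strategy as the paper's: reduce to comparing the two flat closed group-subschemes over $\BG_m$ (since a flat closed subscheme over $\BA^1$ is the scheme-theoretic closure of its restriction to the dense open $\BG_m$, and smoothness of both over $\BA^1$ was already established), then verify that the stabilizer condition over $\BG_m$ unwinds to $g_2=\gamma(t)g_1\gamma(t)^{-1}$, matching formula \eqref{e:graph}. Your write-up spells out the $Z(G)$-bookkeeping in the stabilizer computation and the flatness-closure step more explicitly than the paper does, which is a legitimate expansion rather than a deviation.
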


\begin{proof}  
Both $\on{Stab}_{G\times G}(\gamma' )$ and $\wt{G}$ are closed group-subschemes of 
$\BA^1\times (G\times G)$. Both are flat (and in fact, smooth) over $\BA^1$: for $\wt{G}$ this was 
proved in \propref{p:smooth}(i) and for $\on{Stab}_{G\times G}(\gamma' )$ this follows from smoothness of
$\on{Stab}_{G\times G}$ over $\overset{\circ}{\ol{G_{\on{enh}}}}$ (see \secref{sss:Stab}). So it suffices to check that the two 
group-subschemes have the same fiber over any $t\in\BG_m\,$.

\medskip

By formula \eqref{e:graph}, the subscheme $\wt{G}_t\subset G\times G$ is defined by the equation
\begin{equation}   \label{e:eq1}
\gamma (t)\cdot g_1\cdot \gamma(t)^{-1}=g_2\, , \quad g_1,g_2\in G.
\end{equation}
On the other hand, the fiber of $\on{Stab}_{G\times G}(\gamma' )$ over $t\in\BG_m$ is the stabilizer of 
$\gamma'(t)\in \ol{G_{\on{enh}}}$ with respect to the action of $(G\times G)$ on $\ol{G_{\on{enh}}}$ defined in 
\secref{sss:2sided translations}; so this fiber is defined by the equation
\begin{equation}    \label{e:eq2}
g_1\cdot \gamma'(t)\cdot g_2^{-1}=\gamma'(t)\, , \quad g_1,g_2\in G.
\end{equation}

By the definition of $\fs$ from \secref{sss:thesection}, $\gamma'(t)$ differs from $\gamma (t)^{-1}$ only by 
an element of $Z(G_{\on{enh}})$.
So \eqref{e:eq1} and \eqref{e:eq2} are equivalent .
\end{proof}  

\begin{cor}  \label{c:q-aff}
The quotient $(\BA^1\times G\times G)/\wt{G}$ exists and is a quasi-affine scheme of finite type over $\BA^1$. 
\end{cor}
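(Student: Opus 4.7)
The plan is to combine \propref{p:stabilizer} with \corref{c:universal q-aff} via a straightforward base-change argument. Let $\alpha:\BA^1\to\ol{T_{\on{adj}}}$ denote the homomorphism of monoids extending the co-character $\BG_m\overset{\gamma}\longrightarrow Z(M)\mono T\to T_{\on{adj}}$, so that by construction $\gamma'=\bar\fs\circ\alpha$. \propref{p:stabilizer} identifies $\wt{G}$ with $\on{Stab}_{G\times G}(\gamma')$, and since these stabilizer group-schemes are defined by pullback along the given map to $\overset{\circ}{\ol{G_{\on{enh}}}}$, one has a tautological identification of $\BA^1$-group-schemes
\[
\wt{G}\simeq\BA^1\underset{\ol{T_{\on{adj}}}}\times\on{Stab}_{G\times G}(\bar\fs).
\]

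Next I would use \corref{c:universal q-aff}, which supplies a canonical isomorphism
\[
(\ol{T_{\on{adj}}}\times(G\times G))/\on{Stab}_{G\times G}(\bar\fs)\iso\overset{\circ}{\ol{G_{\on{enh}}}}
\]
of $\ol{T_{\on{adj}}}$-schemes with $(G\times G)$-action. Pulling back along $\alpha$, and using that formation of quotients by smooth (in particular flat and fpqc) group-schemes commutes with arbitrary base change, yields
\[
(\BA^1\times(G\times G))/\wt{G}\iso\BA^1\underset{\ol{T_{\on{adj}}}}\times\overset{\circ}{\ol{G_{\on{enh}}}}.
\]
This base-change step is the one that requires a bit of care; but since \corref{c:universal q-aff} already constructs the quotient on the right as an $\ol{T_{\on{adj}}}$-scheme with all the expected properties, the compatibility with pullback along $\alpha$ is essentially formal from the universal property of the sheaf quotient.

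Finally, quasi-affineness and finite type over $\BA^1$ come for free. Indeed, $\overset{\circ}{\ol{G_{\on{enh}}}}$ is by construction a quasi-compact open subscheme of the affine finite-type $k$-scheme $\ol{G_{\on{enh}}}$, so $\BA^1\underset{\ol{T_{\on{adj}}}}\times\overset{\circ}{\ol{G_{\on{enh}}}}$ is a quasi-compact open subscheme of the affine finite-type $\BA^1$-scheme $\BA^1\underset{\ol{T_{\on{adj}}}}\times\ol{G_{\on{enh}}}$, hence is quasi-affine and of finite type over $\BA^1$. The only mild obstacle in the whole argument is the descent/base-change compatibility in the middle step; everything else is immediate from the constructions of Appendix~\ref{s:Vinberg}.
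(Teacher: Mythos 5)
Your proposal is correct and follows essentially the same route as the paper's own proof: rewrite $\wt{G}$ as $\on{Stab}_{G\times G}(\gamma')$ via \propref{p:stabilizer}, observe that $\gamma'$ factors through $\bar\fs$ so that the quotient in question is a base change of the quotient \eqref{e:q-affine quotient} along the map \eqref{e:map to T adj bar}, and then invoke \corref{c:universal q-aff}. The paper states this more tersely (a three-sentence proof), while you spell out the intermediate identification $\wt{G}\simeq\BA^1\underset{\ol{T_{\on{adj}}}}\times\on{Stab}_{G\times G}(\bar\fs)$ and the compatibility of fppf quotients with base change explicitly, but the underlying argument is the same.
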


\begin{proof}
By \propref{p:stabilizer}, we can rewrite $(\BA^1\times G\times G)/\wt{G}$   as
\begin{equation}   \label{e:quotient in question}
(\BA^1\times G\times G)/\on{Stab}_{G\times G}(\gamma' ).
\end{equation}
By \corref{c:universal q-aff},  the quotient \eqref{e:q-affine quotient} is a quasi-affine scheme of finite type over 
$\ol{T_{\on{adj}}}\,$. The quotient  \eqref{e:quotient in question} is obtained from \eqref{e:q-affine quotient} 
by base change with respect to the map \eqref{e:map to T adj bar}, so it is also quasi-affine.
\end{proof}


\begin{thebibliography}{99}



\bibitem[AG]{AG} D.~Arinkin and D.~Gaitsgory, {\em Singular support of coherent sheaves, and the geometric Langlands conjecture}, 
Selecta Math. (N.S.)  {\bf 21}  (2015),  no. 1, 1--199. Also: arXiv:math/1201.6343.


\bibitem[BG]{BG} A.~Braverman and D.~Gaitsgory, {\it Geometric Eisenstein Series}, Invent. Math {\bf 150} (2002),
287--384. Also: arXiv:math/9912097.

\bibitem[BKa]{BKa} R.~Bezrukavnikov and D.~Kazhdan, {\it Geometry of second adjointness for $\fp$-adic
groups},  \newline arXiv:math/1112.6340.

\bibitem[BP]{BP} M.~Brion and P.~Polo, 
{\em Large Schubert varieties,} Represent. Theory {\bf 4} (2000), 97--126






\bibitem[BKu]{BKu} M.~Brion and S.~Kumar, 
{\em Frobenius splitting methods in geometry and representation theory,}
Progress in Mathematics, {\bf 231}. Birkh\"auser, Boston, 2005. 


\bibitem[BN]{BN} I.~Biswas and D.~S.~Nagaraj, {\it Principal bundles over the projective line},
J. Algebra {\bf 322} (2009), no. 10, 3478--3491.

\bibitem[Bo]{Bo} A.~Bouthier, {\it Dimension des fibres de Springer affines pour les groupes}, Transformation Groups  {\bf 20}  (2015),  no.
3, 615--663. Also: arXiv:math/1203.0975.

\bibitem[Br]{Br} T.~Braden, {\it Hyperbolic localization of Intersection Cohomology}, Transformation Groups {\bf 8}
(2003), no. 3, 209--216. Also: arXiv:math/020225.

\bibitem[DCP]{DCP} C.~De Concini and C.~Procesi, {\it Complete symmetric varieties.}
In: Invariant theory (Montecatini, 1982), 1--44, Lecture Notes in Math., {\bf 996}, Springer, Berlin, 1983.

\bibitem[Dr]{Dr}  V.~Drinfeld, {\it On algebraic spaces with an action of $\BG_m$}, arXiv:math/1308.2604, version 2.

\bibitem[DrGa1]{DrGa1}  V.~Drinfeld and D.~Gaitsgory, {\it On some finiteness questions for algebraic stacks}, Geom. Funct. Anal.  {\bf 23}  (2013),  no. 1, 149--294. Also: arXiv:math/1108.5351.

\bibitem[DrGa2]{DrGa2}  V.~Drinfeld and D.~Gaitsgory, {\it Compact generation of the category of D-modules on the stack of $G$-bundles on a curve}, 
Cambridge Journal of Mathematics {\bf 3} (2015), no. 1-2, 19--125.

\bibitem[DrGa3]{DrGa3}  V.~Drinfeld and D.~Gaitsgory, {\it On a theorem of Braden},  Transformation Groups  {\bf 19}  (2014),  no. 2, 313--358.
Also: arXiv:math/1308.3786.

\bibitem[Ga]{Ga} D.~Gaitsgory, {\it A "strange" functional equation for Eisenstein series and miraculous duality on the moduli stack of bundles,} 
arXiv:math/1404.6780.

\bibitem[Gi]{Gi} V.~Ginzburg, {\it Induction and restriction of character sheaves,} I.~M.~Gel'fand Seminar, 
149--167, Adv. Soviet Math. {\bf 16}, Part 1, Amer. Math. Soc., Providence, RI, 1993.

\bibitem[GR]{GR} D.~Gaitsgory and N.~Rozenblyum, {\it DG indschemes}, In: Perspectives in representation theory, 
 139--251, Contemp. Math., 610, Amer. Math. Soc., Providence, RI,  2014. Also arXiv:math/1108.1738. 



\bibitem[Kn]{Kn} D.~Knutson, Algebraic spaces. Lecture
Notes in Math. {\bf 203}, Springer-Verlag, Berlin-New York, 1971.

\bibitem[KKMS]{KKMS} G. Kempf, F. F. Knudsen, D. Mumford, and B. Saint-Donat, 
{\it Toroidal embeddings. I,} Lecture Notes in Math. {\bf 339}, Springer-Verlag, Berlin-New York, 1973. 


\bibitem[LM]{LM} G.~Laumon and L.~Moret-Bailly, {\it Champs alg\'ebriques},
Ergebnisse der Mathematik und ihrer Grenzgebiete (3 Folge, A Series of Modern Surveys in Mathematics), {\bf 39}, Springer-Verlag, Berlin, 2000.


\bibitem[Lur]{Lur} J.~Lurie, {\it Higher Topos Theory},
Annals of Mathematics Studies, {\bf 170}, Princeton University Press, Princeton, NJ, 2009.

\bibitem[Pu]{Pu} M.~Putcha, {\it Linear algebraic monoids},
London Mathematical Society Lecture Note Series, {\bf 133}. Cambridge University Press, Cambridge, 1988

\bibitem[Re]{Re} L~Renner, {\it Linear
algebraic monoids}. Encyclopaedia of Mathematical Sciences, {\bf 134}. Invariant
Theory and Algebraic Transformation Groups, {\bf V}. Springer-Verlag, Berlin, 2005.

\bibitem[Ri1]{Ri1} A.~Rittatore, {\it Monoides Alg\'ebriques et plongements des groupes}, Th\`ese
de Doctorat, Institut Fourier, 1997, available at
http://www-fourier.ujf-grenoble.fr/THESE/pdf/t122.pdf

\bibitem[Ri2]{Ri2} A.~Rittatore, {\it Algebraic monoids and group embeddings}, Transform.
Groups {\bf 3} (1998), no.4, 375--396.

\bibitem[Ri3]{Ri3} A.~Rittatore, {\it Very flat reductive monoids}, Publ. Mat. Urug. {\bf 9} (2001), no.4, 93--121.

\bibitem[Ri4]{Ri4} A.~Rittatore, {\it Algebraic monoids with affine unit group are affine}, Transform.
Groups {\bf 12} (2007), no.3, 601--605.

\bibitem[Str]{Str} E.~Strickland, {\it A vanishing theorem for group compactifications,}
Math. Ann. {\bf 277} (1987), no. 1, 165--171.

\bibitem[Ti]{Ti} D.~Timashev, {\it Homogeneous spaces and equivariant embeddings,}
Encyclopaedia of Mathematical Sciences, {\bf 138}. 
Invariant Theory and Algebraic Transformation Groups, {\bf 8}. Springer, Heidelberg, 2011. 

\bibitem[Vi]{Vi} E.~B.~ Vinberg, {\it On reductive algebraic semigroups,} 
E.~ B.~ Dynkin's Seminar, 145--182, Amer. Math. Soc. Transl. Ser. 2, {\bf 169}, Amer. Math. Soc., Providence,
RI, 1995.








\end{thebibliography}
\end{document}